\newcommand{\assign}{:=}
\newcommand{\backassign}{=:}
\newcommand{\cdummy}{\cdot}
\newcommand{\infixand}{\text{ and }}
\newcommand{\nobracket}{}
\newcommand{\nosymbol}{}
\newcommand{\tmop}[1]{\ensuremath{\operatorname{#1}}}
\newcommand{\tmstrong}[1]{\textbf{#1}}
\newcommand{\tmtextbf}[1]{\text{{\bfseries{#1}}}}
\newcommand{\tmtextit}[1]{\text{{\itshape{#1}}}}
\newenvironment{enumeratealpha}{\begin{enumerate}[a{\textup{)}}] }{\end{enumerate}}
\newenvironment{enumeratenumeric}{\begin{enumerate}[1.] }{\end{enumerate}}
\newenvironment{enumerateroman}{\begin{enumerate}[i.] }{\end{enumerate}}
\newenvironment{proof}{\noindent\textbf{Proof\ }}{\hspace*{\fill}$\Box$\medskip}
\newcommand{\NN}{\mathbb{N}}
\newcommand{\RR}{\mathbb{R}}
\DeclareMathOperator*{\esssup}{ess\,sup}
\newtheorem{definition}{Definition}
\newtheorem{theorem}[definition]{Theorem}
\newtheorem{lemma}[definition]{Lemma}
\newtheorem{remark}[definition]{Remark}
\newtheorem{proposition}[definition]{Proposition}
\newtheorem{corollary}[definition]{Corollary}
\title{Rough backward SDEs with discontinuous Young drivers}
\author{Dirk Becherer\footnote{\scriptsize Humboldt-Universit\"at zu Berlin, Berlin, Germany, \texttt{becherer@hu-berlin.de}.} \\ \and Yuchen Sun\footnote{\scriptsize Humboldt-Universit\"at zu Berlin and Technische Universität Berlin, Berlin, Germany, \texttt{suyuchen@hu-berlin.de}.} }
\numberwithin{equation}{section}
\numberwithin{definition}{section}
\begin{document}

\maketitle

\begin{abstract}
  We study solutions to backward differential equations that are driven hybridly by a deterministic discontinuous rough path $W$ of finite
  $q$-variation for $q \in [1, 2)$ and by Brownian motion $B$. To distinguish between integration of
  jumps in a forward- or Marcus-sense, we refer to these equations
  as forward- respectively Marcus-type rough backward stochastic differential equations
  \tmtextbf{(RBSDEs)}. We establish global well-posedness by proving global apriori bounds for solutions and employing fixed-point arguments locally.
  Furthermore, we lift the RBSDE solution and the driving rough noise to the
  space of decorated paths endowed with a Skorokhod-type metric and show stability of solutions with respect to perturbations of the rough noise.
  Finally, we prove well-posedness for a new class of backward doubly
  stochastic differential equations (\tmtextbf{BDSDEs}), which are jointly driven by a
  Brownian martingale $B$ and an independent discontinuous stochastic process
  $L$ of finite $q$-variation. We explain, how our RBSDEs can be understood as
  conditional solutions to such BDSDEs, conditioned on the information
  generated by the path of $L$.  \vspace{5pt}\\
  \noindent \textbf{Keywords:} BSDE; BDSDE; rough paths with jumps; Skorokhod topology; Marcus\linebreak integration; Wong-Zakai approximation. \\
\noindent \textbf{MSC subject classification:} 60L90; 60J76; 60H20; 60H15; 37H30.
\end{abstract}

\tableofcontents

\section{Introduction}
We investigate backward differential equations hybridly driven by a Brownian
Motion $B$ and a deterministic discontinuous rough path $W$ of finite
$q$-variation for $q \in [1, 2)$.

Motivated by SDEs featuring It{\^o}'s-forward and Marcus-type jumps (see
{\cite{kurtz_stratonovich_1995}}, and also
{\cite{chevyrev_canonical_2019,friz_differential_2018}} from the rough path
literature), we differentiate between forward-type and geometric-type ways of
integration for jumps, which leads to two different notions, namely
forward-type rough backward SDEs (\tmtextbf{Forward-RBSDEs})
\begin{eqnarray}
  Y_t & = & \xi + \int_t^T f (r, Y_r, Z_{\nosymbol}) \tmop{dr} - \int_t^T Z_r
  \tmop{dB}_r + \int_t^T g_r (Y_{r +}) \tmop{dW}_r,  \label{Forward-RBSDE}
\end{eqnarray}
or Marcus-type rough backward SDEs (\tmtextbf{Marcus-RBSDEs})
\begin{eqnarray}
  Y_t & = & \xi + \int_t^T f (r, Y_r, Z_r) \tmop{dr} - \int_t^T Z_r
  \tmop{dB}_r + \int_t^T g_r (Y_{r +}) \diamond \tmop{dW}_r . 
  \label{Marcus-RBSDE}
\end{eqnarray}
Here, $B$ is a $d$-dimensional Brownian motion and $W$ is a $e$-dimensional
deterministic c{\`a}gl{\`a}d\footnote{The choice of c{\`a}gl{\`a}d path as the
rough driver $W$ might seem unusual at first glance. But since we deal with
backward (Young) integration (see Appendix A), c{\`a}gl{\`a}d path is natural
as c{\`a}dl{\`a}g in forward time. } (left-continuous with right limits) path
of finite $q$-variation with jumps $\Delta W_t \assign W_{t +} - W_t$. While
$\int_t^T g_r (Y_{r +}) \tmop{dW}_r$ is defined as a backward Young integral
(see Appendix A) and $\int_t^T g_r (Y_{r +}) \diamond \tmop{dW}_r$ is taken to
be
\begin{equation*}
  \int_t^T g_r (Y_{r +}) \diamond \tmop{dW} = \int_t^T g_r (Y_{r +}) \tmop{dW}
  + \sum_{t \leq r < T} [\varphi (g_r \Delta W_r, Y_{r +}) - Y_{r +} - g_r
  (Y_{r +}) \Delta W_r], 
\end{equation*}
as in {\cite{kurtz_stratonovich_1995}} with $\varphi (V, x) \assign \varphi
(V, x, 1)$ for $t \mapsto \varphi (V, x, t)$ denoting the solution to the ODE
\[ \frac{\tmop{dy}}{\tmop{dt}} (t) = V (y (t))  \quad \text{with} \quad y (0)
   = x. \]
We use the notation $\int_t^T g_r (Y_{r +})
(\diamond) \tmop{dW}$ to simultaneously cover both $\int_t^T g_r (Y_{r +})  \tmop{dW}$ and $\int_t^T g_r (Y_{r +}) \diamond \tmop{dW}$, so
that both forward- and Marcus-type RBSDEs can be written in one unified form.
We say that a pair of $(Y, Z)$ is a solution to the forward-type or
Marcus-type RBSDE with discontinuous Young drivers $W$ if it satisfies
integral equation (\ref{Forward-RBSDE}) respectively (\ref{Marcus-RBSDE}).

\

To explain the difference in the jump dynamics in Forward-RBSDE
(\ref{Forward-RBSDE}) and Marcus-RBSDE (\ref{Marcus-RBSDE}), let us compare
the jump of the solution $Y$ at time $t$ for both RBSDEs in more detail. In
Forward-RBSDE the jump is given directly by the jump $- g_t (Y_{t +}) \Delta
W_t$ of the Young integral at time t, that we call the \tmtextbf{forward
jump}. Marcus-RBSDEs are inspired by Marcus-type SDEs (see
{\cite{kurtz_stratonovich_1995,marcus_modeling_1980}}). Intuitively, the
solution behaves at jumps as if suddenly accelerating and moving extremely
fast along the underlying vector field over an additional time interval.
Mathematically, this is accomplished by replacing $- g_t (Y_{t +}) \Delta
W_t$, the jump of the Young integral at time t, by $- (\varphi (g_t \Delta
W_t, Y_{t +}) - Y_{t +})$, the so called \tmtextbf{geometric jump} or
\tmtextbf{Marcus jump}.

\

Our paper focuses on demonstrating well-posedness for RBSDEs with hybrid rough
and stochastic drivers and stability for such equations. Yet, an important motivation arises already from a purely stochastic setting, when we randomize
the rough driver $W$ to be a stochastic process $L$ that is independent of the
Brownian motion $B$. The resulting equation, formally written as
\begin{eqnarray}
  Y_t & = & \xi + \int_t^T f (r, Y_r, Z_r) \tmop{dr} - \int_t^T Z_r
  \tmop{dB}_r + \int_t^T g_r (Y_{r +}) (\diamond) \tmop{dL}, 
  \label{Marcus-BDSDE}
\end{eqnarray}
is known as backward doubly SDEs (BDSDEs).  BDSDEs have been introduced by Pardoux and Peng {\cite{pardoux_backward_1994}} for $L$ being an
independent Brownian motion, to provide a probabilistic representation for stochastic partial differential equations
(SPDEs). Diehl and Friz {\cite{diehl_backward_2012}} have shown that RBSDEs (driven
by ``frozen'' Brownian sample paths) are the conditional solutions to such
BDSDEs. We prove an analogous result for a new type of BDSDEs where $L$
instead is a possibly discontinuous stochastic process of finite $q$-variation
($q > 2$). We show how our RBSDE \eqref{Forward-RBSDE} and
\eqref{Marcus-RBSDE} can be derived from such BDSDE by freezing the sample
path $L$, and conversely, how our RBSDEs can be transformed into BDSDEs by
randomizing the rough driver $W$, see Proposition \ref{quenched-BDSDE} and
Theorem \ref{Existence-BDSDE} for details. Although this new type of BDSDE
excludes the case of $L$ being a Brownian motion, it accommodates a wide range
of other processes, including fractional Brownian motion with Hurst coefficient $H >
\frac{1}{2}$, as studied by Jing {\cite{jing_nonlinear_2012}}, or
pure-jump L\'evy processes (see Ch.\ref{solution-bdsde} for details), or a linear combination of independent processes of either kind. We note that our work has implications for the analysis of classes of non-linear stochastic partial differential equations with noise of the above kind, such SPDEs can be understood as rough PDEs for a fixed realization of the noise, a topic to be elaborated in \cite{becherer_2025}.

\

In the pioneering work on RBSDEs by Diehl and Friz
{\cite{diehl_backward_2012}}, the key idea has been to approximate the rough driver $W$ by
a sequence of smooth drivers $W^n$ and to prove that the respected BSDE solutions
$(Y^n, Z^n)$ converge to some limiting process $(Y, Z)$, using classical
BSDE stability result. It is a natural but open question, whether $(Y, Z)$
solves the formal limiting RBSDE equation. Indeed, to make sense of the rough
integral term $\int g (Y) \tmop{dW}$ one needs some regularity of $Y$ (in
the rough path sense, cf.\ {\cite[ch.4]{friz_course_2020}}), which is however lacking from classical BSDE theory as employed in \cite{diehl_backward_2012}. In this sense, the limiting RBSDE there
has only a formal (but not an intrinsic) meaning, in that it cannot be
understood as an integral equation. Our work contributes to more recent developments
{\cite{diehl_backward_2017,liang_multidimensional_2025,song_backward_2025}} to obtain an intrinsic notion for RBSDE solutions by fixed point
methods, wherein regularity analysis for the solution is proven as an invariance
property for the fixed point map. Such arguments require rough drivers with
higher regularity (as in {\cite{diehl_backward_2017,song_backward_2025}} and
our paper) or function $g$ being linear (as in
{\cite{liang_multidimensional_2025}}).

\

Our paper extends beyond the continuous setting in
{\cite{diehl_backward_2017}}. Moreover, by deriving apriori bounds through
direct estimation, we can drop their restriction on $Y$ being one-dimensional,
which arises (naturally) in {\cite{diehl_backward_2017}} from applying
classical BSDE comparison result to obtain bounds on $Y$. We also allow for
more general integrands of the form $g (t, \omega, y)$ instead of $g (y)$.
Most importantly, we permit discontinuous rough drivers $W$ to be integrated
in both the forward and Marcus sense. We show well-posedness for both types
of RBSDEs and by lifting the equations to the space of decorated paths endowed
with a new type of ($p$-variation) Skorokhod metric (see
{\cite{chevyrev_superdiffusive_2024}}), generalizing the existing J1/M1
metric, we obtain the stability of solutions, particularly with respect to
discontinuous rough drivers $W$. We emphasize that all existing literature
about RBSDE is concerned with continuous systems. Even in the broader area of
rough stochastic differential equations (RSDE, see {\cite{friz_rough_2024}}),
we are only aware of the work by Allen and Pieper {\cite{allan_rough_2024}},
who study RSDEs with forward jumps, but they show stability analysis with
respect to the rough drivers $W$ measured in $p$-variation metric, which is
stronger than any kind of Skorokhod metric.

\

Regarding the discontinuous nature of the differential equation, note that discontinuities in the RBSDE solution arise solely from the jumps in the rough
driver $W$. We draw inspiration from the theory of rough differential equations (RDEs) with jumps, which has
been studied for forward-type RDEs in {\cite{friz_differential_2018}} and for
Marcus-type RDEs in
{\cite{chevyrev_canonical_2019,friz_general_2017,williams_path-wise_2001}}.
For Marcus-type RDEs, the well-known ``time-stretching'' method (cf. 
{\cite{cohen_singular_2021}}) has been used, while for forward-type RDEs the
well-posedness is typically proven by direct fixed-point arguments. In the
former approach, one adds \ at each jump of the driver $\phi$ fictitious time
intervals (of total length $\delta$) during which the jump is linearly
interpolated; we denote the resulting object $\phi^{\delta}$ and the
additional path segments ``linear excursion''; one can solve the continuous
RDE driven by $\phi^{\delta}$ and remove the fictitious time to recover a
discontinuous process that is then taken as the solution to the original RDE.
Compared to the fixed-point approach, the ``time-stretching'' method benefits
from building on already existing solution theory on continuous RDEs, thereby
naturally inherits properties therefrom. In contrast, we directly prove the
well-posedness of both forward-type and Marcus-type RBSDEs through a fixed
point approach. In this sense, our approach is more ``intrinsic'', does not
rely on any previous results from continuous RBSDEs, and the global convergence of a sequence of Picard iterations to the solution (see Theorem
\ref{Picard-iteration}) is not only a useful result on its own, but also crucial for proving how RBSDE solutions
depend stably on $W$. Since the ``time-stretching'' idea provides intuition
and is still important, we show in Theorem \ref{fictioustime_Marcus-RBSDE} how
fixed-point solutions of Marcus-RBSDEs coincide with the solutions obtained
from the ``time-stretching'' method.

\

A key aspect in the theory of RDEs with jumps is to understand how small
perturbations in the rough driver $W$ affect the solution $(Y, Z)$ and whether
Wong--Zakai-type results can be achieved to approximate the solution by
those for smoothed drivers. The choice of suitable topologies for the solution
space and for the rough path space is crucial for such analysis. In our work,
we build on the theory of decorated paths recently introduced by Chevyrev et
al. {\cite{chevyrev_superdiffusive_2024}}. In essence, a decorated path $\Phi$
is a high-dimensional object consisting of a discontinuous rough path $\phi :
[0, T] \to \mathbb{R}^d$ together with additional information at each jump to
characterize the dynamics at jumps. This information is referred to as
``excursions'' and could, for instance, be simply ``linearly connecting the
jump'' or something more sophisticated to describe the trajectory that leads
to a Marcus jump. From such information, one can construct a path
$\phi^{\delta} : [0, T + \delta] \rightarrow \mathbb{R}^d$ similar as in the
``time-stretching'' method by adding fictitious time (of total length
$\delta$) to the jumps and interpolate the jumps according to the additional
information encoded in $\Phi$.

Let $\Lambda_I$ denote the set of strictly increasing bijections from an
interval $I$ onto itself, we define on the space of decorated paths the
Skorokhod-type metric:
\begin{equation}
  \alpha_{p ; [0, T]} (\Phi^1, \Phi^2) \assign \lim_{\delta \rightarrow 0}
  \inf_{\lambda \in \Lambda_{[0, T + \delta]}} (| \lambda - \tmop{id}
  |_{\infty} \vee \| \phi^{1, \delta} \circ \lambda - \phi^{2, \delta} \|_{p ;
  [0, T + \delta]}) . \label{heuristic-skorokhod}
\end{equation}
Readers familiar with Skorokhod metrics may recognize that if one takes $p =
\infty$ and chooses linear (or constant) excursions, then $\alpha_{p ; [0,
T]}$ coincides with the classical Skorokhod M1 (or respectively J1) metric.
This generalization is essential; for instance, as shown in Example 1.4 of
{\cite{chevyrev_superdiffusive_2020}}, even for simple two-dimensional
Marcus-type ODEs (with a bounded variation driver), the convergence of drivers
in the M1 norm is not sufficient to ensure convergence of the corresponding
solutions in J1 or M1. However, if the driver and the solution are embedded in
the space of decorated paths and measured by metric $\alpha_{\infty}$, then
convergence of the solutions can be ensured.

In Theorem \ref{stability-RBSDE-decorated} we show how for both forward-type
and Marcus-type RBSDEs the rough driver and the solution can be embedded into
the space of decorated paths. Consequently, for any sequence $W^n \rightarrow
W$ converging in the $\alpha_q$ topology, the corresponding solutions $(Y^n,
Z^n)$ converge $Y^n \rightarrow Y$ in $\alpha_p$ in probability, and also $Z^n
\rightarrow Z$ in $L^2  (\tmop{dt} \otimes \mathbb{P})$. To our best
knowledge, this is the first time stability in Skorokhod-type topologies has
been studied in a hybrid driver setting. To prove it, \ we apply a
doubly-indexed Picard scheme also used in the literature on the convergence of
filtrations in BSDEs {\cite{coquet_weak_2001,papapantoleon_stability_2023}}.
To this end, let $(Y^{n, k}, Z^{n, k})$ denote the $k$-th Picard iterate for
the $n$-th RBSDE. Rather than proving directly that $(Y^n, Z^n) \rightarrow
(Y, Z)$, we first show by induction that $(Y^{n, k}, Z^{n, k}) \rightarrow
(Y^k, Z^k)$ for every $k \in \mathbb{N}$ in the $\alpha_p$ topology. Then, by
establishing a uniform bound on these iterations, we obtain the desired
convergence by letting $k \rightarrow \infty$. This proof differs
significantly from the proof of stability for solutions in deterministic RDE
case (c.f. Theorem 5.3 in {\cite{friz_differential_2018}}, Theorem 3.13 in
{\cite{chevyrev_canonical_2019}} or Proposition 8.16 in
{\cite{chevyrev_superdiffusive_2024}}). In the deterministic context,
stability proofs are naturally tied to contraction arguments in fixed point
methods, relying implicitly on the fact that the latter norm in
(\ref{heuristic-skorokhod}) (the $p$-variation norm) is the same norm as under
which the fixed point has been proven. In our setting, this would correspond
to replacing $\| \cdummy \|_p$ with the $\| \cdummy \|_{p, 2}$ norm on
$\mathcal{B}^p$ (see Definition \ref{YZ-statespace}) in the definition
\eqref{heuristic-skorokhod} of $\alpha_p$. But as one can see from the
definition of $\| \cdummy \|_{p, 2}$, this norm depends on the choice of
filtration and when measuring the distance between $Y^n$ and $Y$ in the above
Skorokhod norm $\alpha_p$, one encounters a term $Y^{n, \delta} \circ
\lambda$, which is adapted to a different filtration as $Y^{\delta}$ due to
the time reparametrization $\lambda$. This discrepancy makes it hard to see,
how one could interpret or bound $\|Y^{n, \delta} \circ \lambda - Y^{\delta}
\|_{p, 2}$. The new approach with the double-indexed Picard scheme avoids
directly estimating the RBSDE solutions in the Skorokhod metric and instead
only estimates the Picard iterations in it, thereby circumventing the
complications associated with the $\| \cdummy \|_{p, 2}$ norm.

\

The paper is organized as follows. Chapter 2 recalls 
preliminaries and introduces notations used throughout. The focus of
Chapter \ref{Chapter 3} is to prove well-posedness of the RBSDE. We establish
both the local and global existence and uniqueness of the solution for slightly generalized
forward- and Marcus-type RBSDEs, see (\ref{rBSDE-Marcus}-\ref{rBSDE-forward}). In
Chapter 4.1, we introduce the space of decorated paths and a suitable
associated Skorokhod metric. Chapter \ref{Chapter 4} demonstrates 
stability of RBSDE solutions with respect to perturbations in $\xi, f, g$ and
$W$. Finally, Chapter 5 shows well-posedness for a new BDSDE, which corresponds
to the  RBSDE theory developed, and explains how such RBSDE can be seen as
conditional solution to such BDSDE. Appendix A recalls backward Young
integration while Appendix B provides a version of It{\^o}'s formula, which applies to processes that are sums of a (continuous) local martingale and a
c{\`a}gl{\`a}d process of finite $q$-variation, $q < 2$. 

\section{Preliminaries}

\tmtextbf{Frequently used inequalities:} Throughout the paper, we often
use inequalities $\tmop{ab} \lesssim \lambda a^2 + \frac{1}{\lambda} b^2$
for $\lambda > 0$ or $(a + b)^2 \lesssim a^2 + b^2$ without further mentioning. The notation
$\lesssim$ means less or equal up to a constant factor.

\tmtextbf{Rough Paths:} A \tmtextit{partition} over $[0, T]$ is a  finite dissection
$\mathcal{P}= (0 = t^n_0 < t^n_1 < \cdots < t^n_n = T)$.
For $\mathbb{V}$ being a finite-dimensional Banach space , $D^p 
([0, T], \mathbb{V})$ (abbreviated $D^p$ or $D^p ([0, T])$ if the
image space is clear from the context) denotes the space of c{\`a}gl{\`a}d paths $x :
[0, T] \to \mathbb{V}$ of finite p-variation, for the \tmtextit{p-variation}
(semi-)norm being defined by
\[ \|x\|_{p ; [0, T]} \assign \bigg( \sup_{t_i \in \mathcal{P}}  \sum_i
   |x_{t_i} - x_{t_{i + 1}} |^p \bigg)^{1 / p}, \]
where the sup is being taken over all partitions $\mathcal{P}$ of $[0, T]$. \\
We call a function $\omega$ from $\{(s,t)\,:\, 0 \leq s \leq t \leq T\}$ into $[0,
\infty)$ a \tmtextit{control} if it is null on the diagonal and
super-additive, i.e. $\omega (s, t) + \omega (t, u) \leq \omega (s, u)$ for
all $0 \leq s \leq t \leq u \leq T$. A control is called \tmtextit{regular} if it is
moreover continuous. For properties of controls not discussed here we refer  to {\cite{friz_multidimensional_2010}}. Notice
that most properties are stated therein  for regular controls (their setting being for continuous processes), but one can easily check
that they also hold for non-regular controls.

\tmtextbf{Stochastic objects:} We fix a filtered probability space $(\Omega,
\mathcal{F}, (\mathcal{F}_t)_{t \in [0, T]}, \mathbb{P})$, which supports a
$d$-dimensional continuous martingale $M$. The filtration $(\mathcal{F}_t)_{t
\in [0, T]}$ is given by the usual filtration of $M$. We further assume $M = B \circ c$ to be a time-changed Brownian motion with $B$ being a $d$-dimensional Brownian motion and $c:[0,T] \to [0,T]$ being a \textbf{deterministic} continuous non-decreasing surjective function. Naturally, for every local martingale $L$ on
$(\mathcal{F}_t)_{t \in [0, T]}$ there exists a predictable process $H$ in
$L^2 (\tmop{dc} \otimes \mathbb{P})$ with
\[ L_t = L_0 + \int_0^t H_r \tmop{dM}_r . \]
Throughout most of the paper, $M$ is simply taken to be the Brownian motion $B$. However, this slight generalization beyond the Brownian scheme is necessary. As we will see in Chapter \ref{rBSDEasDecoratedPath}, we apply ``time-stretching'' to the RBSDE, and the Brownian motion becomes a process $M$ whose trajectories are described by $M = B \circ c$. \\
For a random variable $X$ on $(\Omega, \mathcal{F}, \mathbb{P})$, 
$\| X \|_{\infty}$ denotes its $L^{\infty}$ norm.

\tmtextbf{Solution Space of RBSDE:} The following spaces have been introduced in
{\cite{diehl_backward_2017}} and are essential for our later results. For
intuition
of this choice
%
see Remark \ref{Reason-p2Norm}.

\begin{definition}
  \label{YZ-statespace}For $p > 2$, define $\mathcal{B}_p ([0, T],
  \mathbb{R}^h) \backassign \mathcal{B}^p$ to be the space of adapted
  c{\`a}gl{\`a}d process $Y : \Omega \times [0, T] \rightarrow \mathbb{R}^h$
  with
  \[ \|Y\|_{p, 2 ; [0, T]} \assign \esssup_{\omega \in \Omega} \sup_{t \in [0, T]} \mathbb{E}_t [\|Y\|^2_{p ; [t, T]}]^{1 / 2} <
     \infty . \]
  Denote by $\tmop{BMO} ([0, T], \mathbb{R}^{h \times d}) \backassign
  \tmop{BMO}$ the space of all progressively measurable $Z : \Omega \times [0,
  T] \rightarrow \RR^{h \times d}$ with
  \[ \|Z\|_{\tmop{BMO} ; [0, T]} \assign \esssup_{\omega \in \Omega} \sup_{t \in [0, T]} \mathbb{E}_t \bigg[ \int_t^T |Z_r |^2
     \tmop{dc}_r \bigg]^{1 / 2} < \infty, \]
  where $| Z_r |$ denotes the Frobeniusnorm $| Z_r | \assign \sqrt{\tmop{tr} (Z_r^{\top} Z_r)}$. \\
  We further introduce a (semi)norm $\interleave Y, Z \interleave_{[0, T]}$
  on $\mathcal{B}^p \times \tmop{BMO}$ defined by  the sum
  \[ \interleave Y, Z \interleave_{[0, T]} \assign \|Y\|_{p, 2 ; [0, T]} +
     \|Z\|_{\tmop{BMO} ; [0, T]} . \]
  Notice that $\| \cdot \|_{p, 2}$ and therefore $\interleave \cdot
  \interleave$ is only a seminorm. One could make it to a
  norm by either adding $\| Y_T \|_{\infty}$ to the definition as in
  {\cite{diehl_backward_2017}}, or by restricting the (semi)norm domain to $Y_T=0$, by applying it only to differences $Y_T^1-Y_T^2=0$) of same terminal $Y$-values, as later in
  (\ref{B-R-T-def}). In both variants, $(\mathcal{B}^p \times \tmop{BMO}, \interleave \cdot
  \interleave_{[T - \varepsilon, T]})$ becomes a Banach space.
\end{definition}

In the next lemma, we record some useful properties of the norms $\| \cdot
\|_{p, 2}$ and $\| \cdot \|_{\tmop{BMO}}$.

\begin{lemma} (and Definition.)
  \label{p2Norm-property} For $p > 1$, $Y \in \mathcal{B}^p$ and $Z \in
  \tmop{BMO}$, we have inequalities  
  \begin{enumeratealpha}
    \item $\sup_{t \in [0, T]} \|Y_t \|_{\infty} \leqslant \|Y_T \|_{\infty} +
    \|Y\|_{p, 2 ; [0, T]} ;$
    
    \item $\|Y\|_{p, 2 ; [t, T]} \leqslant \|Y\|_{p, 2 ; (t, T]} + \| \Delta
    Y_t \|_{\infty}$ and
    $\|Z\|_{\tmop{BMO} ; [t, T]} = \|Z\|_{\tmop{BMO} ; (t, T]}$ for $t \leq T$, where $\|Y\|_{p, 2 ; (t, T]} \assign \lim_{\varepsilon
    \rightarrow 0} \|Y\|_{p, 2 ; [t + \varepsilon, T]}$ and $\|Z\|_{\tmop{BMO} ; (t, T]} \assign \lim_{\varepsilon
    \rightarrow 0} \|Z\|_{\tmop{BMO} ; [t + \varepsilon, T]}$;
    
    \item $\frac{1}{2} (\|Y\|_{p, 2 ; [a, b]} +\|Y\|_{p, 2 ; [b, c]})
    \leqslant \|Y\|_{p, 2 ; [a, c]} \leqslant 2^{p - 1} (\|Y\|_{p, 2 ; [a, b]}
    +\|Y\|_{p, 2 ; [b, c]})$, $a < b < c$.
  \end{enumeratealpha}
\end{lemma}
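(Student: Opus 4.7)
\medskip
\noindent\textbf{Proof plan for Lemma \ref{p2Norm-property}.}

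For part (a), the plan is to exploit the $\mathcal{F}_t$-measurability of $Y_t$ so that $|Y_t|=\mathbb{E}_t[|Y_t|]$. I would then split $Y_t = Y_T + (Y_t - Y_T)$, apply the conditional triangle inequality, and bound $|Y_t-Y_T|\leq \|Y\|_{p;[t,T]}$ pointwise. Conditional Jensen turns the resulting $L^1$ estimate into an $L^2$ one, giving
$|Y_t|\leq \|Y_T\|_\infty + \mathbb{E}_t[\|Y\|_{p;[t,T]}^2]^{1/2}\leq \|Y_T\|_\infty+\|Y\|_{p,2;[0,T]}$.
Taking the essential supremum over $\omega$ and the supremum over $t$ yields (a).

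For part (b), I would first prove the $p$-variation estimate on $[t,T]$. Refining any partition of $[t,T]$ by inserting the point $t+\varepsilon$ and using the Minkowski-type bound $\|Y\|_{p;[t,T]}\leq \|Y\|_{p;[t,t+\varepsilon]}+\|Y\|_{p;[t+\varepsilon,T]}$, conditional $L^2$ Minkowski gives
$\mathbb{E}_t[\|Y\|_{p;[t,T]}^2]^{1/2}\leq \mathbb{E}_t[\|Y\|_{p;[t,t+\varepsilon]}^2]^{1/2} + \mathbb{E}_t[\|Y\|_{p;[t+\varepsilon,T]}^2]^{1/2}$.
Since $Y$ is c\`agl\`ad, $\|Y\|_{p;[t,t+\varepsilon]}\to|\Delta Y_t|$ as $\varepsilon\downarrow0$; since $\|Y\|_{p;[0,T]}$ is an $L^2$ majorant, dominated convergence turns this limit into the bound $\|\Delta Y_t\|_\infty$, while the second term converges to $\|Y\|_{p,2;(t,T]}$ by the very definition given in the statement. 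For the $\tmop{BMO}$ equality, the key observation is that $c$ is \emph{continuous}, so $\int_t^{t+\varepsilon}|Z_r|^2\,\mathrm d c_r\to 0$ a.s., and dominated convergence (dominated by $\int_0^T|Z|^2\mathrm d c$, whose conditional $L^1$ norm is finite by the BMO assumption) upgrades this to convergence of $\mathbb{E}_t[\int_t^{t+\varepsilon}|Z|^2\mathrm d c]\to0$, so that $\|Z\|_{\tmop{BMO};[t,T]}=\|Z\|_{\tmop{BMO};(t,T]}$ with equality rather than inequality.

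For part (c), the left inequality follows from the monotonicity of $p$-variation: for $t\in[a,b]$ one has $\|Y\|_{p;[t,b]}\leq \|Y\|_{p;[t,c]}$, hence $\|Y\|_{p,2;[a,b]}\leq \|Y\|_{p,2;[a,c]}$, and symmetrically $\|Y\|_{p,2;[b,c]}\leq \|Y\|_{p,2;[a,c]}$. Summing and dividing by two gives the left bound. For the right inequality, I would split the supremum over $t\in[a,c]$ into the cases $t\in[a,b]$ and $t\in[b,c]$. In the first case I use Minkowski together with $(x+y)^p\leq 2^{p-1}(x^p+y^p)$ to get $\|Y\|_{p;[t,c]}^p\leq 2^{p-1}(\|Y\|_{p;[t,b]}^p+\|Y\|_{p;[b,c]}^p)$, then apply the tower property $\mathbb{E}_t[\|Y\|_{p;[b,c]}^2]=\mathbb{E}_t\bigl[\mathbb{E}_b[\|Y\|_{p;[b,c]}^2]\bigr]\leq \|Y\|_{p,2;[b,c]}^2$ to transfer the $[b,c]$-term out of the conditional expectation at time $t$. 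The case $t\in[b,c]$ reduces directly to $\|Y\|_{p,2;[b,c]}$ by monotonicity. Combining the two cases and taking essential suprema yields the stated right inequality with the constant $2^{p-1}$.

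I expect the main subtlety to be the application of dominated convergence in part (b) for the BMO identity and for the $p$-variation limit, because one must choose a suitable almost sure dominating majorant that is integrable under $\mathbb{E}_t$ uniformly in $\varepsilon$; this is handled by the global finiteness assumptions $Y\in\mathcal{B}^p$ and $Z\in\tmop{BMO}$, together with the continuity of the time change $c$. The numerical constants in (c) (in particular $2^{p-1}$) arise from using the inequality $(x+y)^p\leq 2^{p-1}(x^p+y^p)$ on the level of the super-additive control $\|Y\|_{p;\cdot}^p$ rather than on the norm itself, which is the source of the explicit $p$-dependence.
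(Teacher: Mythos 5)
Your proposal is correct and follows essentially the same route as the paper: part (a) is the identical conditional-expectation/Jensen argument, and part (b) is the same subadditivity-plus-dominated-convergence argument (you additionally supply the BMO identity, which the paper states without proof and which your continuity-of-$c$ argument handles correctly). For part (c) the paper merely cites Lemma 4.6 of the Dudley--Norvai\v{s}a reference, whereas you give a direct proof via interval monotonicity, Minkowski for the $p$-variation across the concatenation point, and the tower property; this is a valid, self-contained substitute that in fact yields the sharper constant $1$ in place of $2^{p-1}$ on the right-hand side.
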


\begin{proof}~\\
  a) $|Y_t | =\mathbb{E}_t [|Y_t |] \leq \|Y_T \|_{\infty} +\mathbb{E}_t [|Y_T
  - Y_t |] \leq \|Y_T \|_{\infty} +\mathbb{E}_t [\|Y\|_{p ; [t, T]}] \leq
  \|Y_T \|_{\infty} + \|Y\|_{p, 2 ; [0, T]}$.
  b) Clearly, $\mathbb{E}_s [\|Y\|^2_{p ; [s,
  T]}]^{\frac{1}{2}} \leqslant \|Y\|_{p, 2 ; (t, T]}$ for $s \ge  t$. Using
  dominated convergence,
  \[ \mathbb{E}_t [\|Y\|^2_{p ; [t, T]}]^{\frac{1}{2}} \leqslant \mathbb{E}_t
     [\lim_{\varepsilon \rightarrow 0} \|Y\|^2_{p ; [t + \varepsilon,
     T]}]^{\frac{1}{2}} + \| \Delta Y_t \|_{\infty} \leqslant \|Y\|_{p, 2 ;
     (t, T]} + \| \Delta Y_t \|_{\infty} . \]
  c) This follows from Lemma 4.6 in {\cite{r_m_dudley_introduction_nodate}}.
\end{proof}

\tmtextbf{Vector field:} For some finite dimensional Banach spaces $\mathbb{W},
\mathbb{V}$, we say a vector field $g : \mathbb{W} \rightarrow \mathbb{V}$ is
in $C^2_b  (\mathbb{W}, \mathbb{V})$ if $|g|_{C^2_b} \assign |g|_{\infty} + |
\tmop{Dg} |_{\infty} + |D^2 g|_{\infty} < \infty$.\\
We are interested in time-dependent random vector field $g : \Omega \times [0,
T] \times \mathbb{W} \rightarrow \mathbb{V}$. One can think of such as a process
taking values in $C^2_b  (\mathbb{W}, \mathbb{V})$. We introduce the notation
\[ 
     {}[[g]]_{p, 2 ; [0, T]}  \assign \esssup_{\omega \in \Omega} \sup_{t \in [0, T]} \mathbb{E}_t [\sup_{y \in \mathbb{R}}
     \|g_{\cdot} (\omega, y)\|_{p ; [t, T]}^2]^{\frac{1}{2}}. \]
Inspired by the notion of stochastic controlled vector field in
{\cite{friz_rough_2024}}, we 
define that a
time-dependent random vector field $g : \Omega \times [0, T] \times \mathbb{W}
\rightarrow \mathbb{V}$ is of class $D^{p, 2} C^2_b (\mathbb{W}, \mathbb{V})$
if
\begin{enumeratealpha}
  \item $g$ is progressively measurable with respect to the filtration
  $(\mathcal{F}_t)_{t \in [0, T]}$,
  
  \item $g (t, w,\cdot)$ is in $ C^2_b  (\mathbb{W}, \mathbb{V})$ for a.e. $\omega$ and every $t \leq T$ with $\sup_{t \in [0, T]}
  \||g|_{C^2_b} \|_{\infty} < \infty$,
  
  \item $g (\omega, \cdot, y)$ is a continuous path with finite $p$-variation for a.e. $\omega$ and every $y \in \mathbb{W}$, 
  
  \item we have finiteness of  $\begin{array}{l}
    {}[[g]]_{p, 2 ; [0, T]}
  \end{array} < \infty$ and  $[[\tmop{Dg}]]_{p, 2 ; [0, T]} < \infty$.
\end{enumeratealpha}
The next lemma states multiple norm estimates  for composed maps of $g \in D^{p, 2} C_b^2
(\mathbb{W}, \mathbb{V})$, where $\mathbb{W}=\mathbb{R}^h$ and
$\mathbb{V}=\mathcal{L} (\mathbb{R}^e, \mathbb{R}^h)$, and an $x \in
\mathcal{B}_p ([0, T], \mathbb{R}^h)$,  
to be used later.

\begin{lemma}
  \label{composition}Let $x \in \mathcal{B}_p ([0, T], \mathbb{R}^h)$ and $g
  \in D^{p, 2} C_b^2 (\mathbb{R}^h, \mathcal{L} (\mathbb{R}^e,
  \mathbb{R}^h))$, then for almost every $\omega$ and every $t \in [0,
  T]$ it holds
  \[ \|g (\omega, x (\omega))\|_{p ; [t, T]} \leq \sup_{s \in [t, T]} |
     \tmop{Dg}_s (\omega) |_{\infty} \|x (\omega) {\|_{p ; [t,
     T]}}_{\nosymbol} + \sup_{y \in \mathbb{R}^h} \|g (\omega, y)\|_{p ; [t,
     T]}, \]
  this implies
  \[ \|g (x)\|_{p, 2 ; [0, T]} \leq \sup_{t \in [0, T]} \| |g|_{C^2_b}
     \|_{\infty} \| {x\|_{p, 2 ; [0, T]}}_{\nosymbol} + [[g]]_{p, 2 ; [0, T]}
     . \]
  For $i = 1, 2$, let $x^i \in
  \mathcal{B}_p$ and $g^i \in D^{p, 2} C_b^2$, and let 
  $x^{\Delta} \assign x^1 - x^2$ and $g^{\Delta} \assign g^1 - g^2$  denote the differences. Then, for
  a.e. $\omega$ and every $t \in [0, T]$, we have
  \begin{flalign}
    & \|g^1 (\omega, x^1 (\omega)) - g^2 (\omega, x^2 (\omega))\|_{p ; [t,
    T]} \nonumber\\
    \lesssim & \sup_{s \in [t, T]} | \tmop{Dg}^1_s (\omega) |_{\infty} 
    \|x^{\Delta} (\omega)\|_{p ; [t, T]} + \sup_{s
    \in [t, T]} |g^{\Delta}_s (\omega) |_{\tmop{Lip}} \|x^2 \|_{p ; [t, T]}\nonumber \\
    & + \sup_{s \in [t, T]} |D^2 g^1_s (\omega) |_{\infty}  (\|x^1
    (\omega)\|_{p ; [t, T]} +\|x^2 (\omega)\|_{p ; [t, T]}) \sup_{t \in [0,
    T]} |x^{\Delta}_t (\omega) | \label{Reason-p2-Norm-1}\\
    & + \sup_{y \in \mathbb{W}} \| \tmop{Dg}_{\cdot} (\omega, y)\|_{p ;
    [t, T]} \sup_{t \in [0, T]} |x^{\Delta}_t (\omega) | + \sup_{y \in
    \mathbb{W}} \|g^{\Delta}_{\cdot}  (\omega, y) \|_{p ; [t, T]} \nonumber.
  \end{flalign}
  If, moreover, $\Delta x_T$ is bounded almost surely, then
  \begin{eqnarray*} 
    &  & \|g^1 (x^1) - g^2 (x^2)\|_{p, 2 ; [0, T]}\\
    & \lesssim & \sup_{t \in [0, T]} \||g^1_t |_{C^2_b} \|_{\infty} 
    (\|x^{\Delta} \|_{p, 2 ; [0, T]} + (\|x^1 \|_{p, 2 ; [0, T]} +\|x^2 \|_{p,
    2 ; [0, T]}) \sup_{t \in [0, T]} \| x^{\Delta}_t \|_{\infty})\\
    &  & + [[\tmop{Dg}^1]]_{p, 2 ; [0, T]} \sup_{t \in [0, T]} \|x^{\Delta}_t
    \|_{\infty} + [[g^{\Delta}]]_{p, 2 ; [0, T]} + \sup_{s \in [0, T]}
    \||g^{\Delta}_s (\omega) |_{\tmop{Lip}} \|_{\infty} \|x^2 \|_{p, 2 ; [0,
    T]} .
  \end{eqnarray*}
\end{lemma}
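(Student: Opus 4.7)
The plan is to prove the four inequalities by pathwise computations on the $p$-variation, then upgrade to the $\mathcal{B}^p$-seminorms by taking conditional $L^2$ and using the defining suprema.

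For the first pathwise bound, I would start with the elementary identity $g_u(x_u)-g_s(x_s) = [g_u(x_u)-g_u(x_s)] + [g_u(x_s)-g_s(x_s)]$ on a pair $s<u$ in $[t,T]$. The first bracket is bounded by $\sup_{r\in[t,T]}|Dg_r(\omega)|_\infty \, |x_u-x_s|$ via the mean value theorem; the second is at most $\sup_y|g_u(\omega,y)-g_s(\omega,y)|$. Taking $p$-variation and using subadditivity of $\|\cdot\|_p$ yields exactly the stated inequality. The $\mathcal{B}^p$-version then follows by applying $(a+b)^2\lesssim a^2+b^2$ to the pathwise bound, passing to $\mathbb{E}_t[\cdot]^{1/2}$, pulling the essentially bounded factor $\sup_s\||g_s|_{C^2_b}\|_\infty$ out of the conditional expectation, and recognising the two remaining factors as $\|x\|_{p,2;[0,T]}$ and $[[g]]_{p,2;[0,T]}$.

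For the difference bound I would decompose
\begin{equation*}
g^1_u(x^1_u)-g^2_u(x^2_u) \;=\; \underbrace{\bigl[g^1_u(x^1_u)-g^1_u(x^2_u)\bigr]}_{(I)_u} \;+\; \underbrace{g^\Delta_u(x^2_u)}_{(II)_u},
\end{equation*}
and treat the two pieces separately. For $(II)$ I would use exactly the argument of the first inequality applied to $g^\Delta$ and $x^2$, producing the terms $\sup_s|g^\Delta_s|_{\mathrm{Lip}}\|x^2\|_{p;[t,T]}$ and $\sup_y\|g^\Delta_\cdot(\omega,y)\|_{p;[t,T]}$. For $(I)$ I would use the fundamental theorem of calculus to write $(I)_u = F_u \cdot x^\Delta_u$ with $F_u := \int_0^1 Dg^1_u(x^2_u+\theta x^\Delta_u)\,d\theta$, and then exploit the product estimate $\|F\,x^\Delta\|_{p;[t,T]}\leq |F|_\infty \|x^\Delta\|_{p;[t,T]} + \sup_t|x^\Delta_t|\,\|F\|_{p;[t,T]}$. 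The factor $|F|_\infty$ is controlled by $\sup_{s\in[t,T]}|Dg^1_s|_\infty$, while the $p$-variation of $F$ is estimated by the same splitting trick applied to $Dg^1$: differences in the $u$-argument at fixed interpolated state cost at most $\sup_y\|Dg^1_\cdot(\omega,y)\|_{p;[t,T]}$, and differences in the interpolated state at fixed $u$ cost at most $\sup_s|D^2g^1_s|_\infty(\|x^1\|_{p;[t,T]}+\|x^2\|_{p;[t,T]})$ since $x^2+\theta x^\Delta$ has $p$-variation bounded by the sum. Assembling these four contributions recovers \eqref{Reason-p2-Norm-1}.

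For the final $\mathcal{B}^p$-estimate I would square the pathwise inequality, again use $(a+b)^2\lesssim a^2+b^2$ and then Cauchy--Schwarz, and pass to conditional $L^2$. The product terms like $\sup_{s\in[t,T]}|D^2g^1_s|_\infty(\|x^1\|_{p;[t,T]}+\|x^2\|_{p;[t,T]})\sup_t|x^\Delta_t|$ and $\sup_y\|Dg^1_\cdot(\omega,y)\|_{p;[t,T]}\sup_t|x^\Delta_t|$ require factoring out the deterministic quantity $\sup_t\|x^\Delta_t\|_\infty$; this is the step that uses the hypothesis that $x^\Delta_T$ is bounded, since Lemma~\ref{p2Norm-property}(a) then guarantees $\sup_t\|x^\Delta_t\|_\infty\leq\|x^\Delta_T\|_\infty+\|x^\Delta\|_{p,2;[0,T]}<\infty$, so that this quantity may legitimately be pulled out of $\mathbb{E}_t[\cdot]^{1/2}$. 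The remaining conditional $L^2$ norms are identified with $\|x^i\|_{p,2}$, $[[Dg^1]]_{p,2}$, and $[[g^\Delta]]_{p,2}$, using also $\sup_{s\in[t,T]}\|\,\cdot\,\|_\infty\leq\sup_{s\in[0,T]}\|\,\cdot\,\|_\infty$ for the essentially bounded coefficient fields.

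The only genuinely delicate step is the product rule estimate for $\|F\,x^\Delta\|_p$ in the FTC piece, where one must remember to invest second derivatives of $g^1$ and $p$-variation of both $x^1$ and $x^2$ in order to control the $p$-variation of the integrated Jacobian $F$; routine bookkeeping of constants aside, everything else is triangle inequality, mean value, and Cauchy--Schwarz.
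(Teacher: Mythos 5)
Your proposal is correct, and it reproduces all five terms of \eqref{Reason-p2-Norm-1} as well as the two $\|\cdot\|_{p,2}$-level estimates; in particular you correctly identify that the hypothesis on $x^{\Delta}_T$ is what lets $\sup_t\|x^{\Delta}_t\|_{\infty}$ be pulled out of $\mathbb{E}_t[\cdot]^{1/2}$ as a deterministic constant (the point of Remark \ref{Reason-p2Norm}), so that no H\"older splitting and no loss of integrability occurs. The overall decomposition is the same as the paper's --- a piece $g^1(x^1)-g^1(x^2)$ carrying both paths plus a piece $g^{\Delta}(x^2)$ carrying the difference of vector fields --- but you handle the first piece differently: the paper estimates the mixed second increment $g^1_t(x^1_t)-g^1_t(x^2_t)-g^1_t(x^1_s)+g^1_t(x^2_s)$ directly by invoking Lemma~1 of \cite{diehl_backward_2017}, whereas you represent it as $F_u x^{\Delta}_u$ with the integrated Jacobian $F_u=\int_0^1 \tmop{Dg}^1_u(x^2_u+\theta x^{\Delta}_u)\,\mathd\theta$ and then apply a Leibniz-type $p$-variation product estimate together with a bound on $\|F\|_{p;[t,T]}$ via $D^2g^1$ and $\sup_y\|\tmop{Dg}^1_{\cdot}(y)\|_{p;[t,T]}$. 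Your route is self-contained (it does not rely on the external second-difference lemma) at the cost of one extra layer of bookkeeping for $\|F\|_{p;[t,T]}$; the two arguments produce identical terms, so either is acceptable.
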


\begin{remark} \label{Reason-p2Norm}
    Let us explain the use of $\| \cdot \|_{p,2}$, instead of weaker norms such as $\| \cdot \|_{L^m_p; [0,T]} \assign \mathbb{E}[\| \cdot \|_{p; [0,T]}^{m}]^\frac{1}{m}$, $m\in \NN$. When we try to estimate the term $\|g(x^1) - g(x^2)\|_{p ; [t,T]}$, we will find the product $(\|x^1\|_{p ; [t, T]} +\|x^2\|_{p ; [t, T]}) \sup_{t \in [0, T]} |x^{\Delta}_t|$ showing up in \eqref{Reason-p2-Norm-1}; this is a consequence of the non-linearity of $g$. Next, to take $\mathbb{E}[(\cdot)^m]^\frac{1}{m}$ on both sides, we need to know the $L^{2m}$-integrability of $\|x^1\|_{p ; [t, T]}, \|x^2\|_{p ; [t, T]}$ and $\sup_{t \in [0, T]} |x^{\Delta}_t|$ if we want to apply Hölder's inequality to separate them. This means that in each iteration of the fixpoint map in Theorem \ref{existenceuniqueness} (c.f.\ (\ref{Reason-p2-Norm-2}, \ref{Reason-p2-Norm-3})) we would lose half of the integrability, if we were to work with the norm $\| \cdot \|_{L^m_p; [0,T]}$. When using $\| \cdot \|_{p,2}$ instead, we do not have this problem, since we have $\sup_{t \in [0, T]} \|x^{\Delta}_t \|_{\infty} < \infty$ by Lemma \ref{p2Norm-property} and we do not need to apply Hölder's inequality anymore. Such a loss of integrability has been observed for RSDE in Remark 3.12 in \cite{friz_rough_2024}, and may be a motivation in \cite{liang_multidimensional_2025} to study the problem for linear functions $g$. 
\end{remark}

\begin{proof}
  Taking $p$-variation on both sides of the following inequality yields the
  first result
  \begin{align*}
    & |g^1_t (x_t^1) - g^2_t (x_t^2) - (g^1_s (x_s^1) - g^2_s (x_s^2)) |\\
    \leq & |g^1_t (x_t^1) - g^1_t (x_t^2) - g^1_t (x_s^1) + g^1_t (x_s^2)
    | + |g^1_t (x^1_s) - g^1_s (x^1_s) - g^1_t (x^2_s) + g^1_s (x^2_s) |\\
    & + |g^1_t (x^2_t) - g^2_t (x_t^2) - g^1_s (x^2_t) + g^2_s (x_t^2) | +
    |g^1_s (x^2_t) - g^2_s (x_t^2) - g^1_s (x^2_s) + g^2_s (x_s^2) |\\
    \leq & |D^2 g^1_t |_{\infty}  (|x^1_{s, t} | + |x^2_{s, t} |)
    |x^{\Delta}_t | + | \tmop{Dg}^1_t |_{\infty}  |x^{\Delta}_{s, t} |\\
    & + |g^1_{s, t} (x^1_s) - g^1_{s, t} (x^2_s) | + |g^{\Delta}_{s, t}
    (x_t^2) | + | \Delta g_s (x_t^2) - \Delta g_s (x_s^2) |\\
    \leq & |D^2 g^1_t |_{\infty}  (|x^1_{s, t} | + |x^2_{s, t} |)
    |x^{\Delta}_t | + | \tmop{Dg}^1_t |_{\infty}  |x^{\Delta}_{s, t} | + |
    \tmop{Dg}^1_{s, t} |_{\infty} |x^{\Delta}_s | + \|g^{\Delta}_{s, t}
    \|_{\infty} + |g^{\Delta}_s |_{\tmop{Lip}} |x^2_{s, t} |,
  \end{align*}
  where we have used Lemma 1 from {\cite{diehl_backward_2017}} in the second inequality. \\
  Next, under the additional assumption, we have $\sup_{t \in [0, T]}
  \|x^{\Delta}_t \|_{\infty} \leq \|x^{\Delta} \|_{p, 2 ; [0, T]} + \|
  x^{\Delta}_T \|_{\infty} < \infty$ by Lemma \ref{p2Norm-property}. Thus, we
  can simply apply $\esssup_{\omega \in \Omega} \sup_{t \in [0, T]}
  \mathbb{E}_t [\cdummy]^{\frac{1}{2}}$ on both sides of the first result to
  obtain the second one.
\end{proof}

\section{Well-posedness of the RBSDE}\label{Chapter 3}

In this section, we prove the existence and uniqueness for solutions to RBSDEs of 
forward-type (\ref{Forward-RBSDE}) and  of Marcus-type
(\ref{Marcus-RBSDE}), in a slightly more general form. 

This section is structured as follows.
In Theorem \ref{existenceuniqueness} we show that the solution of the RBSDE
exists on small time intervals, where the length of the interval depends in
particular on the terminal condition and the $q$-variation of $W$. Then we
concatenate the local solutions to a global solution in Theorem
\ref{global_existenceuniqueness}. Of course, such is only possible if there is
no explosion. Therefore we start the section by first deriving an apriori bound
for solutions to the RBSDE in Theorem \ref{apriori-bound-YZ}. In addition, we
also show that  Picard iterations converges globally to the solution of the RBSDE
in Theorem \ref{Picard-iteration}. Such an iterative approximation scheme is a natural result being of interest in its
own. Moreover, it also turns to be crucial for our proofs of stability for RBSDE solutions in Section~\ref{Stability-solution}.

For readers familiar with the theory of BSDE, it should not come as a surprise
that in many of the following proofs we need to apply It\^{o}'s formula to
$|Y|^2$, where $Y$ is a sum of a local martingale and a process of finite
$q$-variation (with $q < 2$), see e.g. (\ref{ItoY2}) or (\ref{Y2Con}). Yet, such clearly
is outside the scope of the classical It{\^o}'s formula, where $Y$ is required
be a semimartingale.
But by exploiting that the process
still exhibits finite pathwise quadratic variation in the sense of
F{\"o}llmer, one can adapt his ideas for a pathwise proof of It{\^o}'s formula from
{\cite{follmer_calcul_1981}} to our setting, 
see Appendix B for details.

\subsection{Apriori Bound}

In {\cite{diehl_backward_2017}}, the authors show that 
(continuous) RBSDE solutions, if they exist, are bounded from above and below by
the solutions of Young ODEs with drift, what is achieved by a limit argument and using a comparison theorem from classical BSDE theory. However, the existence of global
solutions for these Young ODEs is not proven and is unknown, at least to us. Instead of proving (or assuming) global
existence for the Young ODEs, we here are proving global apriori bounds for RBSDE
solutions directly in Theorem \ref{apriori-bound-YZ}. Doing so also offers the benefit
that we do not need to restrict to BSDE with $Y$ being one-dimensional,  as it naturally would being required when using classical BSDE comparison, as in  {\cite{diehl_backward_2017}}.\\


\tmtextbf{Assumption A:} We assume that
\begin{enumeratealpha}
  \item $q \in [1, 2)$, $p > 2$ with $\frac{1}{p} + \frac{1}{q} > 1$;
  
  \item the rough path $W$ is in $ D^q  ([0, T], \mathbb{R}^e)$; $M$ and $c$ are as in Chapter 2; $\xi$ is in $L^{\infty} (\mathcal{F}_T)$;
  
  \item the generator function $f : \Omega \times [0, T] \times \mathbb{R}^h \times \mathbb{R}^{h
  \times d} \rightarrow \mathbb{R}^h$ is progressively measurable (with respect to
   $(\mathcal{F}_t)_{t \in [0, T]}$). There exists some constant $C_f
  > 0$ such that
  \begin{eqnarray}
    \tmop{ess} \sup_{\omega} \sup_{t \in [0, T]} |f (t, 0, 0) | & \leq & C_f,
    \nonumber\\
    \tmop{ess} \sup_{\omega} \sup_{t \in [0, T]} |f (t, y, z) - f (t, y', z')
    | & \leq & C_f  (|y - y' | + |z - z' |) ;  \label{fLip}
  \end{eqnarray}
  \item $g $ is in $ D^{p, 2} C_b^2 (\mathbb{R}^h, \mathcal{L} (\mathbb{R}^e,
  \mathbb{R}^h))$, and there exists some constant $C_g > 0$ such that
  \[ \sup_{t \in [0, T]} \||g|_{C^2_b} \|_{\infty} \leq C_g, \quad
     [[g]]_{p, 2 ; [0, T]} \leq C_g, \quad [[\tmop{Dg}]]_{p, 2 ; [0, T]}
     \leq C_g . \] 
\end{enumeratealpha}
\begin{theorem}
  \label{apriori-bound-YZ}Let $(Y, Z) \in \mathcal{B}^p \times \tmop{BMO}$ be
  a solution to the RBSDE with Marcus jumps
  \begin{align}
      Y_t = & \xi + \int_t^T f (r, Y_r, Z_r) \tmop{dc}_r + \int_t^T g_r
      (Y_{r +}) \tmop{dW}_r - \int_t^T Z_r \tmop{dM}_r \label{rBSDE-Marcus}\\
      & + \sum_{t \leq r < T} \varphi (g_r (\cdot) \Delta W_r, Y_{r
      +}) - Y_{r +} - g_r (Y_{r +}) \Delta W_r, \nonumber
  \end{align}
or, respectively, to the RBSDE with forward jumps
\begin{equation}
    Y_t = \xi + \int_t^T f (r, Y_r, Z_r) \tmop{dc}_r + \int_t^T g_r (Y_{r
    +}) \tmop{dW}_r - \int_t^T Z_r \tmop{dM}_r .\label{rBSDE-forward}
\end{equation}
Provided that Assumption A holds, the values of $\|Y\|_{p, 2 ; [0, T]}$ and
$\|Z\|_{\tmop{BMO} ; [0, T]}$ are bounded by some constant $L$, whose choice
only depends on $C_f, C_g, | c_T |, \|W\|_{q ; [0, T]}$ and $\| \xi
\|_{\infty}$. In particular, we have $L_{\tmop{apriori}} \assign \sup_{t \in [0, T]} \| Y_t \|_{\infty} \leq
  \| \xi \|_{\infty} + \|Y\|_{p, 2 ; [0, T]} < \infty$ by Lemma \ref{p2Norm-property}.
\end{theorem}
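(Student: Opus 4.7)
The plan is to apply the pathwise It\^o formula from Appendix B to $|Y_\cdot|^2$, take conditional expectation to kill the $M$-martingale part, and then use Young integral estimates together with a Lipschitz bound on $f$. The delicate point is self-referential: the Young integral term on the right-hand side involves $\|Y\|_{p,2}$ itself, so the bound must be closed via a splitting of $[0,T]$ into sub-intervals on which $\|W\|_{q}$ is small enough to absorb the contribution.

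\textbf{Step 1 (It\^o expansion).} Fix $t$ and apply the generalized It\^o formula to $|Y_s|^2$ on $[t,T]$. This yields
\begin{align*}
|Y_t|^2 + \int_t^T |Z_r|^2\,\tmop{dc}_r
= \; & |\xi|^2 + 2\int_t^T Y_r\cdot f(r,Y_r,Z_r)\,\tmop{dc}_r - 2\int_t^T Y_r\cdot Z_r\,\tmop{dM}_r \\
& + 2\int_t^T Y_{r+}\cdot g_r(Y_{r+})\,\tmop{dW}_r + J_t,
\end{align*}
where $J_t$ collects the extra jump contributions; in the forward case $J_t = \sum_{t\le r<T} |g_r(Y_{r+})\Delta W_r|^2$ and in the Marcus case one picks up also $|\varphi(g_r\Delta W_r,Y_{r+})|^2 - |Y_{r+}|^2$ minus the Young chain-rule jumps. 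A Taylor expansion of $\varphi$ together with $g \in D^{p,2}C_b^2$ shows $|J_t| \lesssim \sum_{r}|\Delta W_r|^2 \le \|W\|_{q;[t,T]}^q \cdot \|W\|_{\infty}^{2-q}$, which is finite because $q<2$.

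\textbf{Step 2 (conditioning and Lipschitz absorption).} Taking $\mathbb{E}_t[\cdot]$ removes the $M$-integral (since $Y$ is bounded on the interval by Lemma \ref{p2Norm-property} once we assume $Y_T=\xi$ is bounded and $\|Y\|_{p,2}$ is finite, and $Z\in\tmop{BMO}$). Using $|f(r,Y,Z)|\le C_f(1+|Y|+|Z|)$ and the Young inequality $2|Y||Z|\le \tfrac12 |Z|^2 + 2|Y|^2$, one absorbs half of the $|Z|^2$ term on the left. This gives
\[
\mathbb{E}_t[|Y_t|^2] + \tfrac12\,\mathbb{E}_t\Bigl[\int_t^T|Z_r|^2\tmop{dc}_r\Bigr]
\;\lesssim\; \|\xi\|_\infty^2 + C(1+|c_T|)\bigl(1+\sup_{s\in[t,T]}\|Y_s\|_\infty^2\bigr) + \mathcal{Y}_t,
\]
where $\mathcal{Y}_t$ denotes the Young integral and jump contributions.

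\textbf{Step 3 (Young integral estimate).} Using Young--L\'oeve together with Lemma \ref{composition} applied to $y\mapsto y\cdot g_r(y)$, we bound
\[
|\mathcal{Y}_t| \;\lesssim\; \|W\|_{q;[t,T]}\Bigl(|Y_{T}\,g_T(Y_T)| + \sup_{y}\|g_\cdot(y)\|_{p;[t,T]}\cdot L_{\tmop{apriori}} + C_g (L_{\tmop{apriori}}+1)\|Y\|_{p;[t,T]}\Bigr).
\]
To close the inequality in $\|Y\|_{p,2}$, one needs to take the $p$-variation of the identity obtained in Step 1, rewrite $Y_s - Y_t$ as the sum of $f$-, $M$-, Young- and jump-increments, and apply the $p$-variation triangle inequality. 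The martingale $p$-variation increment is controlled via the Burkholder--Davis--Gundy inequality by $\mathbb{E}_t[\int_t^T|Z|^2\tmop{dc}]^{1/2}$, and the $f$-increment by $C_f |c_T|^{1-1/p}(1+\sup\|Y\|_\infty+\|Z\|_{\tmop{BMO}})$, so that after taking $\esssup_\omega \sup_t \mathbb{E}_t[\cdot]^{1/2}$ we obtain an inequality of the form
\[
\|Y\|_{p,2;[t,T]} + \|Z\|_{\tmop{BMO};[t,T]}
\;\le\; K_1(\|\xi\|_\infty,C_f,C_g,|c_T|) + K_2\bigl(C_g,\|W\|_{q;[t,T]}\bigr)\bigl(1+\|Y\|_{p,2;[t,T]}+\|Z\|_{\tmop{BMO};[t,T]}\bigr),
\]
with $K_2\to 0$ as $\|W\|_{q;[t,T]}\to 0$.

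\textbf{Step 4 (splitting and concatenation).} Since $r\mapsto \|W\|_{q;[r,T]}^q$ is a control, we can pick a finite partition $0=t_0<t_1<\dots<t_N=T$ with $\|W\|_{q;[t_{i-1},t_i]}$ small enough that $K_2<\tfrac12$ on each sub-interval. On the rightmost interval $[t_{N-1},T]$, with $\xi$ playing the role of terminal value, absorption then gives a bound on $\|Y\|_{p,2;[t_{N-1},T]}+\|Z\|_{\tmop{BMO};[t_{N-1},T]}$ depending only on $C_f,C_g,|c_T|,\|\xi\|_\infty,\|W\|_{q;[0,T]}$. In particular Lemma \ref{p2Norm-property}(a) then bounds $\sup_{r\in[t_{N-1},T]}\|Y_r\|_\infty$, so the terminal value of the next piece $[t_{N-2},t_{N-1}]$ is again bounded by a constant of the same type; iterating backwards $N$ times (with $N$ depending only on $\|W\|_{q;[0,T]}$ and the absorption threshold), and reassembling via Lemma \ref{p2Norm-property}(c), yields the claimed bound $L$.

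\textbf{Main obstacle.} The real difficulty is the circular appearance of $\|Y\|_{p,2}$ inside the Young integral estimate; resolving it requires (i) working throughout in the stronger $\|\cdot\|_{p,2}$ norm so that the pointwise bound from Lemma \ref{p2Norm-property}(a) is available, and (ii) subdividing $[0,T]$ according to the control $r\mapsto \|W\|_{q;[r,T]}^q$ so that the nonlinear-in-$Y$ Young term can be absorbed. The Marcus correction terms themselves are not the bottleneck: they enter as $\sum|\Delta W_r|^2$ and are handled uniformly through the $q$-variation of $W$ with $q<2$.
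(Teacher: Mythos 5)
Your overall architecture is the one the paper uses: an energy estimate from the generalized It\^o formula applied to $|Y|^2$ to control $\|Z\|_{\tmop{BMO}}$, a direct $p$-variation estimate of the integral equation itself (with BDG for the martingale increment) to control $\|Y\|_{p,2}$, absorption on small sub-intervals, and a backward iteration over a finite partition using Lemma \ref{p2Norm-property}. However, two steps as written would fail. First, your combined inequality cannot have a constant $K_2$ depending only on $(C_g,\|W\|_{q;[t,T]})$ and tending to zero: the drift term $2\int_t^T Y_r^\top f(r,Y_r,Z_r)\,\tmop{dc}_r$ contributes, after the energy estimate and the $O(1)$ BDG constant, a term of order $|c_{t,T}|^{1/2}\,\|Y\|_{p,2;[t,T]}$ to the right-hand side of the $Y$-inequality (and likewise an $O(1)$ coefficient in front of $\|Z\|_{\tmop{BMO}}$ if you merely add the two estimates rather than substitute the $Z$-bound into the $Y$-bound). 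Absorption therefore also requires $|c_{t_{i-1},t_i}|$ to be small on each sub-interval; the partition must be refined with respect to the control $|c_{s,t}|+\|W\|_{q;(s,t]}^q$, not just the $W$-control. This is why the paper keeps a free parameter $\lambda$ in the Young-type inequalities and imposes both $|c_{t_{i-1},t_i}|\le\bar\varepsilon$ and $\|W\|_{q;(t_{i-1},t_i]}\le\bar\varepsilon$.

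Second, in Step 4 you ask for a partition with $\|W\|_{q;[t_{i-1},t_i]}$ small on \emph{closed} sub-intervals. This is impossible when $W$ has a jump of size $\ge\bar\varepsilon$: that jump lies inside some closed sub-interval of any partition and forces $\|W\|_{q;[t_{i-1},t_i]}\ge|\Delta W_\tau|$. The Dudley--Norvai\v{s}a lemma only yields smallness on the half-open intervals $(t_{i-1},t_i]$, so the (finitely many) large jumps sit at the left endpoints and must be treated separately: one passes from $\|Y\|_{p,2;(t_{i-1},t_i]}$ to $\|Y\|_{p,2;[t_{i-1},t_i]}$ via Lemma \ref{p2Norm-property}(b) and the uniform bound $\|\Delta Y_{t_{i-1}}\|_\infty\le C_g|\Delta W_{t_{i-1}}|$ (Taylor for the Marcus jump, trivially for the forward jump). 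Without this step the backward induction on $\|Y_{t_i}\|_\infty$ does not close. Both gaps are repairable and the repaired argument coincides with the paper's proof; the remaining discrepancies (e.g.\ the exponent on $|c_T|$ in the $f$-increment, the bound $\sum|\Delta W_r|^2\le\|W\|_{q}^{q}\|W\|_\infty^{2-q}$ versus $\|W\|_{2}^2$) are harmless.
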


\begin{proof}
  Instead of showing the apriori bound directly on the whole time interval, we
  start by showing it on $[T - \varepsilon, T]$ for some small $\varepsilon >
  0$. We also assume that $| c_{T - \varepsilon, T} | \leq
  \bar{\varepsilon}$ and $\|W\|_{2 ; [T - \varepsilon, T]} \leq \|W\|_{q
  ; [T - \varepsilon, T]} < \bar{\varepsilon}$ for some $\bar{\varepsilon} >
  0$. The choice of $\varepsilon, \bar{\varepsilon}$ will be specified later. \\
  In order to derive a bound for $\mathbb{E}_t [\|Y\|^2_{p ; [t, T]}]^{1 /
  2}$, we bound each term in (\ref{rBSDE-Marcus}) separately. \\
  By the Lipschitz property of $f$, Proposition 5.3 in
  {\cite{friz_multidimensional_2010}} and H{\"o}lder inequality, it follows
  \begin{align}
     & \mathbb{E}_t \bigg[ \bigg\| \int_t^. f (r, Y_r, Z_r) \tmop{dc}_r
    \bigg\|^2_{p ; [t, T]} \bigg]^{\frac{1}{2}} \label{fInv}\\
    \leq & \mathbb{E}_t \bigg[ \bigg\| \int_t^. f (r, Y_r, Z_r) \tmop{dc}_r
    \bigg\|^2_{1 ; [t, T]} \bigg]^{\frac{1}{2}} 
     \leq  \mathbb{E}_t \bigg[ \bigg( \int_t^T |f (r, Y_r, Z_r) | 
    \tmop{dc}_r \bigg)^2 \bigg]^{\frac{1}{2}} \nonumber\\
    \lesssim & C_f \mathbb{E}_t \bigg[ \bigg( \int_t^T |f (r, 0, 0) | 
    \tmop{dc}_r \bigg)^2 + {\bar{\varepsilon}^2}  \|Y\|^2_{\infty ; [t, T]} +
    \bar{\varepsilon} \int_t^T |Z_r {|^2}  \tmop{dc}_r \bigg]^{\frac{1}{2}}
    \nonumber\\
    \lesssim & \bar{\varepsilon} + \bar{\varepsilon} \|Y\|_{p ; [t, T]} +
    \bar{\varepsilon} \|Y_T \| _{\infty} + \bar{\varepsilon}^{\frac{1}{2}} \|
    Z \|_{\tmop{BMO} ; [T - \varepsilon, T]}.  \nonumber
  \end{align}
  By Proposition \ref{BackwardsYoung} it follows
  \[ \bigg\| \int_t^. g_r (Y_{r +}) \tmop{dW} \bigg\|_{p ; [t, T]} \leq
     \bigg\| \int_t^. g_r (Y_{r +}) \tmop{dW} \bigg\|_{q ; [t, T]} \leq (|g_T
     (Y_T) | +\|g (Y)\|_{p ; [t, T]}) \|W\|_{q ; [t, T]} . \]
  Applying Lemma \ref{composition} and Corollary \ref{integral_q-var} yields
  \begin{align}
    \bigg\| \int_t^. g_r (Y_{r +}) \tmop{dW} \bigg\|_{p, 2 ; [T -
    \varepsilon, T]} \leq & (C_g  (1 + \| Y \|_{p, 2 ; [T - \varepsilon,
    T]}) + C_g) \|W\|_{q ; [T - \varepsilon, T]} .  \label{gInv}
  \end{align}
  By the Burkholder-Davis-Gundy inequality of [Theorem 14.12]{\cite{friz_multidimensional_2010}} we obtain
  \begin{eqnarray}
    \mathbb{E}_t \bigg[ \bigg\| \int_t^. Z_r \tmop{dM}_r \bigg\|^2_{p ; [t,
    T]} \bigg]^{1 / 2} & \lesssim & \mathbb{E}_t \bigg[ \int_t^T |Z_r |^2
    \tmop{dc}_r \bigg]^{1 / 2} .  \label{ZInv}
  \end{eqnarray}
  Applying Taylors formula to $u \mapsto \varphi (g_r (r, \cdot), x, u)$ we
  get for some $\theta \in [0, 1]$
  \begin{align}
    & \bigg| \sum_{k \leq r \leq l} \varphi (g_r (\cdot) \Delta W_r, Y_{r +})
    - Y_{r +} - g_r (Y_{r +}) \Delta W_r \nobracket \bigg| \nonumber\\
    \leq & \sum_{k \leq r \leq l} \bigg| \frac{1}{2} (g_r (\cdot) \tmop{Dg}_r
    (\cdot)) (\varphi (g_r \Delta W_r, Y_{r +}, \theta)) \nobracket  \bigg|
    (\Delta W_r)^2 \nonumber\\
    \leq & \sum_{k \leq r \leq l} \frac{1}{2} (\sup_{t \in [0, T]}
    \||g|_{C^2_b} \|_{\infty})^2  (\Delta W_r)^2 
    \leq 
\frac{1}{2} \sup_{t \in [0, T]} \||g_t|_{C^2_b} \|^2_{\infty}
    \|W\|_{2 ; [k, l]}^2 . \nonumber
  \end{align}
  Notice that $\omega (k, l) \assign \|W\|_{2 ; [k, l]}^2$ defines a control
  (see {\cite{friz_multidimensional_2010}}, Proposition 5.8), so taking
  $p$-variation yields
  \begin{align}
    & \bigg\| \sum_{t \leq r < \cdot} \varphi (g_r  (\cdot) \Delta W_r, Y_{r
    +}) - Y_{r +} - g_r (Y_{r +}) \Delta W_r \nobracket \bigg\|^2_{p ; [t,
    T]} \label{pvarJumpInv} \\
    \leq & \bigg\| \sum_{t \leq r < \cdot} \varphi (g_r  (\cdot) \Delta W_r,
    Y_{r +}) - Y_{r +} - g_r (Y_{r +}) \Delta W_r \nobracket \bigg\|^2_{1 ;
    [t, T]} \leq \frac{1}{2} C_g^4 \|W\|_{2 ; [t, T]}^4. \nonumber
  \end{align}
  By combining the estimates (\ref{fInv}-\ref{pvarJumpInv}) and applying
  H{\"o}lder inequality, we have 
  \begin{align}
    \|Y\|_{p, 2 ; [T - \varepsilon, T]} \lesssim & \;\bar{\varepsilon} +
    \bar{\varepsilon} \|Y\|_{p, 2 ; [T - \varepsilon, T]} + \bar{\varepsilon} \|Y_T
    \|_{\infty} + \|Z\|_{\tmop{BMO} ; [T - \varepsilon, T]} + \|W\|_{2 ; [T -
    \varepsilon, T]}^2 \label{Y2} 
\\
    & + \bar{\varepsilon}^{\frac{1}{2}} \|Z\|_{\tmop{BMO} ; [T -
    \varepsilon, T]} + (C_g  (1 + | Y |_{p, 2 ; [T - \varepsilon, T]}
    \nobracket) \|W\|_{q ; [T - \varepsilon, T]} 
\nonumber\\
    \lesssim &  \; \bar{\varepsilon} + \bar{\varepsilon} \|Y_T \|_{\infty} +
    \bar{\varepsilon}^2 + \bar{\varepsilon} \|Y\|_{p, 2 ; [T - \varepsilon,
    T]} + \big( 1 + \bar{\varepsilon}^{\frac{1}{2}} \big) \|Z\|_{\tmop{BMO}
    ; [T - \varepsilon, T]},  \nonumber
  \end{align}
  This is not a satisfying final result yet, since the estimation of
  $\|Y\|_{p, 2 ; [T - \varepsilon, T]}$ depends on $\|Z\|_{\tmop{BMO} ; [T -
  \varepsilon, T]}$. A common technique from BSDE theory is to derive a second
  estimation by applying It{\^o}'s formula (Proposition \ref{Ito}) to $| Y_t |^2$, together with associativity of Young integral
  (Lemma \ref{associativityYoung}) we get
  \begin{eqnarray}
    | Y_t |^2 & = & 2 \int_t^T Y^{\top}_r f (r, Y_r, Z_r) \tmop{dc}_r + 2
    \int_t^T Y^{\top}_{r +} g_r (Y_{r +}) \tmop{dW}_r - 2 \int_t^T Y^{\top}_r
    Z_r \tmop{dM}_r  \label{ItoY2}\\
    &  & - \int_t^T | Z_r |^2 \tmop{dc}_r + \sum_{t \leq r < T} [| Y_{r +}
    |^2 - | Y_r |^2 - 2 Y_r^{\top} (\Delta Y_r)] . \nonumber
  \end{eqnarray}
  Taking conditional expectations on both sides and making use of $| Y_t |^2>0$, yields 
  \begin{align*}
    \mathbb{E}_t \bigg[ \int_t^T | Z_r |^2
    \tmop{dc}_r \bigg]\leq & \mathbb{E}_t [| \xi |^2] + 2\mathbb{E}_t \bigg[ \int_t^T Y^{\top}_r
    f (r, Y_r, Z_r) \tmop{dc}_r \bigg] + 2\mathbb{E}_t \bigg[ \int_t^T
    Y^{\top}_{r +} g_r (Y_{r +}) \tmop{dW}_r \bigg]\\
    & +\mathbb{E}_t \bigg[ \sum_{t \leq r < T} [| Y_{r +} |^2 - | Y_r |^2
    - 2 Y_r^{\top} (\Delta Y_r)] \bigg] .
  \end{align*}
  We again bound the terms separately. Using Lipschitz continuity of $f$ and the
  basic inequality $\tmop{ab} \lesssim \lambda a^2 + \frac{1}{\lambda} b^2$
  for $\lambda > 0$, we obtain
  \begin{align*}
    \mathbb{E}_t \bigg[ \int_t^T Y^{\top}_r f (r, Y_r, Z_r) \tmop{dc}_r
    \bigg] \lesssim & C_f \mathbb{E}_t \bigg[ \int_t^T |Y_r | (|f (r, 0, 0) |
    + |Y_r | + |Z_r |) \tmop{dc}_r \bigg]  \\
    \lesssim & \mathbb{E}_t \bigg[ \int_t^T \frac{1}{\lambda}  | Y_r | ^2 +
    \lambda | f (r, 0, 0) | ^2 + \lambda | Y_r | ^2 + \lambda | Z_r |^2
    \tmop{dc}_r \bigg] . 
  \end{align*}
  Applying Lemma \ref{p2Norm-property} yields
  \begin{align}
    & \mathbb{E}_t \bigg[ \int_t^T Y^{\top}_r f (r, Y_r, Z_r) \tmop{dc}_r
    \bigg] \label{f2Inv}\\
    \lesssim & \lambda \bar{\varepsilon} \mathbb{E}_t [|f (r, 0, 0) |^2] +
    \lambda \mathbb{E}_t \bigg[ \int_t^T |Z_r |^2 \tmop{dc}_r \bigg] +
    \bar{\varepsilon}  (\lambda + 1 / \lambda)  (\mathbb{E}_t  [\|Y\|_{p ; [t,
    T]}^2] + |Y_T |^2) \nonumber\\
    \lesssim & \lambda (\bar{\varepsilon} +\|Z\|^2_{\tmop{BMO} ; [T -
    \varepsilon, T]} + \bar{\varepsilon} \|Y\|^2_{p, 2 ; [T - \varepsilon, T]}) +
    \bar{\varepsilon}  (\lambda + 1 / \lambda) (\|Y_T \|_{\infty}^2 +\|Y\|^2_{p,
    2 ; [T - \varepsilon, T]}).  \nonumber
  \end{align}
  By Corollary \ref{integral_q-var} and Lemma 2 from
  {\cite{diehl_backward_2017}}, we get
  \begin{align*}
    & \bigg| \int_t^T Y^{\top}_{r +} g_r (Y_{r +}) \tmop{dW}_r \bigg|\\
    \leq & (\|Y^{\top}_{r +} g_r (Y_{r +})\|_{p ; [t, T]} + C_g |Y_T |)
    \|W\|_{q ; [t, T]}\\
    \lesssim & (\|Y\|_{p ; [t, T]} \sup_{r \in [t, T]} |g_r (Y_{r +}) |
    +\|g_r (Y_{r +})\|_{p ; [t, T]} \sup_{r \in [t, T]} |Y_{r +} | + C_g |Y_T
    |) \|W\|_{q ; [t, T]}\\
    \lesssim & (\|Y\|_{p ; [t, T]} \sup_{r \in [t, T]} |g_r (\cdummy)
    |_{\infty} +\|g_r (Y_{r +})\|_{p ; [t, T]}^2 + (\|Y\|_{p ; [t, T]} + | Y_T
    |)^2) \|W\|_{q ; [t, T]} .
  \end{align*}
  Applying Lemma \ref{composition} yields and using that $\|W\|_{q ; [T -
  \varepsilon, T]} < \bar{\varepsilon}$, yields
  \begin{flalign}
     & \mathbb{E}_t \bigg[ \bigg| \int_t^T Y^{\top}_{r +} g_r (Y_{r +})
    \tmop{dW}_r \bigg| \bigg] \label{g2Inv}\\
    \lesssim & C_g \|Y\|_{p, 2 ;
    [T - \varepsilon, T]}  \bar{\varepsilon} + (C_g \| Y \|^2_{p, 2 ;
    [T - \varepsilon, T]} + [[g]]^2_{p, 2 ; [T - \varepsilon, T]} + \| Y
    \|^2_{p, 2 ; [T - \varepsilon, T]} + \| Y_T \|^2_{\infty}) 
    \bar{\varepsilon}. \nonumber
  \end{flalign}
  For the last term, it holds by Remark \ref{ItoJump}
  \begin{flalign}
    \mathbb{E}_t \bigg[ \sum_{t \leq r < T} [| Y_{r +} |^2 - | Y_r |^2 - 2
    Y_r^{\top} (\Delta Y_r)] \bigg] & \leq 2\mathbb{E}_t  \bigg[
    \sum_{t \leq r < T} | \varphi (g_r (\cdot) \Delta W_r, Y_{r +}) - Y_{r +}
    \nobracket |^2 \bigg]. \label{phi2Inv}\\
    & \leq 2 \sup_{t \in [T - \varepsilon, T]} \||g_t|_{C^2_b}
    \|^2_{\infty}  \sum_{t \leq r < T} | \Delta W_r |^2 \leq 2 C^2_g  \bar{\varepsilon}^2, \nonumber    
  \end{flalign}
  where the second inequality follows by Taylor's formula.\\
  Combining the above estimates and using $|a| \leq 1 + |a|^2$ imply for some constant $c$ that
  \begin{eqnarray}
    \|Z\|^2_{\tmop{BMO}, [T - \varepsilon, T]} & \leq & c \lambda
    \|Z\|^2_{\tmop{BMO} ; [T - \varepsilon, T]} + c (\bar{\varepsilon} \lambda +
    \bar{\varepsilon} / \lambda) \|Y\|^2_{p, 2 ; [T - \varepsilon, T]}
    \nonumber\\
    &  & + c (\lambda \bar{\varepsilon} + \bar{\varepsilon} +
    \bar{\varepsilon}^2) + c (\bar{\varepsilon} \lambda + \bar{\varepsilon} /
    \lambda)  \| Y_T \|^2_{\infty} + \| Y_T \|^2_{\infty} . \nonumber
  \end{eqnarray}
  Choosing $\lambda$ small enough such that $c \lambda \leq \frac{1}{2}$, we obtain
  \begin{align}
    \|Z\|_{\tmop{BMO}, [T - \varepsilon, T]} \leq & c (\bar{\varepsilon}
    \lambda + \frac{\bar{\varepsilon}}\lambda)^{\frac{1}{2}} \|Y\|_{p, 2 ; [T -
    \varepsilon, T]} + c (\lambda \bar{\varepsilon} + \bar{\varepsilon} +
    \bar{\varepsilon}^2)^{\frac{1}{2}} \label{Z-apriori}\\
     & + c(\bar{\varepsilon} \lambda + \frac{\bar{\varepsilon}}{\lambda})^{\frac{1}{2}} \| Y_T\|_{\infty} +\| Y_T\|_{\infty}. \nonumber
  \end{align}
  Now substitute the term $\|Z\|_{\tmop{BMO}, [T - \varepsilon, T]}$ in (\ref{Y2}) with (\ref{Z-apriori}) to get
  \begin{align*}
      \|Y\|_{p, 2 ; [T - \varepsilon, T]} \leq & c \bigg(
    \bar{\varepsilon} + \bigg( 1 + \bar{\varepsilon}^{\frac{1}{2}} \bigg)
    (\bar{\varepsilon} \lambda + \bar{\varepsilon} / \lambda)^{\frac{1}{2}} \bigg)
    \|Y\|_{p, 2 ; [T - \varepsilon, T]} \\
    & + c \bigg( \bar{\varepsilon} + \bar{\varepsilon}^2 + \bigg( 1 +
    \bar{\varepsilon}^{\frac{1}{2}} \bigg) (\lambda \bar{\varepsilon} +
    \bar{\varepsilon} + \bar{\varepsilon}^2)^{\frac{1}{2}}\bigg) \\
    & + c \bigg((\bar{\varepsilon} + 1) \|Y_T \|_{\infty} + \bigg( 1 +
    \bar{\varepsilon}^{\frac{1}{2}} \bigg) (\bar{\varepsilon} \lambda +
    \bar{\varepsilon} / \lambda)^{\frac{1}{2}}  \| Y_T \|_{\infty} \bigg).
  \end{align*}
  Now, we fix $\bar{\varepsilon}$ to be small enough such that $c \bigg( \bar{\varepsilon} + \bigg( 1 + \bar{\varepsilon}^{\frac{1}{2}}
    \bigg) (\bar{\varepsilon} \lambda + \bar{\varepsilon} / \lambda)^{\frac{1}{2}}
    \bigg) \leq \frac{1}{2}.$ By   {\cite[Lem.4.7,4.8]{r_m_dudley_introduction_nodate}}, there exists a finite
  partition $\pi = \{0 = t_0 < t_1 < \cdots < t_N = T\}$ such that
  \begin{eqnarray*}
      \max_{i =1, \cdots, N} | c_{t_{i - 1}, t_i} |  \leq \bar{\varepsilon}, & \max_{i = 1, 
  \cdots, N} \|W\|_{q ; (t_{i - 1}, t_i]} \leq \bar{\varepsilon}, & N
  \leq 1 + \max \{ | c_T |, \|W\|_{q ; [0, T]} \} / \bar{\varepsilon}.
  \end{eqnarray*}
  Notice that the choice of $\lambda$ and $\bar{\varepsilon}$ only depends on $C_f, C_g, p$ and is therefore uniform for all intervals, so it holds for all $i = 1, \ldots, N$ that 
  \begin{eqnarray}
    \|Y\|_{p, 2 ; (t_{i - 1}, t_i]} \assign \lim_{\delta \rightarrow 0}
    \|Y\|_{p, 2 ; [t_{i - 1} + \delta, t_i]} & \leq & C_1 \| Y_{t_i}
    \|_{\infty} + C_2  \label{Y-localbound-first}
  \end{eqnarray}
  with $C_1 = c ( ( 1 + \bar{\varepsilon}^{\frac{1}{2}} )
  (\bar{\varepsilon} \lambda + \bar{\varepsilon} / \lambda)^{\frac{1}{2}} +
  \bar{\varepsilon} + 1 )$ and $C_2 = c ( \bar{\varepsilon} +
  \bar{\varepsilon}^2 + ( 1 + \bar{\varepsilon}^{\frac{1}{2}} )
  (\lambda \bar{\varepsilon} + \bar{\varepsilon} +
  \bar{\varepsilon}^2)^{\frac{1}{2}} )$.
  Applying Taylor's formula yields for all $i = 1, \ldots, N$ that
  \begin{align}
    \| \Delta Y_{t_{i - 1}} \|_{\infty} =\|(\varphi (g_{t_{i - 1}}  \Delta
    W_{t_{i - 1}}, Y_{t_{i - 1} +}) - Y_{t_{i - 1} +} \|_{\infty} \leq
    C_g  | \Delta W_{t_{i - 1}} |
    \leq C_3, \label{JumpDynamic}
  \end{align}
  where $C_3 = C_g \|W\|_{q ; [0, T]}$. Applying Lemma \ref{p2Norm-property} allows us to conclude that
\(
    \|Y\|_{p, 2 ; [t_{i - 1}, t_i]} 
\) 
 is dominated by
\(
 C_1 \|Y_{t_i} \|_{\infty} + C_2 + C_3
\).
  This implies $\|Y_{t_{i - 1}} \|_{\infty} \leq \|Y_{t_i} \|_{\infty} + \|Y\|_{p,
     2 ; [t_{i - 1}, t_i]} \leq (C_1 + 1) \|Y_{t_i} \|_{\infty} + C_2 + C_3$ for all $i = 1, \ldots, N$. So, even though it is not apriori clear whether $\| Y_{t_i} \|_{\infty}$, $i = 1, \ldots, N - 1$, is bounded, we can derive
  a bound iteratively. We can, in fact, show by induction
  \begin{align*}
      \|Y_{t_i} \|_{\infty} 
       \leq & (C_1 + 1)^{N - i} \|Y_T \|_{\infty} + (C_2 + C_3)  \sum^{N
       - i}_{j = 1} (C_1 + 1)^{N - i - j}, & i=1,\cdots,N.
  \end{align*}
  Overall, this yields
  \begin{align*}
    \|Y\|_{p, 2 ; [0, T]} \leq & \sum_{i = 1}^N \|Y\|_{p, 2 ; [t_{i -
    1}, t_i]}\\
    \leq & \sum_{i = 1}^N C_1  \bigg( (C_1 + 1)^{N - i} \|Y_T
    \|_{\infty} + (C_2 + C_3) \sum^{N - i}_{j = 1} (C_1 + 1)^{N - i - j}
    \bigg) + C_2 + C_3\\
    \leq & N (C_1 + 1)^N \|Y_T \|_{\infty} + N^2  (C_2 + C_3)  (C_1 +
    1)^N.
  \end{align*}
  One can also attain a global apriori bound for $\|Z\|_{\tmop{BMO} ; [0, T]}$ by adding up (\ref{Z-apriori}). 
  
  \tmtextbf{$(Y, Z)$ being solution to (\ref{rBSDE-forward}):} The proof is
  even simpler, since the terms
  (\ref{pvarJumpInv},\ref{phi2Inv}) simply do not show up anymore. We do have a
  different jump dynamic in (\ref{JumpDynamic}) with $Y_{t_{i - 1}}=-g_{t_{i - 1}} (Y_{t_{i - 1} +})$, but it can still be bounded by $\| \Delta Y_{t_{i - 1}} \|_{\infty} \leq C_g  | \Delta W_{t_{i - 1}} | \leq C_3 $.
\end{proof}

\subsection{Existence and Uniqueness}

We start by showing the existence and uniqueness of the solution on a small
time interval of length $\varepsilon$ (see Theorem \ref{existenceuniqueness})
and specify the dependencies of the interval length in Remark
\ref{epsilon-dependency}. We then explain in Theorem
\ref{global_existenceuniqueness} how to construct the global solution by
iteratively ``gluing'' local solutions, similar like in
{\cite{friz_course_2020}} for RDEs or in {\cite{zhang_backward_2017}} for
BSDEs.

Observe that in the proof below, it is essential to verify that the
fixed point map preserves the closed ball $B^{T, \varepsilon}_R$ with
radius $R > 0$ defined by
\begin{equation}
  B^{T, \varepsilon}_R \assign \left\{ (Y, Z) \in \mathcal{B}^p \times
  \mathrm{BMO} \hspace{0.17em} | \hspace{0.17em} Y_T = \xi, \hspace{0.17em}
  \interleave Y, Z \interleave_{[T - \varepsilon, T]} \leq R \right\} .
  \label{B-R-T-def}
\end{equation}
This invariance property, likewise being commonly found in the rough path literature, ensures a
sufficient path regularity of $Y$, which is critical for the well-posedness of
the backward Young integral $\int g (Y) \hspace{0.17em} \mathrm{d} W$. The explicit bounds on the fixed-point map's image moreover serve to enable the subsequent
contraction argument.

\begin{theorem}[Local existence and uniqueness]
  \label{existenceuniqueness}Provided that Assumption A holds, there exists a
  sufficiently small $\bar{\varepsilon} > 0$ such for all $\varepsilon > 0$ with
  \begin{equation}
    | c_{T - \varepsilon, T} | < \bar{\varepsilon}  \infixand \| W \|_{q ; [T
    - \varepsilon, T]} < \bar{\varepsilon}, \label{barepsilon}
  \end{equation}
  the integral equation (\ref{rBSDE-Marcus}), and respectively the integral equation
  (\ref{rBSDE-forward}) each has a unique respective solution $(Y, Z)$ in $\mathcal{B}_p ([T -
  \varepsilon, T]) \times \tmop{BMO} ([T - \varepsilon, T])$ on $[T -
  \varepsilon, T]$.
\end{theorem}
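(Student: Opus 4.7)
The plan is to set up a Banach fixed-point scheme on the closed ball $B^{T,\varepsilon}_R$ from (\ref{B-R-T-def}). Given $(Y,Z)\in B^{T,\varepsilon}_R$, I would define the fixed-point image $\Phi(Y,Z)=(\tilde Y,\tilde Z)$ via martingale representation: the process
\[
N_t \;:=\; \mathbb{E}_t\!\left[\xi + \int_{T-\varepsilon}^{T} f(r, Y_r, Z_r)\,\mathrm{d}c_r + \int_{T-\varepsilon}^{T} g_r(Y_{r+})\,(\diamond)\,\mathrm{d}W_r\right]
\]
is well-defined and square-integrable because the backward Young integral $\int g(Y)\,\mathrm{d}W$ makes sense pathwise by Proposition \ref{BackwardsYoung} (since $g(Y)\in D^p$ by Lemma \ref{composition} and $1/p+1/q>1$), and the Marcus correction is an absolutely convergent sum bounded by $\tfrac12 |g|_{C^2_b}^2\|W\|_2^2$ as in (\ref{pvarJumpInv}). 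By the martingale representation property assumed on $M$, there is a unique $\tilde Z\in L^2(\mathrm{d}c\otimes\mathbb{P})$ with $N_t-N_{T-\varepsilon}=\int_{T-\varepsilon}^{t}\tilde Z_r\,\mathrm{d}M_r$; setting $\tilde Y_t := N_t - \int_{T-\varepsilon}^{t} f(r,Y_r,Z_r)\,\mathrm{d}c_r - \int_{T-\varepsilon}^{t} g_r(Y_{r+})(\diamond)\,\mathrm{d}W_r$ yields the linearised RBSDE identity with $(Y,Z)$ frozen in the driver and terminal value $\tilde Y_T=\xi$.

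For invariance $\Phi(B^{T,\varepsilon}_R)\subset B^{T,\varepsilon}_R$, I would replay the single-iterate analogue of the estimates in the proof of Theorem \ref{apriori-bound-YZ}. Using (\ref{fInv}), (\ref{gInv}), (\ref{ZInv}) and (\ref{pvarJumpInv}) gives a bound of the form $\|\tilde Y\|_{p,2;[T-\varepsilon,T]}\lesssim \bar\varepsilon(1+\|\xi\|_\infty+R) + \|\tilde Z\|_{\mathrm{BMO};[T-\varepsilon,T]}$, and It\^o's formula applied to $|\tilde Y|^2$ as in (\ref{ItoY2}), together with the bounds (\ref{f2Inv}), (\ref{g2Inv}), (\ref{phi2Inv}) and absorption of a $\lambda\|\tilde Z\|_{\mathrm{BMO}}^2$-term with $\lambda$ small, produces the BMO estimate $\|\tilde Z\|_{\mathrm{BMO};[T-\varepsilon,T]}\lesssim \|\xi\|_\infty + \bar\varepsilon^{1/2}(1+R)$. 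Combining the two, choosing $R := 2(C_0+\|\xi\|_\infty)$ for an explicit constant $C_0$, and shrinking $\bar\varepsilon$, yields $\interleave\tilde Y,\tilde Z\interleave_{[T-\varepsilon,T]}\le R$.

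For the contraction step, given two inputs $(Y^i,Z^i)\in B^{T,\varepsilon}_R$ the differences $\tilde Y^\Delta:=\tilde Y^1-\tilde Y^2$, $\tilde Z^\Delta:=\tilde Z^1-\tilde Z^2$ satisfy a linear BSDE with zero terminal condition, driver $f(r,Y^1_r,Z^1_r)-f(r,Y^2_r,Z^2_r)$ and Young integrand $g_r(Y^1_{r+})-g_r(Y^2_{r+})$. Rerunning the same scheme with Lipschitz continuity of $f$, the difference inequality from Lemma \ref{composition} on the $g$-term, and the observation that the Marcus correction map $x\mapsto\varphi(v,x)-x-v(x)$ is Lipschitz with constant $O(|g|_{C^2_b}^2|v|^2)$, I expect to reach
\[
\interleave\tilde Y^\Delta,\tilde Z^\Delta\interleave_{[T-\varepsilon,T]} \;\leq\; C(\bar\varepsilon,R)\,\interleave Y^\Delta,Z^\Delta\interleave_{[T-\varepsilon,T]},
\]
with $C(\bar\varepsilon,R)\to 0$ as $\bar\varepsilon\to 0$. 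Further shrinking $\bar\varepsilon$ so that $C(\bar\varepsilon,R)\le 1/2$ makes $\Phi$ a strict contraction on $B^{T,\varepsilon}_R$; Banach's theorem then yields a unique fixed point, and uniqueness in the whole space $\mathcal{B}^p\times\mathrm{BMO}$ on $[T-\varepsilon,T]$ follows since any solution sits in some ball of radius bounded by the a priori estimate of Theorem \ref{apriori-bound-YZ}, over which the contraction can be arranged by taking $\bar\varepsilon$ smaller still. The forward-type case (\ref{rBSDE-forward}) is strictly simpler, as the jump-correction terms (\ref{pvarJumpInv}) and (\ref{phi2Inv}) are absent.

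The principal obstacle lies in the contraction estimate, precisely because Lemma \ref{composition} produces the cross term $(\|Y^1\|_{p,2}+\|Y^2\|_{p,2})\sup_t\|Y^\Delta_t\|_\infty$ stemming from the nonlinearity of $g$. As explained in Remark \ref{Reason-p2Norm}, this is where the choice of the norm $\|\cdot\|_{p,2}$ is essential: since $Y^1_T=Y^2_T=\xi$ forces $Y^\Delta_T=0$, Lemma \ref{p2Norm-property}.a gives $\sup_t\|Y^\Delta_t\|_\infty\le \|Y^\Delta\|_{p,2}$, so the cross term is controlled by $2R\cdot\|Y^\Delta\|_{p,2}$ without any loss of integrability and can be absorbed once $\bar\varepsilon$ is small.
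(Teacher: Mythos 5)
Your proposal is correct and follows essentially the same route as the paper: the same fixed-point map via conditional expectation and martingale representation on $B^{T,\varepsilon}_R$, invariance by replaying the a priori estimates together with It\^o's formula for $|\bar Y|^2$, and contraction using Lemma \ref{composition} with the cross term absorbed thanks to $Y^\Delta_T=0$ and the $\|\cdot\|_{p,2}$ norm. The only cosmetic difference is that the contraction constant does not vanish with $\bar\varepsilon$ alone — as in the paper one must first tune the Young-inequality parameter $\lambda$ small (to kill the $c\lambda^{1/2}\|Z^\Delta\|_{\mathrm{BMO}}$ contribution) and then shrink $\bar\varepsilon$ — but you already use this two-parameter tuning in the invariance step.
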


\begin{proof}
  We will only show the proof for (\ref{rBSDE-Marcus}), the proof for
  (\ref{rBSDE-forward}) is in essence the same, indeed rather a bit simpler, since it is without terms for infinite sums of Marcus jumps.\\
  Fix some $\bar{\varepsilon} > 0$ that will be specified later in the proof.
  Notice that due to the left-continuity of $c$ and $W$, we have by Lemma 7.1
  in {\cite{friz_differential_2018}} that there always exists an $\varepsilon >
  0$ such that the condition \eqref{barepsilon} is satisfied. \\
  For $Y, Z \in B^{T, \varepsilon}_R$, we define the fixpoint map $\mathcal{M}^{T,
  \varepsilon} (Y, Z) = (\bar{Y}, \bar{Z})$  with
  \begin{align*}
    \bar{Y}_t \assign & \mathbb{E}_t  \bigg[ \nobracket \xi + \int_t^T f (r,
    Y_r, Z_r) \tmop{dc}_r + \int_t^T g_r (Y_{r +}) \diamond \tmop{dW}_r
    \bigg]
  \end{align*}
  with $\bar{Z}$ being defined by It\^o's  martingale representation 
  \begin{align*}
    & \bar{Y}_t + \int_{T - \varepsilon}^t f (r, Y_{r +}, Z_{r +}) \tmop{dc}_r +
    \int_{T - \varepsilon}^t g_r (Y_{r +}) \diamond \tmop{dW}\\
    = & \mathbb{E}_t  \bigg[ \nobracket \xi + \int_{T - \varepsilon}^T f (r,
    Y_{r +}, Z_{r +}) \tmop{dc}_r + \int_{T - \varepsilon}^T g_r (Y_{r +})
    \diamond \tmop{dW} \bigg] = \int_{T - \varepsilon}^t \bar{Z} \tmop{dM}_r + \bar{Y}_{T - \varepsilon},
  \end{align*}
  on the time interval  $t \in [T - \varepsilon, T]$. It follows by construction that $(\bar{Y},\bar{Z})$ satisfies 
  \begin{eqnarray}
    \bar{Y}_t & = & \xi + \int_t^T f (r, Y_r, Z_r) \tmop{dc}_r + \int_t^T g_r
    (Y_{r +}) \diamond \tmop{dW} - \int_t^T \bar{Z}_r \tmop{dM}_r . 
    \label{Ybar}
  \end{eqnarray}
  The following proof is divided into two major parts. First we show
  that the fixpoint map $\mathcal{M}^{T, \varepsilon}$ has an invariance property, meaning that
  for $R$ sufficiently small, $\mathcal{M}^{T, \varepsilon}$ maps $B^{T,
  \varepsilon}_R$ to itself. Afterwards, we will show a contraction property of
  $\mathcal{M}^{T, \varepsilon}$ in the second part.
  
  {\tmstrong{Invariance:}} Let $(Y, Z)$ be elements in $B^{T, \varepsilon}_R$ and
  denote $(\bar{Y}, \bar{Z}) =\mathcal{M}^{T, \varepsilon} (Y, Z)$.
  This proof is very similar to the proof of the apriori bound, we will therefore omit some steps in the estimations. In order to deriving a bound for $\mathbb{E}_t [\| \bar{Y} \|^2_{p ; [t,
  T]}]^{1 / 2}$, we will bound each term in (\ref{Ybar}). Similarly to (\ref{fInv}-\ref{pvarJumpInv}), we obtain the four inequalities
  \begin{align}
    \mathbb{E}_t \bigg[ \bigg\| \int_t^. f (r, Y_r, Z_r) \tmop{dc_r} \bigg\|_{p
    ; [t, T]} \bigg] & \lesssim \bar{\varepsilon} + \bar{\varepsilon} R + \bar{\varepsilon} 
    \| Y_T \|_{\infty} + \bar{\varepsilon}^{\frac{1}{2}} R ; \nonumber\\
    \bigg\| \int_t^. g_r (Y_{r +}) \tmop{dW_r} \bigg\|_{p, 2 ; [T -
    \varepsilon, T]} & \leq C_g  (1 + R) \|W\|_{q ; [T - \varepsilon,
    T]} ; \nonumber\\
    \mathbb{E}_t \bigg[ \bigg\| \int_t^. \bar{Z}_r \tmop{dM}_r \bigg\|^2_{p ;
    [t, T]} \bigg]^{1 / 2} & \lesssim \mathbb{E}_t \bigg[ \int_t^T |
    \bar{Z}_r |^2 \tmop{dc_r} \bigg]^{1 / 2} ; \nonumber\\
    \bigg\| \sum_{t \leq r < \cdot} \varphi (g_r  (\cdot) \Delta W_r, Y_{r +})
    - Y_{r +}& - g_r (Y_{r +}) \Delta W_r \bigg\|^2_{p ; [t, T]} 
    \leq \frac{1}{2} C_g^4 \|W\|_{2 ; [t, T]}^4 .\nonumber
  \end{align}
  By combining the above estimates and H{\"o}lder's inequality, we finally have
  the bound
  \begin{eqnarray}
    \| \bar{Y} \|_{p, 2 ; [T - \varepsilon, T]} & \lesssim & \bar{\varepsilon}
    \big( 1 + \big\| {Y_T}  \big\|_{\infty} \big) + \bar{\varepsilon}^2
    + \big( \bar{\varepsilon} + \bar{\varepsilon}^{\frac{1}{2}} \big) R +
    \| \bar{Z} \|_{\tmop{BMO} ; [T - \varepsilon, T]}  \label{YInv}
  \end{eqnarray}
  In order to derive a second estimation, we again apply It{\^o}'s formula to $|
  \bar{Y}_t |^2$, take conditional expectations on both sides and make use of $| \bar{Y}_t |^2 \geq 0$, to conclude that
  \begin{align*}
    \mathbb{E}_t \bigg[ \int_t^T | \bar{Z}_r |^2
    \tmop{dc_r} \bigg] \leq& | \bar{Y}_T |^2 + 2\mathbb{E}_t \bigg[ \int_t^T
    \bar{Y}^{\top}_r f (r, Y_r, Z_r) \tmop{dc}_r \bigg] + 2\mathbb{E}_t
    \bigg[ \int_t^T \bar{Y}^{\top}_{r +} g_r (Y_{r +}) \tmop{dW}_r \bigg]\\
    & +\mathbb{E}_t \bigg[ \sum_{t \leq r < T} [| \bar{Y}_{r +} |^2 - |
    \bar{Y}_r |^2 - 2 \bar{Y}_r^{\top} (\Delta \bar{Y}_r)] \bigg] .
  \end{align*}
  We can repeat the same estimation as in (\ref{f2Inv}, \ref{g2Inv},
  \ref{phi2Inv}), now of course also carefully distinguish the terms
  containing $\bar{Y}, \bar{Z}$ and $Y, Z$, we obtain the three inequalities\footnote{We also
  distinguish between $\bar{Y}_T$ and $Y_T$, which are actually both equal to $\xi$ here,
  but they are going to be different when we are going to apply the estimates derived here again to prove  Theorem
  \ref{Picard-iteration}.}
  \[ \mathbb{E}_t \big[ \int_t^T \bar{Y}^{\top}_r f (r, Y_r, Z_r)
     \tmop{dc}_r \big] \lesssim \lambda (\bar{\varepsilon} + R^2 +
     \bar{\varepsilon} R^2) + \frac{\bar{\varepsilon}}{\lambda} (\| \bar{Y}_T
     \|^2_{\infty} +\| \bar{Y} \|^2_{p, 2 ; [T - \varepsilon, T]}) +
     \bar{\varepsilon} \lambda (\|Y_T \|^2_{\infty} + R^2) ; \]
  \[ \mathbb{E}_t \big[ \big| \int_t^T \bar{Y}^{\top}_{r +} g_r (Y_{r +})
     \tmop{dW}_r \big| \big] \lesssim C_g \bar{\varepsilon} {\| \bar{Y} \|
     _{p, 2 ; [T - \varepsilon, T]}}_{\nosymbol} + (C_g R^2 + C_g + \| \bar{Y}
     \|^2_{p, 2 ; [T - \varepsilon, T]} + \| \bar{Y}_T \|^2_{\infty})
     \bar{\varepsilon} ; \]
  \[ \mathbb{E}_t \bigg[ \sum_{t \leq r < T} [\Delta (\bar{Y}_r - \xi)^2 - 2
     (\Delta \bar{Y}_r) (\bar{Y}_r - \xi)] \bigg] \leq 2 C_g 
     \bar{\varepsilon}^2 . \]
  Combining the above estimates and using $|a| \leq 1 + |a|^2$ imply for some
  constant $c$ that
  \begin{align*}
    \mathbb{E}_t \big[ \int_t^T | \bar{Z} |^2 \tmop{dc} \big] \leq & 
    \| \bar{Y}_T \|^2_{\infty} + c \lambda (\bar{\varepsilon} + R^2 +
    \bar{\varepsilon} R^2) +  \frac{c \bar{\varepsilon}}{\lambda} \| \bar{Y}_T
    \|^2_{\infty} + c \bar{\varepsilon} \lambda \|Y_T \|^2_{\infty} +
    \frac{c \bar{\varepsilon}}{\lambda} \| \bar{Y} \|^2_{p, 2 ; [T - \varepsilon, T]}\\
    & + c \bar{\varepsilon} (1 + \| \bar{Y} \|^2_{p, 2 ; [T - \varepsilon,
    T]}) + c (R^2 +\| \bar{Y} \|^2_{p, 2 ; [T - \varepsilon, T]} + \|
    \bar{Y}_T \|^2_{\infty})  \bar{\varepsilon} + c \bar{\varepsilon}^2 .
  \end{align*}
  Then for some constant $C_z$ (depending only on $C_f, C_g, p$) the following
  follows directly
  \begin{align}
    \| \bar{Z} \|_{\tmop{BMO} ; [T - \varepsilon, T]} \leq & C_z \lambda^{\frac{1}{2}} R  + C_z
    \bar{\varepsilon}^{\frac{1}{2}} ( 1 + \lambda^{^{\frac{1}{2}}} +
    \bar{\varepsilon}^{^{\frac{1}{2}}} ) + C_z
    \bar{\varepsilon}^{\frac{1}{2}} \lambda^{\frac{1}{2}} \|Y_T \|_{\infty} +
    (1 + C_z \bar{\varepsilon}^{\frac{1}{2}} \lambda^{- \frac{1}{2}}) \| \bar{Y}_T
    \|_{\infty} \nonumber\\
    & + C_z \bar{\varepsilon}^{\frac{1}{2}} ( 1 +
    \lambda^{^{\frac{1}{2}}} ) R + C_z (\bar{\varepsilon} / \lambda +
    \bar{\varepsilon})^{\frac{1}{2}} \| \bar{Y} \| _{p, 2 ; [T - \varepsilon,
    T]} . \label{ZInv-2} 
  \end{align}
  Substituting (\ref{ZInv-2}) in (\ref{YInv}), yields for a constant $C_y$
  (depending only on $C_f, C_g, p$) that
  \begin{align}
    \| \bar{Y} \|_{p, 2 ; [T - \varepsilon, T]} \leq & C_y
    \lambda^{\frac{1}{2}} R + C_y \bigg( \bar{\varepsilon}^{\frac{1}{2}}
    \lambda^{\frac{1}{2}} + \bar{\varepsilon}^{\frac{1}{2}} \bigg) \|Y_T
    \|_{\infty} + C_y (1 + \bar{\varepsilon}^{\frac{1}{2}} \lambda^{- \frac{1}{2}}) \| \bar{Y}_T \|_{\infty} \nonumber\\
    & + F_y (\bar{\varepsilon})  \bigg( 1 + \lambda^{\frac{1}{2}} + \bigg( 1
    + \lambda^{\frac{1}{2}} \bigg) R + \bigg( 1 + \frac{1}{\lambda} \bigg)
    \| \bar{Y} \| _{p, 2 ; [T - \varepsilon, T]} \bigg),  \label{YInv-detail}
  \end{align}
  for some function $F_y$, which is right continuous at $0$, i.e. $F_y (0 +) =
  0$.\\
  We define $C = C_z \vee C_y$ and fix some $m \in \mathbb{N}$. We can choose
  $\lambda$ small such that $C \lambda^{\frac{1}{2}} \leq \frac{1}{m}$ and
  then choose $\bar{\varepsilon}$ small such that
  \begin{align}
    & C_z  \bar{\varepsilon}^{\frac{1}{2}}  \bigg( 1 + \lambda^{\frac{1}{2}}
    + \bar{\varepsilon}^{\frac{1}{2}} \bigg) \leq \frac{1}{m}, & C_z 
    \bar{\varepsilon}^{\frac{1}{2}}  \bigg( 1 + \lambda^{\frac{1}{2}} \bigg)
    \leq \frac{1}{m}, & C_z  (\bar{\varepsilon} / \lambda +
    \bar{\varepsilon})^{\frac{1}{2}} \leq \frac{1}{m}, & C
    \bar{\varepsilon}^{\frac{1}{2}} \lambda^{- \frac{1}{2}} \leq
    \frac{1}{m}, &  \nonumber\\
    & C \bigg( \bar{\varepsilon}^{\frac{1}{2}} \lambda^{\frac{1}{2}} +
    \bar{\varepsilon}^{\frac{1}{2}} \bigg) \leq \frac{1}{m}, & F_y
    (\bar{\varepsilon})  \bigg( 1 + \lambda^{\frac{1}{2}} \bigg) \leq
    \frac{1}{m}, & F_y (\bar{\varepsilon})  \bigg( 1 + \frac{1}{\lambda} \bigg)
    \leq \frac{1}{m} . &  &  \label{condition-barepsilon}
  \end{align}
  This implies for (\ref{ZInv-2}-\ref{YInv-detail}) that
  \begin{align}
    \| \bar{Z} \|_{\tmop{BMO} ; [T - \varepsilon, T]} \leq &
    \frac{2}{m} R  + \frac{1}{m} \|Y_T \|_{\infty} + \bigg( 1 + \frac{1}{m}
    \bigg) \| \bar{Y}_T \|_{\infty} + \frac{1}{m} + \frac{1}{m} \| \bar{Y} \|
    _{p, 2 ; [T - \varepsilon, T]}  \label{ZInv-abstact}\\
    \| \bar{Y} \|_{p, 2 ; [T - \varepsilon, T]} \leq & \frac{2}{m} R +
    \frac{1}{m} \|Y_T \|_{\infty} + \bigg( \frac{1}{m} + C_y \bigg) \|
    \bar{Y}_T \|_{\infty} + \frac{1}{m} + \frac{1}{m} \| \bar{Y} \| _{p, 2 ;
    [T - \varepsilon, T]}  \label{YInv-abstact}
  \end{align}
  Choosing $R \geqslant m C_y  \| \xi \|_{\infty} \vee 1$ and $m \geqslant
  11$ leads to
  \begin{equation}
    \| \bar{Y} \|_{p, 2 ; [T - \varepsilon, T]} \leq \frac{5}{m - 1} R
    \leq \frac{1}{2} R \quad \infixand \quad \| \bar{Z} \|_{\tmop{BMO} ;
    [T - \varepsilon, T]} \leq \frac{5}{m} R \leq \frac{1}{2} R.
    \label{YInv-final}
  \end{equation}
  We have thus shown that $\mathcal{M}^{T, \varepsilon} (B^{T, \varepsilon}_R) \subseteq B^{T,
  \varepsilon}_R$.
  
  {\tmstrong{Contraction:}}
  Let $\varepsilon, \bar{\varepsilon}$ be small enough such that the
  invariance property holds. \ .
  
  We define $(\bar{Y}, \bar{Z}) =\mathcal{M}^{T, \varepsilon} (Y, Z)$ and
  $(\overline{Y'}, \overline{Z'}) =\mathcal{M}^{T, \varepsilon} (Y', Z')$ for
  $(Y, Z), (Y', Z') \in B^{T, \varepsilon}_R$ and denote the differece by
  $H^{\Delta} \assign H - H'$ for $H = Y, Z, \bar{Y}, \bar{Z}$. For any $t \in [T - \varepsilon, T]$ holds
  \begin{eqnarray}
    \bar{Y}^{\Delta}_t & = & \bar{Y}^{\Delta}_T + \int_t^T [f (r, Y_r, Z_r) -
    f (r, Y'_r, Z'_r)] \tmop{dr} + \int_t^T [g_r (Y_{r +}) - g_r (Y'_{r +})]
    \tmop{dW} \nonumber\\
    &  & - \int_t^T \bar{Z}^{\Delta}_r \tmop{dM}_r + \sum_{t \leq r < T}
    [\varphi (g_r (\cdot) \Delta W_r, Y_{r +}) - \varphi (g_r (\cdot) \Delta
    W_r, Y'_{r +}) \nobracket \nonumber\\
    &  & \nobracket - Y_{r +} + Y'_{r +} + g_r (Y_{r +}) \Delta W_r - g_r
    (Y'_{r +}) \Delta W_r] .  \label{contraction}
  \end{eqnarray}
  By definition of the fixed point map we have $Y^{\Delta}_T =
  \bar{Y}^{\Delta}_T = 0$, but in order to reuse the estimates for the proof
  of Theorem \ref{Picard-iteration}, we shall keep this term as a dummy
  variable.
  
  By Lipschitz continuity of $f$ and applying H{\"o}lder inequality, we get
  \begin{align}
   &\bigg\| \int_t^. [f (r, Y_r, Z_r) - f (r, Y'_r, Z'_r)] \tmop{dc}_r
    \bigg\|_{p ; [t, T]} 
    \leq 
    \int_t^T |f (r, Y_r, Z_r) - f (r, Y'_r,
    Z'_r) | \tmop{dc}_r 
    \nonumber\\
& \qquad 
    \leq 
      C_f  \int_t^T |Y^{\Delta}_r | \tmop{dc}_r 
     + C_f  \int_t^T
    |Z^{\Delta}_r | \tmop{dc}_r 
    \lesssim 
    \; \bar{\varepsilon} \sup_{r \in [T - \varepsilon, T]} |
    Y^{\Delta}_r | + \bar{\varepsilon}^{1 / 2} \bigg( \int_t^T |Z^{\Delta}_r
    |^2 \tmop{dc}_r \bigg)^{\frac 1 2}. \nonumber
  \end{align}
  This implies with Lemma \ref{p2Norm-property} that
  \begin{eqnarray}
    &  & \bigg\| \int_t^. [f (r, Y_r, Z_r) - f (r, Y'_r, Z'_r)] \tmop{dc}_r
    \bigg\|_{p, 2 ; [T - \varepsilon, T]} \nonumber\\
    & \lesssim & \bar{\varepsilon} (\| Y^{\Delta} \|_{p, 2 ; [T -
    \varepsilon, T]} + \| Y^{\Delta}_T \|_{\infty}) + \bar{\varepsilon}^{1 /
    2} \| Z^{\Delta} \|_{_{\tmop{BMO} ; [T - \varepsilon, T]}}.  \label{fCon}
  \end{eqnarray}
  We get by Corollary \ref{integral_q-var} and Lemma \ref{composition} that
  \begin{align}
     & \bigg\| \int_t^. g (Y_{r +}) - g (Y'_{r +}) \tmop{dW} \bigg\|_{p, 2
    ; [T - \varepsilon, T]} \nonumber\\
    \leq &  (\| g (Y) - g (Y') \|
  _{p,2 ; [t, T]} + | g (Y_T) - g (Y'_T) |_\infty) \| W \| _{q ; [T- \varepsilon, T]} \label{Reason-p2-Norm-2}\\
    \lesssim & \sup_{t \in [0, T]} \||g^1 |_{C^2_b} \|_{\infty}  \|Y^{\Delta}
    \|_{p, 2 ; [T - \varepsilon, T]}  \| W \| _{q ; [T - \varepsilon, T]}
    \nonumber\\
    & + \sup_{t \in [0, T]} \||g^1 |_{C^2_b} \|_{\infty} (\|Y\|_{p, 2 ; [T
    - \varepsilon, T]} +\|Y' \|_{p, 2 ; [T - \varepsilon, T]}) \sup_{t \in [T
    - \varepsilon, T]} \| Y^{\Delta}_t \|_{\infty} \| W \| _{q ; [T -
    \varepsilon, T]} \nonumber\\
    &  + [[\tmop{Dg}^1]]_{p, 2 ; [0, T]} \sup_{t \in [T - \varepsilon, T]}
    \|Y^{\Delta}_t \|_{\infty} \| W \| _{q ; [T - \varepsilon, T]} + C_g  \|Y^{\Delta}_T \|_{\infty} \| W \| _{q ; [T - \varepsilon, T]}\nonumber\\
    \lesssim & C_g (\|Y^{\Delta} \|_{p, 2 ; [T - \varepsilon, T]} + \|
    Y^{\Delta}_T \|_{\infty}) (1 + R) \| W \| _{q ; [T - \varepsilon, T]} . 
    \label{gCon}
  \end{align}
  The Burkholder-Davis-Gundy inequality [Theorem 14.12]{\cite{friz_multidimensional_2010}}
  implies that
  \begin{eqnarray}
    \mathbb{E}_t \bigg[ \bigg\| \int_t^. \bar{Z}^{\Delta}_r \tmop{dM}_r
    \bigg\|^2_{p ; [t, T]} \bigg]^{1 / 2} & \leq & C_p \mathbb{E}_t
    \bigg[ \int_t^T | \bar{Z}^{\Delta}_r |^2 \tmop{dc_r} \bigg]^{1 / 2} 
    \label{ZCon}
  \end{eqnarray}
  for a constant $C_p$ depending only on $p$ .
   Letting \[\mathcal{J}_r:= \varphi (g_r \Delta W_r, Y_{r +}) - \varphi (g_r \Delta W_r, Y'_{r
    +}) - Y_{r +} + Y'_{r +} + g_r (Y_{r +}) \Delta W_r - g_r (Y'_{r +})
    \Delta W_r,\] and using Taylor approximation of $u \mapsto \varphi (g_r \Delta W_r, Y_{r +}, u) - \varphi (g_r \Delta W_r, Y_{r +}, u)$ yields 
  \begin{align*}
    | \mathcal{J}_r | \leq \frac{1}{2}  
    \left|g_r \cdot \tmop{Dg}_r (\varphi (g_r \Delta W_r, Y_{r
    +}, \theta)) - g_r \cdot \tmop{Dg}_r (\varphi (g_r \Delta W_r, Y'_{r +},
    \theta)) \right| \cdot |\Delta W_r|^2, 
  \end{align*}
  for some $\theta \in [0, 1]$. 
  By Lipschitz continuity of $g_r \cdot \tmop{Dg_r}$ and Gronwall's inequality  
  {\cite[Thm.3.15]{friz_multidimensional_2010}} we get 
  \begin{align}
    \mathcal{J}_r \leq & | g_r |_{C_b^2}  | \varphi (g_r \Delta W_r, Y_{r +}, \theta) -
    \varphi (g_r \Delta W_r, Y'_{r +}, \theta) || \Delta W_r |^2 \nonumber\\
     \leq & | g_r |_{C_b^2} |Y^{\Delta}_{r +} | e^{| \tmop{Dg}_r |_{\infty}  |
    \Delta W_r |}  | \Delta W_r |^2, \nonumber
  \end{align}
  and summing up over jump times $r$ of $W$ yields
  \begin{align}
    \sum_{t \leq r < \cdot} | \mathcal{J}_r |
    \leq & c (\|Y^{\Delta} \|_{p ; [t, T]} + \| Y^{\Delta}_T \|_{\infty})
    e^{c \|W\| _{q ; [T - \varepsilon, T]}}  \sum_{t \leq r < \cdot} | \Delta
    W_r |^2 \nonumber\\
    \leq & c (\|Y^{\Delta} \|_{p ; [t, T]} + \| Y^{\Delta}_T \|_{\infty})
    e^{c \bar{\varepsilon}} \|W\|^2_{2 ; [t, .]}, \nonumber
  \end{align}
  and then, by a similar argument as in equation (\ref{pvarJumpInv}) and
  taking p-variation, conditional expectation and $\esssup_\omega \sup_t$, we
  obtain
  \begin{align}
    \big\| \sum_{t \leq r < \cdot} \mathcal{J}_r  \big\| _{p ; 2, [T - \varepsilon,
    T]} \leq c (\|Y^{\Delta} \|_{p, 2 ; [T - \varepsilon, T]} + \| Y^{\Delta}_T
    \|_{\infty}) e^{c \bar{\varepsilon}} \|W\|^2_{2 ; [T - \varepsilon, T]} . 
    \label{JumpCon}
  \end{align}
  By combining (\ref{fCon}-\ref{JumpCon}) and noticing that $R > 1$, we get
  \begin{align}
    \| \bar{Y}^{\Delta} \|_{p, 2 ; [T - \varepsilon, T]} \leq & c (\|Y^{\Delta}
    \|_{p, 2 ; [T - \varepsilon, T]} + \| Y^{\Delta}_T \|_{\infty})  (R
    \bar{\varepsilon} + e^{c\|W\|q ; [0, T]}  \bar{\varepsilon}^2) \nonumber\\
    & + c \bar{\varepsilon}^{1 / 2} \|Z^{\Delta} \|_{\tmop{BMO} ; [T -
    \varepsilon, T]} + C_p  \| \bar{Z}^{\Delta} \|_{\tmop{BMO} ; [T - \varepsilon,
    T]}, \nonumber
  \end{align}
  For some suitable function $F$, being right continuous at $0$,  we can simplify the above as 
  \begin{align}
    \| \bar{Y}^{\Delta} \|_{p, 2 ; [T - \varepsilon, T]} \leq & F
    (\bar{\varepsilon})  (R (\|Y^{\Delta} \|_{p, 2 ; [T - \varepsilon, T]} + \|
    Y^{\Delta}_T \|_{\infty}) +\|Z^{\Delta} \|_{\tmop{BMO} ; [T - \varepsilon,
    T]})  \label{YCon}\\
    & + C_p  \| \bar{Z}^{\Delta} \|_{\tmop{BMO} ; [T - \varepsilon, T]}, \nonumber
  \end{align}
  Similar to the invariance proof, we apply Itô's
  formula on $| \bar{Y}^{\Delta}_t |^2$, take conditional expectations on both sides and make use of $| \bar{Y}_t |^2 \geq 0$ to get
  \begin{align}
&     \mathbb{E}_t \bigg[\int_t^T | \overline{Z}^{\Delta}_r |^2
    \tmop{dc}_r \bigg] \leq  \| \bar{Y}^{\Delta}_T \|_\infty^2 + 2 \mathbb{E}_t \bigg[\int_t^T \bar{Y}^{\Delta}_r
    (f (r, Y_r, Z_r) - f (r, Y'_r, Z_r')) \tmop{dc}_r \bigg] \label{Y2Con} \\
    & + 2 \mathbb{E}_t \bigg[ \int_t^T \bar{Y}^{\Delta}_r  (g_r (Y_{r +}) - g_r (Y'_{r +}))
    \tmop{dW}_r \bigg ] \nonumber 
    + \mathbb{E}_t \bigg[ \sum_{t \leq r < T} [| \bar{Y}_{r +}^{\Delta} |^2 - |
    \bar{Y}_r^{\Delta} |^2 - 2 \Delta \bar{Y}^{\Delta}_r (\bar{Y}^{\Delta}_r)] \bigg ].  \nonumber
  \end{align}
  By Lipschitz continuity of $f$ and using Lemma \ref{p2Norm-property} for the last inequality, we see that
  \begin{align}
    & \mathbb{E}_t \bigg[ \bigg| \int_t^T \bar{Y}^{\Delta}_r (f (r, Y_r, Z_r)
    - f (r, Y'_r, Z_r')) \tmop{dc}_r \bigg| \bigg] \label{fY2Con}\\
    \lesssim & \mathbb{E}_t \bigg[ \int_t^T | \bar{Y}^{\Delta}_r |
    (|Y^{\Delta}_r | + |Z^{\Delta}_r |) \tmop{dc}_r \bigg] \nonumber\\
    \lesssim & \mathbb{E}_t \bigg[ \int_t^T \bigg( \frac{1}{\lambda} |
    \bar{Y}^{\Delta}_r |^2 + \lambda |Y^{\Delta}_r |^2 + \lambda |Z^{\Delta}_r
    |^2 \bigg) \tmop{dc}_r \bigg] \nonumber\\
    \lesssim & \mathbb{E}_t \bigg[ \lambda \int_t^T |Z^{\Delta}_r |^2
    \tmop{dc}_r + \lambda \bar{\varepsilon} \sup_{r \in [t, T]} | Y^{\Delta}_r 
    |^2 + \frac{\bar{\varepsilon}}{\lambda} \sup_{r \in [t, T]} |
    \bar{Y}^{\Delta}_r  |^2 \bigg] \nonumber\\
    \lesssim & \lambda \| Z^{\Delta} \|_{\tmop{BMO} ; [T - \varepsilon, T]}^2
    + \lambda \bar{\varepsilon}  (\| Y^{\Delta} \|^2_{p, 2 ; [T - \varepsilon,
    T]} + \| Y^{\Delta}_T \|_{\infty}^2) + \frac{\bar{\varepsilon}}{\lambda}
    (\| \bar{Y}^{\Delta} \|^2_{p ; [t, T]} + \| \bar{Y}^{\Delta}_T
    \|_{\infty}^2) . \nonumber
  \end{align}
  For the following step, we use Lemma 1 from {\cite{diehl_backward_2017}} in the
first inequality, Lemma \ref{composition} in the second and $\tmop{ab}
\lesssim a^2 + b^2$ in the third, to get

\begin{align}
  & \mathbb{E}_t \big[\| \bar{Y}^{\Delta}_{r +} (g_r (Y_{r +}) - g_r (Y'_{r
  +}))\|_{p ; [t, T]} \big] \nonumber\\
  \leq & \mathbb{E}_t \big[\| \bar{Y}^{\Delta} \|_{p ; [t, T]} \sup_{r \in [t, T]}
  | g_r |_{C_b^2} | Y^{\Delta}_r | + \sup_{r \in [t, T]} | \bar{Y}^{\Delta}_r
  | \|g_r (Y) - g (Y_r')\|_{p ; [t, T]}\big] \nonumber\\
  \lesssim & \mathbb{E}_t \big[\| \bar{Y}^{\Delta} \|_{p ; [t, T]} \sup_{r \in [t,
  T]} | Y^{\Delta}_r | + \sup_{r \in [t, T]} | \bar{Y}^{\Delta}_r | \|Y^{\Delta} \|_{p ; [t, T]} \big] \nonumber\\
  & + \mathbb{E}_t \big[\sup_{r \in [t, T]} | \bar{Y}^{\Delta}_r |
  ( + (\| Y \|_{p ; [t, T]} + \| Y' \|_{p ; [t,
  T]}) \sup_{r \in [t, T]} | Y^{\Delta}_r |) \big] \nonumber\\
  \lesssim & \mathbb{E}_t \big[\| \bar{Y}^{\Delta} \|_{p ; [t, T]}^2 + \sup_{r \in
  [t, T]} | Y^{\Delta}_r |^2 + \sup_{r \in [t, T]} | \bar{Y}^{\Delta}_r |^2
  +\|Y^{\Delta} \|_{p ; [t, T]}^2 \big] \nonumber\\
  & +\mathbb{E}_t \big[(\| Y \|_{p ; [t, T]} + \| Y' \|_{p ; [t, T]}) \sup_{r \in
  [t, T]} | Y^{\Delta}_r |^2 \big] \label{Reason-p2-Norm-3}
\end{align}
Thereby, using the above estimate, Corollary, Lemma \ref{p2Norm-property} and $R > 1$,
we obtain
\begin{align}
  & \mathbb{E}_t \bigg[ \bigg| \int_t^T \bar{Y}^{\Delta}_{r +} (g_r (Y_{r +})
  - g_r (Y'_{r +})) \tmop{dW}_r \bigg| \bigg] \label{gY2Con}\\
  \lesssim & R (\|Y^{\Delta} \|_{p, 2 ; [T - \varepsilon, T]}^2 + \|
  Y^{\Delta}_T \|_{\infty}^2 +\| \bar{Y}^{\Delta} \|_{p, 2 ; [T - \varepsilon,
  T]}^2 + \| \bar{Y}^{\Delta}_T \|^2_{\infty}) \| W \|_{q ; [0, T]} \nonumber
\end{align}
  Using Taylor approximation for $u \mapsto \varphi (g_r \Delta W_r, Y_{r
  +}, u) - \varphi (g_r \Delta W_r, Y_{r +}, u)$, Lipschitz
  continuity of $g$ in the first and Gronwall in the second inequality, we get
  \begin{align}
    \lefteqn{ \sum_{t \leq r < T} | \nobracket \varphi (g \Delta W_r, Y_{r +}) -
    \varphi (g \Delta W_r, Y'_{r +}) - Y_{r +} + Y'_{r +} | \nobracket^2 
    \label{Jump-contraction-Y2}
    }\\
    \leq & C_g \sum_{t \leq r < T} | \Delta W_r |^2 | \nobracket
    \varphi (g \Delta W_r, Y_{r +}, \theta) - \varphi (g \Delta W_r, Y'_{r +},
    \theta) |^2 \nobracket \nonumber\\
    \leq & C_g \sum_{t \leq r < T} | \Delta W_r |^2  |Y^{\Delta}_{r +}
    |^2 e^{2 C_g  | \Delta W_r |} 
    \lesssim 
(\|Y^{\Delta} \|^2_{p ; [t, T]} + | Y^{\Delta}_T |^2) e^{c
    \|W\|_{q ; [T - \varepsilon, T]}} \|W\|^2_{2 ; [t, T]} \nonumber
  \end{align}
   for some $\theta \in [0, 1]$.
  It then follows, using Remark \ref{ItoJump}, that
  \begin{align}
    & \sum_{t \leq r < T} [| \bar{Y}_{r +}^{\Delta} |^2 - |
    \bar{Y}_r^{\Delta} |^2 - 2 \Delta \bar{Y}^{\Delta}_r (\bar{Y}^{\Delta}_r)]
    \label{JumpY2Con}\\
    \leq & 2 \sum_{t \leq r < T} | \nobracket \varphi (g \Delta W_r, Y_{r +})
    - \varphi (g \Delta W_r, Y'_{r +}) - Y_{r +} + Y'_{r +} | \nobracket^2
    \nonumber\\
    \lesssim & (\|Y^{\Delta} \|^2_{p ; [t, T]} + | Y^{\Delta}_T |^2) e^{c
    \|W\|_{q ; [T - \varepsilon, T]}} \|W\|^2_{2 ; [t, T]}. \nonumber 
  \end{align}
  By combining the estimates (\ref{fY2Con}-\ref{JumpY2Con}) and Corollary \ref{integral_q-var}, the inequality (\ref{Y2Con}) becomes
  \begin{align*}
    \mathbb{E}_t \bigg[ \int_t^T | \bar{Z}^{\Delta}_r |^2 \tmop{dc}_r \bigg]
    \leq & \| \bar{Y}^{\Delta}_T \|_{\infty}^2 + c \lambda \|Z^{\Delta}
    \|^2_{ \tmop{BMO} ; [T - \varepsilon, T]} + c \lambda \bar{\varepsilon}  (\|
    Y^{\Delta} \|^2_{p, 2 ; [T - \varepsilon, T]} + \| Y^{\Delta}_T
    \|_{\infty}^2)\\
    & + c R  \bar{\varepsilon}   (\| Y^{\Delta} \|^2_{p, 2 ; [T - \varepsilon,
    T]} + \| Y^{\Delta}_T \|_{\infty}^2 +\| \bar{Y}^{\Delta} \|_{p, 2 ; [T -
    \varepsilon, T]}^2 + \| \bar{Y}^{\Delta}_T \|^2_{\infty}) \\
    & + c (\| Y^{\Delta} \|^2_{p, 2 ; [T - \varepsilon, T]} + \| Y^{\Delta}_T
    \|_{\infty}^2) e^{c \bar{\varepsilon}}  \bar{\varepsilon}^2 + c
    \frac{\bar{\varepsilon}}{\lambda} (\| \bar{Y}^{\Delta} \|^2_{p ; [t, T]} +
    \| \bar{Y}^{\Delta}_T \|_{\infty}^2) .
  \end{align*}
  Next, by taking $\esssup_{\omega} \sup_t (\cdot)^{\frac{1}{2}}$ on both sides, we obtain
  \begin{eqnarray}
    \| \bar{Z}^{\Delta} \|_{\tmop{BMO} ; [T - \varepsilon, T]} & \leq & \|
    \bar{Y}^{\Delta}_T \|_{\infty} + c \bigg(
    \frac{\bar{\varepsilon}}{\lambda} + R \bar{\varepsilon} \bigg)^{\frac{1}{2}}
    \| \bar{Y}^{\Delta} \|_{p, 2 ; [T - \varepsilon, T]} \nonumber\\
    &  & + c (\lambda \varepsilon + R \bar{\varepsilon} + e^{c \bar{\varepsilon}} 
    \bar{\varepsilon}^2)^{\frac{1}{2}}  \|Y^{\Delta} \|_{p, 2 ; [T - \varepsilon,
    T]} + c \lambda^{\frac{1}{2}} \|Z^{\Delta} \|_{\tmop{BMO} ; [T - \varepsilon,
    T]} . \nonumber\\
    &  & + c \bigg( \frac{\bar{\varepsilon}}{\lambda} + R \bar{\varepsilon}
    \bigg)^{\frac{1}{2}} \| \bar{Y}^{\Delta}_T \|_{\infty}  + c (\lambda
    \bar{\varepsilon} + R \bar{\varepsilon} + e^{c \bar{\varepsilon}} 
    \bar{\varepsilon}^2)^{\frac{1}{2}} \| Y^{\Delta}_T \|_{\infty}  
    \label{ZCon-2}
  \end{eqnarray}
  Combining (\ref{YCon},\ref{ZCon-2}) yields for some function $F$, which is right continuous at $0$, that
  \begin{align}
    \| \bar{Z}^{\Delta} \|_{\tmop{BMO} ; [T - \varepsilon, T]} \leq & \|
    \bar{Y}^{\Delta}_T \|_{\infty} + F (\bar{\varepsilon}) (\frac{1}{\lambda} +
    R)^{\frac{1}{2}} \|Y^{\Delta} \|_{p, 2 ; [T - \varepsilon, T]} 
    \label{ZBMOCon}\\
    & + \bigg( F (\bar{\varepsilon}) + c \lambda^{\frac{1}{2}} \bigg)
    \|Z^{\Delta} \|_{\tmop{BMO} ; [T - \varepsilon, T]} + F (\bar{\varepsilon})
    (\frac{1}{\lambda} + R)^{\frac{1}{2}} \| \bar{Z}^{\Delta} \|_{\tmop{BMO} ;
    [T - \varepsilon, T]} \nonumber\\
    & + F (\bar{\varepsilon}) (\frac{1}{\lambda} + R)^{\frac{1}{2}} \|
    \bar{Y}^{\Delta}_T \|_{\infty}  + F (\bar{\varepsilon}) (\lambda +
    R)^{\frac{1}{2}} \| Y^{\Delta}_T \|_{\infty}  . \nonumber
  \end{align}
  Now we first choose $\lambda$ small enough such that $c
  \lambda^{\frac{1}{2}} \leq \frac{1}{6}$ and then $\varepsilon$ small enough
  such that $F (\bar{\varepsilon}) (\frac{1}{\lambda} + R)^{\frac{1}{2}} \leq
  \frac{1}{6}$ and $F (\bar{\varepsilon}) \leq \frac{1}{6}$, then we have
  (recall that $Y^{\Delta}_T = 0$)
  \[ \| \bar{Z}^{\Delta} \|_{\tmop{BMO} ; [T - \varepsilon, T]} \leq \frac{2}{5}
     \|Y^{\Delta} \|_{p, 2 ; [T - \varepsilon, T]} + \frac{2}{5} \|Z^{\Delta}
     \|_{\tmop{BMO} ; [T - \varepsilon, T]} . \]
  Then, by substituting (\ref{ZCon-2}) into (\ref{YCon}), one obtains
  \begin{align}
    & \| \bar{Y}^{\Delta} \|_{p, 2 ; [T - \varepsilon, T]} 
    \; \leq\;  C_p \|
    \bar{Y}^{\Delta}_T \|_{\infty} + F (\bar{\varepsilon}) R \|Y^{\Delta} \|_{p,
    2 ; [T - \varepsilon, T]} + F (\bar{\varepsilon}) R \| \bar{Y}^{\Delta}
    \|_{p, 2 ; [T - \varepsilon, T]}  \label{Yp2Con}\\
    &\qquad  
    + \bigg( F (\bar{\varepsilon}) R + \lambda^{\frac{1}{2}} \bigg) 
    \|Z^{\Delta} \|_{\tmop{BMO} ; [T - \varepsilon, T]} + F (\bar{\varepsilon})
    (\lambda + R) \| Y^{\Delta}_T \|_{\infty} \nonumber 
    + F (\bar{\varepsilon}) \bigg(
    \frac{1}{\lambda} + R \bigg)  \| \bar{Y}^{\Delta}_T \|_{\infty}  .
    \nonumber
  \end{align}
  and can again by choosing $\lambda$ and $\bar{\varepsilon}$ carefully (and
  recalling $\bar{Y}^{\Delta}_T = Y^{\Delta}_T = 0$), we have
  \begin{eqnarray*}
    \| \bar{Y}^{\Delta} \|_{p, 2 ; [T - \varepsilon, T]} & \leq & \frac{2}{5} 
    \|Y^{\Delta} \|_{p, 2 ; [T - \varepsilon, T]} + \frac{2}{5} \|Z^{\Delta}
    \|_{\tmop{BMO} ; [T - \varepsilon, T]} .
  \end{eqnarray*}
  Overall, we thus have
  \[ \| \bar{Y}^{\Delta} \|_{p, 2 ; [T - \varepsilon, T]} + \| \bar{Z}^{\Delta}
     \|_{\tmop{BMO} ; [T - \varepsilon, T]} \leq \frac{4}{5}  (\|Y^{\Delta}
     \|_{p, 2 ; [T - \varepsilon, T]} +\|Z^{\Delta} \|_{\tmop{BMO} ; [T -
     \varepsilon, T]}) . \]
  Therefore $\mathcal{M}^{T, \varepsilon}$ admits a unique fixpoint $(Y, Z) \in
  B^{T, \varepsilon}_R$, which is the unique solution of the Marcus-RBSDE
  (\ref{rBSDE-Marcus}) on the interval $[T - \varepsilon, T]$.
\end{proof}

\begin{remark}
  \label{epsilon-dependency}The choice of $R$ in the invariance part of the
  proof, see \eqref{YInv-final}, depends only on $C_f, C_g, p$ and the norm of
  the terminal condition $\| \xi \|_{\infty}$. As for $\bar{\varepsilon}$, it is
  chosen to be small enough depending only on $C_f, C_g, p$ such that the
  invariance property holds. They have to be chosen even smaller for the
  contraction property to hold and there their choice also depends on $R$, and
  therefore directly on $\| \xi \|_{\infty}$. One can further bound $\| \xi
  \|_{\infty}$ by the apriori bound $L_{\tmop{apriori}}$ from Theorem \ref{apriori-bound-YZ}, and obtain new quantities $R_L \geqslant R$ and
  $\bar{\varepsilon}_L \leq \bar{\varepsilon}$, which are still going to play crucial roles in proving of the next theorem. Finally, the
  choice of $\varepsilon$ is determined by condition \eqref{barepsilon},
 showing that it depends on $\bar{\varepsilon}, c$ and $W$. 
\end{remark}

The general idea of proving existence and uniqueness of the RBSDE solution on
the whole interval $[0, T]$ is to find a finite time partition $\pi = \{0 =
t_0 < t_1 < \cdots < t_N = T\}$ with small meshsize $| \pi | \leq
\varepsilon$, so that we can apply Theorem \ref{existenceuniqueness} to solve
the RBSDE on the intervals $[t_i, t_{i + 1}]$ and ``glue'' the local solutions
together. There are two obstacles where this approach faces problems. The first one is
that, at least under Assumption A, the choice of $\bar{\varepsilon}$ and
therefore of $\varepsilon$ depend on norm of terminal condition $\| Y_{t_i}
\|_{\infty}$, which now differs for every interval. Fortunately, whenever
we obtain a new terminal condition $Y_{t_i}$ from solving the RBSDE on $[t_i,
t_{i + 1}]$, it is automatically bounded by the apriori bound
$L_{\tmop{apriori}}$ and we could work with the quantity $\bar{\varepsilon}_L$
defined in Remark \ref{epsilon-dependency}, which serves the same puropose as
$\bar{\varepsilon}$, but is additionally uniform for all terminal conditions
obtained in the above form. The second challenge occurs from large jumps of the rough driver $W$ at times $\tau \in [0, T]$, since regardless of the time
partition we choose, there will always be an interval $[t_i, t_{i + 1}]$
containing such a $\tau$, thus causing  $\| W \|_{q ; [t_i, t_{i + 1}]}$ to be too
large for the condition \eqref{barepsilon} to hold,  presenting direct application of Theorem
\ref{existenceuniqueness}  on $[t_i, t_{i + 1}]$. Fortunately, since there are only finitely many jumps larger than
$\bar{\varepsilon}>0$, one can ``take them out'' of the local fix point construction, but instead define the solution at such large jumps 
``by hand''.
This sketch of ideas is elaborated in the proof of the next theorem.

\begin{theorem}[Global existence and uniqueness]
  \label{global_existenceuniqueness}Provided that Assumption A holds, there
  exists a unique $(Y, Z) \in \mathcal{B}_p \times \tmop{BMO}$ that satisfies
  the integral equation (\ref{rBSDE-Marcus}) (resp. (\ref{rBSDE-forward})) on
  the whole interval $[0, T]$.
\end{theorem}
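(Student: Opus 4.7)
I would construct the global solution by backward induction on a finite partition of $[0, T]$, gluing local solutions from Theorem \ref{existenceuniqueness} and prescribing the solution's jumps at the finitely many large jumps of $W$ directly from the integral equation. Concretely, Theorem \ref{apriori-bound-YZ} furnishes an apriori constant $L_{\tmop{apriori}}$ bounding $\sup_t \|Y_t\|_{\infty}$ for any prospective solution. By Remark \ref{epsilon-dependency} I fix a uniform $\bar{\varepsilon}_L > 0$, depending only on the data, such that Theorem \ref{existenceuniqueness} applies on every closed subinterval $[a, b]$ satisfying $|c_{a,b}| \vee \|W\|_{q;[a,b]} < \bar{\varepsilon}_L$ for any terminal condition of $L^\infty$-norm $\leq L_{\tmop{apriori}}$. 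Since $\|W\|_{q;[0,T]} < \infty$, the set of large jump times $J := \{\tau \in (0, T) : |\Delta W_\tau| \geq \bar{\varepsilon}_L/2\} = \{\tau_1 < \cdots < \tau_K\}$ is finite. Dudley's Lemma \cite{r_m_dudley_introduction_nodate} (already used in the proof of Theorem \ref{apriori-bound-YZ}), combined with a refinement to include all points of $J$, produces a finite partition $0 = s_0 < s_1 < \cdots < s_M = T$ with $J \subset \{s_0, \ldots, s_M\}$ and $|c_{s_{i-1}, s_i}| \vee \|W\|_{q;(s_{i-1}, s_i]} < \bar{\varepsilon}_L/2$ for every $i$. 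In particular, whenever $s_{i-1} \notin J$, $\|W\|_{q;[s_{i-1}, s_i]} \leq \|W\|_{q;(s_{i-1}, s_i]} + |\Delta W_{s_{i-1}}| < \bar{\varepsilon}_L$, so Theorem \ref{existenceuniqueness} directly applies.

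\emph{Backward gluing with jumps by hand.} Starting from $Y_T = \xi$, I iterate from $i = M$ down to $i = 1$. On an interval $[s_{i-1}, s_i]$ with $s_{i-1} \notin J$, Theorem \ref{existenceuniqueness} yields a unique local solution with terminal value $Y_{s_i}$. When $s_{i-1} = \tau_j \in J$, Theorem \ref{existenceuniqueness} is instead applied to each $[\tau_j + \delta, s_i]$ with $\delta > 0$ small, and uniqueness across varying $\delta$ glues these into a consistent solution on $(\tau_j, s_i]$ with a well-defined right-limit $Y_{\tau_j +}$; I then prescribe
\[
  Y_{\tau_j} := Y_{\tau_j+} + g_{\tau_j}(Y_{\tau_j+})\,\Delta W_{\tau_j} \quad (\text{forward}), \qquad Y_{\tau_j} := \varphi\bigl(g_{\tau_j}\,\Delta W_{\tau_j},\, Y_{\tau_j+},\, 1\bigr) \quad (\text{Marcus}),
\]
which are precisely the jumps of $\int_{\cdot}^T g_r(Y_{r+})(\diamond)\tmop{dW}_r$ at $\tau_j$ dictated by (\ref{rBSDE-forward}) and (\ref{rBSDE-Marcus}), respectively. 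At every step the apriori bound applied to the partial solution on $[s_{i-1}, T]$ (which itself solves the RBSDE on that interval with terminal $\xi$) ensures $\|Y_{s_{i-1}}\|_\infty \leq L_{\tmop{apriori}}$, so the induction hypothesis is preserved. Concatenation across all pieces, combined with Lemma \ref{p2Norm-property}(c) and the uniform local norm bounds from Theorem \ref{existenceuniqueness}, places $(Y, Z) \in \mathcal{B}^p \times \tmop{BMO}$; additivity of the backward Young, Lebesgue--Stieltjes, and stochastic integrals across the partition together with the prescribed jump identities verifies the global integral equation on $[0, T]$.

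\emph{Uniqueness and main obstacle.} Any two solutions $(Y, Z), (Y', Z') \in \mathcal{B}^p \times \tmop{BMO}$ are bounded by $L_{\tmop{apriori}}$, so the uniqueness clause of Theorem \ref{existenceuniqueness} applies on each $[s_{i-1}, s_i]$ with $s_{i-1} \notin J$; at each $\tau_j \in J$, $Y_{\tau_j}$ is an explicit function of $Y_{\tau_j +}$ via the jump formula above, so equality is propagated through the jump. Backward induction from $Y_T = Y'_T = \xi$ thus yields $(Y, Z) = (Y', Z')$. The principal obstacle is precisely the treatment of the large jumps of $W$: whenever an interval's left endpoint hosts a jump of size exceeding $\bar{\varepsilon}_L$, the hypothesis $\|W\|_{q;[a,b]} < \bar{\varepsilon}_L$ of Theorem \ref{existenceuniqueness} fails for every $b > a$, so the local fixed-point argument cannot be invoked directly on such an interval; isolating the finitely many such times in the partition and defining $Y$ there via the explicit forward- or Marcus-jump formula is the key device that circumvents this failure.
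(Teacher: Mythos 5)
Your proof is correct and follows essentially the same strategy as the paper: a Dudley-type partition with $\bar{\varepsilon}_L$ from Remark \ref{epsilon-dependency}, backward gluing of local solutions from Theorem \ref{existenceuniqueness}, the a priori bound of Theorem \ref{apriori-bound-YZ} to keep the successive terminal conditions uniformly controlled, prescribing the value at jump times of $W$ by the explicit forward/Marcus jump formula, and uniqueness from local uniqueness plus the jump identity. The only (inessential) difference is that the paper prescribes the jump ``by hand'' at every partition point, whereas you do so only at the finitely many large jumps and let the local theorem absorb the small ones.
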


\begin{proof}
  Recall the definition of $\bar{\varepsilon}_L$ from Remark
  \ref{epsilon-dependency}. \ By Lemma 4.7 and 4.8 in
  {\cite{r_m_dudley_introduction_nodate}}, there exists a finite time
  partition $\pi = \{0 = t_0 < t_1 < \cdots < t_N = T\}$ such that 
  \begin{align*}
      \max_{i =
  1, \cdots, N} | c_{t_{i - 1}, t_i} |  \leq \bar{\varepsilon}_L, &\max_{i =
  1, \cdots, N} \|W\|_{q ; (t_{i - 1}, t_i]} \leq \bar{\varepsilon}_L, &N \leq 1 + \max \{ | c_T |, \|W\|_{q ; [0, T]} \} / \bar{\varepsilon}_L.
  \end{align*}  
  By Theorem \ref{existenceuniqueness}, there is a unique solution $(Y,
  Z)$ of the RBSDE on $(t_{N - 1}, T]$. We now define by $Y_{t_{N - 1}} \assign \varphi (g_{t_{N - 1}} \Delta
  W_{t_{N - 1}}, Y_{t_{N - 1} +})$ the value ``after'' (in reverse time) the
  jump directly and let $Z_{t_{N - 1}} \assign Z_{t_{N - 1} +}$
  for Marcus jumps,  or respectively let  $Y_{t_{N - 1}} \assign Y_{t_{N - 1} +} +
  g_{t_{N - 1}} (Y_{t_{N - 1} +}) \Delta W_{t_{N - 1}}$ and $Z_{t_{N - 1}}
  \assign Z_{t_{N - 1} +}$ for forward jumps. It is straightforward to verify that we
  have thereby constructed an unique solution of the RBSDE on $[t_{N - 1}, T]$. By
  the apriori bounds (from Theorem \ref{apriori-bound-YZ}), we have that $\| Y_{t_{N -
  1}} \|_{\infty} \leq L_{\tmop{apriori}}$, and one sees that we next can apply the 
  Theorem \ref{existenceuniqueness} again on $(t_{N - 2}, t_{N - 1}]$.
  Iterating the above contruction (for $n=N-1,N-2,\ldots$) until reaching $t_{n-1}=0$,  yields a global solution to
  the rough BSDE. The uniqueness follows from the local uniqueness at each step.
\end{proof}

In the next theorem, we show that Picard iterations $(Y^n, Z^n)_{n\in \NN}$ of Marcus-RBSDE
\eqref{rBSDE-Marcus} or Forward-RBSDE (\ref{rBSDE-forward}), defined as
\begin{enumeratenumeric}
  \item $Y^0 \equiv \xi$, $Z^0 \equiv 0$;
  \item $Y^{n + 1}_t  =  \xi + \int_t^T f (r, Y^n_r, Z^n_r) \tmop{dc}_r +
    \int_t^T g_r (Y^n_{r +})  (\diamond) \tmop{dW} - \int_t^T Z^{n + 1}_r
    \tmop{dM}_r$, \quad$t \in [0, T], n \in \mathbb{N}$.
\end{enumeratenumeric}
is a Cauchy sequence in $\mathcal{B}_p \times \tmop{BMO}$. Combined with the uniqueness of the global solution $(Y,Z)$ of the respective RBSDE as shown in Theorem \ref{global_existenceuniqueness}, we have that $(Y^n, Z^n) \to (Y,Z)$ as $n \to \infty$ in $\mathcal{B}_p \times \tmop{BMO}$. This result provides a constructive way of obtaining global solutions to
RBSDEs. Its usefulness also shows in its crucial role in
the proof of Theorem \ref{stability-RBSDE-decorated} and Theorem
\ref{measurable-RBSDE-solution}. We like to  emphasize that showing the convergence of the global Picard iterations is non-trivial. Indeed, by the (local) contraction proof of Theorem
\ref{existenceuniqueness}, we concluded that, on a sufficiently small time interval $[s, t]$, it holds
\begin{align*}
    & \interleave Y^{n + 1} - Y^n, Z^{n + 1} - Z^n \interleave_{[s, t]} 
    \\
    & \qquad \leq  C
\interleave Y^n - Y^{n - 1}, Z^n - Z^{n - 1} \interleave_{[s, t]} + (C + \eta)
\|Y^{n + 1}_t - Y^n_t \|_{\infty} 
+ C \| Y^n_t - Y^{n - 1}_t \|_{\infty}
\end{align*}
for some constants $C < 1$ and $C + \eta > 1$. Since the terms $\|Y^{n + 1}_t - Y^n_t
\|_{\infty}$ and $\| Y^n_t - Y^{n - 1}_t \|_{\infty}$ are in general in \emph{not}
zero except for $t = T$ (as in Theorem \ref{existenceuniqueness}), this explains why 
that proof there does \emph{not} establish a global contraction property. In fact, we do not believe that a global contraction in general holds, instead, we are only showing $(Y^n,Z^n)$ to be a Cauchy sequence in the subsequent theorem. Here we can use similar arguments as used for Theorems \ref{existenceuniqueness}
and \ref{global_existenceuniqueness}, and we are able to build on several estimates from there.

\begin{theorem}
  \label{Picard-iteration}Provided that Assumption A holds, then for the 
  Marcus-RBSDE \eqref{rBSDE-Marcus} (or, respectively, for the Forward-RBSDE (\ref{rBSDE-forward})), the Picard-iterates $(Y^n,Z^n)_{n \in \NN}$, as defined above, form a Cauchy sequence in $\mathcal{B}_p \times \tmop{BMO}$. Its limit is the (respective) global RBSDE solution $(Y,Z)$ from Theorem~\ref{global_existenceuniqueness}, in other words it holds
  \[ \lim_{n \rightarrow \infty} (\| Y^n - Y \|_{p, 2 ; [0, T]} + \| Z^n - Z
     \|_{\tmop{BMO} ; [0, T]}) = 0. \]
\end{theorem}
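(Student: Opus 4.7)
The plan is to combine the local contraction inequality from the proof of Theorem~\ref{existenceuniqueness} with the backward-gluing strategy from Theorem~\ref{global_existenceuniqueness}. Fix the partition $\pi = \{0 = t_0 < \cdots < t_N = T\}$ chosen in the proof of Theorem~\ref{global_existenceuniqueness} so that $|c_{t_{i-1},t_i}| \le \bar\varepsilon_L$ and $\|W\|_{q;(t_{i-1},t_i]} \le \bar\varepsilon_L$, and set
\[
E^n_i := \interleave Y^n - Y^{n-1},\, Z^n - Z^{n-1} \interleave_{(t_{i-1}, t_i]},
\qquad
V^n_i := \|Y^n_{t_i} - Y^{n-1}_{t_i}\|_\infty.
\]
On each half-open interval $(t_{i-1}, t_i]$ the restriction of the Picard iteration is exactly the local Picard iteration with varying terminal $Y^{n+1}_{t_i}$, so the contraction estimate recalled just above the theorem gives $E^{n+1}_i \le C\, E^n_i + (C+\eta)\, V^{n+1}_i + C\, V^n_i$ with $C < 1$. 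The strategy is a backward induction on $i = N, N-1, \dots, 1$: for $i = N$ all iterates share the terminal $\xi$, so $V^{n+1}_N = V^n_N = 0$ and we obtain pure geometric contraction; for $i < N$, the terms $V^n_i$ will inherit geometric decay from the decay already established on $[t_i, T]$, which, inserted in the contraction, still yields decay of $E^n_i$.

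A preliminary step is to secure uniform-in-$n$ bounds $\sup_n \interleave Y^n, Z^n \interleave_{[0,T]} < \infty$, needed to keep the constants $C,\eta$ and the function $F_y$ of~(\ref{YInv-detail}) controlled across all iterations. This is obtained by the same backward procedure via the invariance property of $\mathcal{M}^{T,\varepsilon}$ from Theorem~\ref{existenceuniqueness}: on $(t_{N-1}, T]$ the restriction $(Y^n, Z^n)$ is exactly the local Picard sequence and by invariance stays in $B^{T,\varepsilon}_{R_L}$; Lemma~\ref{p2Norm-property}(a) together with the uniform jump bound $\|\Delta Y^n_{t_{N-1}}\|_\infty \le C_g \|W\|_{q;[0,T]}$ then yields a uniform $L^\infty$-bound on $Y^n_{t_{N-1}}$, and one reapplies invariance on $(t_{N-2}, t_{N-1}]$ with this terminal bound. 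Iterating $N$ times gives the global uniform bound, in the spirit of the apriori estimate in Theorem~\ref{apriori-bound-YZ}.

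The heart of the argument is the inductive step. The base case $i = N$ gives $E^{n+1}_N \le C\, E^n_N$, hence $E^n_N \le C^{n-1} E^1_N$. At a partition point $t_i$ with $i < N$, the Marcus identity $Y^{n+1}_{t_i} - Y^{n+1}_{t_i+} = \varphi(g_{t_i} \Delta W_{t_i}, Y^n_{t_i+}) - Y^n_{t_i+}$ (respectively the forward identity $Y^{n+1}_{t_i} - Y^{n+1}_{t_i+} = g_{t_i}(Y^n_{t_i+}) \Delta W_{t_i}$) combined with a Taylor/Gr\"onwall argument in the spirit of~(\ref{Jump-contraction-Y2}) gives
\[
V^{n+1}_i \lesssim \|Y^{n+1}_{t_i+} - Y^n_{t_i+}\|_\infty + c\, e^{c\|W\|_{q;[0,T]}} |\Delta W_{t_i}|\, \|Y^n_{t_i+} - Y^{n-1}_{t_i+}\|_\infty.
\]
Both right-hand terms are controlled by Lemma~\ref{p2Norm-property}(a) applied on $(t_i, T]$ (with vanishing terminal at $T$), which bounds them by $\sum_{j>i} E^{n+1}_j$ and $\sum_{j>i} E^n_j$ respectively. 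By the induction hypothesis these sums decay geometrically, so $V^n_i$ does too, and inserting into the contraction yields a recursion $E^{n+1}_i \le C\, E^n_i + K\rho^n$ with $\rho < 1$, from which $E^n_i \to 0$ geometrically by a routine iteration of the inequality.

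Finally, Lemma~\ref{p2Norm-property}(b)--(c) (to pass from half-open to closed subintervals and then to $[0, T]$) together with the analogous sub-additivity of $\|\cdot\|_{\tmop{BMO}}^2$ upgrade the interval-wise decay to $\interleave Y^n - Y^{n-1}, Z^n - Z^{n-1} \interleave_{[0,T]} \to 0$, so $(Y^n, Z^n)$ is Cauchy in $\mathcal{B}^p \times \tmop{BMO}$. Passing to the limit in the defining Picard equation, using Lemma~\ref{composition} and Corollary~\ref{integral_q-var} for the backward Young integral and the Burkholder--Davis--Gundy inequality for the martingale term, shows the limit solves the RBSDE; uniqueness from Theorem~\ref{global_existenceuniqueness} identifies it with $(Y, Z)$. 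I expect the trickiest technical step to be the jump analysis at partition times, where the local contraction of Theorem~\ref{existenceuniqueness} does not apply across a large jump, so one must instead exploit the explicit Marcus/forward jump identity together with the $L^\infty$ versus $\|\cdot\|_{p,2}$ transfer of Lemma~\ref{p2Norm-property}.
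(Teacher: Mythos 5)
Your proposal is correct and follows essentially the same route as the paper's proof: uniform-in-$n$ bounds obtained by iterating the invariance estimates backward over a partition adapted to $c$ and $W$, the local contraction inequality with terminal-difference terms, and a backward induction over the subintervals in which the jump at each partition point is handled by the explicit Marcus/forward identity together with the $L^\infty$ versus $\|\cdot\|_{p,2}$ transfer of Lemma~\ref{p2Norm-property}. The only (harmless) bookkeeping difference is that unrolling the recursion $E^{n+1}_i \le C\,E^n_i + K\rho^n$ across the $N$ intervals actually yields a rate $n^{N-1}C^n$ rather than a pure geometric one --- this is exactly the paper's bound \eqref{Picard-contraction} --- but, as you implicitly use, such polynomial factors can be absorbed into a slightly larger geometric rate on each successive interval, so the Cauchy conclusion is unaffected.
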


\begin{proof}
  We show that there exists an $R > 0$ such that for all $n \in
  \mathbb{N}$ we have the bounds
  \begin{equation}
    \interleave Y^n, Z^n \interleave_{[0, T]} \assign \|Z^n \|_{\tmop{BMO} ;
    [0, T]} + \|Y^n \|_{p, 2 ; [0, T]} \leq R \label{Picard-Inv} .
  \end{equation}
  Furthermore, there exists some $\varepsilon > 0$ such that for any partition
  $\Pi = \{0 = t_0 < t_1 < \cdots < t_N = T\}$ satisfying\footnote{The existence of such partitions is provided by Lemma 4.7 and Lemma 4.8 in
  {\cite{r_m_dudley_introduction_nodate}}.}
  \begin{eqnarray}
    \max_{j = 1, \cdots, N} |c_{t_{i - 1}, t_i} | \leq \varepsilon, \; \max_{j
    = 1, \cdots, N} \|W\|_{q ; (t_{i - 1}, t_i]} \leq \varepsilon, \; N
    \leq 1 + \max \{|c_T |, \|W\|_{q ; [0, T]} \} / \varepsilon, 
    \label{partition-pi-bar}
  \end{eqnarray}
  we have for some $C < 1$ and constants $\kappa, \eta > 0$ (see
  (\ref{DefEta}) and (\ref{DefKi}) for details) that  
  \begin{equation}
    \interleave Y^{n, n + 1}, Z^{n, n + 1} \interleave_{[t_{N - j}, T]}
    \leq 2^{(p + 2) (j - 1)} C^{n - j}  (C + \kappa)^j  (C + \eta)^{j -
    1} n^{j - 1} \interleave Y^1, Z^1 \interleave_{[t_{N - j}, T]},
    \label{Picard-contraction}
  \end{equation}
  holds for all $n \in \mathbb{N}$ and $j =
  1, \cdots, N$,
  where $Y^{n, n + 1}$ and \ $Z^{n, n + 1}$ denote the differences $Y^{n + 1} -
  Y^n$ and $Z^{n + 1} - Z^n$, respectively.
  
  For better readability, we postpone proving (\ref{Picard-Inv}) and
  (\ref{Picard-contraction}) to continue at first by completing the main line of proof of the theorem:
  For $i = N$ we have for all $n \in \mathbb{N}$
  \begin{equation}
    \interleave Y^{n, n + 1}, Z^{n, n + 1} \interleave_{[0, T]} \leq C_N
    C^n n^{N - 1} \interleave Y^1, Z^1 \interleave_{[0, T]},
    \label{Picard-estimates}
  \end{equation}
  with $C_N \assign 2^{(p + 2) (N - 1)}  (C + \kappa)^N  (C + \eta)^{N - 1}
  C^{- N}$. We get the boundedness of $\interleave Y^1, Z^1 \interleave_{[0,
  T]}$ directly from (\ref{Picard-Inv}).
  Hence, for every $n \in \mathbb{N}$ we have
  \begin{equation}
    \interleave Y - Y^n, Z - Z^n \interleave_{[0, T]} \leq \interleave
    Y^1, Z^1 \interleave_{[0, T]} C_N  \sum_{l = n}^{\infty} l^{N - 1} C^l .
    \label{Picard-bound}
  \end{equation}
  It is straightforward to show that $\sum_{l = 0}^{\infty} l^N C^l < \infty$ is finite, implying
  that
  \[ \lim_{n \rightarrow \infty} \|Y^n - Y\|_{p, 2 ; [0, T]} + \|Z^n -
     Z\|_{\tmop{BMO} ; [0, T]} = 0. \]
  \tmtextbf{Proof of inequality} \eqref{Picard-Inv}: By the same
  calculation as in (\ref{ZInv-abstact}-\ref{YInv-abstact}), we have that there exists
  some $\bar{\varepsilon} > 0$, such that for any time partition $\pi = \{0 =
  s_0 < s_1 < \cdots < s_M = T\}$ satisfying
  \begin{eqnarray}
    \max_{i = 1, \cdots, M} |c_{s_{i - 1}, s_i} | \leq \bar{\varepsilon}, &
    \max_{i = 1, \cdots, M} \|W\|_{q ; (s_{i - 1}, s_i]} \leq
    \bar{\varepsilon}, & M \leq 1 + \max \{|c_T |, \|W\|_{q ; [0, T]} \}
    / \bar{\varepsilon}, \nonumber
  \end{eqnarray}
  we have that for all $i = 0, \ldots, M - 1$ and $n \in \mathbb{N}$ it holds
  \begin{align*}
    \|Z^n \|_{\tmop{BMO} ; (s_{i - 1}, s_i]} \leq & \frac{2}{m}
    \interleave Y^{n - 1}, Z^{n - 1} \interleave_{(s_{i - 1}, s_i]} +
    \frac{1}{m} \|Y^{n - 1}_{s_i}  \|_{\infty} + \bigg( \frac{1}{m} + 1
    \bigg) \| Y^n_{s_i} \|_{\infty} \\ &+ \frac{1}{m} + \frac{1}{m} \|Y^n \|_{p,
    2 ; (s_{i - 1}, s_i]}, \\
    \|Y^n \|_{p, 2 ; (s_{i - 1}, s_i]} \leq & \frac{2}{m} \interleave
    Y^{n - 1}, Z^{n - 1} \interleave_{(s_{i - 1}, s_i]} + \frac{1}{m}  \|Y^{n
    - 1}_{s_i} \|_{\infty} + \bigg( \frac{1}{m} + C_y \bigg) \|Y^n_{s_i}
    \|_{\infty} + \frac{1}{m} \\&+ \frac{1}{m} \|Y^n \|_{p, 2 ; (s_{i - 1}, s_i]}
    . 
  \end{align*}
  Notice that the choice of $\bar{\varepsilon}$ is specified by the
  conditions (\ref{condition-barepsilon}). We observe, that it depends only on $C_y$
  (and $C_z$), which in turn depends only on $C_f, C_g, p$ and is invariant for
  different intervals and different $n \in \mathbb{N}$. At any time $t \in [0,
  T]$ and for any $n \in \mathbb{N}$, we have for both the forward jump
  $\Delta Y^{n + 1}_{s_i} = - g_{s_i} (Y^n_{s_i +}) \Delta W_{s_i}$ and Marcus
  jumps $\Delta Y^{n + 1}_{s_i} = - (\varphi (g_{s_i} \Delta W_{s_i}, Y^n_{s_i
  +}) - Y^n_{s_i +})$ the uniform bound $\| \Delta Y^{n + 1}_{s_i} \|_{\infty}
  \leq C_g \|W\|_{q ; [0, T]}$, here we use Taylor formula for estimating
  the Marcus jumps. This gives us the estimate $\|Y^n \|_{p, 2 ; [s_{i - 1},
  s_i]} \leq \|Y^n \|_{p, 2 ; (s_{i - 1}, s_i]} + C_g \|W\|_{q ; [0,
  T]}$, combining this with the above two inequalities and using the
  continuity of the $\tmop{BMO}$ norm we get
  \begin{align}
    \frac{m - 2}{m} \interleave Y^n, Z^n \interleave_{[s_{i - 1}, s_i]}
    \leq & \frac{4}{m} \interleave Y^{n - 1}, Z^{n - 1}
    \interleave_{(s_{i - 1}, s_i]} + \frac{2}{m}  \|Y^{n - 1}_{s_i}
    \|_{\infty} \nonumber \\
    & + \bigg( \frac{2}{m} + 1 + C_y \bigg) \|Y^n_{s_i} \|_{\infty} + \frac{2}{m} + C_g \|W\|_{q ; [0, T]} . \nonumber
  \end{align}
  Given some $m \in \mathbb{N}$, to  be specified later, we fix a
  value
  \begin{equation}
    R \assign mM^{1 - p}  (4 C_y)^{M + 1}  (1 \vee \| \xi \|_{\infty} \vee C_g
    \|W\|_{q ; [0, T]}) \label{DefR}
  \end{equation}
  and show by induction over $i$ and $n$ that
  \[ \interleave Y^n, Z^n \interleave_{[s_{i - 1}, s_i]} \leq M^{1 - p} 
     (4 C_y)^{- i} R. \]
  We begin by showing it for the cases $n = 0, i = 1, \ldots, M$ and $i = M, n
  \in \mathbb{N}$. The first case with $Y^0 = \xi, Z^0 = 0$ being trivial, we
  just show the second case by induction over $n$. Assume $\interleave Y^n,
  Z^n \interleave_{[t_{M - 1}, t_M]} \leq M^{1 - p}  (4 C_y)^{- M} R$
  holds for some $n$, we show for $n + 1$ that
  \begin{align*}
    & \frac{m - 2}{m} \interleave Y^n, Z^n \interleave_{[t_{M - 1}, t_M]} \\
    \leq & \frac{4}{m} \interleave Y^{n - 1}, Z^{n - 1}
    \interleave_{(t_{M - 1}, t_M]} + \bigg( \frac{4}{m} + 1 + C_y \bigg) \|
    \xi \|_{\infty} + \frac{2}{m} + C_g \|W\|_{q ; [0, T]}\\
    \leq & \frac{4}{m} M^{1 - p}  (4 C_y)^{- M} R + \frac{1}{m}  \bigg(
    \frac{4}{m} + 1 + C_y \bigg) M^{1 - p}  (4 C_y)^{- M - 1} R\\
    & + \frac{2}{m^2} M^{1 - p}  (4 C_y)^{- M - 1} R + \frac{1}{m} M^{1 - p} 
    (4 C_y)^{- M - 1} R,
  \end{align*}
  implying $\interleave Y^n, Z^n \interleave_{[t_{M - 1}, t_M]} \leq
  \frac{5}{m - 2} M^{1 - p}  (4 C_y)^{- M} R$. So for $m \geqslant 7$ we get the desired result.
  Next, assuming that for some $n$ and $i$ we have $\interleave Y^n, Z^n
  \interleave_{[s_j, s_{j + 1}]} \leq M^{1 - p}  (4 C_y)^{- j - 1} R$ for
  all $j = i, \ldots, M - 1$ and $\interleave Y^{n - 1}, Z^{n - 1}
  \interleave_{[s_{j - 1}, s_j]} \leq M^{1 - p}  (4 C_y)^{- j} R$ for all
  $j = i, \ldots, M$, we show that $\interleave Y^n, Z^n \interleave_{[s_{i -
  1}, s_i]} \leq M^{1 - p}  (4 C_y)^{- i} R$ follows. We have
  \begin{align*}
    & \frac{m - 2}{m} \interleave Y^n, Z^n \interleave_{[s_{i - 1}, s_i]}\\
    \leq & \frac{4}{m} \interleave Y^{n - 1}, Z^{n - 1}
    \interleave_{(s_{i - 1}, s_i]} + \frac{2}{m} \|Y^{n - 1}_{s_i} 
    \|_{\infty} + (\frac{2}{m} + 1 + C_y)\| Y^n_{s_i} \|_{\infty} +
    \frac{2}{m} + C_g \|W\|_{q ; [0, T]}\\
    \leq & \frac{4}{m} \interleave Y^{n - 1}, Z^{n - 1}
    \interleave_{(s_{i - 1}, s_i]} + \frac{2}{m}  \bigg( \sum_{j = i}^{M - 1}
    \|Y^{n - 1} \|_{p, 2 ; [s_j, s_{j + 1}]} +\| \xi \|_{\infty} \bigg)\\
    & + (\frac{2}{m} + 1 + C_y)  \bigg( \sum_{j = i}^{M - 1} \|Y^n \|_{p, 2 ;
    [s_j, s_{j + 1}]} +\| \xi \|_{\infty} \bigg) + \frac{2}{m} + C_g \|W\|_{q
    ; [0, T]}.
  \end{align*}
  By the induction assumption, it holds
  \[ \sum_{j = i}^{M - 1} \|Y^{n - 1} \|_{p, 2 ; [s_j, s_{j + 1}]} \leq
     RM^{1 - p}  \sum_{j = i}^{M - 1} (4 C_y)^{- j - 1} < RM^{1 - p}  (4
     C_y)^{- i - 1}  \sum_{j = 0}^{\infty} (4 C_y)^{- j} \]
  Without loss of generality, let $C_y > 1$. Using geometric series limits, we obtain
  \[ \sum_{j = i}^{M - 1} \|Y^{n - 1} \|_{p, 2 ; [s_j, s_{j + 1}]} \leq
     \frac{RM^{1 - p}  (4 C_y)^{- i - 1} }{1 - (4 C_y)^{- 1}} \leq
     \frac{4}{3} RM^{1 - p}  (4 C_y)^{- i} = \frac{1}{3 C_y} M^{1 - p}  (4
     C_y)^{- i} \]
  and by the same estimation we also have $\sum_{j = i}^{M - 1} \|Y^n \|_{p, 2
  ; [s_j, s_{j + 1}]} \leq \frac{1}{3 C_y} M^{1 - p}  (4 C_y)^{- i}$.
  Together with the \eqref{DefR}, we conclude
  \[ \frac{m - 2}{m} \interleave Y^n, Z^n \interleave_{[s_{i - 1}, s_i]}
     \leq \bigg( \frac{5}{m} + \frac{1}{3 C_y} + \frac{1}{3} \bigg) M^{1
     - p}  (4 C_y)^{- i} R, \]
  implying $\interleave Y^n, Z^n \interleave_{[s_{i - 1}, s_i]} \leq M^{1
  - p}  (4 C_y)^{- i} R$ for any $m \geqslant 21$. This completes the
  induction argument. Applying Lemma \ref{p2Norm-property} and the above in particular implies that
  \[ \interleave Y^n, Z^n \interleave_{[0, T]} \leq M^{p - 1}  \sum_{i =
     1}^M \interleave Y^n, Z^n \interleave_{[s_{i - 1}, s_i]} \leq
     \frac{(4 C_y)^{- 1}}{1 - (4 C_y)^{- 1}} R \leq \frac{1}{3} R. \]
  \tmtextbf{Proof of the inequalities in} \eqref{Picard-contraction}: 
  By exactly the same calculation as (\ref{ZBMOCon}-\ref{Yp2Con}), we get for
  all $n \in \mathbb{N}$ and $i = 0, \ldots, N - 1$, that
  \begin{align*}
    \|Z^{n, n + 1} \|_{\tmop{BMO} ; (t_i, t_{i + 1}]} \leq & \|Y^{n, n +
    1}_{t_{i + 1}} \|_{\infty} + F (\varepsilon)  (\frac{1}{\lambda} +
    R)^{\frac{1}{2}} \|Y^{n - 1, n} \|_{p, 2 ; (t_i, t_{i + 1}]}\\
    & + (F (\varepsilon) + c \lambda^{\frac{1}{2}}) \|Z^{n - 1, n}
    \|_{\tmop{BMO} ; (t_i, t_{i + 1}]} + F (\varepsilon) (\frac{1}{\lambda} +
    R)^{\frac{1}{2}} \| Y^{n, n + 1}_{t_{i + 1}} \|_{\infty} \\
    & F (\varepsilon)  (\lambda + R)^{\frac{1}{2}} \|Y^{n - 1, n}_{t_{i + 1}}
    \|_{\infty} + F (\varepsilon)  (\frac{1}{\lambda} + R)^{\frac{1}{2}}
    \|Z^{n, n + 1} \|_{\tmop{BMO} ; (t_i, t_{i + 1}]},
  \end{align*}
  \begin{align}
    \|Y^{n, n + 1} \|_{p, 2 ; (t_i, t_{i + 1}]} \leq & C_p \|Y^{n, n +
    1}_{t_{i + 1}} \|_{\infty} + F (\varepsilon) R\|Y^{n - 1, n} \|_{p, 2 ;
    (t_i, t_{i + 1}]}  \nonumber\\
    & + F (\varepsilon) R \|Y^{n, n + 1} \|_{p, 2 ; (t_i, t_{i
    + 1}]} + (F (\varepsilon) R + \lambda^{\frac{1}{2}}) \|Z^{n - 1, n}
    \|_{\tmop{BMO} ; (t_i, t_{i + 1}]} \nonumber\\
    & + F (\varepsilon) (\lambda + R) \|Y^{n - 1, n}_{t_{i + 1}} \|_{\infty}
    + F (\varepsilon) \bigg( \frac{1}{\lambda} + R \bigg)  \|Y^{n, n +
    1}_{t_{i + 1}} \|_{\infty}, \nonumber
  \end{align}
  where $R$ is from \eqref{DefR} and $C_p$ from \eqref{ZCon}. Similarly as we
  have argued several times before, we can again choose $\lambda$ and $\varepsilon$ to be
  suitably small (depending only $C_f, C_g, p$ and $R$) to get for some $C <
  1$ that
  \begin{align}
\label{YZCon}
    & \interleave Y^{n, n + 1}, Z^{n, n + 1} \interleave_{(t_i, t_{i + 1}]}  
    \\
    & \qquad  \le  C\|Y^{n - 1, n}_{t_{i + 1}} \|_{\infty} + (C + \eta) \|Y^{n, n
    + 1}_{t_{i + 1}} \|_{\infty}  + C \interleave Y^{n - 1, n}, Z^{n - 1, n}
    \interleave_{(t_i, t_{i + 1}]}  \nonumber
  \end{align}
  for all $n \in \mathbb{N}$ and $i = 1, \ldots, N - 1$, and some constant
  \begin{equation}
    \eta < C + 2 C_p . \label{DefEta}
  \end{equation}
  We have for forward-type integration of jumps that
  \begin{align*}
    \| \Delta Y^{n + 1}_t - \Delta Y_t^n \|_{\infty} & = \|(g_t (Y^n_{t +}) -
    g_{t_i} (Y_{t +}^{n - 1})) \Delta W_t \|_{\infty} \leq C_g | \Delta
    W_t |  \| Y^{n - 1, n}_{t +} \|_{\infty},
  \end{align*}
  while for the Marcus-type jumps it follows  that
  \begin{align*}
    \| \Delta Y^{n + 1}_t - \Delta Y_t^n \|_{\infty} & = \| \varphi (g_t
    \Delta W_t, Y^n_{t +}) - \varphi (g_t \Delta W_t, Y^{n - 1}_{t +}) -
    Y^n_{t +} + Y^{n - 1}_{t +} \|_{\infty}\\
    & \leq C_g | \Delta W_t | \exp (C_g | \Delta W_t |) \| Y^{n - 1,
    n}_{t +} \|_{\infty},
  \end{align*}
by  using Taylor and Gronwall arguments,  similarly as in
  \eqref{Jump-contraction-Y2}.  For both type of jumps and for any time $t$ we have $\| \Delta Y^{n + 1}_t -
  \Delta Y_t^n \|_{\infty} \leq \kappa (\|Y^{n - 1, n}_{t_{i + 1}}
  \|_{\infty} +\|Y^{n - 1, n} \|_{p, 2 ; (t_i, t_{i + 1}]})$ for
  \begin{equation}
    \kappa \assign C_g \|W\|_{q ; [0, T]} \vee C_g \exp (C_g \|W\|_{q ; [0,
    T]})  \|W\|_{q ; [0, T]} . \label{DefKi}
  \end{equation}
  We can combine the above inequality with \eqref{YZCon} to get
  \begin{align*}
    &\interleave Y^{n, n + 1}, Z^{n, n + 1} \interleave_{(t_i, t_{i + 1}]}\\
    &\qquad
    \leq  (C + \kappa) (\|Y^{n - 1, n}_{t_{i + 1}} \|_{\infty} +
    \interleave Y^{n - 1, n}, Z^{n - 1, n} \interleave_{(t_i, t_{i + 1}]})  
    +(C + \eta) \|Y^{n, n + 1}_{t_{i + 1}} \|_{\infty} 
  \end{align*}
  By repeatedly applying \eqref{YZCon} to the term $\interleave \cdot
  \interleave_{(t_i, t_{i + 1}]}$ on the right, one obtains
  \begin{align}
    & \interleave Y^{n, n + 1}, Z^{n, n + 1} \interleave_{[t_i, t_{i + 1}]} \label{YZCon-full}\\
    \leq & (C + \eta) \|Y^{n, n + 1}_{t_{i + 1}} \|_{\infty} + (C + \eta)
    (C + \kappa) \sum_{l = 1}^{n - 1} C^{l - 1} \|Y^{n - l, n - l + 1}_{t_{i +
    1}} \|_{\infty} \nonumber\\
    & + (C + \kappa) \sum_{l = 0}^{n - 1} C^l \|Y^{n - l - 1, n - l}_{t_{i +
    1}} \|_{\infty} + (C + \kappa) C^{n - 1} \interleave Y^1, Z^1
    \interleave_{(t_i, t_{i + 1}]} \nonumber
  \end{align}
  To show \eqref{Picard-contraction} by induction over $j = 1, \ldots, N$,  we start for $j = 1$ by noticing that $Y^{n, n + 1}_{t_N} = 0$ for all $n \in
  \mathbb{N}$, so \eqref{YZCon-full} simply becomes $\interleave Y^{n, n + 1},
  Z^{n, n + 1} \interleave_{[t_{N - 1}, T]} \leq (C + \kappa) C^{n - 1}
  \interleave Y^1, Z^1 \interleave_{(t_{N - 1}, t_N]}$.
  Assuming now that for some $j$ the inequality \eqref{Picard-contraction} holds
  for all $n \in \mathbb{N}$. To conclude that it holds for $j + 1$, notice that
  \eqref{YZCon-full} can be rewritten as
  \begin{align*}
    & \interleave Y^{n, n + 1}, Z^{n, n + 1} \interleave_{[t_{N - j - 1},
    t_{N - j}]}\\
    \leq & (C + \eta) \|Y^{n, n + 1} \|_{p, 2 ; [t_{N - j}, T]} + (C +
    \eta) (C + \kappa) \sum_{l = 1}^{n - 1} C^{l - 1} \|Y^{n - l, n - l + 1}
    \|_{p, 2 ; [t_{N - j}, T]}\\
    & + (C + \kappa) \sum_{l = 0}^{n - 1} C^l \|Y^{n - l - 1, n - l} \|_{p, 2
    ; [t_{N - j}, T]} + (C + \kappa) C^{n - 1} \interleave Y^1, Z^1
    \interleave_{(t_{N - j - 1}, t_{N - j}]} .
  \end{align*}
  By substituting according to \eqref{Picard-contraction} in the above, we obtain
  \begin{align*}
    & \interleave Y^{n, n + 1}, Z^{n, n + 1} \interleave_{[t_{N - j - 1},
    t_{N - j}]}\\
    \leq & 2^{(p + 2) (j - 1)} C^{n - j}  (C + \kappa)^j  (C + \eta)^j
    n^{j - 1} \interleave Y^1, Z^1 \interleave_{[t_{N - j}, T]}\\
    & + 2^{(p + 2) (j - 1)} C^{n - j - 1}  (C + \kappa)^{j + 1}  (C + \eta)^j
    n^j \interleave Y^1, Z^1 \interleave_{[t_{N - j}, T]}\\
    & + 2^{(p + 2) (j - 1)} C^{n - j - 1}  (C + \kappa)^{j + 1}  (C +
    \eta)^{j - 1} n^j \interleave Y^1, Z^1 \interleave_{[t_{N - j}, T]}\\
    & + (C + \kappa) C^{n - 1} \interleave Y^1, Z^1 \interleave_{(t_{N - j -
    1}, t_{N - j}]}\\
    \leq & 3 \times 2^{(p + 2) (j - 1)} C^{n - j - 1}  (C + \kappa)^{j +
    1}  (C + \eta)^j n^j \interleave Y^1, Z^1 \interleave_{[t_{N - j}, T]}\\
    & + (C + \kappa) C^{n - 1} \interleave Y^1, Z^1 \interleave_{(t_{N - j -
    1}, t_{N - j}]} .
  \end{align*}
  We use the fact $C^{n - j - 1}  (C + \kappa)^{j + 1}  (C + \eta)^j n^j >
  C^{n - j}  (C + \kappa)^j  (C + \eta)^{j - 1} n^{j - 1} > (C + \kappa) C^{n
  - 1}$ and apply Lemma \ref{p2Norm-property} twice to conclude that
  \begin{align*}
    & \interleave Y^{n, n + 1}, Z^{n, n + 1} \interleave_{[t_{N - j - 1},
    T]}\\
    \leq & 2^{p - 1}  (\interleave Y^{n, n + 1}, Z^{n, n + 1}
    \interleave_{[t_{N - j}, T]} + \interleave Y^{n, n + 1}, Z^{n, n + 1}
    \interleave_{[t_{N - j - 1}, t_{N - j}]})\\
    \leq & \nobracket 2^{(p + 2) j} C^{n - j - 1} (C + \kappa)^{j + 1} (C
    + \eta)^j n^j \interleave Y^1, Z^1 \interleave_{[t_{N - j - 1}, T]}) .
  \end{align*}
  This completes the proof.
\end{proof}

\section{Stability of Solution}\label{Chapter 4}
In this chapter, we study the stability of the solutions to the Forward-RBSDE
(\ref{Forward-RBSDE}) and the Marcus-RBSDE (\ref{Marcus-RBSDE}). Our aim is to
quantify by upper stability estimates the extend, by which small perturbations in the quantities $\xi$, $f$, $g$, and the rough driving path $W$
can affect the solution $(Y, Z)$. There are various ways to measure the
distance between c{\`a}gl{\`a}d paths, including the standard $p$-variation
metric and a Skorokhod-type $p$-variation metric, as introduced in
{\cite{chevyrev_canonical_2019}} and {\cite{friz_differential_2018}}. We work
within the framework introduced by Chevyrev et al.
{\cite{chevyrev_superdiffusive_2024}}, who generalized the established classical
 Skorokhod-type metrics, by introducing a more general notion of paths called
decorated paths and defining a suitable  Skorokhod-type metric on this  decorated path space.
Both the rough driver $W$ and the solution $Y$ can be embedded into the space
of decorated paths. We summarize the results of Chevyrev et al.
{\cite{chevyrev_superdiffusive_2024}} in Chapter \ref{decorated-path} and
provide further details on how the RBSDE solution can be embedded into the
space of decorated paths in Chapter \ref{rBSDEasDecoratedPath}. Finally, in
Chapter \ref{Stability-solution}, we state and prove the stability of RBSDE
solutions.

\subsection{Decorated Paths and Skorokhod-type p-variation
Metric}\label{decorated-path}

We are going to work under the notion of decorated paths as introduced in
{\cite{chevyrev_superdiffusive_2024}}. They work with c{\`a}dl{\`a}g paths,
while we work with c{\`a}gl{\`a}d paths; due to this difference in the setting, some of our
definitions require slight adaptions from theirs, but nonetheless the proof ideas remain the same. Therefore,  we do refer the reader to {\cite{chevyrev_superdiffusive_2024}} for proofs.

\begin{definition}\label{def:decopaths}
  Let $I = [a, b] \subset \mathbb{R}$ be a closed interval and $\Phi : I
  \rightarrow D (I) \assign D ([0, 1], \mathbb{R}^e)$ be a path that maps into
  the space of c{\`a}gl{\`a}d paths on $[0, 1]$. We say $t \in I$ is a
  stationary point of $\Phi$ if $\Phi (t) \equiv \tmop{const}$. Let $\Pi
  \subset I$ be a subset of $I$.\\
  Let $\bar{\mathcal{D}} (I)$ denote the space that consists of pairs $(\Phi,
  \Pi)$ with the following properties:
  \begin{enumeratealpha}
    \item the function $t \mapsto \Phi (t) (0)$ lies in $D (I)$;
    
    \item the set $\Pi$ is at most countable and contains all non stationary points of
    $\Phi$;
    
    \item for all $\varepsilon > 0$, there exist only finitely many points $t
    \in \Pi$ such that $| \Phi (t) (1) - \Phi (t) (0) | > \varepsilon$.
  \end{enumeratealpha}
\end{definition}

Let $\Lambda_I$ denote the set of strictly increasing bijections from $I$ to
$I$. Two c{\`a}gl{\`a}d paths $\varphi^1, \varphi^2 \in D (I)$ are called
reparameterization of each other if there exists a $\lambda \in \Lambda_I$
such that $\varphi^1 = \varphi^2 \circ \lambda$.

We can now define an equivalence class on $\bar{\mathcal{D}} (I)$. We say that
$(\Phi^1, \Pi^1), (\Phi^2, \Pi^2) \in \bar{\mathcal{D}} (I)$ are equivalent if
the functions
\[ s \mapsto \Phi^i (t) (s)  \hspace{0.17em} \mathbb{1}_{\{s < 1\}} + \Phi^i
   (t +) (1)  \hspace{0.17em} \mathbb{1}_{\{s = 1\}} \]
are reparameterization of each other for all $t \in I$; in particular, this
implies $\Phi^1 (t -) (0) = \Phi^1 (t) (0) = \Phi^2 (t) (0) = \Phi^2 (t -)
(0)$ and $\Phi^1 (t +) (1) = \Phi^2 (t +) (1)$ for all $t \in I$. Notice that
the definition of equivalence classes on $\bar{\mathcal{D}} (I)$ is
independent of $\Pi^1$ and $\Pi^2$.

\begin{definition}
  Denote by $\mathfrak{D} (I) \assign \bar{\mathcal{D}} (I) / \sim$ the quotient space
  of equivalence classes on $\bar{\mathcal{D}} (I)$. We refer to elements of
  $\mathfrak{D} (I)$ as {\tmstrong{decorated paths}}.
\end{definition}

Let $(\Phi, \Pi) \in \bar{\mathcal{D}} ([a, b])$ with $\Pi = \{t_k \}_{k = 1,
\ldots, m}$. We can define the {\tmstrong{$\delta$-extension}} $\Phi^{\delta}
\in D ([a, b + \delta])$ of the path by adding fictitious time of length
$\delta > 0$ as follows. For $\Pi = \emptyset$ we simply define
\[ \Phi^{\delta} (t) = \Phi (t) (0)  \quad \text{for } t \in [a, b]  \quad
   \text{and} \quad \Phi^{\delta} (t) = \Phi (b)  \quad \text{for } t \in (b,
   b + \delta]. \]
Otherwise, set
\(r = \sum_{k = 1}^m 2^{- k}  \text{ and }  r_k = \frac{2^{- k}
   \delta}{r}.\)
We define a  c{\`a}gl{\`a}d function
\begin{equation}
  \tau^{\delta} : [a, b] \to [a, b + \delta], \qquad \tau^{\delta} (t) = t +
  \sum_{k = 1}^m r_k  \hspace{0.17em} \mathbb{1}_{\{t_k < t\}}, \label{tau}
\end{equation}
which is strictly increasing, and moreover  define $\Phi^{\delta}$ as\footnote{If $\Phi (t) (\cdummy) \in C ([0,
1])$ and $\Phi (t +) (1) = \Phi (t) (1)$ for all $t \in [a, b]$, this
construction will be the same as in {\cite{chevyrev_canonical_2019}}.}
\begin{equation}
  \Phi^{\delta}_s := \left\{\begin{array}{ll}
    \Phi (t) (0), & \text{if } s = \tau^{\delta}_t  \text{for some } t \in [a,
    b],\\
    \Phi (t_k)  ((\tau^{\delta}  (t_k +) - s) / r_k), \nobracket & \text{if }
    s \in (\tau^{\delta}_{t_k}, \tau^{\delta}_{t_k +} \nobracket]  \text{for
    some } 1 \leq k \leq m.
  \end{array}\right. \label{delta-extention}
\end{equation}
It is common in the literature to refer to the additional path segments on the
intervals $(\tau^{\delta} (t_k), \tau^{\delta} (t_k +)]$, $1 \leq k \leq m$,
as \tmtextbf{excursions} of the paths.

For $(\Phi^1, \Pi^1), (\Phi^2, \Pi^2) \in \bar{\mathcal{D}} (I)$, we can now
introduce the (pseudo) metric
\[ \alpha_{\infty ; [a, b]} ((\Phi^1, \Pi^1), (\Phi^2, \Pi^2)) \assign
   \lim_{\delta \to 0} \inf_{\lambda \in \Lambda_{[a, b + \delta]}} \max
   \left\{ \| \lambda - \mathrm{id} \|_{\infty}, \hspace{0.27em} \| \Phi^{1,
   \delta} \circ \lambda - \Phi^{2, \delta} \|_{\infty ; [a, b + \delta]}
   \right\} . \]
It has been shown in Lemma 8.12 of {\cite{chevyrev_superdiffusive_2024}} that
this limit exists and is independent of various choices that we have made
above, such as the ordering of the jumps, the definition of $r_k$, and
especially the choice of $\Pi^1$ and $\Pi^2$. We will therefore suppress the
set $\Pi$ and speak only of $\Phi \in \bar{\mathcal{D}} ([a, b])$. It has also
been shown that $\alpha_{\infty ; [a, b]} (\Phi^1, \Phi^2) = 0$ if and only if
$\Phi^1 \sim \Phi^2$, which means that even though $\alpha_{\infty}$ is not a
metric on $\bar{\mathcal{D}} (I)$, it is a metric on the equivalence class
$\mathfrak{D} (I)$. In fact, by Theorem 8.14 of
{\cite{chevyrev_superdiffusive_2024}}, the space $\mathfrak{D} (I)$ equipped
with $\alpha_{\infty}$ is a complete separable metric space.

The authors of {\cite{chevyrev_superdiffusive_2024}} have shown that
everything holds likewise also when, instead of the uniform norm, one uses the $p$-var
norm by considering the space
\[ \mathfrak{D}^{p \text{-var}} (I) = \left\{ \hspace{0.17em} \Phi \in
   \mathfrak{D} (I) \hspace{0.27em} \mid \hspace{0.27em} \| \Phi^{\delta}
   \|_{p \text{-var}} < \infty \right\} \]
and the $p$-variation type Skorokhod metric $\alpha_p$ defined by
\[ \alpha_{p ; [a, b]} ((\Phi^1, \Pi^1), (\Phi^2, \Pi^2)) \assign \lim_{\delta
   \to 0} \inf_{\lambda \in \Lambda_{[a, b + \delta]}} \max \left\{ \| \lambda
   - \mathrm{id} \|_{\infty}, \hspace{0.27em} \| \Phi^{1, \delta} \circ
   \lambda - \Phi^{2, \delta} \|_{p ; [a, b + \delta]} \right\} . \]
Let $\sigma^p_{J 1}$ denote the $p$-variation type J1 metric on $D^p (I)$ (see
{\cite{friz_differential_2018}}) and $\sigma^p_{\tmop{SM} 1}$ denote the
$p$-variation type SM1 metric on $D^p (I)$ (see
{\cite{chevyrev_canonical_2019}}), in the special case $p = \infty$, this is
the classical Skorokhod J1 (see {\cite{billingsley_convergence_1999}}, Chap.
12 or {\cite{jacod_limit_2003}}, Chap. VI) and SM1 metric on $D (I)$ (see
{\cite{whitt_stochastic-process_2002}}, Chap. 12.3). There is a natural
connection between these metrics through the embeddings \hypertarget{embedding-i}{$\imath : D (I)
\hookrightarrow \bar{\mathcal{D}} (I)$}, defined by $\imath h (t) (s) = h (t+)$, and \hypertarget{embedding-j}{$\jmath : D (I) \hookrightarrow \bar{\mathcal{D}} (I)$}, defined by $(\jmath h) (t) (s) = (1 - s) h (t +) + \tmop{sh} (t)$, which is the linear
path connections $h (t +)$ and $h (t)$. We again suppress the set $\Pi$, since
it is relevant for the definition of the metric, but whenever needed, it can
always be chosen as a countable set containing all discontinuities of $h$. It
is straightforward to see that $(D^p (I), \sigma^p_{J 1})$ is isometric to
$(\imath D^p (I), \alpha_p)$ and $(D^p (I), \sigma^p_{\tmop{SM} 1})$ is
isometric to $(\jmath D^p (I), \alpha_p)$.

\subsection{Rough BSDE Solution as Decorated
Path}\label{rBSDEasDecoratedPath}

We have shown in the previous section that the rough driver $W$ can be
naturally lifted to the space of decorated paths via $\imath$ (or $\jmath$) by
adding constant (or linear) excursions as additional information at each
discontinuity. Measuring the lift $\imath W$ (or $\jmath W$) in $\alpha_p$ is
then equivalent to a $p$-variation-type Skorokhod J1 (or M1) metric.

As for the solution $(Y, Z)$ to the RBSDE, the $Z$-component does not play an
important role in this section, since its norm can be interpreted as a norm of
$\int Z \tmop{dB}$, which is almost surely continuous. Embedding the
$Y$-component into the space of decorated paths is more subtle. Indeed, even
in the simpler case of ODEs driven by bounded variation paths, a sequence of
drivers convergent in the J1 metric may fail to produce convergent solutions
in J1 or M1 (see Example 1.4 in {\cite{chevyrev_superdiffusive_2020}}), which
corresponds to the $\imath$- or $\jmath$-embedding. However, there is a
natural way to embed $Y$ that draws on the ``time-stretching'' idea commonly
used for Marcus-type equations.

Let $\Pi = \{t_k \}_{k = 1, \ldots, m} \subset [0, T]$ be a countable set
containing all discontinuities of $W$, and let $\mathcal{E}$ be an embedding
from $D (I)$ to $\bar{\mathcal{D}} (I)$. In this paper, $\mathcal{E}$ will be
chosen as either $\imath$ or $\jmath$ to relate to forward- or Marcus-type
jumps. We want to add fictitious time to the RBSDE and we begin by defining
$W^{\delta} \hspace{0.27em} = \hspace{0.27em} (\mathcal{E}W)^{\delta}$, where
the right-hand side is the $\delta$-extension from \eqref{delta-extention}
applied with respect to $\Pi$.

We then study the following (forward-type) RBSDE on $[0, T + \delta]$:
\begin{equation}
  \hat{Y}_t^{\delta} \hspace{0.27em} = \hspace{0.27em} \xi \hspace{0.27em} +
  \hspace{0.27em} \int_t^{T + \delta} \hat{f}^{\delta} (r, \hat{Y}^{\delta}_r,
  \hat{Z}^{\delta}_r) \hspace{0.17em} \mathrm{d} c^{\delta}_r \hspace{0.27em}
  + \hspace{0.27em} \int_t^{T + \delta} \hat{g}^{\delta} (r,
  \hat{Y}^{\delta}_r) \hspace{0.17em} \mathrm{d} W^{\delta}_r \hspace{0.27em}
  - \hspace{0.27em} \int_t^{T + \delta} \hat{Z}^{\delta}_r \hspace{0.17em}
  \mathrm{d} B^{\delta}_r, \label{cts_rBSDE}
\end{equation}
where $c^{\delta} = (\imath \hspace{0.17em} \mathrm{id})^{\delta}$,
$\hat{f}^{\delta} (r, \cdot, \cdot) = f (c^{\delta}_r, \cdot, \cdot)$,
$\hat{g}^{\delta} (r, \cdot) = g (c^{\delta}_r, \cdot)$, and $B^{\delta} =
(\imath B)^{\delta} = B_{c^{\delta}}$. Note that all $\delta$-extensions are
defined with respect to $\Pi$. Moreover, we can simply use linear excursions,
since they do not affect the jumps in this setting. Under Assumption A, the
well-posedness of \eqref{cts_rBSDE} follows from Theorem
\ref{global_existenceuniqueness}.

We can now construct a decorated path \hypertarget{lift-Y}{$\mathbf{Y} : [0, T] \rightarrow D^p
([0, 1])$}, which naturally have $\hat{Y}^{\delta}$ as its $\delta$-extension,
for that we simply define $\mathbf{Y} (t)$ to be a linear reparameterization
of $Y^{\delta} \mid_{[ \tau^{\delta} (t), \tau^{\delta} (t +)]}$ for every $t\in [0,T]$.

In the next two theorems, we show rigorously how the RBSDE (\ref{cts_rBSDE})
can be seen as the ``time-stretched'' version of the Forward-RBSDE
(\ref{Forward-RBSDE}) or Marcus-RBSDE (\ref{Marcus-RBSDE}) depending on
the choice of $\mathcal{E}$ being $\imath$ or $\jmath$. The following imply, in particular, that
$\mathbf{Y} (t) (0) = Y^{\delta}_{\tau^{\delta} (t)} = Y_t$, $t \in [0, T]$, justifying the decorated path
$\mathbf{Y}$ being a lift of $Y$ in the space of decorated paths. The proofs
are inspired by Theorem 38 of {\cite{friz_general_2017}}.

\begin{theorem}
  \label{fictioustime_Marcus-RBSDE}Let $(Y, Z)$ be a solution of Marcus-RBSDE
  (\ref{Marcus-RBSDE}). We define
  \begin{align*}
    \hat{Y}^{\delta}_s = \left\{ \begin{array}{ll}
      Y_t & , \text{if } s = \tau^{\delta}_t  \text{for some } t \in [0, T],\\
      \varphi (g_{t_n +} \Delta W_{t_n}, Y_{t_n +}, (\tau^{\delta} (t_k +) -
      s) / r_k) & , \text{if } s \in (\tau^{\delta}_{t_k}, \tau^{\delta}_{t_k
      +} \nobracket]  \text{for some } 1 \leq k \leq m,
    \end{array} \right.
  \end{align*}
  and $\hat{Z}^{\delta}_s \assign Z_{c^{\delta} (s)}$ for all $s \in [0, T + \delta]$. Then
  $(\hat{Y}^{\delta}, \hat{Z}^{\delta})$ is the unique solution to the RBSDE
  (\ref{cts_rBSDE}) with $W^{\delta} = (\jmath W)^{\delta}$.
  
  Conversely, let $(\hat{Y}^{\delta}, \hat{Z}^{\delta})$ be the solution to the
  RBSDE (\ref{cts_rBSDE}), the pair $(\hat{Y}^{\delta}_{\tau^{\delta}},
  \hat{Z}^{\delta}_{\tau^{\delta}})$ is the unique solution to the
  Marcus-RBSDE (\ref{Marcus-RBSDE}). 
\end{theorem}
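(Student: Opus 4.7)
The plan is to verify the integral equation \eqref{cts_rBSDE} separately on (i) the non-excursion segments $[\tau^{\delta}(t_{k-1}+), \tau^{\delta}(t_k)]$ and (ii) the excursion intervals $[\tau^{\delta}(t_k), \tau^{\delta}(t_k+)]$, and to glue the pieces together. The crucial observation is that during an excursion the time-change $c^{\delta}$ and the Brownian component $B^{\delta} = B_{c^{\delta}}$ are both constant, while $W^{\delta} = (\jmath W)^{\delta}$ is a straight line from $W_{t_k}$ to $W_{t_k +}$ traversed over $r_k$ units of fictitious time. Hence on an excursion the RBSDE \eqref{cts_rBSDE} collapses to a backward ODE, whereas between excursions the bijection $r = \tau^{\delta}(u)$ pulls the integrals against $\mathrm{d}c^{\delta}$, $\mathrm{d}W^{\delta}$, $\mathrm{d}B^{\delta}$ back to integrals against $\mathrm{d}c$, $\mathrm{d}W$, $\mathrm{d}M$ on the corresponding subinterval of $[0,T]$.

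On a non-excursion segment, by definition $\hat{Y}^{\delta}_r = Y_{c^{\delta}(r)}$ and $\hat{Z}^{\delta}_r = Z_{c^{\delta}(r)}$, so the identity obtained after pulling back is precisely a piece of the Marcus-RBSDE \eqref{Marcus-RBSDE} satisfied by $(Y,Z)$, minus its Marcus-jump sum. On an excursion at $t_k$, the $\mathrm{d}c^{\delta}$- and $\mathrm{d}B^{\delta}$-integrals vanish, and the Young integral reduces to $\int g_{t_k}(\hat{Y}^{\delta}_r)(\Delta W_{t_k}/r_k)\,\mathrm{d}r$, since $g$ is continuous in time (so $g_{t_k+} = g_{t_k}$) and $W^{\delta}$ is affine with slope $\Delta W_{t_k}/r_k$ on the excursion. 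Substituting $u = (\tau^{\delta}(t_k+) - s)/r_k$ and invoking the defining property $\mathrm{d}y/\mathrm{d}u = V(y)$ of the Marcus flow $\varphi(V,x,\cdot)$ shows that the proposed expression $\hat{Y}^{\delta}_s = \varphi(g_{t_k}\Delta W_{t_k}, Y_{t_k+}, u)$ solves this backward ODE with terminal value $Y_{t_k+}$; at the left endpoint the formula yields $\varphi(g_{t_k}\Delta W_{t_k}, Y_{t_k+}) = Y_{t_k}$ by the definition of the Marcus jump, which matches the value of $\hat{Y}^{\delta}$ on the neighbouring non-excursion segment, so the pieces concatenate continuously.

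Summing the pieces over all $t_k \in \Pi$, the excursion contributions to the Young integral in \eqref{cts_rBSDE} reproduce, via a telescoping argument, exactly the Marcus correction $\sum_k [\varphi(g_{t_k}\Delta W_{t_k}, Y_{t_k+}) - Y_{t_k+} - g_{t_k}(Y_{t_k+})\Delta W_{t_k}]$ that is present in \eqref{Marcus-RBSDE} but not in \eqref{cts_rBSDE} (whose driver $W^{\delta}$ is continuous). Uniqueness of the solution to \eqref{cts_rBSDE} then follows from Theorem \ref{global_existenceuniqueness} applied with the continuous driver $W^{\delta}$ of finite $q$-variation, the time-change $c^{\delta}$ and the Brownian motion $B^{\delta}$; Assumption A is inherited from the original data by construction, and the adaptedness and $\tmop{BMO}$-bound for $\hat{Z}^{\delta} = Z_{c^{\delta}}$ follow from the time-change property of the filtration under the deterministic map $c^{\delta}$. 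The converse direction is then immediate: given $(\hat{Y}^{\delta}, \hat{Z}^{\delta})$ solving \eqref{cts_rBSDE}, the same piecewise change-of-variable read backwards shows that $(\hat{Y}^{\delta}_{\tau^{\delta}}, \hat{Z}^{\delta}_{\tau^{\delta}})$ satisfies \eqref{Marcus-RBSDE}, and uniqueness comes from Theorem \ref{global_existenceuniqueness} applied to the Marcus-RBSDE.

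The main obstacle will be making the change-of-variable for the backward Young integral $\int \hat{g}^{\delta}(\hat{Y}^{\delta})\,\mathrm{d}W^{\delta}$ across an excursion rigorous, so that its ODE interpretation is genuinely derived from the integral equation rather than posited. This requires the associativity of Young integration from Lemma \ref{associativityYoung} together with Proposition \ref{BackwardsYoung}, adapted to the reversed time direction inherent in the backward equation, and it is here that the flow identity $\varphi(V, x, u_1 + u_2) = \varphi(V, \varphi(V, x, u_1), u_2)$ is invoked in order to match the excursion endpoint values across the gluing points and to identify the excursion contribution with the Marcus jump correction.
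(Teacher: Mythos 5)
Your overall architecture coincides with the paper's: on each excursion $(\tau^{\delta}(t_k),\tau^{\delta}(t_k+)]$ the time change $c^{\delta}$ and $B^{\delta}=B_{c^{\delta}}$ are constant and $W^{\delta}$ is affine, so \eqref{cts_rBSDE} reduces to the Marcus flow ODE solved by $\varphi(g_{t_k}\Delta W_{t_k},Y_{t_k+},\cdot)$, whose left endpoint value $\varphi(g_{t_k}\Delta W_{t_k},Y_{t_k+})=Y_{t_k}$ matches the adjacent segment; the excursion contributions to $\int \hat g^{\delta}(\hat Y^{\delta})\,\mathrm{d}W^{\delta}$ account for the jump term of the original Young integral plus the Marcus correction; and the converse plus uniqueness follow from Theorem \ref{global_existenceuniqueness}. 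This is exactly the paper's equation \eqref{cts_approx2} and its use.

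The gap is in the step you yourself flag as the main obstacle, and the tools you name do not close it. The map $\tau^{\delta}:[0,T]\to[0,T+\delta]$ is a strictly increasing c\`agl\`ad \emph{injection} whose image omits the excursion intervals, so it is not a reparametrization in the sense in which Young integrals are invariant; moreover, when $W$ has countably many (possibly dense) jumps the complement of the excursions in $[0,T+\delta]$ is not a finite union of intervals, so ``verify on each non-excursion segment and glue'' and the finite telescoping sum are not available. Lemma \ref{associativityYoung} concerns associativity $\int x\,\mathrm{d}(\int y\,\mathrm{d}z)=\int xy\,\mathrm{d}z$, not time changes, so it does not supply the change-of-variable identity you need. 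The paper instead establishes the identity for all $t\in\mathrm{Im}(\tau^{\delta})$ simultaneously by an $\approx_{\varepsilon}$ argument: approximate $\int g(Y_{+})\,\mathrm{d}W$ by Riemann--Stieltjes sums via the MRS/RRS convergence of Proposition \ref{BackwardsYoung}, isolate finitely many jumps at a time, transport the sums through $\tau^{\delta}$, choose refinements $\bar P_i\subset\tilde P_i$ so that the boundary mismatch $(\hat g^{\delta}_{\tilde t_1^i}(\hat Y^{\delta}_{\tilde t_1^i})-\hat g^{\delta}_{\tilde t_0^i}(\hat Y^{\delta}_{\tilde t_0^i}))\,W^{\delta}_{\tau^{\delta}(t_i),\tau^{\delta}(t_i+)}$ is small, and identify the limit of the transported sums with $\int\hat g^{\delta}(\hat Y^{\delta})\,\mathrm{d}W^{\delta}$. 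To salvage your cleaner splitting you would have to prove an invariance-under-$\tau^{\delta}$ lemma for backward Young integrals against c\`agl\`ad drivers and justify countable additivity over the excursion decomposition; as written these are asserted rather than proved.
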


\begin{proof}
  Notice that by construction it holds $Y_t = \hat{Y}^{\delta}_{\tau^{\delta}
  (t)}$ and $Z_t = \hat{Z}^{\delta}_{\tau^{\delta} (t)}$ for all $t \in [0,
  T]$. Furthermore, if $t \in \mathrm{Im} (\tau^{\delta}) \subset [0, T +
  \delta]$, then $\hat{Y}^{\delta}_t = Y_{c^{\delta}_t}$. The converse
  statement follows from the first result together with the uniqueness of the
  solution of (\ref{cts_rBSDE}).
  
  We start the proof of the first statement with the observation that for $t
  \in [0, T + \delta] \setminus \mathrm{Im} (\tau^{\delta})$ (i.e. for $t \in
  (\tau^{\delta} (t_k), \tau^{\delta} (t_k +)]$ for some $1 \le k \le m$) by
  definition of $\hat{Y}^{\delta}$ we have
  \begin{align}
    \hat{Y}^{\delta}_t & = \varphi \bigg( g_{t_k +} \Delta W_{t_k}, Y_{t_k +},
    \frac{\tau^{\delta} (t_k +) - t}{r_k} \bigg) \nonumber\\
    & = Y_{t_k +} + \int_0^{\frac{\tau^{\delta} (t_k +) - t}{r_k}} g_{t_k +}
    (\varphi (g_{t_k +} \Delta W_{t_k}, Y_{t_k +}, r)) \Delta W_{t_k} 
    \hspace{0.17em} \tmop{dr} \nonumber\\
    & = Y_{t_k +} + \int_t^{\tau^{\delta}  (t_k +)} g_{t_k +} \bigg( \varphi
    \bigg( g_{t_k +} \Delta W_{t_k}, Y_{t_k +}, \frac{\tau^{\delta} (t_k +) -
    r}{r_k} \bigg) \bigg) \frac{1}{r_k} \Delta W_{t_k}  \hspace{0.17em}
    \tmop{dr} \nonumber\\
    & = Y_{t_k +} + \int_t^{\tau^{\delta}  (t_k +)}
    \hat{g}^{\delta}_{\tau^{\delta}  (t_k +)} (\hat{Y}^{\delta}_r)
    dW^{\delta}_r .  \label{cts_approx2}
  \end{align}
  Then we show that $(\hat{Y}^{\delta}, \hat{Z}^{\delta})$ satisfies
  (\ref{cts_rBSDE}) for all $t \in \mathrm{Im} (\tau^{\delta})$.
  
  We introduce the notation $a \approx_{\varepsilon} b$ meaning $|a - b| \le
  \varepsilon$. Due to the absolute continuity of the last term of the
  Marcus-RBSDE (\ref{Marcus-RBSDE}), for every $t \in [0, T + \delta]$ and
  every $\varepsilon > 0$ there exist $n \in \mathbb{N}$ and time points $\{t_i
  \}_{i = 0, \ldots, n}$ (with some being jump points) with
  \[ c^{\delta}_t = t_0 < t_1 < t_2 < \cdots < t_n < t_{n + 1} = T, \]
  such that
  \begin{align*}
    Y_{c^{\delta}_t} - \xi \approx_{\varepsilon} & \int_{c^{\delta}_t}^T f (r,
    Y_r, Z_r)  \hspace{0.17em} dr - \int_{c^{\delta}_t}^T Z_r  \hspace{0.17em}
    dB_r + \sum_{i = 0}^n \int_{t_i}^{t_{i + 1}} g_r (Y_r)  \hspace{0.17em}
    dW_r\\
    & + \sum_{i = 0}^n [\varphi (g_{t_i +} \Delta W_{t_i}, Y_{t_i +}) -
    Y_{t_i +} - g_{t_i +} (Y_{t_i +}) \Delta W_{t_i}] .
  \end{align*}
  By the MRS convergence of $\int g_r (Y_r)  \hspace{0.17em} dW_r$ (see
  Proposition \ref{backwardYoung}), we can find partitions
  \[ \mathcal{P}_i = \{t_i = t_0^i, \ldots, t_{n_{i + 1}}^i = t_{i + 1} \} \]
  of $[t_i, t_{i + 1}]$ for $i = 0, \ldots, n$ such that
  \[ \int_{t_i}^{t_{i + 1}} g_r (Y_r)  \hspace{0.17em} dW_r
     \approx_{\frac{1}{n + 1} \varepsilon} \sum_{j = 0}^{n_i} g_{t_{j + 1}^i}
     (Y_{t_{j + 1}^i}) W_{t_j^i, t_{j + 1}^i} . \]
  Thus,
  \begin{align}
    Y_{c^{\delta}_t} - \xi \approx_{2 \varepsilon} & \int_{c^{\delta}_t}^T f (r,
    Y_r, Z_r)  \hspace{0.17em} \tmop{dr} - \int_{c^{\delta}_t}^T Z_r 
    \hspace{0.17em} \tmop{dB}_r + \sum_{i = 0}^n \sum_{j = 0}^{n_i} g_{t_{j +
    1}^i} (Y_{t_{j + 1}^i}) W_{t_j^i, t_{j + 1}^i} \nonumber\\
    & + \sum_{i = 0}^n [\varphi (g_{t_i +} \Delta W_{t_i}, Y_{t_i +}) -
    Y_{t_i +} - g_{t_i +} (Y_{t_i +}) \Delta W_{t_i}] .  \label{cts_approx1}
  \end{align}
  Equation \eqref{cts_approx2} implies in particular that
  \[ \varphi (g_{t_i} \Delta W_{t_i}, Y_{t_i +}) = Y_{t_i +} +
     \int_{\tau^{\delta} (t_i)}^{\tau^{\delta}  (t_i +)}
     \hat{g}^{\delta}_{\tau^{\delta} (t_i)} (\hat{Y}^{\delta}_r)
     \tmop{dW}^{\delta}_r . \]
  Combining this with \eqref{cts_approx1} and applying a time-change result {\cite[Prop.V.1.5]{revuz_continuous_1999}} along
  with the definitions of $W^{\delta}$ and $\hat{Y}^{\delta}$ yields
  \begin{align*}
    Y_{c^{\delta}_t} - \xi \approx_{2 \varepsilon} & \int_t^{T + \delta}
    \hat{f}^{\delta} (r, Y_{c^{\delta}_r}, \hat{Z}^{\delta}_r) 
    \hspace{0.17em} dc^{\delta}_r - \int_t^{T + \delta} \hat{Z}^{\delta}_r 
    \hspace{0.17em} \tmop{dB}^{\delta}_r\\
    & + \sum_{i = 0}^n \int_{\tau^{\delta} (t_i)}^{\tau^{\delta}  (t_i +)}
    \hat{g}^{\delta}_{\tau^{\delta}  (t_i +)} (\hat{Y}^{\delta}_r)
    \tmop{dW}^{\delta}_r - \sum_{i = 0}^n \hat{g}^{\delta}_{\tau^{\delta} 
    (t_i +)} (\hat{Y}^{\delta}_{\tau^{\delta} (t_i +)})
    W^{\delta}_{\tau^{\delta} (t_i), \tau^{\delta}  (t_i +)}\\
    & + \sum_{i = 0}^n \sum_{j = 0}^{n_i} \hat{g}^{\delta}_{\tau^{\delta}
    (t^i_{j + 1})} (\hat{Y}^{\delta}_{\tau^{\delta} (t_{j + 1}^i)})
    W^{\delta}_{\tau^{\delta} (t_j^i), \tau^{\delta} (t_{j + 1}^i)} .
  \end{align*}
  For all $i = 0, \ldots, n$, note that $\tau^{\delta}  (t_i +) <
  \tau^{\delta} (t_1^i)$. Define the partition of $[\tau^{\delta} (t_i +),
  \tau^{\delta} (t_{i + 1})]$ by
  \[ \tilde{P}_i \assign \{ \tilde{t}_0^i, \tilde{t}_1^i, \ldots,
     \tilde{t}_{n_i + 1}^i \} \assign \{\tau^{\delta} (t_i +), \tau^{\delta}
     (t_1^i), \ldots, \tau^{\delta} (t_{n_i + 1}^i)\} . \]
  Then we can rewrite the above as
  \begin{align*}
    Y_{c^{\delta}_t} - \xi \approx_{2 \varepsilon} & \int_t^{T + \delta}
    \hat{f}^{\delta} (r, Y_{c^{\delta}_r}, \hat{Z}^{\delta}_r) 
    \hspace{0.17em} dc^{\delta}_r - \int_t^{T + \delta} \hat{Z}^{\delta}_r 
    \hspace{0.17em} \tmop{dB}^{\delta}_r + \sum_{i = 1}^n \int_{\tau^{\delta}
    (t_i)}^{\tau^{\delta}  (t_i +)} \hat{g}^{\delta}_{\tau^{\delta} (t_i)}
    (\hat{Y}^{\delta}_r) \tmop{dW}^{\delta}_r\\
    & + \sum_{i = 0}^n \bigg( \sum_{j = 0}^{n_i}
    \hat{g}^{\delta}_{\tilde{t}_{j + 1}^i} (\hat{Y}^{\delta}_{\tilde{t}_{j +
    1}^i}) \hspace{0.17em} W^{\delta}_{\tilde{t}_j^i, \tilde{t}_{j + 1}^i} +
    (\hat{g}^{\delta}_{\tilde{t}_1^i} (\hat{Y}^{\delta}_{\tilde{t}_1^i}) -
    \hat{g}^{\delta}_{\tilde{t}_0^i} (\hat{Y}^{\delta}_{\tilde{t}_0^i}))
    \hspace{0.17em} W^{\delta}_{\tau^{\delta} (t_i), \tau^{\delta} (t_i +)}
    \bigg)
  \end{align*}
  We can find refinements $\bar{P}_i \subset \tilde{P}_i$ such that
  \[ \hat{g}^{\delta}_{\tilde{t}_1^i} (\hat{Y}^{\delta}_{\bar{t}_1^i}) -
     \hat{g}^{\delta}_{\tilde{t}_0^i} (\hat{Y}^{\delta}_{\bar{t}_0^i}) \le
     \frac{\varepsilon}{n \|W\|_{q \text{-var} ; [0, T]}} \]
  for all $i = 1, \ldots, n$. This implies
  \begin{align*}
    Y_{c^{\delta}_t} - \xi \approx_{3 \varepsilon} & \int_t^{T + \delta}
    \hat{f}^{\delta} (r, Y_{c^{\delta}_r}, \hat{Z}^{\delta}_r) 
    \hspace{0.17em} dc^{\delta}_r - \int_t^{T + \delta} \hat{Z}^{\delta}_r 
    \hspace{0.17em} dB^{\delta}_r + \sum_{i = 0}^n \sum_{j = 0}^{n_i}
    \hat{g}^{\delta}_{\tilde{t}_{j + 1}^i} (\hat{Y}^{\delta}_{\tilde{t}_{j +
    1}^i}) W^{\delta}_{\tilde{t}_j^i, \tilde{t}_{j + 1}^i} .
  \end{align*}
  The right-hand side converges (in the RRS sense) to
  \[ \int_t^{T + \delta} \hat{f}^{\delta} (r, Y_{c^{\delta}_r},
     \hat{Z}^{\delta}_r)  \hspace{0.17em} dc^{\delta}_r - \int_t^{T + \delta}
     \hat{Z}^{\delta}_r  \hspace{0.17em} dB^{\delta}_r + \int_t^T
     \hat{g}^{\delta}_r (\hat{Y}^{\delta}_r)  \hspace{0.17em} dW^{\delta}_r .
  \]
  By construction, $Y_{c^{\delta}_t} - \xi$ is also the RRS limit, and by
  uniqueness we have
  \[ Y_{c^{\delta}_t} = \xi + \int_t^{T + \delta} \hat{f}^{\delta} (r,
     Y_{c^{\delta}_r}, \hat{Z}^{\delta}_r)  \hspace{0.17em} dc^{\delta}_r -
     \int_t^{T + \delta} \hat{Z}^{\delta}_r  \hspace{0.17em} dB^{\delta}_r +
     \int_t^T \hat{g}^{\delta}_r (\hat{Y}^{\delta}_r)  \hspace{0.17em}
     dW^{\delta}_r . \]
  Finally, it holds that $\hat{Y}^{\delta}_t = Y_{c^{\delta}_t}$ for all $t
  \in \mathrm{Im} (\tau^{\delta})$ and $c$ is constant on $[0, T + \delta]
  \setminus \mathrm{Im} (\tau^{\delta})$. Hence, for all $t \in \mathrm{Im}
  (\tau^{\delta})$ we have shown
  \[ Y_{c^{\delta}_t} = \hat{Y}^{\delta}_t = \xi + \int_t^{T + \delta}
     \hat{f}^{\delta} (r, \hat{Y}^{\delta}_r, \hat{Z}^{\delta}_r) 
     \hspace{0.17em} dc^{\delta}_r - \int_t^{T + \delta} \hat{Z}^{\delta}_r 
     \hspace{0.17em} dB^{\delta}_r + \int_t^T \hat{g}^{\delta}_r
     (\hat{Y}^{\delta}_r)  \hspace{0.17em} dW^{\delta}_r . \]
  For all $1 \le k \le m$, we get in particular
  \[ Y_{t_k +} = \hat{Y}^{\delta}_{\tau^{\delta}  (t_k +)} = \xi +
     \int_{\tau^{\delta}  (t_k +)}^{T + \delta} \hat{f}^{\delta} (r,
     \hat{Y}^{\delta}_r, \hat{Z}^{\delta}_r)  \hspace{0.17em} dc^{\delta}_r -
     \int_{\tau^{\delta}  (t_k +)}^{T + \delta} \hat{Z}^{\delta}_r 
     \hspace{0.17em} dB^{\delta}_r + \int_{\tau^{\delta}  (t_k +)}^T
     \hat{g}^{\delta}_r (\hat{Y}^{\delta}_r)  \hspace{0.17em} dW^{\delta}_r,
  \]
  by taking $\varepsilon \to 0$ for $\hat{Y}^{\delta}_{\tau^{\delta}  (t_k +
  \varepsilon)}$. For all $t \in [0, T + \delta] \setminus \mathrm{Im}
  (\tau^{\delta})$, i.e. $t \in (\tau^{\delta} (t_k), \tau^{\delta} (t_k +)]$
  for some $1 \le k \le m$, we can now add \eqref{cts_approx2} to the above to
  get
  \[ \hat{Y}^{\delta}_t = \xi + \int_{\tau^{\delta}  (t_k +)}^{T + \delta}
     \hat{f}^{\delta} (r, \hat{Y}^{\delta}_r, \hat{Z}^{\delta}_r) 
     \hspace{0.17em} \tmop{dc}^{\delta}_r - \int_{\tau^{\delta}  (t_k +)}^{T +
     \delta} \hat{Z}^{\delta}_r  \hspace{0.17em} dB^{\delta}_r + \int_t^T
     \hat{g}^{\delta}_r (\hat{Y}^{\delta}_r)  \hspace{0.17em} dW^{\delta}_r .
  \]
  This is the desired equation (\ref{cts_rBSDE}) for $t \in [0, T + \delta]
  \setminus \mathrm{Im} (\tau^{\delta})$, noting that both $c^{\delta}$ and
  $B^{\delta}$ are constant on $(\tau^{\delta} (t_k), \tau^{\delta} (t_k +)]$.
\end{proof}
By exactly the same argument, we can also get
\begin{theorem}
  \label{fictioustime_Forward-RBSDE}Let $(Y, Z)$ be a solution of
  Forward-RBSDE (\ref{Forward-RBSDE}). The pair $(Y_{c^{\delta}},
  Z_{c^{\delta}})$ is the unique solution to the RBSDE (\ref{cts_rBSDE}) with
  $W^{\delta} = (\jmath W)^{\delta}$.
  
  Conversely, let $(\hat{Y}^{\delta}, \hat{Z}^{\delta})$ be the solution to the
  RBSDE (\ref{cts_rBSDE}), the pair $(\hat{Y}^{\delta}_{\tau^{\delta}},
  \hat{Z}^{\delta}_{\tau^{\delta}})$ is the unique solution to the
  Forward-RBSDE (\ref{Forward-RBSDE}).
\end{theorem}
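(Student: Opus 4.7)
My approach is to adapt the proof of Theorem \ref{fictioustime_Marcus-RBSDE} essentially verbatim, observing that the forward-type setting is structurally simpler than the Marcus one: there is no Marcus flow $\varphi$ to integrate along each excursion, so no analogue of the identity \eqref{cts_approx2} is required. Instead, a single Young-integral evaluation across each excursion of $(\jmath W)^{\delta}$ already reproduces the forward jump $g_{t_k}(Y_{t_k+}) \Delta W_{t_k}$, exactly because $W^{\delta}$ changes by $\Delta W_{t_k}$ across the excursion while $Y_{c^{\delta}}$ is constant there (the integrand $\hat{g}^{\delta}(\hat{Y}^{\delta}) = g(c^{\delta}_{\cdummy}, Y_{c^{\delta}})$ is piecewise constant on each excursion because $c^{\delta}$ is).

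First I would check the trivial identities $\hat{Y}^{\delta}_{\tau^{\delta}(t)} = Y_t$ and $\hat{Z}^{\delta}_{\tau^{\delta}(t)} = Z_t$ for all $t \in [0,T]$, and verify that $(\hat{Y}^{\delta}, \hat{Z}^{\delta}) = (Y_{c^{\delta}}, Z_{c^{\delta}}) \in \mathcal{B}^p \times \tmop{BMO}$ in the time-changed filtration generated by $B^{\delta} = B_{c^{\delta}}$, using that the $\|\cdummy\|_{p,2}$ and $\|\cdummy\|_{\tmop{BMO}}$ norms are preserved under deterministic non-decreasing time changes. The main step is then to verify the integral equation \eqref{cts_rBSDE} at every scaffold point $t = \tau^{\delta}(s)$, $s \in [0,T]$. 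Mirroring the $\varepsilon$-approximation from the Marcus proof, I would pick subintervals $s = t_0 < t_1 < \cdots < t_{n+1} = T$ separating the large jumps of $W$, together with MRS-type refinements $\mathcal{P}_i$ of $[t_i, t_{i+1}]$ such that the backward Young sums approximate $\int g(Y)\,dW$ up to $\varepsilon$ via Proposition \ref{backwardYoung}. A time-change, as in Prop.\ V.1.5 of \cite{revuz_continuous_1999}, transports the drift and martingale integrals onto $[t, T+\delta]$, and the Young sums on $[s, T]$ translate into Young sums against $W^{\delta}$ on $[t, T+\delta]$; passing to the MRS limit yields \eqref{cts_rBSDE} at every scaffold point.

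To extend the equation from scaffold points to all of $[0, T+\delta]$, I would use that $c^{\delta}$ and $B^{\delta}$ are constant on each excursion, and $\hat{Y}^{\delta}$ is constant on the open excursion as well; thus the drift and martingale parts do not change across an excursion, and the backward Young integral $\int_{\cdummy}^{\tau^{\delta}(t_k+)} \hat{g}^{\delta}(\hat{Y}^{\delta}) \, dW^{\delta}$ over the excursion reduces to a single linear term. Hence the equation at any excursion point reduces to the identity at the right endpoint $\tau^{\delta}(t_k+)$, which is a scaffold point. The converse direction --- that $(\hat{Y}^{\delta}_{\tau^{\delta}}, \hat{Z}^{\delta}_{\tau^{\delta}})$ solves \eqref{Forward-RBSDE} --- is then immediate from the first direction together with the global uniqueness of \eqref{cts_rBSDE} provided by Theorem \ref{global_existenceuniqueness}. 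I expect the main obstacle to be the careful bookkeeping at the boundary times $\tau^{\delta}(t_k)$ and $\tau^{\delta}(t_k+)$: one must verify that the c{\`a}gl{\`a}d conventions of $W^{\delta}$, $c^{\delta}$ and $\hat{Y}^{\delta}$ align so that the single Young-sum contribution across each excursion reproduces exactly the forward jump of $Y$, with no spurious boundary terms --- this is where most of the Marcus-proof estimates need to be re-checked line-by-line for the forward case.
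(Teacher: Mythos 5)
Your overall strategy --- rerun the proof of Theorem \ref{fictioustime_Marcus-RBSDE} with simplified excursion dynamics, and get the converse from uniqueness --- is exactly what the paper intends (its proof of this theorem is literally ``by exactly the same argument''). However, your central claim about what happens on an excursion is wrong, and it is not the kind of bookkeeping slip that line-by-line re-checking will repair. On the excursion $(\tau^{\delta}(t_k),\tau^{\delta}(t_k+)]$ one has $c^{\delta}\equiv t_k$, hence $\hat Y^{\delta}=Y_{c^{\delta}}\equiv Y_{t_k}$ there, and $Y_{c^{\delta}}$ only jumps to $Y_{t_k+}$ at $\tau^{\delta}(t_k+)$. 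If $W^{\delta}=(\jmath W)^{\delta}$ is the continuous linear ramp, then (i) the backward Young integral of the constant integrand $g_{t_k}(Y_{c^{\delta}})=g_{t_k}(Y_{t_k})$ against the ramp equals $g_{t_k}(Y_{t_k})\,\Delta W_{t_k}$, not the forward jump $g_{t_k}(Y_{t_k+})\,\Delta W_{t_k}$ --- your computation evaluates $g$ at the wrong value of $Y$; and (ii) more decisively, with $c^{\delta}$, $B^{\delta}$ and $W^{\delta}$ all continuous, any solution of \eqref{cts_rBSDE} is continuous and must follow the flow ODE $\mathrm{d}\hat Y^{\delta}=-\hat g^{\delta}(\hat Y^{\delta})\,\mathrm{d}W^{\delta}$ along the excursion, i.e.\ it produces a \emph{Marcus} jump after removing the fictitious time --- that is precisely the content of Theorem \ref{fictioustime_Marcus-RBSDE}. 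The discontinuous process $Y_{c^{\delta}}$ therefore cannot solve \eqref{cts_rBSDE} driven by $(\jmath W)^{\delta}$, and the claim with $\jmath$ would force forward and Marcus solutions to coincide, which fails for nonlinear $g$.

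What makes the forward statement true is the \emph{constant} excursion $\imath$, consistent with the paper's earlier assignment of $\imath$ to forward jumps and of $\jmath$ to Marcus jumps (the ``$\jmath$'' in the statement of Theorem \ref{fictioustime_Forward-RBSDE} is evidently a slip that you should not take at face value). With $W^{\delta}=(\imath W)^{\delta}$ the driver retains a genuine jump of size $\Delta W_{t_k}$, co-located with the jump of $Y_{c^{\delta}}$, and the jump identity for backward Young integrals against a c\`agl\`ad integrator,
\[ \hat Y^{\delta}_{s}-\hat Y^{\delta}_{s+}=\hat g^{\delta}_{s+}\big(\hat Y^{\delta}_{s+}\big)\,\Delta^{+}W^{\delta}_{s}=g_{t_k}\big(Y_{t_k+}\big)\,\Delta W_{t_k}, \]
reproduces exactly the forward jump of \eqref{Forward-RBSDE}. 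Once you make this replacement, the rest of your outline (MRS approximation at scaffold points, time change of the drift and martingale terms, constancy of $c^{\delta}$ and $B^{\delta}$ on excursions, and the appeal to uniqueness for the converse) goes through. As written, though, your argument proves a version of the Marcus theorem rather than the forward one.
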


\subsection{Stability of Solution Map}\label{Stability-solution}
Before we state and prove the continuity result, we shall first prove the following two useful lemmas.

\begin{lemma}
  \label{alpha-p-equiv}Let q>0 and $W^{\infty}, W^1, W^2, \ldots \in
  \bar{\mathcal{D}}^q (I)$. It holds
  \[ \lim_{k \rightarrow \infty} \alpha_{q ; [a, b]} (W^k, W^{\infty}) = 0 \]
  if and only if for any sequence $(\delta^l)_{l \in \mathbb{N}}$ with
  $\lim_{l \rightarrow \infty} \delta^l = 0$ there exists a sequence of
  reparameterization $(\lambda^{k, l} \in \Lambda_{[a, b + \delta^l]})_{k, l
  \in \mathbb{N}}$ such that
  \[ \lim_{k \rightarrow \infty} \lim_{l \rightarrow \infty} | \lambda^{k, l}
     - \tmop{id} |_{\infty} \quad \vee \quad \|W^{k, \delta^l} \circ
     \lambda^{k, l} - W^{\infty, \delta^l} \|_{q ; [a, b + \delta^l]} = 0. \]
\end{lemma}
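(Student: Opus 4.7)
The plan is to reduce both implications to the definition
$$\alpha_{q ; [a, b]} (W^k, W^{\infty}) = \lim_{\delta \to 0} F_k (\delta), \qquad F_k (\delta) \assign \inf_{\lambda \in \Lambda_{[a, b + \delta]}} \bigl( | \lambda - \tmop{id} |_{\infty} \vee \| W^{k, \delta} \circ \lambda - W^{\infty, \delta} \|_{q ; [a, b + \delta]} \bigr),$$
where the $\delta \to 0$ limit exists and is independent of the approximating null sequence by Lemma~8.12 of \cite{chevyrev_superdiffusive_2024} (recalled in Chapter~\ref{decorated-path}). Both directions then come down to a comparison between the infimum $F_k(\delta^l)$ and the value $E_{k,l}$ attained by a specific reparameterization $\lambda^{k,l}$.

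For the forward direction ($\Rightarrow$), I would assume $\alpha_q (W^k, W^{\infty}) \to 0$ as $k\to\infty$ and fix an arbitrary null sequence $\delta^l \to 0$. For each pair $(k,l)$ I choose $\lambda^{k,l} \in \Lambda_{[a, b + \delta^l]}$ approximating the infimum up to $1/l$, so that
$$F_k (\delta^l) \leq E_{k, l} \assign | \lambda^{k, l} - \tmop{id} |_{\infty} \vee \| W^{k, \delta^l} \circ \lambda^{k, l} - W^{\infty, \delta^l} \|_{q ; [a, b + \delta^l]} \leq F_k (\delta^l) + 1/l.$$
A sandwich then yields $\lim_{l\to\infty} E_{k,l} = \lim_{l\to\infty} F_k(\delta^l) = \alpha_q(W^k,W^\infty)$ for each $k$, and applying $\lim_{k\to\infty}$ gives the required iterated limit.

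For the reverse direction ($\Leftarrow$), I would use the hypothesis for any convenient null sequence $\delta^l \to 0$. Since $\lambda^{k,l}$ is a specific competitor for the infimum, one has $F_k(\delta^l) \leq E_{k,l}$ for every $(k,l)$. The existence of both limits in $l$ (the left one by Lemma~8.12, the right one by hypothesis) and monotonicity of $\lim$ for inequalities then yield $\alpha_q(W^k, W^\infty) \leq \lim_{l\to\infty} E_{k,l}$. Passing to $k\to\infty$ and invoking $\alpha_q \geq 0$ concludes.

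The main subtlety, rather than a true obstacle, is to secure the existence of the inner limit $\lim_l E_{k,l}$ in the forward direction; this is exactly what the $1/l$-tight approximation of $F_k(\delta^l)$ delivers, since it sandwiches $E_{k,l}$ between two sequences with a common limit. One should also make sure to invoke the independence of the $\delta \to 0$ limit from the choice of null sequence at the very end of both directions, which guarantees that the statement about ``any'' $\delta^l \to 0$ is well-posed and consistent with the definition of $\alpha_q$.
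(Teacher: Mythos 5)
Your proof is correct and follows essentially the same route as the paper's: rewrite $\alpha_q$ as the iterated limit $\lim_k\lim_l F_k(\delta^l)$, choose near-optimal reparameterizations realizing the infimum for the forward direction, and use that each $\lambda^{k,l}$ is a competitor for the infimum (plus nonnegativity) for the converse. Your $1/l$-tight choice of $\lambda^{k,l}$ even secures the existence of the inner limit via the sandwich a bit more explicitly than the paper's $\tfrac1k+\tfrac1l$ tolerance does.
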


\begin{proof}
  First of all, notice that
  \begin{eqnarray*}
    \lim_{k \rightarrow \infty} \alpha_{q ; [a, b]} (W^k, W^{\infty}) & = &
    \lim_{k \rightarrow \infty} \lim_{\delta \rightarrow 0} \inf_{\lambda \in
    \Lambda_{[a, b + \delta]}} | \lambda - \tmop{id} |_{\infty} \vee \|W^{k,
    \delta^l} \circ \lambda - W^{\infty, \delta^l} \|_{q ; [a, b + \delta]}\\
    & = & \lim_{k \rightarrow \infty} \lim_{l \rightarrow \infty}
    \inf_{\lambda \in \Lambda_{[a, b + \delta^l]}} | \lambda - \tmop{id}
    |_{\infty} \vee \|W^{k, \delta^l} \circ \lambda - W^{\infty, \delta^l}\|_{q
    ; [a, b + \delta]} .
  \end{eqnarray*}
  Given $\lim_{k \rightarrow \infty} \alpha_{q ; [a, b]} (W^k, W) = 0$. For
  each $k, l \in \mathbb{N}$, by the definition of infimum there exist a
  $\lambda^{k, l} \in \Lambda_{[a, b + \delta^l]}$ such that
  \begin{eqnarray*}
    &  & | \lambda^{k, l} - \tmop{id} |_{\infty} \vee \|W^{k, \delta^l} \circ
    \lambda^{k, l} - W^{\infty, \delta^l} \|_{q ; [a, b + \delta^l]}\\
    & \leq & \inf_{\lambda \in \Lambda_{[a, b + \delta^l]}} | \lambda -
    \tmop{id} |_{\infty} \vee \|W^{k, \delta^l} \circ \lambda - W^{\infty,
    \delta^l} \|_{q ; [a, b + \delta]} + \frac{1}{k} + \frac{1}{l} .
  \end{eqnarray*}
  Now let both sides $l \rightarrow \infty$ and then $k \rightarrow \infty$, we
  get
  \[ \lim_{k \rightarrow \infty} \lim_{l \rightarrow \infty} | \lambda^{k, l}
     - \tmop{id} |_{\infty} \vee \|W^{k, \delta^l} \circ \lambda^{k, l} -
     W^{\infty, \delta^l} \|_{q ; [a, b + \delta^l]} = 0. \]
  For the converse, we have the trivial estimate
  \begin{eqnarray*}
    0 & \leq & \lim_{k \rightarrow \infty} \lim_{l \rightarrow \infty}
    \inf_{\lambda \in \Lambda_{[a, b + \delta^l]}} | \lambda - \tmop{id}
    |_{\infty} \vee \|W^{k, \delta^l} \circ \lambda - W^{\infty, \delta^l} \|_{q
    ; [a, b + \delta]}\\
    & \leq & \lim_{k \rightarrow \infty} \lim_{l \rightarrow \infty} |
    \lambda^{k, l} - \tmop{id} |_{\infty} \vee \|W^{k, \delta^l} \circ
    \lambda^{k, l} - W^{\infty, \delta^l} \|_{q ; [a, b + \delta^l]} \qquad
    \leq 0.
  \end{eqnarray*}
\end{proof}

\begin{lemma}
  \label{continuity_parametrization}Let $x : \Omega \times [0, T] \rightarrow
  \mathbb{R}^h$ be a process with continuous sample paths of finite
  {$p$-variation} for some $p > 1$. Let $\lambda^l \in \Lambda_{[0, T +
  \delta^l]}$, $l \in \mathbb{N}$, be a sequence of reparameterizations with
  $\lim_{l \rightarrow \infty} | \lambda^l - \tmop{id} |_{\infty} = 0$. Then,
  $\lim_{l \rightarrow \infty} \| x^{\delta^l} \circ \lambda^l - x^{\delta^l}
  \|_{p ; [0, T + \delta^l]} = 0$ almost surely.
  
  \begin{proof}
    We apply interpolation  {\cite[Prop.~5.5]{friz_multidimensional_2010}} to
    get
    \begin{eqnarray*}
      \| x^{\delta^l} \circ \lambda^l - x^{\delta^l} \|_{p ; [0, T +
      \delta^l]} & \leq & \| x^{\delta^l} \circ \lambda^l - x^{\delta^l}
      \|_{p - \varepsilon ; [0, T + \delta^l]}^{\frac{p - \varepsilon}{p}} \|
      x^{\delta^l} \circ \lambda^l - x^{\delta^l} \|_{0 ; [0, T +
      \delta^l]}^{\frac{\varepsilon}{p}}\\
      & \leq & 2 \| x^{\delta^l} \|_{p - \varepsilon ; [0, T +
      \delta^l]}^{\frac{p - \varepsilon}{p}} \| x^{\delta^l} \circ \lambda^l -
      x^{\delta^l} \|_{0 ; [0, T + \delta^l]}^{\frac{\varepsilon}{p}} .
    \end{eqnarray*}
    Now notice that due to the continuity of $x$ and by construction of the
    $\delta^l$-extension $x^{\delta^l}$, there exists reparameterization
    $\varphi^l \in \Lambda_{[0, T] ; [0, T + \delta^l]}$ such that $|
    \varphi^l - \tmop{id} |_{\infty} \leq \delta^l$ and $x^{\delta^l} = x
    \circ \varphi^l$. \ Using the fact that the $p$-variation stays invariant
    under reparameterization and $\| \cdot \|_0 \leq 2 \| \cdot
    \|_{\infty}$, we have
    \[ \lim_{l \rightarrow \infty} \| x^{\delta^l} \circ \lambda^l -
       x^{\delta^l} \|_{p ; [0, T + \delta^l]} \leq 4 \| x  \|_{p -
       \varepsilon ; [0, T]}^{\frac{p - \varepsilon}{p}} \lim_{l \rightarrow
       \infty} \| x \circ \varphi^l \circ \lambda^l \circ (\varphi^l)^{- 1} -
       x  \|_{\infty ; [0, T]}^{\frac{\varepsilon}{p}} = 0, \]
    where the second term converges to zero due to $| \varphi^l \circ \lambda^l
    \circ (\varphi^l)^{- 1} - \tmop{id} |_{\infty} \leq 2 \delta^l + |
    \lambda^l - \tmop{id} |_{\infty} \xrightarrow{l \rightarrow \infty} 0$ and
    uniform continuity of $x$.
  \end{proof}
\end{lemma}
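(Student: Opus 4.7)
The plan is to exploit the continuity of $x$ and interpolation between $p$-variation norms. Since the sample path $x(\omega)$ is continuous, its decorated lift carries no non-trivial excursions, so the $\delta^l$-extension $x^{\delta^l}$ is nothing more than a time-reparameterisation of $x$ onto a slightly enlarged interval: there exists a non-decreasing $\psi^l : [0, T+\delta^l] \to [0,T]$ with $|\psi^l - \mathrm{id}|_{\infty} \leq \delta^l$ such that $x^{\delta^l} = x \circ \psi^l$. This preserves the $r$-variation for every $r \geq 1$, i.e.\ $\|x^{\delta^l}\|_{r;[0,T+\delta^l]} = \|x\|_{r;[0,T]}$.

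Next I would invoke the standard interpolation inequality for $p$-variation (Proposition 5.5 of \cite{friz_multidimensional_2010}): for any $1 \leq p' < p$ and any càglàd path $f$ on an interval $J$,
\[ \|f\|_{p;J} \leq \|f\|_{p';J}^{p'/p} \cdot \bigl(2\|f\|_{\infty;J}\bigr)^{1-p'/p}. \]
Applied to $f = x^{\delta^l}\circ\lambda^l - x^{\delta^l}$ on $J = [0,T+\delta^l]$ with $p' = p - \varepsilon \geq 1$, the $p'$-variation factor can be bounded by $2\|x^{\delta^l}\|_{p';[0,T+\delta^l]} = 2\|x\|_{p-\varepsilon;[0,T]}$ via the triangle inequality and reparametrisation invariance; this quantity is almost surely finite and uniform in $l$.

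The remaining factor is the supremum $\|x^{\delta^l}\circ\lambda^l - x^{\delta^l}\|_{\infty;[0,T+\delta^l]}$, which I would show tends to zero almost surely by uniform continuity of $x$ on the compact interval $[0,T]$. Indeed, writing both sides through $\psi^l$, we have pointwise $|x(\psi^l(\lambda^l(t))) - x(\psi^l(t))|$ controlled by the modulus of continuity of $x$ evaluated at $|\psi^l\circ\lambda^l\circ(\psi^l)^{-1} - \mathrm{id}|_{\infty}$ (or a direct estimate bounding the inner displacement by $2\delta^l + |\lambda^l - \mathrm{id}|_{\infty}$). Both quantities vanish as $l \to \infty$, so the supremum tends to zero; raising to the power $\varepsilon/p > 0$ preserves this.

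The main technical obstacle is the bookkeeping for the two nested time changes $\psi^l$ and $\lambda^l$, which live on different intervals and need to be composed carefully; once one fixes the normalisation that $\psi^l$ is 1-Lipschitz (e.g.\ $\psi^l(t) = t\wedge T$ works since any continuous decorated-path extension may be taken this way), the estimate $|\psi^l(\lambda^l(t)) - \psi^l(t)| \leq |\lambda^l - \mathrm{id}|_{\infty}$ is immediate and uniform continuity of $x$ on $[0,T]$ finishes the argument. Combining with the interpolation step above yields $\|x^{\delta^l}\circ\lambda^l - x^{\delta^l}\|_{p;[0,T+\delta^l]} \to 0$ almost surely.
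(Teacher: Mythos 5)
Your proposal is correct and follows essentially the same route as the paper's proof: identify $x^{\delta^l}$ as a reparameterisation $x\circ\psi^l$ of the continuous path with $|\psi^l-\tmop{id}|_\infty\le\delta^l$, apply the $p$-variation interpolation inequality to isolate a uniform $(p-\varepsilon)$-variation factor and a sup-norm factor, and kill the latter by uniform continuity of $x$ together with $|\psi^l\circ\lambda^l\circ(\psi^l)^{-1}-\tmop{id}|_\infty\to 0$. The only inaccuracy is the aside that one may take $\psi^l(t)=t\wedge T$ --- the fictitious time is inserted at the points of $\Pi$, not appended at the end --- but this is immaterial since $\psi^l$ is indeed $1$-Lipschitz and $\delta^l$-close to the identity, which is all your estimate uses.
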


We are going to show stabilty for RBSDE solutions under the following assumption. \\
\tmtextbf{Assumption B:}
\begin{enumeratealpha}
  \item Let $q \in [1, 2)$, $p > 2$ with $\frac{1}{p} + \frac{1}{q} > 1$.
  
  \item $W^k$ is in $ D^q ([0, T], \mathbb{R}^e)$, $k \in \mathbb{N} \cup
  \{\infty\}$, and satisfies $\sup_{k \in \mathbb{N} \cup \{\infty\}} \| W^k \|_{q ; [0,
  T]} < \infty$;
  
  \item $\xi^k$ is in $ L^{\infty}$, $k \in \mathbb{N} \cup \{\infty\}$, and satisfies
  $\sup_{k \in \mathbb{N} \cup \{\infty\}} \| \xi^k \|_{\infty} < \infty$;
  
  \item Generator functions $f^k : \Omega \times [0, T] \times \mathbb{R}^h \times \mathbb{R}^{h
  \times d} \rightarrow \mathbb{R}^h$, $k \in \mathbb{N} \cup \{\infty\}$, are
  adapted to 
  $(\mathcal{F}_t)_{t \in [0, T]}$. There exists
  some constant $C_f > 0$ such that $\mathbb{P}$-a.s.
  \begin{eqnarray}
    \sup_{t \in [0, T]} |f^k (t, 0, 0) | & \leq & C_f \text { and} \nonumber\\
    |f^k (t, y, z) - f^k (t, y', z') | & \leq & C_f  (|y - y' | + |z - z' |) 
   \text{ for all $k \in
  \mathbb{N} \cup \{\infty\}$,$in [0, T]$;} \nonumber
  \end{eqnarray}
  \item $\lim_{k \rightarrow \infty} | f^k (t, y, z) - f^{\infty} (t, y, z) |
  = 0$ holds $\tmop{dt} \otimes \mathbb{P}$-a.e.\  for all $y, z$;
  
  \item Functions $g^k$ are in $D^{p, 2} C_b^2 (\mathbb{R}^h, \mathcal{L} (\mathbb{R}^e,
  \mathbb{R}^h))$, $k \in \mathbb{N} \cup \{\infty\}$, and there exists some
  constant $C_g > 0$ such that for all $k \in \mathbb{N} \cup \{\infty\}$, we have
  \[ \sup_{t \in [0, T]} \||g^k |_{C^2_b} \|_{\infty} \leq C_g, \quad
     [[g^k]]_{p, 2 ; [0, T]} \leq C_g, \quad [[\tmop{Dg}^k]]_{p, 2 ; [0,
     T]} \leq C_g ; \]
  \item Both $\sup_{t \in [0, T]} | \tmop{Dg}^k_t - \tmop{Dg}^{\infty}_t
  |_{\infty}$ and $\sup_{y \in \mathbb{W}} \|g^k (y) - g^{\infty}  (y) \|_{p ;
  [0, T]}$ converge in probability to zero for $k \rightarrow \infty$;
  
  \item the path trajectories of $g^{\infty}_{\cdot} (y, \omega)$ and
  $\tmop{Dg}^{\infty}_{\cdot} (y, \omega)$ are $\mathbb{P}$-a.s. uniformly continuous in $t$ uniformly in $y$, i.e. $\mathbb{P}$-almost
  surely it holds $\lim_{n \rightarrow \infty} \sup_{t \in [0, T]} \sup_{y \in
  \mathbb{W}} | g^{\infty}_{\lambda^n (t)} (y) - g_t^{\infty} (y) | = 0$ and
  $\lim_{n \rightarrow \infty} \sup_{t \in [0, T]} \sup_{y \in \mathbb{W}} |
  \tmop{Dg}^{\infty}_{\lambda^n (t)} (y) - \tmop{Dg}_t^{\infty} (y) | = 0$ for
  any $(\lambda^n)_{n \in \mathbb{N}} \subset \Lambda_{[0, T]}$ with $\lim_{n
  \rightarrow \infty} | \lambda - \tmop{id} |_{\infty} = 0$.
\end{enumeratealpha}
\begin{theorem}
  \label{stability-RBSDE-decorated}Provided that Assumption B is satisfied, let $(Y^k,
  Z^k)$ for $k \in \mathbb{N} \cup \{\infty\}$ be the solution to the
  forward-type (or Marcus-type) RBSDE
  \begin{eqnarray*}
    Y^k_t & = & \xi^k + \int_t^T f (r, Y^k_r, Z^k_r) \tmop{dr} + \int_t^T g_r
    (Y^k_{r +})  (\diamond) \tmop{dW}^k_r - \int_t^T Z^k_r \tmop{dB}_r .
  \end{eqnarray*}
  For $k \in \mathbb{N} \cup \{\infty\}$, let \hyperlink{lift-Y}{$\mathbf{Y^k}$} denote  the
  lift of $Y^k$ to the space of decorated paths, as constructed in Chapter~\ref{rBSDEasDecoratedPath}, and define \hyperlink{embedding-i}{$\mathbf{W^k} = \imath W^k$} (or \hyperlink{embedding-j}{$\mathbf{W^k} = \jmath W^k$}, respectively).\\ 
  If $\lim_{k \rightarrow \infty}
  \alpha_{q ; [0, T]} (\mathbf{W^k}, \mathbf{W^{\infty}}) = 0$ holds, then we have for any $\varepsilon > 0$ that
  \begin{align*}
      \lim_{k \rightarrow
  \infty} \mathbb{P} (\alpha_{p ; [0, T]} (\mathbf{Y^k},
  \mathbf{Y^{\infty}}) > \varepsilon) = 0 \quad \text{and} \quad
  \lim_{k \rightarrow \infty} \mathbb{E} \bigg[ \int_0^T (Z^k_r -
  Z^{\infty}_r)^2 \tmop{dr} \bigg] = 0.
  \end{align*}
\end{theorem}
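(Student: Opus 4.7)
The plan is to adopt the doubly-indexed Picard scheme anticipated in the introduction, since the seminorm $\interleave \cdot \interleave$ depends on the filtration and is therefore incompatible with the time reparametrizations inherent in the Skorokhod-type metric $\alpha_p$. For each $k \in \mathbb{N} \cup \{\infty\}$, I denote by $(Y^{k,m}, Z^{k,m})_{m \in \mathbb{N}}$ the Picard iterates for the $k$-th RBSDE as in Theorem~\ref{Picard-iteration}, and by $\mathbf{Y}^{k,m}$ their lifts to decorated paths by the same time-stretching construction of Chapter~\ref{rBSDEasDecoratedPath} that produced $\mathbf{Y}^k$. The strategy is to split
\[
\alpha_{p;[0,T]}(\mathbf{Y}^k, \mathbf{Y}^{\infty}) \leq \alpha_{p;[0,T]}(\mathbf{Y}^k, \mathbf{Y}^{k,m}) + \alpha_{p;[0,T]}(\mathbf{Y}^{k,m}, \mathbf{Y}^{\infty,m}) + \alpha_{p;[0,T]}(\mathbf{Y}^{\infty,m}, \mathbf{Y}^{\infty}),
\]
and to prove (A) that for every fixed $m$ the middle term vanishes in probability as $k \to \infty$, with matching $L^2$-convergence for the $Z$-components, and (B) that the outer two terms vanish as $m \to \infty$, uniformly in $k$. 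A standard $\varepsilon/3$-argument then closes the theorem, and the analogous splitting yields the $L^2(\mathrm{d}t \otimes \mathbb{P})$-convergence of $Z^k$ via $\|\cdot\|_{\mathrm{BMO}}$.

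For (B), I revisit the proof of Theorem~\ref{Picard-iteration} and observe that the partition size $N$, the radius $R$, the contraction constant $C < 1$, and the constants $\kappa$, $\eta$ there depend only on $C_f$, $C_g$, $p$, on $|c_T|$, on $\|W\|_{q;[0,T]}$, and on $\|\xi\|_\infty$; all of these are uniformly bounded across $k$ by Assumption~B. Hence the geometric tail estimate \eqref{Picard-bound} holds with constants independent of $k$, and $\sum_{l=m}^\infty l^{N-1} C^l \to 0$ as $m \to \infty$ delivers the required uniform convergence $\interleave Y^{k,m} - Y^k, Z^{k,m} - Z^k \interleave_{[0,T]} \to 0$. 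Because for each fixed $k$ the lifts $\mathbf{Y}^{k,m}$ and $\mathbf{Y}^k$ share the same jump times and the same (Marcus or forward) excursions inherited from $W^k$, the Skorokhod distance between them collapses to an ordinary $p$-variation distance of their difference, which is dominated by $\|Y^{k,m} - Y^k\|_{p,2}$ and hence by the $\interleave \cdot \interleave$-seminorm.

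For (A), I argue by induction on $m$. The base case $Y^{k,0} \equiv \xi^k$, $Z^{k,0} \equiv 0$ is immediate from Assumption~B. For the induction step, the $(m+1)$-th iterate is the conditional expectation of
\[
\xi^k + \int_t^T f^k(r, Y^{k,m}_r, Z^{k,m}_r)\,\mathrm{d}r + \int_t^T g^k_r(Y^{k,m}_{r+})\,(\diamond)\,\mathrm{d}W^k_r,
\]
with $Z^{k,m+1}$ recovered by It\^{o}'s martingale representation. The same time-stretching argument as in Theorems~\ref{fictioustime_Marcus-RBSDE} and~\ref{fictioustime_Forward-RBSDE}, applied now at the level of the $m+1$-th Picard iterate, converts the identity into a continuous equation on $[0, T+\delta]$ driven by $W^{k,\delta}$ with integrand $g^k(\hat Y^{k,m,\delta})$. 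Convergence at level $m+1$ then follows from three separate continuity statements: continuity of the $f$-integral via Assumption~B(d)--(e) and the induction hypothesis; continuity of the backward Young integral jointly in driver and integrand via Corollary~\ref{integral_q-var}, Lemma~\ref{composition} and the $\alpha_q$-convergence of $\mathbf{W}^k$; and $L^2$-continuity of the martingale representation giving the $Z$-part.

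The main obstacle is the induction step for the rough integral, because the $\alpha_q$-convergence $\mathbf{W}^k \to \mathbf{W}^\infty$ only yields closeness after a reparametrization $\lambda^{k,l}$ (Lemma~\ref{alpha-p-equiv}) of the $\delta^l$-extended interval that is generally not $(\mathcal{F}_t)$-adapted. To control the composition $g^\infty(\hat Y^{\infty,m,\delta^l}) \circ \lambda^{k,l}$ in $p$-variation, I combine Lemma~\ref{continuity_parametrization} (applicable because the lifted Picard iterate $\hat Y^{\infty,m,\delta}$ is continuous by construction) with the equicontinuity of the trajectories of $g^\infty$ from Assumption~B(h) and the induction hypothesis on $\mathbf{Y}^{\infty,m}$; the Young estimate of Proposition~\ref{BackwardsYoung} then produces the required bound on the rough integral in $p$-variation along $\lambda^{k,l}$. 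This layering is exactly the point of the double-indexed scheme: the reparametrization mismatch discussed before \eqref{heuristic-skorokhod} is resolved once the integrand is replaced by a continuous, already-convergent Picard iterate, which bypasses the filtration incompatibility that precludes a direct contraction estimate on $\interleave Y^k - Y^\infty, Z^k - Z^\infty \interleave$.
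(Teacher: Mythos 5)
Your proposal is correct and follows essentially the same route as the paper: the same three-term splitting via a doubly-indexed Picard scheme, the same uniform-in-$k$ geometric tail bound from Theorem \ref{Picard-iteration} for the outer terms, and the same induction over the Picard index for the middle term, handling the $f$-integral, the Young integral (via Lemma \ref{continuity_parametrization}, Lemma \ref{composition} and Assumption B(g)--(h)) and the martingale part separately. The only cosmetic difference is that the paper first reduces $\alpha_p$ to reparametrized $\delta^l$-extensions via Lemma \ref{alpha-p-equiv} before splitting, whereas you split at the level of decorated paths; these are equivalent.
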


\begin{proof}
  We shall start by fixing for all decorated paths the same countable set
  $\Pi \subset [0, T]$ which contains the stationary points (in the sense of Def.~\ref{def:decopaths}) of all $W^k$, $k
  \in \mathbb{N} \cup \{\infty\}$.
  
  By Lemma \ref{alpha-p-equiv}, the above statement is equivalent to the following: \\
  Given any sequence of reparameterization $(\lambda^{k, l} \in
  \Lambda_{[0, T + \delta^l]})_{k, l \in \mathbb{N}}$ such that
  \[ \lim_{k \rightarrow \infty} \lim_{l \rightarrow \infty} | \lambda^{k, l}
     - \tmop{id} |_{\infty} \quad \vee \quad \|W^{k, \delta^l} \circ
     \lambda^{k, l} - W^{\infty, \delta^l} \|_{q ; [0, T + \delta^l]} = 0, \]
  then for all $\varepsilon > 0$ it holds
  \begin{eqnarray}
    \lim_{k \rightarrow \infty} \mathbb{P} (\lim_{l \rightarrow \infty}
    \|Y^{k, \delta^l} \circ \lambda^{k, l} - Y^{\infty, \delta^l} \|_{p ; [0,
    T + \delta^l]} > \varepsilon) = 0, &  &  \label{continuity-rBSDE}
  \end{eqnarray}
  \begin{equation}
    \lim_{k \rightarrow \infty} \mathbb{E} \bigg[ \int_0^T (Z^k_r -
    Z^{\infty}_r)^2 \tmop{dr} \bigg] = \lim_{k \rightarrow \infty} \mathbb{E}
    \bigg[ \lim_{l \rightarrow \infty} \int_0^{T + \delta^l} (Z^{k,
    \delta^l}_r - Z^{\infty, \delta^l}_r)^2 \tmop{dc}^{\delta^l}_r \bigg] =
    0, \label{continuity-rbsde-1}
  \end{equation}
  where for any $k \in \mathbb{N} \cup \{\infty\}$ the pair $(Y^{k, \delta^l},
  Z^{k, \delta^l})$ satisfies the RBSDE
  \begin{align*}
    Y^{k, \delta^l}_t = \xi^k + \int_t^{T + \delta^l} \hat{f}^{k, \delta^l}_r ( Y^{k, \delta^l}_r, Z^{k, \delta^l}_r) \tmop{dc}^{\delta^l}_r +
    \int_t^{T + \delta^l} \hat{g}^{k, \delta^l}_r  (Y^{k, \delta^l}_r)
    \tmop{dW}^{k, \delta^l}_r - \int_t^{T + \delta^l} Z^{k, \delta^l}_r
    \tmop{dB}^{\delta^l}_r.
  \end{align*}
  For each $k \in \mathbb{N} \cup \{\infty\}$, we define a Picard
  iteration sequence, by letting $Y^{k, 0, \delta^l} \equiv \xi^k$ and $ Z^{k, 0, \delta^l} \equiv 0,$ and then define iteratively ($Y^{k, n+1, \delta^l}, Z^{k, n+1, \delta^l}$) for  $n\ge 0$ by
  \begin{align*}
    Y^{k, n + 1, \delta^l}_t = & \xi^k + \int_t^{T + \delta^l} \hat{f}^{k,
    \delta^l}_r (Y^{k, n, \delta^l}_r, Z^{k, n, \delta^l}_r)
    \tmop{dc}^{\delta^l}_r + \int_t^{T + \delta^l} \hat{g}^{k, \delta^l}_r 
    (Y^{k, n, \delta^l}_r) \tmop{dW}^{k, \delta^l}_r\\
    & - \int_t^{T + \delta^l} Z^{k, n + 1, \delta^l}_r
    \tmop{dB}^{\delta^l}_r .
  \end{align*}
  In order to show the convergence
  (\ref{continuity-rBSDE}-\ref{continuity-rbsde-1}), we begin with the
  following inequalities
  \begin{align*}
    & \lim_{k \rightarrow \infty} \mathbb{P} (\lim_{l \rightarrow \infty}
    \|Y^{k, \delta^l} \circ \lambda^{k, l} - Y^{\infty, \delta^l} \|_{p ; [0,
    T + \delta^l]} > \varepsilon)\\
    \leq & \lim_{n \rightarrow \infty} \mathbb{P} \bigg( \lim_{l
    \rightarrow \infty} \|Y^{\infty, \delta^l} - Y^{\infty, n, \delta^l} \|_{p
    ; [0, T + \delta^l]} > \frac{\varepsilon}{3} \bigg)\\
    & + \lim_{n \rightarrow \infty} \lim_{k \rightarrow \infty} \mathbb{P}
    \bigg( \lim_{l \rightarrow \infty} \|Y^{k, n, \delta^l} \circ \lambda^{k,
    l} - Y^{\infty, n, \delta^l} \|_{p ; [0, T + \delta^l]} >
    \frac{\varepsilon}{3} \bigg)\\
    & + \lim_{n \rightarrow \infty} \sup_{k \in \mathbb{N}} \mathbb{P}
    \bigg( \lim_{l \rightarrow \infty} \|Y^{k, n, \delta^l} - Y^{k, \infty,
    \delta^l} \|_{p ; [0, T + \delta^l]} > \frac{\varepsilon}{3} \bigg), 
    \end{align*}
    \begin{align*}
    \lim_{k \rightarrow \infty} \mathbb{E} \bigg[ \int_0^T (Z^{k, \delta^1}_r
    - Z^{\infty, \delta^1}_r)^2 \tmop{dc}_r \bigg] \leq & \lim_{n
    \rightarrow \infty} \mathbb{E} \bigg[ \int_0^{T + \delta^1} (Z^{\infty,
    \delta^1}_r - Z^{\infty, n + 1, \delta^1}_r)^2 \tmop{dc}^{\delta^1}_r
    \bigg]\\
    & + \lim_{n \rightarrow \infty} \lim_{k \rightarrow \infty} \mathbb{E}
    \bigg[ \int_0^{T + \delta^1} (Z^{k, n + 1, \delta^1}_r - Z^{\infty, n + 1,
    \delta^1}_r)^2 \tmop{dc}^{\delta^1}_r \bigg]
    \\ & + \lim_{n \rightarrow \infty} \sup_{k \in \mathbb{N}} \mathbb{E}
    \bigg[ \int_0^{T + \delta^1} (Z^{\infty, n + 1, \delta^1}_r - Z^{\infty,
    \delta^1}_r)^2 \tmop{dc}^{\delta^1}_r \bigg],
  \end{align*}
  We show that all three terms on the right side of both inequalities are zero.\\
  We start with the first and third terms of each. By the Markov inequality and
  the fact that for $\phi \in \bar{\mathcal{D}} ([0, T])$ the extension
  $\phi^{\delta^1}$ and $\phi^{\delta^2}$ are just reparameterization of each
  other and the $p$-variation norm is invariant under reparameterization, we
  have
  \begin{eqnarray}
    &  & \lim_{n \rightarrow \infty} \sup_{k \in \mathbb{N} \cup \{\infty\}}
    \mathbb{P} (\lim_{l \rightarrow \infty} \|Y^{k, n + 1, \delta^l} - Y^{k,
    \infty, \delta^l} \|_{p ; [0, T + \delta^l]} > \varepsilon) \nonumber\\
    & \leq & \lim_{n \rightarrow \infty} \sup_{k \in \mathbb{N} \cup
    \{\infty\}} \mathbb{P} (\|Y^{k, n + 1, \delta^1} - Y^{k, \infty, \delta^1}
    \|_{p ; [0, T + \delta^1]} > \varepsilon) \nonumber\\
    & \leq & \lim_{n \rightarrow \infty} \sup_{k \in \mathbb{N} \cup
    \{\infty\}}  \frac{1}{\varepsilon}  \|Y^{k, n + 1, \delta^1} - Y^{k,
    \infty, \delta^1} \|_{p, 2 ; [0, T + \delta^1]} . \nonumber
  \end{eqnarray}
  In (\ref{Picard-Inv}, \ref{Picard-bound}) in the proof of Theorem
  \ref{Picard-iteration} we have shown that
  \begin{align*}
      \interleave Y^{k, n, \delta^1}, Z^{k, n, \delta^1} \interleave_{[0, T
  + \delta^1]} &\leq R_k \\
  \interleave Y^{k, \infty, \delta^1} - Y^{k,
  n, \delta^1}, Z^{k, \infty, \delta^1} - Z^{k, n, \delta^1} \interleave_{[0,
  T + \delta^1]} &\leq R_k C_{N_k} \sum_{l = n}^{\infty} l^{N^k - 1} C^l,
  \end{align*}
   holds for any $n \in \mathbb{N}$, 
  where $C<1$, and 
  \begin{align*}
  R_k &\assign 21 M_k^{1 - p}  (4 C_y)^{M_k + 1}  (1 \vee \| \xi^k
  \|_{\infty} \vee C_g \|W^k \|_{q ; [0, T]}), \\
  C_{N_k} &\assign 2^{(p +
  2) (N_k - 1)}  (C + \kappa_k)^{N_k}  (C + \eta)^{N_k - 1} C^{- N_k}
  \end{align*}
  with $\kappa_k \assign C_g \|W^k \|_{q ; [0, T]} \vee C_g \exp (C_g \|W^k \|_{q ;[0, T]})  \|W^k \|_{q ; [0, T]}$ and $M_k, N_k \in \NN$ bounded \linebreak by $M_k \leq 1 + \max
  \{|c_T |, \|W^k \|_{q ; [0, T]} \} / \varepsilon_1$ (choice of
  $\varepsilon_1$ only depending on $C_f, C_g, p$), \linebreak and $N_k \leq 1 + \max \{|c_T |, \|W^k
  \|_{q ; [0, T]} \} / \varepsilon_2$ (the choice $\varepsilon_2$ only depends
  on $C_f, C_g, R_k, p$).\\
  Notice that $\Upsilon = \sup_{k \in \mathbb{N} \cup \{\infty\}} \Upsilon_k$
  exists for all $\Upsilon \in \{ C_N, N, \kappa, R, M \}$ due to the
  assumptions $\sup_{k \in \mathbb{N} \cup \{\infty\}} \| \xi^k \|_{\infty} <
  \infty$ and $\sup_{k \in \mathbb{N} \cup \{\infty\}} \|W^k \|_{q ; [0, T]} <
  \infty$. So we have
  \begin{align}
    \sup_{k \in \mathbb{N} \cup \{\infty\}} (\|Y^{k, n, \delta^1} \|_{p, 2 ;
    [0, T + \delta^1]} +\|Z^{k, n, \delta^1} \|_{\tmop{BMO} ; [0, T +
    \delta^1]}) \leq & R < \infty,  \label{bound-n-picard}\\
    \sup_{k \in \mathbb{N} \cup \{\infty\}} \interleave Y^{k, \infty,
    \delta^1} - Y^{k, n, \delta^1}, Z^{k, \infty, \delta^1} - Z^{k, n,
    \delta^1} \interleave_{[0, T + \delta^1]} \leq & RC_N \sum_{l =
    n}^{\infty} l^{N - 1} C^l, \nonumber
  \end{align}
  hence $\lim_{n \rightarrow \infty} \sup_{k \in \mathbb{N} \cup \{\infty\}}
  \interleave Y^{k, \infty, \delta^1} - Y^{k, n, \delta^1}, Z^{k, \infty,
  \delta^1} - Z^{k, n, \delta^1} \interleave_{[0, T + \delta^1]} = 0$.\\
  We are only left to show the following iteratively over $n$
  \begin{align}
    \lim_{n \rightarrow \infty} \sup_{k \in \mathbb{N}} \mathbb{P} \bigg(
    \lim_{l \rightarrow \infty} \|Y^{k, n, \delta^l} \circ \lambda^{k, l} -
    Y^{\infty, n, \delta^l} \|_{p ; [0, T + \delta^l]} > \frac{\varepsilon}{3}
    \bigg) = 0, \label{continuity-Y} \\
    \lim_{k \rightarrow \infty} \mathbb{E} \bigg[  \int_0^{T + \delta^1}
    (Z^{k, n, \delta^1}_r - Z^{\infty, n, \delta^1}_r)^2
    \tmop{dc}^{\delta^1}_r \bigg] = 0. \label{continuity-Z}
  \end{align}
  For $Y^{k, 0, \delta^l} \equiv Y^{\infty, 0, \delta^l} \equiv \xi$ and
  $Z^{k, 0, \delta^l} \equiv Z^{\infty, 0, \delta^l} \equiv 0$ the convergences in
  (\ref{continuity-Y}) and (\ref{continuity-Z}) clearly hold. Assuming (\ref{continuity-Y}-\ref{continuity-Z}) hold for some $n \in
  \mathbb{N}$, we want to show they then also hold for $n + 1$. This means, that it suffices to show \eqref{continuity-Z} together with the convergence in probability to zero as $k \rightarrow \infty$ for the
  following terms:
  \begin{flalign*}
    \tmop{a} . & \lim_{l \rightarrow \infty} \bigg\| \int_{\lambda^{k,
    l}_{\cdot}}^{T + \delta^l} \hat{f}^{k, \delta^l}_r (Y^{k, n, \delta^l}_r,
    Z^{k, n, \delta^l}_r) \tmop{dc}^{\delta^l}_r - \int_{\cdot}^{T + \delta^l}
    \hat{f}^{\infty, \delta^l}_r (Y^{\infty, n, \delta^l}_r, Z^{\infty, n,
    \delta^l}_r) \tmop{dc}^{\delta^l}_r \bigg\|_{p ; [0, T + \delta^l]};&\\
    \tmop{b} . & \lim_{l \rightarrow \infty} \bigg\| \int_{\lambda^{k,
    l}_{\cdot}}^{T + \delta^l} \hat{g}^{k, \delta^l} (r, Y^{k, n, \delta^l}_r)
    \tmop{dW}^{k, \delta^l}_r - \int_{\cdot}^{T + \delta^l} \hat{g}^{\infty,
    \delta^l} (r, Y^{\infty, n, \delta^l}_r) \tmop{dW}^{\infty, \delta^l}_r
    \bigg\|_{p ; [0, T + \delta^l]}; &\\
    \tmop{c} . & \lim_{l \rightarrow \infty} \bigg\| \int_{\lambda^{k,
    l}_{\cdot}}^{T + \delta^l} Z^{k, n + 1, \delta^l}_r \tmop{dB}^{\delta^l}_r
    - \int_{\cdot}^{T + \delta^l} Z^{\infty, n + 1, \delta^l}_r
    \tmop{dB}^{\delta^l}_r \bigg\|_{p ; [0, T + \delta^l]}. &
  \end{flalign*}
  To show the convergence for \tmtextbf{a.}, we show the convergence of the following three terms:
  \begin{align*}
    & \lim_{l \rightarrow \infty} \bigg\| \int_{\lambda^{k, l}_{\cdot}}^{T
    + \delta^l} \hat{f}^{k, \delta^l}_r (Y^{k, n, \delta^l}_r, Z^{k, n,
    \delta^l}_r) \tmop{dc}^{\delta^l}_r - \int_{\cdot}^{T + \delta^l}
    \hat{f}^{\infty, \delta^l}_r (Y^{\infty, n, \delta^l}_r, Z^{\infty, n,
    \delta^l}_r) \tmop{dc}^{\delta^l}_r \bigg\|_{p ; [0, T + \delta^l]}\\
    \leq & \lim_{l \rightarrow \infty} \bigg\| \int_{\lambda^{k,
    l}_{\cdot}}^{T + \delta^l} \hat{f}^{k, \delta^l}_r (Y^{k, n, \delta^l}_r,
    Z^{k, n, \delta^l}_r) - \hat{f}^{\infty, \delta^l}_r (Y^{k, n,
    \delta^l}_r, Z^{k, n, \delta^l}_r) \tmop{dc}^{\delta^l}_r \bigg\|_{p ;
    [0, T + \delta^l]}\\
    & + \lim_{l \rightarrow \infty} \bigg\| \int_{\lambda^{k,
    l}_{\cdot}}^{T + \delta^l} f^{\infty} (c^{\delta^l}_r, Y^{k, n,
    \delta^l}_r, Z^{k, n, \delta^l}_r) - f^{\infty} (c^{\delta^l}_r,
    Y^{\infty, n, \delta^l}_r, Z^{\infty, n, \delta^l}_r)
    \tmop{dc}^{\delta^l}_r \bigg\|_{p ; [0, T + \delta^l]}\\
    & + \lim_{l \rightarrow \infty} \bigg\| \int_{\lambda^{k,
    l}_{\cdot}}^{\cdot} \hat{f}^{\infty, \delta^l}_r ( Y^{\infty, n,
    \delta^l}_r, Z^{\infty, n, \delta^l}_r) \tmop{dc}^{\delta^l}_r \bigg\|_{p
    ; [0, T + \delta^l]}
  \end{align*}
  By Lemma \ref{continuity_parametrization} we have almost sure convergence to
  zero of the last term. We also have the convergence in probability to zero
  of the first term:
  \begin{align*}
    & \lim_{k \rightarrow \infty} \lim_{l \rightarrow \infty} \bigg\|
    \int_{\lambda^{k, l}_{\cdot}}^{T + \delta^l} \hat{f}^{k, \delta^l}_r (
    Y^{k, n, \delta^l}_r, Z^{k, n, \delta^l}_r) - \hat{f}^{\infty, \delta^l}_r
    (Y^{k, n, \delta^l}_r, Z^{k, n, \delta^l}_r) \tmop{dc}^{\delta^l}_r
    \bigg\|_{p ; [0, T + \delta^l]}\\
    \leq & \lim_{k \rightarrow \infty} \lim_{l \rightarrow \infty}
    \int_0^{T + \delta^l} | \hat{f}^{k, \delta^l}_r ( Y^{k, n, \delta^l}_r,
    Z^{k, n, \delta^l}_r) - \hat{f}^{\infty, \delta^l}_r ( Y^{k, n,
    \delta^l}_r, Z^{k, n, \delta^l}_r) | \tmop{dc}^{\delta^l}_r\\
    = & \lim_{k \rightarrow \infty} \int_0^{T + \delta^1} | f^k
    (c^{\delta^1}_r, Y^{k, n, \delta^1}_r, Z^{k, n, \delta^1}_r) - f^{\infty}
    (c^{\delta^1}_r, Y^{k, n, \delta^1}_r, Z^{k, n, \delta^1}_r) |
    \tmop{dc}^{\delta^1}_r \quad = \quad 0,
  \end{align*}
  where in the last step taking limits $\lim_k$ can be interchanged with  integration by dominated convergence, thanks to the upper bound
  \begin{eqnarray*}
    &  & | f^k (c^{\delta^1}_r, Y^{k, n, \delta^1}_r, Z^{k, n, \delta^1}_r) -
    f^{\infty} (c^{\delta^1}_r, Y^{k, n, \delta^1}_r, Z^{k, n, \delta^1}_r)
    |\\
    & \leq & | f^k (c^{\delta^1}_r, Y^{k, n, \delta^1}_r, Z^{k, n,
    \delta^1}_r) - f^k (c^{\delta^1}_r, 0, 0) - f^{\infty} (c^{\delta^1}_r,
    Y^{k, n, \delta^1}_r, Z^{k, n, \delta^1}_r) + f^{\infty} (c^{\delta^1}_r,
    0, 0) |\\
    &  & + | f^{\infty} (c^{\delta^1}_r, 0, 0) - f^k (c^{\delta^1}_r, 0, 0)
    |\\
    & \leq & 2 C_f (Y^{k, n, \delta^1}_r + Z^{k, n, \delta^1}_r) + 2
    C_f,
  \end{eqnarray*}
  that is integrable by \eqref{bound-n-picard}. Convergence in probability of the second term holds by
  \begin{align*}
    & \mathbb{P} \bigg( \lim_{l \rightarrow \infty} \bigg\|
    \int_{\lambda^{k, l}_{\cdot}}^{T + \delta^l} \hat{f}^{\infty, \delta^l}
    (r, Y^{k, n, \delta^l}_r, Z^{k, n, \delta^l}_r) - \hat{f}^{\infty,
    \delta^l} (r, Y^{\infty, n, \delta^l}_r, Z^{\infty, n, \delta^l}_r)
    \tmop{dc}^{\delta^l}_r \bigg\|_{p ; [0, T + \delta^l]} > \varepsilon
    \bigg)\\
    \leq & \mathbb{P} \bigg( \lim_{l \rightarrow \infty} \int_0^{T +
    \delta^l} |f (c^{\delta^l}_r, Y^{k, n, \delta^l}_r, Z^{k, n, \delta^l}_r)
    - f (c^{\delta^l}_r, Y^{\infty, n, \delta^l}_r, Z^{\infty, n, \delta^l}_r)
    | \tmop{dc}^{\delta^l}_r > \varepsilon \bigg)\\
    \leq & \mathbb{P} \bigg( \lim_{l \rightarrow \infty} \int_0^{T +
    \delta^l} C_f |Y^{k, n, \delta}_r - Y^{\infty, n, \delta^l}_r | + C_f
    |Z^{k, n, \delta^l}_r - Z^{\infty, n, \delta^l}_r | \tmop{dc}^{\delta^l}_r
    > \varepsilon \bigg)\\
    \leq & \mathbb{P} \bigg( \lim_{l \rightarrow \infty} (T +
    \delta^l) C_f \|Y^{k, n, \delta^l} - Y^{\infty, n, \delta^l} \|_{p ; [0, T
    + \delta^l]} + | \xi^k - \xi | + C_f  \int_0^T |Z^{k, n}_r - Z^{\infty,
    n}_r | \tmop{dr} > \varepsilon \bigg)\\
    \leq & \mathbb{P} \bigg( \lim_{l \rightarrow \infty} \|Y^{k, n,
    \delta^l} - Y^{\infty, n, \delta^l} \circ (\lambda^{k, l})^{- 1} \|_{p ;
    [0, T + \delta^l]} > \frac{\varepsilon}{4 (T + \delta^l) C_f} \bigg)\\
    & +\mathbb{P} \bigg( \lim_{l \rightarrow \infty} \|Y^{\infty, n,
    \delta^l} \circ (\lambda^{k, l})^{- 1} - Y^{\infty, n, \delta^l} \|_{p ;
    [0, T + \delta^l]} > \frac{\varepsilon}{4 (T + \delta^l) C_f} \bigg)\\
    & + \frac{\varepsilon^2}{16} \mathbb{E} [| \xi^k - \xi |^2] +
    \frac{\varepsilon^2 C_f}{16} \mathbb{E} \bigg[ \int_0^T |Z^{k, n}_r -
    Z^{\infty, k}_r |^2 \tmop{dr} \bigg] \xrightarrow{k \rightarrow \infty}
    0,
  \end{align*}
  where the first term converges to zero by induction assumption, and the second term converges by Lemma
  \ref{continuity_parametrization}. Combined, we have the convergence in probability of a).
  
  For showing the convergence for \tmtextbf{b.}, we start by applying Lemma \ref{stability_youngIntergral} to get
  \begin{align*}
    &  \lim_{l \rightarrow \infty} \bigg\|
    \int_{\lambda^{k, l}_{\cdot}}^{T + \delta^l} \hat{g}^{k, \delta^l} (r,
    Y^{k, n, \delta^l}_r) \tmop{dW}^{k, \delta^l}_r - \int_{\cdot}^{T +
    \delta^l} \hat{g}^{\infty, \delta^l} (r, Y^{\infty, n, \delta^l}_r)
    \tmop{dW}^{\infty, \delta^l}_r \bigg\|_{p ; [0, T + \delta^l]} \\
    = & \lim_{l \rightarrow \infty} \bigg\| \int^{T +
    \delta^l}_{\cdot} \hat{g}^{k, \delta^l} (\lambda^{k, l}_r, Y^{k, n,
    \delta^l}_{\lambda^{k, l}_r}) \tmop{dW}^{k, \delta^l}_{\lambda^{k, l}_r} -
    \int_{\cdot}^{T + \delta^l} \hat{g}^{\infty, \delta^l} (r, Y^{\infty, n,
    \delta^l}_r) \tmop{dW}^{\infty, \delta^l}_r \bigg\|_{p ; [0, T +
    \delta^l]} \\
    \leq &  \lim_{l \rightarrow \infty} \|g^k
    (\lambda^{k, l}  \circ c^{\delta^l} , Y^{k, n, \delta^l}_{\lambda^{k, l}})
    - g^{\infty} (c^{\delta^l} , Y^{\infty, n, \delta^l})\|_{p ; [0, T +
    \delta^l]} \|W^{k, \delta^l} \circ \lambda^{k, l} \|_{q ; [0, T +
    \delta^l]} \\
    & + \lim_{l \rightarrow \infty} (\| \hat{g}^{\infty,
    \delta^l} (r, Y^{\infty, n, \delta^l}_r)\|_{p ; [0, T + \delta^l]} +
    \sup_{t \in [0, T]} \||g^{\infty} |_{C^2_b} \|_{\infty})\|W^{k, \delta^l}
    \circ \lambda^{k, l} - W^{\infty, \delta^l}_r \|_{q ; [0, T + \delta^l]},
  \end{align*}
  the second term converges to $0$ as $k\to \infty$ by assumption. To show that the first term converges in probability to $0$ first notice that for all $k \in \mathbb{N} \cup \{
  \infty \}$ and $j=0,1$ it holds
  \begin{align*}
  \sup_{s \in [0, T + \delta^l]} |D^j g^k  (\lambda^{k,
  l}  \circ c^{\delta^l}_s, \cdot) |_{\infty} &= \sup_{s \in [0, T]} |D^j g^k_s
  (\cdot) |_{\infty}, \\
      \sup_{y \in \mathbb{W}} \|D^j g^k 
  (\lambda^{k, l}  \circ c^{\delta^l} , y)\|_{p ; [0, T + \delta^l]} &= \sup_{y
  \in \mathbb{W}} \|D^j g^k_{\cdot} (\omega, y)\|_{p ; [0, T]}.
  \end{align*}
  Then by applying Lemma \ref{composition} to
  $g^k (\lambda^{k, l}  \circ c^{\delta^l} , \cdot)$ and $g^{\infty} 
  (c^{\delta^l} , \cdot)$, we get the inequality 
  \begin{align}
    & \lim_{l \rightarrow \infty} \|g^k (\lambda^{k, l}  \circ
    c^{\delta^l} , Y^{k, n, \delta^l}_{\lambda^{k, l}}) - g^{\infty}
    (c^{\delta^l} , Y^{\infty, n, \delta^l})\|_{p ; [0, T + \delta^l]} \nonumber\\
    \leq & \lim_{l \rightarrow \infty} 2 C_g \| Y^{k, n,
    \delta^l}_{\lambda^{k, l}} - Y^{\infty, n, \delta^l} \|_{p ; [0, T +
    \delta^l]} \label{stability-g-1}\\
    & + \lim_{l \rightarrow \infty} C_g (\| Y^{k, n, \delta^1}  \|_{p ;
    [0, T]} + \| Y^{\infty, n, \delta^1} \|_{p ; [0, T]}) \| Y^{k, n,
    \delta^l}_{\lambda^{k, l}} - Y^{\infty, n, \delta^l} \|_{p ; [0, T +
    \delta^l]}\label{stability-g-2}\\
    & + \lim_{l \rightarrow \infty} \sup_{y \in \mathbb{W}} \|g^k
    (c^{\delta^l}  \circ \lambda^{k, l} , y) - g^{\infty}  (c^{\delta^l} , y)
    \|_{p ; [0, T + \delta^l]} \label{stability-g-3}\\
    & + \lim_{l \rightarrow \infty} \sup_{s \in [0, T]} \big|
    g^k_{c^{\delta^l}  \circ \lambda^{k, l}_s} - g^{\infty}_{c^{\delta^l}_s}
    \big|_{\tmop{Lip}} \|Y^{\infty, n, \delta^l} \|_{p ; [0, T + \delta^l]} \label{stability-g-4}
  \end{align}
  Take $k$ to $\infty$, \eqref{stability-g-1} converges in probability to $0$ by the induction assumption. The terms (\ref{stability-g-3}-\ref{stability-g-4}) convergence in probability to $0$ by Assumption B and Lemma \ref{continuity_parametrization}
  \begin{align*}
     & \lim_{l \rightarrow \infty} \sup_{y \in \mathbb{W}} \|g^k
    (c^{\delta^l}  \circ \lambda^{k, l} , y) - g^{\infty}  (c^{\delta^l} , y)
    \|_{p ; [0, T + \delta^l]} \\
    \leq & \sup_{y \in \mathbb{W}} \|g^k (y) - g^{\infty}  (y) \|_{p ; [0,
    T]} + \lim_{l \rightarrow \infty} \sup_{y \in \mathbb{W}}
    \|g^{\infty}_{c^{\delta^l}  \circ \lambda^{k, l} } (y) -
    g^{\infty}_{c^{\delta^l} } (y) \|_{p ; [0, T + \delta^l]} \xrightarrow{k \rightarrow \infty} 0,\\
     & \lim_{l \rightarrow \infty} \sup_{s \in [0, T]} \big|
    g^k_{c^{\delta^l}  \circ \lambda^{k, l}_s} - g^{\infty}_{c^{\delta^l}_s}
    \big|_{\tmop{Lip}} \|Y^{\infty, n, \delta^l} \|_{p ; [0, T + \delta^l]} \\
    \leq & \sup_{s \in [0, T]} \big| \tmop{Dg}^k  {- \tmop{Dg}^{\infty}_s} 
    \big|_{\infty} + \lim_{l \rightarrow \infty} \sup_{s \in [0, T]} \big|
    \tmop{Dg}^{\infty}_{c^{\delta^l}  \circ \lambda^{k, l}_s} -
    \tmop{Dg}^{\infty}_{c^{\delta^l}_s} \big|_{\infty} \|Y^{\infty, n} \|_{p
    ; [0, T ]} \xrightarrow{k \rightarrow \infty} 0.
  \end{align*}
  We are left to show the convergence \eqref{stability-g-2}. We have for any $\varepsilon, A > 0$ that
  \begin{align*}
    & \mathbb{P} (\lim_{l \rightarrow \infty} C_g (\| Y^{k, n, \delta^1} 
    \|_{p ; [0, T]} + \| Y^{\infty, n, \delta^1} \|_{p ; [0, T]}) \| Y^{k, n,
    \delta^l}_{\lambda^{k, l}} - Y^{\infty, n, \delta^l} \|_{p ; [0, T +
    \delta^l]} > \varepsilon)\\
    \leq & \mathbb{P} \bigg( \sup_{k \in \mathbb{N} \cup \{ \infty \}}
    \| Y^{k, n, \delta^1}  \|_{p ; [0, T]} > \frac{A}{2 C_g} \bigg)
    +\mathbb{P} \bigg( \lim_{l \rightarrow \infty} \| Y^{k, n,
    \delta^l}_{\lambda^{k, l}} - Y^{\infty, n, \delta^l} \|_{p ; [0, T +
    \delta^l]} > \frac{\varepsilon}{A} \bigg),
  \end{align*}
  the second term converge to zero as $k \rightarrow \infty$ by induction
  assumption and the first converge to zero as we take $A \rightarrow \infty$,
  since $\mathbb{E} [\sup_{k \in \mathbb{N} \cup \{ \infty \}} \| Y^{k, n,
  \delta^1}  \|_{p ; [0, T]}] < \infty$ by (\ref{bound-n-picard}).
  
  To show the convergence \eqref{continuity-Z}, we start by defining
  \[ M^{k, n + 1, \delta^l} \assign \xi^k + \int_0^{T + \delta^l} \hat{f}^{k,
     \delta^l} (r, Y^{k, n, \delta^l}_r, Z^{k, n, \delta^l}_r)
     \tmop{dc}^{\delta^l}_r + \int_0^{T + \delta^l} \hat{g}^{k, \delta^l}  (r,
     Y^{k, n, \delta^l}_r) \tmop{dW}^{k, \delta^l}_r \]
  for any $k \in \mathbb{N} \cup \{\infty\}$, that is $M^{k, n + 1,
  \delta^l} = Y^{k, n + 1, \delta^l}_0 + \int_0^{T + \delta^l} Z^{k, n + 1,
  \delta^l}_r \tmop{dB}^{\delta^l}_r$ by construction. We show
  \begin{enumerateroman}
    \item $\mathbb{E} [\sup_{k \in \mathbb{N}} \sup_{l \in \mathbb{N}} (M^{k,
    n + 1, \delta^l})^2] < \infty,$
    
    \item $\lim_{k \rightarrow \infty} \mathbb{P} (\lim_{l \rightarrow \infty}
    |M^{k, n + 1, \delta^l} - M^{\infty, n + 1, \delta^l} | > \varepsilon)$=0
    for all $\varepsilon > 0$,
  \end{enumerateroman}
  what implies, by applying Vitali's convergence theorem, that
  \begin{equation}
    \lim_{k \rightarrow \infty} \mathbb{E} [\lim_{l \rightarrow \infty} (M^{k,
    n + 1, \delta^l} - M^{\infty, n + 1, \delta^l})^2] = 0.
    \label{continuity-M}
  \end{equation}
  We postpone the proof of i) and ii) for later. By a simple change of variable
  argument one can see that $\int_0^{T + \delta^l} (Z^{k, n + 1, \delta^l}_r -
  Z^{\infty, n + 1, \delta^l}_r)^2 \tmop{dc}^{\delta^l}_r$ produce the same
  value for all $l \in \mathbb{N}$, so by It{\^o} isometry and $Y^{k, n + 1,
  \delta^l}_0$ being orthogonal to the stochastic integral, we get the
  convergence I.
  \begin{eqnarray*}
    &  & \mathbb{E} \bigg[ \int_0^{T + \delta^1} (Z^{k, n + 1, \delta^1}_r -
    Z^{\infty, n + 1, \delta^1}_r)^2 \tmop{dc}^{\delta^1}_r \bigg]\\
    & \leq & \lim_{l \rightarrow \infty} \mathbb{E} \bigg[ \bigg( Y^{k,
    n + 1, \delta^l}_0 + \int_0^{T + \delta^l} (Z^{k, n + 1, \delta^l}_r -
    Z^{\infty, n + 1, \delta^l}_r) \tmop{dB}^{\delta^l}_r \bigg)^2 \bigg]\\
    & = & \mathbb{E} [\lim_{l \rightarrow \infty} (M^{k, n + 1, \delta^l} -
    M^{\infty, n + 1, \delta^l})^2] \xrightarrow{k \rightarrow \infty} 0,
  \end{eqnarray*}
  where in the last equality, we can pull the limit inside due to i). The reason for adding this (for now seemingly useless) limit in $l$ will become
  clear in the proof for ii), where it allows us to reuse many of the above
  estimations. \\
  For i) we have $\mathbb{E} [\sup_{k \in \mathbb{N}} (\xi^k)^2] < \infty$ by
  assumption. The Lebesgue integral can be estimated as follows
  \begin{align*}
    & \mathbb{E} \bigg[ \sup_{k \in \mathbb{N}} \sup_{l \in \mathbb{N}}
    \bigg| \int_0^{T + \delta^l} \hat{f}^{k, \delta^l}_r ( Y^{k, n,
    \delta^l}_r, Z^{k, n, \delta^l}_r) \tmop{dc}^{\delta^l}_r \bigg|^2
    \bigg]\\
    \lesssim & \mathbb{E} \bigg[ \sup_{k \in \mathbb{N}}  \int_0^{T +
    \delta^1} (|Y^{k, n, \delta^1}_r | + |Z^{k, n, \delta^1}_r | + 1)^2
    \tmop{dc}^{\delta^1}_r \bigg]\\
    \lesssim & \mathbb{E} \bigg[ \sup_{k \in \mathbb{N}} T (\|Y^{k, n,
    \delta^1} \|^2_{p ; [0, T + \delta^1]} + | \xi^k |^2 + 1) + \int_0^T
    |Z^{k, n, \delta^1}_r |^2 \tmop{dc}^{\delta^1}_r \bigg]\\
    \lesssim & C_f  \bigg( T + T\mathbb{E}[\sup_{k \in \mathbb{N}} | \xi^k
    |^2] +\mathbb{E} \bigg[ \sup_{k \in \mathbb{N}}  \int_0^T |Z^{k, n,
    \delta^1}_r |^2 \tmop{dc}^{\delta^1}_r \bigg] + T\mathbb{E} [\sup_{k \in
    \mathbb{N}} \|Y^{k, n, \delta^1} \|^2_{p ; [0, T + \delta^1]}] \bigg) <
    \infty,
  \end{align*}
  where in the last time we used (\ref{bound-n-picard}). For the third term we
  have
  \begin{eqnarray*}
    &  & \mathbb{E} \bigg[ \sup_{k \in \mathbb{N}} \sup_{l \in \mathbb{N}}
    \bigg| \int_0^{T + \delta^l} \hat{g}^{k, \delta^l} (r, Y^{k, n,
    \delta^l}_r) \tmop{dW}^{k, \delta^l}_r \bigg|^2 \bigg]\\
    & \leq & \mathbb{E} [\sup_{k \in \mathbb{N}} \sup_{l \in \mathbb{N}}
    (\|g^k (c^{\delta^l}, Y^{k, n, \delta^l})\|_{p ; [0, T + \delta^l]} +
    |g|_{\infty})^2 \|W^{k, \delta^l} \|_{q ; [0, T + \delta^l]}^2] .
  \end{eqnarray*}
  As for ii), we have done all the hard work in a. and b. The trivial bound
  $| x_{0, T} | \leq \| x \|_{p ; [0, T]} + | x_0 |$ gives us
  \[ \lim_{l \rightarrow \infty} \bigg| \int_0^{T + \delta^l} \hat{f}^{k,
     \delta^l} (r, Y^{k, n, \delta^l}_r, Z^{k, n, \delta^l}_r)
     \tmop{dc}^{\delta^l}_r - \int_0^{T + \delta^l} \hat{f}^{\infty, \delta^l}
     (r, Y^{\infty, n, \delta^l}_r, Z^{\infty, n, \delta^l}_r)
     \tmop{dc}^{\delta^l}_r \bigg| \leq \tmop{II}, \]
  which converges as $k \rightarrow \infty$ in probability to zero, as we have shown before, the same holds true for the Young integral term. Combining them
  gives us the convergence in ii).
  
  Finally, to show the claim for \tmtextbf{c.}, it holds
  \begin{align*}
    & \bigg\| \int_{\lambda^{k, l}_{\cdot}}^{T +
    \delta^l} Z^{k, n + 1, \delta^l}_r \tmop{dB}^{\delta^l}_r -
    \int_{\cdot}^{T + \delta^l} Z^{\infty, n + 1, \delta^l}_r
    \tmop{dB}^{\delta^l}_r \bigg\|_{p ; [0, T + \delta^l]} 
    \\
    \leq &  \bigg\| \int_{\lambda^{k,
    l}_{\cdot}}^{\cdot} Z^{\infty, n + 1, \delta^l}_r \tmop{dB}^{\delta^l}_r
    \bigg\|_{p ; [0, T + \delta^l]} 
    +\bigg\| \int_{\lambda^{k, l}_{\cdot}}^{T +
    \delta^l} (Z^{k, n + 1, \delta^l}_r - Z^{\infty, n + 1, \delta^l}_r)
    \tmop{dB}^{\delta^l}_r \bigg\|_{p ; [0, T + \delta^l]}.
  \end{align*}
  The first term converges almost surely by Lemma \ref{continuity_parametrization}, while for the second term we have
  \begin{eqnarray*}
    &  & \mathbb{P} \bigg( \bigg\| \int_{\lambda^{k, l}_{\cdot}}^{T +
    \delta^l} (Z^{k, n + 1, \delta^l}_r - Z^{\infty, n + 1, \delta^l}_r)
    \tmop{dB}^{\delta^l}_r \bigg\|_{p ; [0, T + \delta^l]} >
    \frac{\varepsilon}{2} \bigg)\\
    & \leq & \frac{4}{\varepsilon^2} \mathbb{E} \bigg[ \bigg\|
    \int_{\cdot}^{T + \delta^l} (Z^{k, n + 1, \delta^l}_r - Z^{\infty, n + 1,
    \delta^l}_r) \tmop{dB}^{\delta^l}_r \bigg\|_{p ; [0, T + \delta^l]}^2
    \bigg]\\
    & \leq & \frac{4 C^p}{\varepsilon^2} \mathbb{E} \bigg[ \int_0^T
    (Z^{k, n + 1, \delta^l}_r - Z^{\infty, n + 1, \delta^l}_r)^2
    \tmop{dc}^{\delta^l}_r \bigg] \xrightarrow{k \rightarrow \infty, l
    \longrightarrow \infty} 0
  \end{eqnarray*}
  by applying Markov and Burkholder inequalities. 
\end{proof}

\section{BDSDE}

One of our main motivations for studying the RBSDEs is to study the new type
of backward Doubly SDEs (BDSDEs) as presented in \eqref{Marcus-BDSDE}. In
spirit, BDSDE can be seen as an annealed or randomized version of the RBSDE.
In the RBSDE, the rough path $W$ is treated as a frozen realization of the
stochastic noise that drives the BDSDE; by randomizing $W$, one recovers the
BDSDE. Section \ref{Chap-measurable-selection} introduces some measurable
selection results in the spirit of
{\cite{friz_controlled_2024,stricker_calcul_1978}}. These results allow us to
find a version of the RBSDE solution which is in some sense measurable with
respect to the rough driver $W$ (see Theorem \ref{measurable-RBSDE-solution}).
The exact definition of a BDSDE solution is presented in Section
\ref{solution-bdsde}. We show that RBSDE solutions can be seen as conditional
solutions to the BDSDEs (see Proposition \ref{quenched-BDSDE}), from which we
can deduce uniqueness of the BDSDE. We continue by showing that if we
randomize the measurable version of the RBSDE solution from Section
\ref{Chap-measurable-selection}, the resulting process indeed solves the
BDSDE, hence proving well-posedness of this new type of BDSDEs (see Theorem
\ref{Existence-BDSDE}).

\subsection{Measurable Selection}\label{Chap-measurable-selection}

On a filtered probability space  $(\Omega^1, \mathcal{F}^1, (\mathcal{F}^1_t)_{t \in [0, T]},
\mathbb{P}^1)$ satisfying the usual condition,
we denote by $\tmop{Prog}$  the progressive
$\sigma$-field (\cite[Def.I.4.7] {revuz_continuous_1999})
\begin{align*}
  \tmop{Prog} = \{A \in \mathcal{F}^1 \otimes \mathcal{B}([0, T]) \mid
  \mathbb{1}_A \text{ is } (\mathcal{F}^1_t)_{t \in [0, T]}
  \text{-progressively measurable} \} .
\end{align*}
 
Let $(U, \mathcal{U})$ be a measurable space. Given two processes $X, \bar{X}$, beings maps from
$(\Omega^1 \times [0, T]) \times U$ into some Polish measurable space, we say
$\bar{X}$ is a \textbf{$\tmop{Prog} \otimes \, \mathcal{U}$-measurable version}
of $X$, if $\bar{X}$ is a $\tmop{Prog} \otimes \, \mathcal{U}$-measurable and $\bar{X} (\cdot, u)$ is a modification  (cf.\   \cite[Def.I.1.7]{protter_stochastic_2004})
of $X (\cdot, u)$ for any $u \in
U$, meaning that for any $t$ and $u$, $X (t,\cdot, u)=\bar{X}(t,\cdot, u)$ holds a.s..
If, moreover, $X$ and $\bar{X}$ are both cadl\`ag (or c{\`a}gl{\`a}d) in  $t$ for a.e. $\omega$ and all $u$, then
$\bar{X} (\cdot, u)$ is even indistinguishable from $X (\cdot, u)$ for any $u \in
U$.

The following measurable selection  results are similar to those on {\cite{friz_controlled_2024}}, building on classical work by Stricker and Yor {\cite{stricker_calcul_1978}}. We simply adapt
their results to our setting.

\begin{proposition}
  \label{limit-prog-measurable}Given processes $X^n : (\Omega^1 \times [0, T])
  \times U \longrightarrow \mathbb{R}^h$, $n \in \mathbb{N}$, such that every
  $X^n$ has a $\tmop{Prog} \otimes \, \mathcal{U}$-measurable version and  $X^n (\cdot, u)$ is $\mathbb{P}^1$-a.s. c{\`a}gl{\`a}d, for
  each $u \in U$. Let
  $X: \Omega^1 \times [0, T] \times U \longrightarrow \mathbb{R}^h$ be a map
  (without any measurability assumption) and assume that for each $u \in U$,
  $X^n (\cdot, u)$ converges in $\mathbb{P}^1$-probability to $X (\cdot, u)$
  uniformly in time. Then $X$ has a $\tmop{Prog} \otimes
  \mathcal{U}$-measurable version, which is $\mathbb{P}^1$-a.s. c{\`a}gl{\`a}d
  for every $u \in U$.
\end{proposition}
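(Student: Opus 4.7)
The plan is to construct the desired version $\bar X$ as the pointwise limit along a subsequence $(N_k(u))_k$ whose dependence on $u$ is itself $\mathcal{U}$-measurable. First I replace each $X^n$ by its $\tmop{Prog}\otimes\mathcal{U}$-measurable version $\bar X^n$; since $\bar X^n(\cdot,u)$ is a modification of $X^n(\cdot,u)$ and both are $\mathbb{P}^1$-a.s.\ c{\`a}gl{\`a}d, they are indistinguishable, so the hypothesis transfers to $\bar X^n(\cdot,u)\to X(\cdot,u)$ uniformly in time in $\mathbb{P}^1$-probability for every $u\in U$. Fixing a countable dense $D\subset[0,T]$, the functional $(\omega,u)\mapsto\sup_{t\in D}|\bar X^n(\omega,t,u)-\bar X^m(\omega,t,u)|$ is $\mathcal{F}^1\otimes\mathcal{U}$-measurable and, for every fixed $u$, $\mathbb{P}^1$-a.s.\ coincides with the sup over $[0,T]$ by c{\`a}gl{\`a}d regularity. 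Hence by Fubini, $u\mapsto\mathbb{P}^1(\sup_{t\in D}|\bar X^n(\cdot,u)-\bar X^m(\cdot,u)|>\varepsilon)$ is $\mathcal{U}$-measurable.

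I then define $\mathcal{U}$-measurable indices $N_0(u)\equiv 0$ and, iteratively,
\[
N_{k+1}(u) \; :=\; \inf\Bigl\{n>N_k(u)\,:\,\sup_{m,m'\ge n}\mathbb{P}^1\bigl(\sup_{t\in D}|\bar X^m(\cdot,u)-\bar X^{m'}(\cdot,u)|>2^{-k}\bigr)<2^{-k}\Bigr\}.
\]
The ucp-Cauchy property of $(\bar X^n(\cdot,u))_n$ guarantees $N_k(u)<\infty$, and the level-set decomposition $\bar X^{N_k(u)}(\omega,t,u)=\sum_{n\in\mathbb{N}}\bar X^n(\omega,t,u)\mathbb{1}_{\{N_k(u)=n\}}$ shows that this process is $\tmop{Prog}\otimes\mathcal{U}$-measurable. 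For each fixed $u$, a Borel--Cantelli argument then yields that $(\bar X^{N_k(u)}(\cdot,u))_k$ is $\mathbb{P}^1$-a.s.\ Cauchy in the supremum norm on $[0,T]$; by uniqueness of limits in probability its uniform limit must agree a.s.\ with $X(\cdot,u)$. I would finally set $\bar X(\omega,t,u):=\lim_k \bar X^{N_k(u)}(\omega,t,u)$ on the jointly measurable set where the uniform limit exists, and $0$ otherwise. This $\bar X$ is $\tmop{Prog}\otimes\mathcal{U}$-measurable, a.s.\ c{\`a}gl{\`a}d in $t$ for every $u$ as a uniform limit of c{\`a}gl{\`a}d paths, and indistinguishable from $X(\cdot,u)$ for every $u\in U$.

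The main obstacle is that the subsequence realising a.s.\ uniform convergence genuinely depends on $u$; a naive $\limsup_n$ or $\liminf_n$ of the full sequence $\bar X^n$ would not recover $X$, since convergence in probability does not imply a.s.\ convergence of the full sequence (one can have $X^n\to 0$ in probability while $\limsup_n X^n=\infty$ a.s.). The key device is the explicit, Fubini-based definition of $N_k(u)$, which renders the $u$-dependent subsequence selection $\mathcal{U}$-measurable without requiring any topological structure on $U$ or any external selection theorem.
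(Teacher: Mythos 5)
Your argument is essentially the standard Stricker--Yor measurable-subsequence-selection proof, which is exactly what underlies Lemma 4.6 of the reference that the paper's one-line proof defers to; so in substance you are taking the same route, just writing it out in full. The Fubini-based measurability of $u\mapsto\mathbb{P}^1(\sup_{t\in D}|\bar X^n(\cdot,u)-\bar X^m(\cdot,u)|>\varepsilon)$, the measurable selection of $N_k(u)$ via level sets, and the Borel--Cantelli step are all sound (take $0\in D$ so that the sup over $D$ really equals the sup over $[0,T]$ for c\`agl\`ad paths).

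Two points need tightening. First, in the final step you set $\bar X$ to $0$ outside ``the set where the uniform limit exists''; if that set is the collection of pairs $(\omega,u)$ for which the whole path converges uniformly, it is only $\mathcal{F}^1_T\otimes\mathcal{U}$-measurable, and multiplying a progressively measurable process by the indicator of such an $(\omega,u)$-set generally destroys progressive measurability (a set $A\times[0,T]$ lies in $\tmop{Prog}$ only when $A\in\bigcap_t\mathcal{F}^1_t$). The standard fix is to define $\bar X$ as the componentwise $\limsup_k\bar X^{N_k(u)}$, truncated to $0$ on the pointwise-in-$(\omega,t,u)$ set where the $\limsup$ is not finite or does not equal the $\liminf$; this set is $\tmop{Prog}\otimes\mathcal{U}$-measurable, and on the almost-sure event of uniform convergence the $\limsup$ coincides with your limit. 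Second, you assert without justification that the measurable versions $\bar X^n(\cdot,u)$ are themselves a.s.\ c\`agl\`ad (a modification of an a.s.\ c\`agl\`ad process need not be); either arrange this by a regularisation of the versions, or note that only the a.s.\ identity $\sup_{t\in D}|\bar X^n-\bar X^m|=\sup_{t\in[0,T]}|X^n-X^m|$ (which follows from the $t$-by-$t$ modification property on the countable set $D$ plus the regularity of the original $X^n$) is needed for the selection, and derive the c\`agl\`ad property of the limit from that of $X(\cdot,u)$ on the convergence event. Neither issue affects the architecture of the proof.
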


\begin{proof}
  The statement follows by the same proof as in Lemma 4.6 in
  {\cite{friz_controlled_2024}} by replacing the optional $\sigma$-field with
  the progressive $\sigma$-field $\tmop{Prog}$.
\end{proof}

Let $(\Omega^2, \mathcal{F}^2, \mathbb{P}^2)$ to be complete probability
space. Define $(\Omega, \mathcal{F}, \mathbb{P})$ as the product space
$(\Omega^1, \mathcal{F}^1, \mathbb{P}^1) \otimes (\Omega^2, \mathcal{F}^2,
\mathbb{P}^2)$ with the filtration $(\mathcal{F}_t)_{t \in [0, T]} \assign
(\mathcal{F}^1_t \otimes \mathcal{F}^2)_{t \in [0, T]}$.

\begin{proposition}
  \label{measurable-selection}Let $X^n : \Omega^1 \times \Omega^2 \times [0,
  T] \longrightarrow \mathbb{R}^h$ be $\mathcal{F}_t$-progressively measurable
  and $X : \Omega^1 \times \Omega^2 \times [0, T] \longrightarrow
  \mathbb{R}^h$ such that for $\mathbb{P}^2$-a.e. $\omega^2$ the process $X^n
  (\cdot, \omega^2)$ is $\mathbb{P}^1$-a.s. c{\`a}gl{\`a}d and converges in
  $\mathbb{P}^1$-probability to $X (\cdot, \omega^2)$ uniformly in time. Then
  $X$ has a $\mathcal{F}_t$-progressively measurable version $\tilde{X}$ such
  that $\tilde{X}$ is $\mathbb{P}$-a.s. c{\`a}gl{\`a}d and $X^n$ converge in
  $\mathbb{P}$-probability to $\tilde{X}$ uniformly in time.
\end{proposition}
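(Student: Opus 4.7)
The plan is to reduce the statement to an application of Proposition \ref{limit-prog-measurable} by treating the second coordinate $\omega^2$ as the parameter $u$; that is, I would take $U = \Omega^2$ and $\mathcal{U} = \mathcal{F}^2$. The key preliminary observation is that, because $\mathcal{F}_t = \mathcal{F}^1_t \otimes \mathcal{F}^2$, a map $\Omega^1 \times \Omega^2 \times [0,T] \to \mathbb{R}^h$ is $\mathcal{F}_t$-progressively measurable if and only if, viewed as a map $(\Omega^1 \times [0,T]) \times \Omega^2 \to \mathbb{R}^h$, it is $\tmop{Prog} \otimes \mathcal{F}^2$-measurable, with $\tmop{Prog}$ denoting the progressive $\sigma$-field on $\Omega^1 \times [0,T]$ associated to $(\mathcal{F}^1_t)$. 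This equivalence is verified directly from the definition of progressive measurability and the product structure of $\mathcal{F}_t$, using that rearranging factors of a product $\sigma$-field does not change measurability.

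With this identification, the only obstruction to a direct invocation of Proposition \ref{limit-prog-measurable} is that the c\`agl\`ad and convergence hypotheses hold only for $\mathbb{P}^2$-a.e.\ $\omega^2$, whereas that proposition requires them for every $u \in U$. I would remove this obstruction by picking $N \in \mathcal{F}^2$ with $\mathbb{P}^2(N) = 0$ such that the hypotheses are satisfied on $N^c$, and replacing $X^n$ and $X$ by $X^n \cdot \mathbf{1}_{N^c}(\omega^2)$ and $X \cdot \mathbf{1}_{N^c}(\omega^2)$. Since $N^c \in \mathcal{F}^2 \subset \mathcal{F}_t$, the truncated processes remain $\mathcal{F}_t$-progressively measurable, and now fulfill the hypotheses of Proposition \ref{limit-prog-measurable} for \emph{every} $\omega^2 \in \Omega^2$. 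Applying that proposition yields a $\tmop{Prog} \otimes \mathcal{F}^2$-measurable version $\tilde{X}$ which is $\mathbb{P}^1$-a.s.\ c\`agl\`ad for every $\omega^2$; by the equivalence above, $\tilde{X}$ is $\mathcal{F}_t$-progressively measurable.

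It then remains to translate the per-$\omega^2$ statements furnished by Proposition \ref{limit-prog-measurable} into the corresponding statements on $(\Omega, \mathbb{P})$. That $\tilde{X}$ is a modification of $X$ at each fixed $t$ follows from Fubini together with the $\mathbb{P}^2$-negligibility of $N$, using that on $N^c$ the modification holds at the $\mathbb{P}^1$-level. The $\mathbb{P}$-a.s.\ c\`agl\`ad property is obtained similarly, by applying Fubini to the $\mathbb{P}^1$-a.s.\ c\`agl\`ad property of $\tilde{X}(\cdot, \omega^2)$ available for every $\omega^2 \in N^c$. Uniform convergence in $\mathbb{P}$-probability follows from Fubini combined with dominated convergence: for every $\varepsilon > 0$,
\[
\mathbb{P}\bigl(\sup\nolimits_{t \in [0,T]} |X^n_t - \tilde{X}_t| > \varepsilon\bigr)
\;=\; \int_{\Omega^2} \mathbb{P}^1\bigl(\sup\nolimits_{t \in [0,T]} |X^n_t(\cdot,\omega^2) - \tilde{X}_t(\cdot,\omega^2)| > \varepsilon\bigr) \, \mathrm{d}\mathbb{P}^2(\omega^2),
\]
where the integrand is bounded by $1$ and tends to $0$ for every $\omega^2 \in N^c$; measurability of the suprema is unproblematic since all relevant trajectories are $\mathbb{P}^1$-a.s.\ c\`agl\`ad, so the suprema reduce to countable ones over a dense subset of $[0,T]$ together with the endpoint.

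The only real technical subtlety is the first step---checking carefully the equivalence between $\mathcal{F}_t$-progressive measurability on the product space and $\tmop{Prog} \otimes \mathcal{F}^2$-measurability with parameter $\omega^2$. Once this bookkeeping about product $\sigma$-fields is in place, everything else amounts to handling a $\mathbb{P}^2$-null set and executing Fubini, and the substance of the statement is inherited from Proposition \ref{limit-prog-measurable}.
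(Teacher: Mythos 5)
Your proposal is correct and follows essentially the same route as the paper: identify $\mathcal{F}_t$-progressive measurability on the product space with $\tmop{Prog}\otimes\mathcal{F}^2$-measurability, kill the exceptional $\mathbb{P}^2$-null set (the paper sets $X^n$ to $0$ there, you multiply by an indicator), invoke Proposition \ref{limit-prog-measurable} with $U=\Omega^2$, and finish with Fubini and dominated convergence. The one step you flag as a subtlety -- the equivalence of the two notions of measurability -- is exactly what the paper dispatches with a monotone class argument, so nothing is missing.
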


\begin{proof}
  Define $C \subset \Omega^2$ to be the collection of all $\omega^2 \in \Omega^2$, where $X^n (\cdot, \omega^2)$ is not c{\`a}gl{\`a}d for some $n\in \mathbb{N}$ or $X^n (\cdot, \omega^2)$ does not converge to $X (\cdot, \omega^2)$ uniformly in time in $\mathbb{P}^1$-probability. Further define the process $\widetilde{X^n}$ as
  \begin{align*}
    \widetilde{X^n} (\omega^1, \omega^2) = \left\{ \begin{array}{ll}
      X^n (\omega^1, \omega^2) & , \tmop{if} \omega^2 \notin C\\
      0 & , \text{if } \omega^2 \in C
    \end{array} . \right.
  \end{align*}
  The process $\widetilde{X^n}$ is $\mathcal{F}_t$-progressively measurable
  since $C$ is a $\mathbb{P}^2$-Nullset and $\mathcal{F}^2$ is completed.
  Furthermore, by construction,n we have for every $\omega^2$ the process
  $\widetilde{X^n} (\cdot, \omega^2)$ is c{\`a}gl{\`a}d $\mathbb{P}^1$-a.s.
  and converges to $X (\cdot, \omega^2)$ uniformly in time in
  $\mathbb{P}^1$-probability, so by Proposition \ref{limit-prog-measurable}
  there exists a ${\tmop{Prog} \otimes \mathcal{F}^2 } $-measurable version
  $\tilde{X}$ of $X$ such that for every $\omega^2$ we have $\tilde{X} (\cdot,
  \omega^2)$ is $\mathbb{P}^1$-a.s. c{\`a}gl{\`a}d and
  \begin{equation}
    \mathbb{E}^{\mathbb{P}^1} [\sup_{t \in [0, T]} | \widetilde{X^n_t} (\cdot,
    \omega^2) - \widetilde{X_t} (\cdot, \omega^2) | \wedge 1] \xrightarrow{n
    \rightarrow \infty}  0. \label{tildeXnTOtildeX}
  \end{equation}
  We get $\tilde{X}$ is $\mathbb{P}$-a.s. c{\`a}gl{\`a}d by simply applying
  Fubini. By a simple monotone class, we can see that ${\tmop{Prog}
  \otimes \mathcal{F}^2 } $-measurable is the same as
  $(\mathcal{F}_t)$-progressively measurable. This implies that $\sup_{t \in
  [0, T]} | \widetilde{X^n_t} - \widetilde{X_t} |$ is
  $\mathcal{F}_T$-measurable, then by Fubini, dominated convergence and
  (\ref{tildeXnTOtildeX}) we have
  \[ \mathbb{E}^{\mathbb{P}} [\sup_{t \in [0, T]} | \widetilde{X^n_t} -
     \widetilde{X_t} | \wedge 1] =\mathbb{E}^{\mathbb{P}^2}
     [\mathbb{E}^{\mathbb{P}^1} [\sup_{t \in [0, T]} | \widetilde{X^n_t} -
     \widetilde{X_t} | \wedge 1]] \xrightarrow{n \rightarrow \infty}  0. \]
  Since $X^n = \widetilde{X^n}$ holds $\mathbb{P}$-a.s. we get the desired
  convergence. 
\end{proof}

We consider the filtered probability space $(\Omega^B, \mathcal{F}^B,
(\mathcal{F}^B_t)_{t \in [0, T]}, \mathbb{P}^B)$, which supports a
$d$-dimensional Brownian motion $B$ and the filtration $(\mathcal{F}^B_t)_{t
\in [0, T]}$ is given by the usual filtration of $B$. Let $(U, \mathcal{U})$
denote another measurable space and $L : (U, \mathcal{U}) \rightarrow (D^q
([0, T]), \mathfrak{D}_T)$ be a measurable map from $(U, \mathcal{U})$ to
$(D^q, \mathfrak{D}_T)$, $q < 2$, with $\mathfrak{D}_T$ denoting the smallest
$\sigma$-algebra with respect to which all coordinate projections are
measurable (c.f. Theorem 12.5, {\cite{billingsley_convergence_1999}}). We
consider RBSDEs in the form
\begin{eqnarray}
  Y_t^u & = & \xi (\cdot, u) + \int_t^T f (r, u, Y_r^u, Z_r^u) \tmop{dr} -
  \int_t^T Z_r^u \tmop{dB}_r + \int_t^T g_r (W, Y^u_{r +})  (\diamond)
  \tmop{dL} (u),  \label{RBSDE-measurable}
\end{eqnarray}
where we use the shorthand notation $\int_t^T g_r (Y_{r +}) \hspace{0.17em}
(\diamond) \tmop{dL} (u)$ to signal that we are showing results for both
forward- and Marcus-type RBSDEs. We have shown in Theorem
\ref{global_existenceuniqueness} that (under suitable conditions on the
coefficients) for any $u \in U$, there exists a unique solution $(Y^u, Z^u)$
to the above RBSDE. We define the process $(Y, Z)$ as maps from $\Omega^B
\times [0, T] \times U$ to $\mathbb{R}^h \times \mathbb{R}^{h \times d}$
given by $Y (t, \omega^B, u) = Y^u (t, \omega^B)$ and $Z (t, \omega^B, u) =
Z^u (t, \omega^B)$. We show in the next theorem that $(Y, Z)$ has a
$\tmop{Prog} \otimes \mathfrak{D}_T$-measurable version $(\tilde{Y}, \tilde{Z})$. \\
We precede the theorem with the following lemma, where we show a measurable
selection result for the It{\^o} representation theorem. Let $M : \Omega^B
\times [0, T] \times U \to \mathbb{R}^h$ be a $\mathcal{F}^B \otimes
\mathcal{B} ([0, T]) \otimes \mathcal{U}$-measurable process such that each
$M^u \assign M (\cdot, \cdot, u)$ is a zero mean $L^2$-martingale with respect
to the Brownian filtration $(\mathcal{F}_t^B)_t$. By It{\^o} representation,
for each $u \in U$ there exists a unique progressively measurable (even
predictable) process $H^u = (H^{u, i, j})_{i \leq h, j \leq d} :
\Omega^B \times [0, T] \to \mathbb{R}^{h \times d}$ with $H^u \in L^2
(\tmop{dt} \otimes \mathbb{P}^B)$ such that 
$M_t^{u, i} = \int_0^t H_s^u
\tmop{dB}_s = \sum_{j = 1}^d H_s^{u, i, j} \tmop{dB}^j_s$, $i \leq h$,
for all $t \in [0, T]$. Our next result shows, that $H (\cdot, \cdot, u) \assign H^u$, $u\in U$, has a
$\tmop{Prog} \otimes \, \mathcal{U}$-measurable version.

\begin{lemma}
  \label{measurable-selection-Martingale-repre}There exists $\tilde{H} :
  (\Omega^B \times [0, T]) \times U \to \mathbb{R}^{h \times d},$ which is a
  $\tmop{Prog} \otimes \, \mathcal{U}$-measurable version of $H (u, \cdot,
  \cdot)$. In particular it holds for all $u \in U$ and all $t \in [0, T]$
  that
  \[ M_t^{u, i} = \int_0^t H_s (u, \cdot, \cdot) \tmop{dB}_s = \sum_{j = 1}^d \int_0^t
     H_s^{i, j} (u, \cdot, \cdot) \tmop{dB}^j_s . \]
\end{lemma}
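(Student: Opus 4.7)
The plan is to recover each integrand $H^{u,i,j}$ from the quadratic covariation $\langle M^{u,i}, B^j \rangle$, approximate the latter by explicit Riemann-type sums that are manifestly $\tmop{Prog} \otimes \mathcal{U}$-measurable, and then pass to the limit with the help of Proposition \ref{limit-prog-measurable} followed by a Lebesgue-differentiation step.

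First, since each $M^u$ is a martingale in the Brownian filtration it admits a continuous modification (the integral $\int_0^\cdot H^u\,dB$). A standard regularisation of the given jointly measurable $M$ along dyadic times (taking conditional expectations with respect to a regular conditional probability for $\mathbb{P}^B$ and then a limit) yields a jointly $\tmop{Prog} \otimes \mathcal{U}$-measurable process $\tilde{M}$ that is, for every $u\in U$, $\mathbb{P}^B$-indistinguishable from the continuous version of $M^u$. Next, for $n \in \mathbb{N}$ and indices $i \leq h$, $j \leq d$, let $t^n_k := kT/2^n$ and define the continuous piecewise-linear interpolation $S^{u,n,i,j}_t$ of
\[
\sum_{k:\, t^n_{k+1} \leq t} \bigl(\tilde{M}^{u,i}_{t^n_{k+1}} - \tilde{M}^{u,i}_{t^n_k}\bigr)\bigl(B^j_{t^n_{k+1}} - B^j_{t^n_k}\bigr).
\]
These processes are continuous (hence c{\`a}gl{\`a}d) and $\tmop{Prog} \otimes \mathcal{U}$-measurable by construction. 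For each fixed $u$, standard convergence of approximating sums for the covariation of continuous $L^2$-martingales gives $S^{u,n,i,j} \to \langle M^{u,i}, B^j \rangle = \int_0^\cdot H^{u,i,j}_s\,ds$ uniformly on $[0,T]$ in $\mathbb{P}^B$-probability as $n \to \infty$. Proposition \ref{limit-prog-measurable} then furnishes a $\tmop{Prog} \otimes \mathcal{U}$-measurable, $\mathbb{P}^B$-a.s.\ continuous version $\tilde{V}^{i,j}$ of the covariation process.

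Finally, with the convention $\tilde{V}^{i,j}(u, s, \omega) := \tilde{V}^{i,j}(u, T, \omega)$ for $s \geq T$, define
\[
\tilde{H}^{i,j}(u, t, \omega) := \limsup_{\substack{h \downarrow 0 \\ h \in \mathbb{Q}_+}} \frac{\tilde{V}^{i,j}(u, t+h, \omega) - \tilde{V}^{i,j}(u, t, \omega)}{h}.
\]
As a countable limsup of $\tmop{Prog} \otimes \mathcal{U}$-measurable maps, $\tilde{H}^{i,j}$ is itself $\tmop{Prog} \otimes \mathcal{U}$-measurable. For each fixed $u$, the map $t \mapsto \tilde{V}^{i,j}(u, t, \cdot)$ is $\mathbb{P}^B$-a.s.\ equal to the absolutely continuous function $t \mapsto \int_0^t H^{u,i,j}_s\,ds$, so Lebesgue differentiation yields $\tilde{H}^{i,j}(u, \cdot, \cdot) = H^{u,i,j}(\cdot, \cdot)$ on a set of full $dt \otimes d\mathbb{P}^B$-measure. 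Since altering the integrand on such a null set does not change the stochastic integral, the representation $M^{u,i}_t = \sum_{j=1}^d \int_0^t \tilde{H}^{i,j}_s(u, \cdot)\,dB^j_s$ holds for every $u \in U$.

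The main obstacle is the very first step: the hypothesis provides $M$ only as a jointly $\mathcal{F}^B \otimes \mathcal{B}([0,T]) \otimes \mathcal{U}$-measurable map, not as a jointly progressive process with $u$-wise continuous paths. Building $\tilde{M}$ requires a careful selection argument that simultaneously preserves joint measurability, progressive measurability in $(t,\omega)$, and $u$-wise indistinguishability from the continuous Brownian-martingale version; once this is in place, the subsequent covariation convergence and Lebesgue differentiation steps are routine.
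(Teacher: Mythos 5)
Your proposal is correct and follows essentially the same route as the paper: both recover $H^{u,i,j}$ as the ($dt$-a.e.) derivative of a jointly $\tmop{Prog}\otimes\,\mathcal{U}$-measurable version of the covariation $[M^{u,i},B^j]=\int_0^{\cdot}H^{u,i,j}_s\,ds$. The only difference is that the paper obtains the measurable version of the bracket in one step by citing Proposition 2 of Stricker--Yor, whereas you reconstruct it by hand (regularising $M$, forming Riemann sums, and invoking Proposition \ref{limit-prog-measurable}), which works but makes the joint regularisation of $M$ --- the step you rightly flag as delicate --- the part that the citation would otherwise absorb.
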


\begin{proof}(We thank Peter Imkeller for suggesting the idea of the proof.)
  We show the statement for $h = d = 1$. By
  in {\cite[Proposition~2, in slight (multivariate) generalization)]{stricker_calcul_1978}}, there is a $\tmop{Prog}
  \otimes \mathcal{U}$-measurable map denoted by $[M, B] : \Omega^B \times [0,
  T] \times U \to \mathbb{R}_+$ such that $[M, B]$ is a $\tmop{Prog} \otimes
  \mathcal{U}$-measurable version of $[M^u, B] \mid_{u = \cdot}$, which is equal (up to indistinguishability) to
  \[ \bigg[ \int_0^t H_s^u \tmop{dB}_s, B \bigg] = \int_0^t H_s^u \tmop{ds} .
  \]
  This shows that, for each $u \in U$, $t \mapsto [M, B] (\omega, t, u)$ is
  for a.e. $\omega$ in the Cameron-Martin space $\mathcal{H}_T$ of
  absolutely continuous paths $h$ on $[0, T]$ with $\partial h \in L_T^2 \assign L^2 ([0, T])$.
  From
  \[ \partial : \mathcal{H}_T \to L_T^2 ; \quad f \mapsto F \assign \bigg( t
     \mapsto \lim_{n \to \infty}  \frac{f (t) - f ((t - 1 / n) \vee 0)}{1 / n}
     \bigg) \]
  we obtain that $\tilde{H} \assign \partial [M, B]$ is $\tmop{Prog} \otimes
  \mathcal{U}$-measurable, and recalling that $H^u=H (u, \cdot, \cdot)$ we can conclude  that \~{H} is a $\tmop{Prog} \otimes
  \mathcal{U}$-measurable version of $H$. {\hspace*{\fill}}
\end{proof}

\begin{theorem}
  \label{measurable-RBSDE-solution}Let $\xi : \Omega^B \times U \rightarrow
  \mathbb{R}^h$ be $\mathcal{F}^B \otimes \mathcal{U}$-measurable, $f :
  (\Omega^B \times [0, T]) \times U \times \mathbb{R}^h \times \mathbb{R}^{h
  \times d} \rightarrow \mathbb{R}^h$ be $\tmop{Prog} \otimes \mathcal{U}
  \otimes \mathcal{B}_h \otimes \mathcal{B}_{h \times d}$-measurable and $g :
  (\Omega^B \times [0, T]) \times U \times \mathbb{R}^h \rightarrow
  \mathcal{L} (\mathbb{R}^e, \mathbb{R}^h)$ be $\tmop{Prog} \otimes
  \mathcal{U} \otimes \mathcal{B}_h$-measurable. Further assume that for any
  $u \in U$ the functions $\xi^u (\cdot) \assign \xi (\cdot, u)$, $f^u (\cdot)
  \assign f (u, \cdot)$ and $g^u (\cdot) \assign g (u, \cdot)$ satisfy the
  Assumption A. \\
  Let $(Y^u, Z^u)$ denote the solution to the RBSDE
  \eqref{RBSDE-measurable} and define $Y (t, \omega^B, u) = Y^u (t, \omega^B)$
  and $Z (t, \omega^B, u) = Z^u (t, \omega^B)$. Then $(Y, Z)$ has a
  $\tmop{Prog} \otimes \, \mathcal{U}$-measurable version $(\tilde{Y},
  \tilde{Z})$, such that $\tilde{Y} (\cdot, u)$ is $\mathbb{P}^B$-a.s.
  c{\`a}gl{\`a}d for any $u \in U$. In particular, $Y(\cdot,u)$ and $\tilde{Y}(\cdot,u)$ are indistinguishable for all $u \in U$.

\end{theorem}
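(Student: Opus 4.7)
The plan is to approximate $(Y, Z)$ by the Picard iterates $(Y^{u,n}, Z^{u,n})_{n \in \mathbb{N}}$ from Theorem \ref{Picard-iteration}, establish by induction on $n$ that each iterate admits a $\tmop{Prog} \otimes \mathcal{U}$-measurable version, and then pass to the limit using Proposition \ref{limit-prog-measurable} and Lemma \ref{measurable-selection-Martingale-repre}. Concretely, for each $u \in U$, set $Y^{u,0} \equiv \xi^u$, $Z^{u,0} \equiv 0$, and
\[
Y^{u,n+1}_t = \xi^u + \int_t^T f(r, u, Y^{u,n}_r, Z^{u,n}_r)\, dr + \int_t^T g_r(u, Y^{u,n}_{r+})\, (\diamond)\, dL(u) - \int_t^T Z^{u,n+1}_r\, dB_r,
\]
where $Z^{u,n+1}$ is extracted via It\^o's martingale representation theorem. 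By Theorem \ref{Picard-iteration}, $(Y^{u,n}, Z^{u,n}) \to (Y^u, Z^u)$ in $\mathcal{B}^p \times \tmop{BMO}$ for each fixed $u$.

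For the induction step, assume $Y^n$ and $Z^n$ are jointly measurable in $u$ (in the $\tmop{Prog} \otimes \mathcal{U}$ sense for $n \geq 1$). The key observation is that the Young integral $\int_0^T g_r(u, Y^{u,n}_{r+})\, (\diamond)\, dL(u)$ can be realized as an $\mathbb{P}^B$-a.s.\ limit of Riemann-type sums $\sum_i g_{t_i}(u, Y^{u,n}_{t_i+})(L_{t_{i+1}}(u) - L_{t_i}(u))$ taken along a fixed countable nested sequence of partitions (cf.\ Appendix A); each such sum is $\mathcal{F}^B_T \otimes \mathcal{U}$-measurable by the assumed measurability of $g$ and $L$ together with the induction hypothesis, and pointwise limits preserve joint measurability. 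The Lebesgue integral in $f$ is handled analogously by Fubini. Hence the terminal datum
\[
A^{u,n} := \xi^u + \int_0^T f(r, u, Y^{u,n}_r, Z^{u,n}_r)\, dr + \int_0^T g_r(u, Y^{u,n}_{r+})\, (\diamond)\, dL(u)
\]
is $\mathcal{F}^B_T \otimes \mathcal{U}$-measurable. The martingale $M^{u,n+1}_t := \mathbb{E}[A^{u,n} \mid \mathcal{F}^B_t]$ admits a jointly measurable c\`adl\`ag version, e.g.\ via a regular conditional distribution on the Polish path space of $B$, and Lemma \ref{measurable-selection-Martingale-repre} then delivers a $\tmop{Prog} \otimes \mathcal{U}$-measurable version of $Z^{n+1}$. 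Writing $Y^{u,n+1}_t$ as $M^{u,n+1}_t$ minus the corresponding running integrals yields a jointly measurable c\`agl\`ad version of $Y^{n+1}$.

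To pass to the limit, fix $u$. Since $Y^u_T = Y^{u,n}_T = \xi^u$, Lemma \ref{p2Norm-property}(a) gives $\sup_{t \in [0,T]} \|Y^u_t - Y^{u,n}_t\|_\infty \leq \|Y^{u,n} - Y^u\|_{p, 2 ; [0,T]} \to 0$ as $n \to \infty$, so $Y^{u,n}$ converges to $Y^u$ uniformly in time in $\mathbb{P}^B$-probability. Proposition \ref{limit-prog-measurable} then supplies a $\tmop{Prog} \otimes \mathcal{U}$-measurable version $\tilde{Y}$ of $Y$ which is $\mathbb{P}^B$-a.s.\ c\`agl\`ad for each $u$; indistinguishability of $\tilde{Y}(\cdot, u)$ and $Y(\cdot, u)$ follows as both are c\`agl\`ad modifications agreeing at each $t$ a.s. For $Z$, the $\tmop{BMO}$-convergence $Z^{u,n} \to Z^u$ translates, via It\^o isometry and Doob's inequality, into uniform $L^2(\mathbb{P}^B)$-convergence $M^{u,n+1} \to M^u$, with $M^u_t = \mathbb{E}[A^u \mid \mathcal{F}^B_t]$ and $A^u$ the limit of $A^{u,n}$; applying Proposition \ref{limit-prog-measurable} to the c\`agl\`ad modifications $(M^{u,n+1}_{s-})_s$ yields a jointly measurable version of $M^u$, and one final application of Lemma \ref{measurable-selection-Martingale-repre} extracts the desired $\tilde{Z}$.

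The main technical obstacle is the joint $u$-measurability of the Young integral with a parameter-dependent integrator $L(u)$ of only finite $q$-variation, which lives in an infinite-dimensional path space; this precludes straightforward dominated convergence arguments and forces the Riemann-sum reduction, where the $\mathbb{P}^B$-a.s.\ convergence along a fixed (independent of $u$) sequence of partitions is what enables measurability to be transferred from finite sums to the limit.
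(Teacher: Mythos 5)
Your proposal follows essentially the same route as the paper: approximate by the Picard iterates, prove joint $\tmop{Prog}\otimes\mathcal{U}$-measurability of each iterate by induction (Riemann-sum arguments for the Young integral, a measurable-selection lemma for the martingale representation of $Z^{n+1}$), and pass to the limit via the uniform-in-time convergence in probability supplied by Theorem \ref{Picard-iteration} together with Proposition \ref{limit-prog-measurable}; your detour through the martingales $M^{u,n+1}$ for the $Z$-limit is a cosmetic variant of the paper's direct appeal to Stricker--Yor. One inaccuracy to repair: for Marcus-type jumps the $(\diamond)$-integral is \emph{not} the a.s.\ limit of the plain Riemann sums $\sum_i g_{t_i}(u,Y^{u,n}_{t_i+})(L_{t_{i+1}}(u)-L_{t_i}(u))$ — it carries the additional correction series $\sum_{r}[\varphi(g_r\Delta L_r(u),Y^{u,n}_{r+})-Y^{u,n}_{r+}-g_r(Y^{u,n}_{r+})\Delta L_r(u)]$, whose joint measurability must be argued separately (each summand is measurable by continuity of the ODE flow in its initial datum, and the series converges absolutely, so left-continuity in $t$ gives progressive measurability). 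This is easily patched and does not change the structure of your argument.
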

\begin{remark}
  One can simply choose $(U, \mathcal{U})$ to be $(D^p, \mathfrak{D}_T)$
  and the map $L$ to be the identity map, then we would have the measurable
  dependency of $(Y, Z)$ directly to the rough driver $W \in U = D^p$, in the
  sense that $(Y, Z)$ has a $\tmop{Prog} \otimes \mathfrak{D}_T$-measurable
  version. 
\end{remark}

\begin{proof}
  Recall from Theorem \ref{Picard-iteration} that $(Y^u, Z^u)$ is the limit of
  the Picard iteration \\$(Y^{u, n}, Z^{u, n})_{n \in \NN}$, defined as $Y^{u,
  0} \equiv \xi^u$, $Z^{u, 0} \equiv 0$ and then for every $u \in U$
  iteratively as
  \begin{align}
    Y^{u, n + 1}_t  =  \xi^u + \int_t^T f^u (r, Y^{u, n}_r, Z^{u, n}_r)
    \tmop{dr} + \int_t^T g^u_r (Y^{u, n}_{r +})  (\diamond) \tmop{dL} (u) -
    \int_t^T Z_r^{u, n + 1} \tmop{dB}_r .  \label{Picard-W-RBSDE}
  \end{align}
  We show by induction that $(Y^{\cdot, n}, Z^{\cdot, n})$ has $\tmop{Prog}
  \otimes \mathcal{U}$-measurable version. \
  
  For $Y^{\cdot, 0} \equiv \xi (\cdot, u)$, $Z^{\cdot, 0} \equiv 0$ this
  obviously holds. We abuse the notation a little bit and let $(Y^{\cdot, n},
  Z^{\cdot, n}_r)$ denote its $\tmop{Prog} \otimes \, \mathcal{U}$-measurable
  version. Using the $\tmop{Prog} \otimes \, \mathcal{U}$-measurability of
  $(Y^{\cdot, n}, Z^{\cdot, n})$ and applying Lemma 8.5 in
  {\cite{friz_controlled_2024}} and Lemma \ref{Measurable-Young} we get that
  $\int f^u (r, Y^{u, n}_r, Z^{u, n}_r) \tmop{dr} \mid_{u = \cdot}$ and $\int
  g^u_r (Y^{u, n}_{r +}) \tmop{dL} (u) \mid_{u = \cdot}$ have $\tmop{Prog}
  \otimes \mathfrak{D}_T$-measurable versions (again denoted the same). As for
  ${\sum_{0 \leq r < \cdot}}  [\varphi (g^u_r \Delta L (u)_r, Y^{u, n}_{r
  +}) - Y^{u, n}_{r +} - g^u_r (Y^{u, n}_{r +}) \Delta L (u)_r] \mid_{u =
  \cdot}$, it follows directly from continuity of the ODE solution with
  respect to the initial condition that each summand is $\mathcal{F}^B_r
  \otimes \mathfrak{D}_T$-measurable. By the absolute continuity of the sum
  $\sum_{0 \leq r < t} (\cdot)$, we get that the sum $\sum_{0 \leq r
  < t} (\cdot)$ is $\mathcal{F}^B_t \otimes \mathcal{U}$-measurable, hence by
  the left-continuity of ${\sum_{0 \leq r < \cdot}}  (\cdot)$, we get the
  $\tmop{Prog} \otimes \, \mathcal{U}$-measurability.
  
  We are left to show that $Z^{\cdot, n + 1}$ and $\int  Z^{u, n + 1}_r
  \tmop{dB}_r \mid_{u = \cdot}$ have $\tmop{Prog} \otimes
  \mathcal{U}$-measurable versions. Recall that $Z^{u, n + 1}$ is given by the
  martingale representation
  \begin{align*}
    \int_0^t Z^{u, n + 1}_r \tmop{dB}_r = \mathbb{E} \bigg[ \xi^u + \int_0^T
    f^u (r, Y^{u, n}_r, Z^{u, n}_r) \tmop{dr} + \int_0^T g^u_r (Y^{u, n}_{r
    +}) (\diamond) \tmop{dL} (u) \mid \mathcal{F}^B_t \bigg] .
  \end{align*}
  We have shown that everything inside the conditional expectation has
  $\tmop{Prog} \otimes \, \mathcal{U}$ - measurable versions, then by Lemma
  \ref{measurable-selection-Martingale-repre}, $Z^{\cdot, n + 1}$ also has a
  $\tmop{Prog} \otimes \, \mathcal{U}$-measurable version. \\
  We can now rewrite \eqref{Picard-W-RBSDE} as a ``forward'' equation
  \begin{eqnarray}
    Y^{\cdot, n + 1}_t - Y^{\cdot, n + 1}_0 & = & - \int_0^t \ldots \tmop{dr}
    \mid_{u = \cdot} - \int_0^t \ldots (\diamond) \tmop{dL} (u) \mid_{u =
    \cdot} + \int_0^t \ldots \tmop{dB}_r \mid_{u = \cdot} \nonumber
  \end{eqnarray}
  to see that $Y^{\cdot, n + 1}$ has $\tmop{Prog} \otimes \, \mathcal{U}$-measurable version, which is also c{\`a}gl{\`a}d. 
  
  By Theorem \ref{Picard-iteration} we have for all $u \in U$ that $Y^{u, n}
  \rightarrow Y^u$ in $\| \cdot \|_{p, 2}$ and $Z^{u, n} \rightarrow Z^u$ in
  $\tmop{BMO}$, then by the estimates $\mathbb{E} [\sup_{t \in [0, T]} | Y^{u,
  n}_t - Y^u_t |^2]^{\frac{1}{2}} \leq \| Y^{u, n} - Y^u \|_{p, 2}$ and
  $\mathbb{E} \big[ \int_0^T (Z_r^{u, n} - Z_r^u)^2 \tmop{dr} \big]
  \leq \| Z^{u, n} - Z^u \|_{\tmop{BMO}}$ we get in particular $Y^{u, n}
  \rightarrow Y^u$ uniform in time in $\mathbb{P}^B$-probability and $Z^{u, n
  + 1} \rightarrow Z^u$ in $\tmop{dt} \otimes \mathbb{P}^B$-probability. By
  Lemma \ref{limit-prog-measurable} and Proposition 1 in
  {\cite{stricker_calcul_1978}} we get that $Y^{\cdot}$ and $Z^{\cdot}$ both
  have $\tmop{Prog} \otimes \, \mathcal{U}$-measurable version $(\tilde{Y},
  \tilde{Z})$ and $\tilde{Y} (\cdot, u)$ is $\mathbb{P}^B$-a.s. càglàd for any $u \in
  U$.
\end{proof}

\subsection{Solution to BDSDE}\label{solution-bdsde}

In this section, we study the BDSDE \eqref{Marcus-BDSDE} and we start by
specifying the probabilistic setup. Let $(\Omega^B, \mathcal{F}^B,
\mathbb{P}^B)$ and $(\Omega^W, \mathcal{F}^W, \mathbb{P}^W)$ denote complete
probability spaces, which support respectively a $d$-dimensional Brownian
motion $B$ and a process $L : (\Omega^W, \mathcal{F}^W) \rightarrow (D^q,
\mathfrak{D}_T)$ of finite $q$-variation. One can for example take $(\Omega^B,
\mathcal{F}^B, \mathbb{P}^B)$ to be the classical Wiener space and $B$ to be
canonical process. As for $(\Omega^W, \mathcal{F}^W, \mathbb{P}^W)$ one can
take $(\Omega^W, \mathcal{F}^W)$ to be $(D^q, \mathfrak{D}_T)$, $L$ to be the
canonical process and $\mathbb{P}^W$ to be the measure that is uniquely
determined by the canonical process $L$ being
\begin{enumeratenumeric}
  \item a L{\'e}vy process without a Gaussian part with index $\beta < 2$,
  such process is almost surely of finite $q$-variation for some $q < 2$ (see
  Theorem 2, {\cite{monroe_gamma_1972}}) or,
  
  \item a fractional Brownian motion with Hurst $h \in (\frac{1}{2}, 1)$ or,
  
  \item a sum of independent L{\'e}vy process and fractional Brownian motion,
  both of the above types.
\end{enumeratenumeric}
Whether $L$ has independent increments with respect to $\mathbb{P}^W$ plays an
important role later in Corollary \ref{BDSDE-independentInc} to specify the
filtration of BDSDE solutions, one can check easily that only the first out of
the above example has independent increments.

We work on the product space
\[ (\Omega, \mathcal{F}, \mathbb{P}) \assign (\Omega^B, \mathcal{F}^B,
   \mathbb{P}^B) \otimes (\Omega^W, \mathcal{F}^W, \mathbb{P}^W), \]
on which by construction the (lifted) map $B (\omega^B, \omega^W) \equiv B
(\omega^B)$ is a Brownian motion and $L (\omega^B, \omega^W) \equiv L
(\omega^W)$ is a stochastic process, which is independent of $B$ and of finite
$q$-variation for some $q < 2$. We introduce the forward filtration
$\mathcal{F}^B_t = \sigma \{B_s, 0 \leq s \leq t\} \vee \mathcal{N}^B$
generated by $B$ (on $\Omega^B$) and completed with the
$\mathbb{P}^B$-negligible sets $\mathcal{N}^B$ and backward filtration
$\mathcal{F}^L_{t, T} = \sigma \{L_T - L_s, t \leq s \leq T\} \vee
\mathcal{N}^L$ generated by L (on $\Omega^L$) and completed with the
$\mathbb{P}^L$-negligible sets $\mathcal{N}^L$. One can naturally lift
$(\mathcal{F}^B_t)_t$ and $(\mathcal{F}^L_{t, T})_t$ to filtrations on the
product space $\Omega$ with $\bar{\mathcal{F}}^B_t =\mathcal{F}^B_t \otimes \{
\emptyset, \Omega^L \}$ and $\bar{\mathcal{F}}^L_{t, T} = \{ \emptyset,
\Omega^B \} \otimes \mathcal{F}^L_{t, T}$. Finally, we define on $\Omega$ the
initially enlarged filtration $(\mathcal{F}^B_t \otimes \mathcal{F}^L_{0,
T})_{t \in [0, T]}$, \ and the two sides ``filtration''
$\mathcal{G}_t =\mathcal{F}^B_t \otimes \mathcal{F}^L_{t, T}$ often used in
BDSDE literature, which is actually not a filtration.

\

Let $\xi : \Omega \rightarrow \mathbb{R}$ be $\mathcal{F}$-measurable, $f :
\Omega^B \times [0, T] \times D^q \times \mathbb{R} \times \mathbb{R}^d
\rightarrow \mathbb{R}$ be $\tmop{Prog} \otimes \mathfrak{D}_T \otimes
\mathcal{B}_d \otimes \mathcal{B}$-measurable and $g : \Omega^B \times [0, T]
\times D^q \times \mathbb{R} \rightarrow \mathbb{R}$ be $\tmop{Prog} \otimes
\mathfrak{D}_T \otimes \mathcal{B}$-measurable, we are interested in the
\tmtextbf{BDSDE} \eqref{Marcus-BDSDE} as seen in the introduction
\begin{eqnarray}
  Y_t & = & \xi + \int_t^T f (r, Y_r, Z_r) \tmop{dr} - \int_t^T Z_r
  \tmop{dB}_r + \int_t^T g_r (Y_{r +})  (\diamond) \tmop{dL} . \nonumber
\end{eqnarray}
Here, the first two integrals are standard Lebesgue integrals and It\^{o}
integral. The last integral $\int g (Y) \tmop{dL}$ is a pathwise defined
backward Young integral (see Proposition \ref{randomizedYoung}). The term
$\int g (Y) \diamond \tmop{dL}$ is again a short hand notation for
\begin{align}
  \int_t^T g_r (Y_{r +}) \diamond \tmop{dL} =  \int_t^T g_r (Y_{r +})
  \tmop{dL} + \sum_{t \leq r < T} [\varphi (g_r \Delta L_r, Y_{r +}) - Y_{r +}
  - g_r (Y_{r +}) \Delta L_r] .  \label{BDSDE-jumps}
\end{align}
In the following, we provide a solution theory for this new type of BDSDEs,
which accommodates a wide range of process $L$, even allowing jumps, while
``only'' requiring it to be of finite $q$-variation with $q < 2$. Of course,
due to the Brownian motion notoriously only being of finite $2 +
\varepsilon$-variation, we can not study the classical BDSDE introduced by
Pardoux and Peng in {\cite{pardoux_backward_1994}}. But we can obtain the
results about the well-posedness of BDSDEs by Jing
{\cite{jing_nonlinear_2012}} (under slightly different assumptions), where the
author studies the case of $L$ being a fractional Brownian motion with Hurst
parameter in $(\frac{1}{2}, 1)$. In that paper, the author makes sense of $\int
g (Y) \tmop{dL}$ as a backward Russo--Vallois integral, which given enough
regularity of the integrand agrees with the backward Young integral in our
paper, this has been shown in Section 4 of {\cite{zahle_integration_2001}} for
forward integral, but for backward integral the argument is the same.

In the next definition, we introduce the notion of BDSDE solution.

\begin{definition}
  \label{Def-BDSDE-solution}Let $Y : [0, T] \times \Omega \rightarrow
  \mathbb{R}$ and $Z : [0, T] \times \Omega \rightarrow \mathbb{R}$ be a pair
  of $(\mathcal{F}^B_t \otimes \mathcal{F}^L_{0, T})_{t \in [0,
  T]}$-progressively measurable processes, such that for $\mathbb{P}^2$-a.e.
  $\omega^2$ the norms $\| Y (\cdot, \omega^2) \|_{p, 2}$ and $\|Z(\cdot, \omega^2)\|_{\tmop{BMO}}$ are finite\footnote{For $(Y, Z)$ satisfying the
  above conditions, all integrals in (\ref{Marcus-BDSDE}) are intrinsically
  well-defined.}. The pair $(Y, Z)$ is called a solution to the BDSDE if it
  satisfies the integral equation (\ref{Marcus-BDSDE}). 
\end{definition}

\begin{remark}
  The solution pair $(Y, Z)$ in general does not need to be adapted to two
  sides ``filtration'' $\mathcal{G}_t =\mathcal{F}^B_t \otimes
  \mathcal{F}^L_{t, T}$, this is only the case when $L$ has independent
  increments.
\end{remark}

We start by showing the uniqueness of the BDSDE solution and its connection to
RBSDEs if the solution exists. The existence of the BDSDE solution will be
shown in Theorem \ref{Existence-BDSDE}. But before we show the following
useful lemma. \

\begin{lemma}
  \label{quenched-lebesgue}Let $x : [0, T] \times \Omega \rightarrow
  \mathbb{R}$ be a measurable function, then it holds $\mathbb{P}$-a.s.
  \begin{align*}
    \int x_r (\omega^1, \omega^2) \tmop{dr} = \int x_r (\omega^1, W) \tmop{dr}
    \mid_{W = \omega^2} .
  \end{align*}
  Let $x : [0, T] \times \Omega \rightarrow \mathbb{R}$ be
  $(\mathcal{F}^B_t \otimes \mathcal{F}^L_{0, T})_{t \in [0,
  T]}$-progressively measurable, then $\mathbb{P}$-a.s.
  \begin{align*}
    \int x_r (\omega^1, \omega^2) \tmop{dB} = \int x_r (\omega^1, W) \tmop{dB}
    \mid_{W = \omega^2} .
  \end{align*}
\end{lemma}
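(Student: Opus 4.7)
The plan is to treat the two assertions in turn, starting with the deterministic Lebesgue statement (which is essentially a direct unfolding of definitions) and then handling the It\^{o} integral by the standard simple-process approximation scheme, with care taken at the limit step.

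For the first identity, the key observation is that, for each fixed pair $(\omega^1, \omega^2)$, the integrands on the two sides agree pointwise in $r$: by definition of the evaluation map $\mid_{W=\omega^2}$, the integrand $x_r(\omega^1, W)\mid_{W=\omega^2}$ is simply $x_r(\omega^1, \omega^2)$. So the Lebesgue integral in $r$ produces the same number (whenever both are defined). What needs to be checked is that both sides are honest $\mathcal{F}$-measurable random variables on $\Omega$, so that the equality ``$\mathbb{P}$-a.s.'' is meaningful; this follows from measurability of $x$ in $(r,\omega^1,\omega^2)$ together with Fubini, which guarantees that $\omega^2 \mapsto \int x_r(\omega^1, \omega^2)\,\mathrm{d}r$ is measurable whenever the inner integral exists, which is the case $\mathbb{P}$-a.s.

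For the It\^o identity, the strategy is the classical density argument for stochastic integrals, carried out on the product space. First I would treat elementary processes of the form $x_r(\omega^1, \omega^2) = H(\omega^1, \omega^2)\,\mathbf{1}_{(s,t]}(r)$, where $H$ is bounded and $\mathcal{F}^B_s \otimes \mathcal{F}^L_{0,T}$-measurable. For such $x$, the slice $x_r(\omega^1, \omega^2)$ is, for each fixed $\omega^2$, an $\mathcal{F}^B_t$-adapted elementary process under $\mathbb{P}^B$, and both sides of the claimed identity equal $H(\omega^1, \omega^2)(B_t-B_s)(\omega^1)$, so the identity holds pointwise on $\Omega$. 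By linearity this extends to finite linear combinations of such elementary processes. Next, I would approximate a generic $(\mathcal{F}^B_t \otimes \mathcal{F}^L_{0,T})$-progressively measurable $x$ (first assumed bounded and later extended by truncation) in $L^2(\mathrm{d}r \otimes \mathbb{P})$ by a sequence $x^n$ of such elementary processes; the It\^o isometry on $(\Omega,\mathcal{F},\mathbb{P})$ yields
\[
\mathbb{E}\Bigl[\Bigl(\int x^n \,\mathrm{d}B - \int x \,\mathrm{d}B\Bigr)^2\Bigr] \longrightarrow 0.
\]
Since each $x^n$ satisfies the identity, the issue becomes passing this property to the limit.

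The main obstacle, and the step that deserves care, is the passage to the limit on the \emph{frozen} side: one needs that for $\mathbb{P}^W$-a.e.\ $\omega^2$ the family $x^n_\cdot(\cdot, \omega^2)$ approximates $x_\cdot(\cdot, \omega^2)$ in $L^2(\mathrm{d}r \otimes \mathbb{P}^B)$. This follows from Fubini: writing
\[
\mathbb{E}\Bigl[\int_0^T |x^n_r - x_r|^2 \,\mathrm{d}r\Bigr] = \int_{\Omega^W} \mathbb{E}^{\mathbb{P}^B}\Bigl[\int_0^T |x^n_r(\cdot, \omega^2) - x_r(\cdot, \omega^2)|^2 \,\mathrm{d}r\Bigr]\,\mathrm{d}\mathbb{P}^W(\omega^2),
\]
and passing to a subsequence so that the inner expectation converges to $0$ for $\mathbb{P}^W$-a.e.\ $\omega^2$. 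Along this subsequence, the frozen It\^o isometry under $\mathbb{P}^B$ gives that $\int x^n(\cdot,\omega^2)\,\mathrm{d}B \to \int x(\cdot,\omega^2)\,\mathrm{d}B$ in $L^2(\mathbb{P}^B)$, which, combined with the $\mathbb{P}$-a.s.\ convergence (again along a subsequence) of the left-hand side on the product space, and the identity already established for each $x^n$, yields the claim for $\mathbb{P}$-a.e.\ $(\omega^1,\omega^2)$. A standard monotone-class argument then upgrades from bounded progressively measurable integrands to the general case stated in the lemma.
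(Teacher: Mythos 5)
Your proposal is correct, but it takes a different (and more heavyweight) route than the paper, at least for the stochastic integral. The paper proves both identities at once by writing down the Riemann--Stieltjes partial sums $\sum_{t_i^n\in\pi^n} x_{t_i^n}(\omega^1,\omega^2)\,(t_{i+1}^n-t_i^n)$ and $\sum_{t_i^n\in\pi^n} x_{t_i^n}(\omega^1,\omega^2)\,B_{t_i^n,t_{i+1}^n}(\omega^1)$, observing that these coincide \emph{identically} with the corresponding sums for the frozen process evaluated at $W=\omega^2$, and letting $n\to\infty$; the limits then agree (along a subsequence, a.s.) because the approximants are literally the same random variables. Your treatment of the Lebesgue integral as a pointwise tautology is in fact cleaner than the paper's, since Riemann sums converge to the Lebesgue integral only under some time-regularity of $r\mapsto x_r$ (c\`agl\`ad in the paper's applications), whereas your observation needs only Fubini for measurability of the slice map. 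For the It\^o integral you replace the Riemann-sum argument by the classical density scheme: elementary processes, the isometry on the product space, Fubini to extract $\mathbb{P}^W$-a.e.\ convergence of the frozen approximations, and a subsequence to pass to the limit on both sides. This is more work but buys you two things the paper's one-line argument glosses over: it covers general progressively measurable $x\in L^2(\mathrm{d}r\otimes\mathbb{P})$ without assuming left-continuity in time, and the subsequence construction implicitly produces the jointly measurable (in $(\omega^1,u)$) version of $u\mapsto\int x(\cdot,u)\,\mathrm{d}B$ that is needed for the evaluation $\mid_{W=\omega^2}$ to be well defined --- a point the paper delegates to its Stricker--Yor measurable-selection machinery elsewhere. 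It would be worth stating explicitly that the right-hand side is interpreted through such a version, but this is a matter of bookkeeping rather than a gap.
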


\begin{proof}
  Let $\pi = (\pi^n)_{n \in \NN}$ be a sequence of time partitions on $[0, T]$
  with $| \pi^n | \to 0$ as $n \to \infty$, we have
  \[ \sum_{t_i^n \in \pi^n} x_{t^n_i} (\omega^1, \omega^2) (t^n_{i + 1} -
     t^n_i) = \sum_{t_i^n \in \pi^n} x_{t^n_i} (\omega^1, W) (t^n_{i + 1} -
     t^n_i) \mid_{W = \omega^2} . \]
  By taking $n \rightarrow \infty$, we get the first result. The second follows by the same argument.
\end{proof}

\begin{proposition}
  \label{quenched-BDSDE}Let $\xi : \Omega \rightarrow \mathbb{R}^h$ be
  $\mathcal{F}$-measurable, $f : (\Omega^B \times [0, T]) \times D^q \times
  \mathbb{R}^h \times \mathbb{R}^{h \times d} \rightarrow \mathbb{R}^h$ be
  $\tmop{Prog} \otimes \mathfrak{D}_T \otimes \mathcal{B}_h \otimes
  \mathcal{B}_{h \times d}$-measurable and $g : (\Omega^B \times [0, T])
  \times D^q \times \mathbb{R}^h \rightarrow \mathcal{L} (\mathbb{R}^e,
  \mathbb{R}^h)$ be $\tmop{Prog} \otimes \mathfrak{D}_T \otimes
  \mathcal{B}_h$-measurable. Assume that for $\mathbb{P}^2$-a.e. $W \in D^q$,
  the functions $\xi^W (\cdot) \assign \xi (\cdot, W)$, $f^W (\cdot) \assign f
  (W, \cdot)$ and $g^W (\cdot) \assign g (W, \cdot)$ satisfy the Assumption A.
  Then the BDSDE has at most one solution in the sense of Definition
  \ref{Def-BDSDE-solution}. Furthermore let $(Y, Z)$ be a solution of the
  BDSDE, then for $\mathbb{P}^2$-a.e. $\omega^L \in \Omega^L$ the pair $(Y
  (\cdot, \omega^L), Z (\cdot, \omega^L))$ is a solution to the RBSDE
  \begin{eqnarray*}
    Y^{L (\omega^L)}_t & = & \xi (\cdot, \omega^L) + \int_t^T f (r, \cdot,
    \omega^L, Y^{L (\omega^L)}_r, Z^{L (\omega^L)}_r) \tmop{dr} - \int_t^T
    Z^{L (\omega^L)}_r \tmop{dB}_r\\
    &  & + \int_t^T g_r (\cdot, \omega^L, Y^{L (\omega^L)}_{r +})  (\diamond)
    \tmop{dL} (\omega^L) .
  \end{eqnarray*}
\end{proposition}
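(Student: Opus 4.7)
The strategy is to slice the BDSDE in the $\omega^L$ variable. Fixing $\omega^L$ (outside a $\mathbb{P}^L$-null set) in the integral equation \eqref{Marcus-BDSDE} should produce exactly the RBSDE driven by the frozen path $W=L(\omega^L)$. Once this slicing identity is established, uniqueness of the BDSDE follows immediately from the RBSDE uniqueness of Theorem \ref{global_existenceuniqueness}: any two BDSDE solutions give, for $\mathbb{P}^L$-a.e.\ $\omega^L$, two solutions of the same RBSDE, hence agree on slices, and Fubini concludes.

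The plan is to handle each term of \eqref{Marcus-BDSDE} separately. For $\int_t^T f(r,Y_r,Z_r)\,\mathrm{d}r$ and $\int_t^T Z_r\,\mathrm{d}B_r$, the desired Fubini-type identity is precisely Lemma \ref{quenched-lebesgue} (note that the progressive measurability of $(Y,Z)$ with respect to $(\mathcal{F}^B_t\otimes\mathcal{F}^L_{0,T})$ is enough to apply it to the Itô integral). The Marcus correction in \eqref{BDSDE-jumps} is a countable almost surely absolutely convergent sum of $\mathcal{F}$-measurable terms, each depending on $\omega^L$ only through $L(\omega^L)$ and $g_r(\cdot,\omega^L,\cdot)$; evaluating $\omega^L$ therefore trivially commutes with the sum. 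The one nontrivial step is the backward Young integral $\int_t^T g_r(Y_{r+})\,\mathrm{d}L$, where we must show that conditioning on $\omega^L$ transforms the ``randomized'' Young integral into the pathwise Young integral against the frozen $L(\omega^L)\in D^q$. This is the content of Proposition \ref{randomizedYoung} in Appendix A: since Assumption A gives $\|Y(\cdot,\omega^L)\|_{p,2}<\infty$ with $\tfrac{1}{p}+\tfrac{1}{q}>1$, the pathwise backward Young integral is well defined for $\mathbb{P}^L$-a.e.\ $\omega^L$, and it coincides almost surely with the BDSDE integral.

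Combining the four identities above, for $\mathbb{P}^L$-a.e.\ $\omega^L$ the pair $(Y(\cdot,\omega^L),Z(\cdot,\omega^L))$ satisfies the integral equation of the RBSDE driven by $W=L(\omega^L)$; it lies in $\mathcal{B}^p\times\mathrm{BMO}$ by Definition \ref{Def-BDSDE-solution} and inherits $(\mathcal{F}^B_t)$-adaptedness from the progressive measurability assumption. Since the hypothesis on $(\xi^W,f^W,g^W)$ is exactly Assumption A for $\mathbb{P}^L$-a.e.\ $\omega^L$, Theorem \ref{global_existenceuniqueness} applies and yields both the second assertion of the proposition and, by the slicing argument above, uniqueness of the BDSDE. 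The main technical obstacle is the pathwise commutation for the Young integral; modulo that (which is Appendix A), the rest is a bookkeeping exercise in Fubini and measurability.
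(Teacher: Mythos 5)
Your proposal is correct and follows essentially the same route as the paper's proof: slice the BDSDE at fixed $\omega^L$ using Lemma \ref{quenched-lebesgue} for the Lebesgue and It\^o integrals, Proposition \ref{randomizedYoung} for the backward Young integral, and a direct argument for the Marcus jump sum, then invoke the RBSDE uniqueness of Theorem \ref{global_existenceuniqueness} on each slice and conclude by Fubini. The term-by-term decomposition and the key lemmas you identify are exactly those used in the paper.
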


\begin{proof}
  Let $(Y^1, Z^1), (Y^2, Z^2)$ both be solutions to the BDSDE
  \eqref{Marcus-BDSDE}. Then by Lemma \ref{quenched-lebesgue} and
  \ref{randomizedYoung} and similar arguments for the sum in (\ref{BDSDE-jumps})
  we get for $\mathbb{P}^L$-a.e. $\omega^L \in \Omega^L$ that
  \begin{align*}
    Y^i_t (\cdot, \omega^L) = & \xi (\cdot, \omega^L) + \int_t^T f (r, W, Y
    _r (\cdot, W), Z_r (\cdot, W)) \tmop{dr} \mid_{W = \omega^L} - \int_t^T
    Z_r (\cdot, W) \tmop{dB}_r \mid_{W = \omega^L}\\
    & + \int_t^T g_r (\cdot, W, Y_{r +} (\cdot, W))  (\diamond) \tmop{dL}
    (W) \mid_{W = \omega^L} .
  \end{align*}
  Then by the uniqueness of the solution to the above RBSDE (see Theorem
  \ref{global_existenceuniqueness}) we get for $\mathbb{P}^L$-a.e. $\omega^L
  \in \Omega^L$ that $Y^1 (\cdot, \omega^L) = Y^2 (\cdot, \omega^L)$ holds $\mathbb{P}^B$-a.s. for all $t$ and $Z^1 
  (\cdot, \omega^L) = Z^2  (\cdot, \omega^L)$ holds $\tmop{dt} \otimes
  \mathbb{P}^B$-a.s., then by Fubini we get that $Y^1 = Y^2$ $\mathbb{P}$-a.s. for all $t$ and $Z^1
  = Z^2$ hold $\tmop{dt} \otimes \mathbb{P}$-almost surely. 
\end{proof}

Given the uniqueness of the solution, we only need to construct a solution to the
BDSDE \eqref{Marcus-BDSDE} to get well-posedness of it. Let $(\tilde{Y},
\tilde{Z})$ denote the $\tmop{Prog} \otimes \, \mathcal{U}$-measurable process
from Theorem \ref{measurable-RBSDE-solution} with $(\tilde{Y} (\cdot, u),
\tilde{Z} (\cdot, u))$ being solution to \eqref{RBSDE-measurable} for every $u
\in U$. We can randomize this process and obtain a new process $(Y^L, Z^L) :
[0, T] \times \Omega \rightarrow \mathbb{R} \times \mathbb{R}^d$ defined by
\[ (Y^L (\omega), Z^L (\omega)) = (\tilde{Y} (\omega^B, u) \mid_{u =
   \omega^L}, \tilde{Z} (\omega^B, u) \mid_{u = \omega^L}), \]
this process is by construction $(\mathcal{F}^B_t \otimes \mathcal{F}^L_{0,
T})_{t \in [0, T]}$-progressively measurable. We show in the next theorem that
$(Y^L, Z^L)$ is indeed a solution to the BDSDE (\ref{Marcus-BDSDE}).

To further motivate this solution, we also show that $(Y^L, Z^L)$ is the limit
of the Picard iteration $(Y^n, Z^n)$, $n \in \mathbb{N}$, of the BDSDE
(\ref{Marcus-BDSDE}). We define $Y^0 \equiv \xi$, $Z^0 \equiv 0$ and then
iteratively define $(Y^{n + 1}, Z^{n + 1})$ in the following way. We define
\begin{align*}
  Y^{n + 1}_t =\mathbb{E} \bigg[ \xi + \int_t^T f (r, \omega, Y^n_r, Z^n_r)
  \tmop{dr} + \int_t^T g_r (Y^n_{r +}) (\diamond) \tmop{dL} \mid
  \mathcal{F}^B_t \vee \mathcal{F}^L_{0, T} \bigg],
\end{align*}
here the process $Y^{n + 1}$ is by definition $(\mathcal{F}^B_t \vee
\mathcal{F}^L_{0, T})_{t \in [0, T]}$-adapted, we shall always work with its
c{\`a}gl{\`a}d and (therefore) $(\mathcal{F}^B_t \vee \mathcal{F}^L_{0, T})_{t
\in [0, T]}$-progressively measurable version. By the extended It{\^o}
representation (c.f. Theorem 4.2, {\cite{amendinger_martingale_2000}}) there
exists a unique $(\mathcal{F}^B_t \vee \mathcal{F}^W_{0, T})_{t \in [0,
T]}$-progressively measurable process $Z^{n + 1}$ such that
\begin{align*}
  \int_0^t Z^{n + 1}_r \tmop{dB}_r =\mathbb{E} \bigg[ \xi + \int_0^T f (r,
  \omega, Y^n_r, Z^n_r) \tmop{dr} + \int_0^T g_r (Y^n_{r +}) (\diamond)
  \tmop{dW} \mid \mathcal{F}^B_t \vee \mathcal{F}^W_{0, T} \bigg] .
\end{align*}
\begin{theorem}
  \label{Existence-BDSDE}Under the same assumption as in Proposition
  \ref{quenched-BDSDE}, there exists a pair of $(\mathcal{F}^B_t \vee
  \mathcal{F}^W_{0, T})_{t \in [0, T]}$-progressively measurable process
  $(Y^L, Z^L)$, which is the unique solution to the BDSDE
  (\ref{Marcus-BDSDE}). Furthermore it holds $Y^{n + 1} \rightarrow Y^L$
  uniform in time in $\mathbb{P}$-probability and $Z^{n + 1} \rightarrow Z^L$
  in $\tmop{dt} \otimes \mathbb{P}$-probability.
\end{theorem}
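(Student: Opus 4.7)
The plan is to verify the existence of $(Y^L, Z^L)$ by randomising the pathwise RBSDE solution, and then to obtain the Picard convergence by reducing the BDSDE iteration to the RBSDE iteration after conditioning. Uniqueness has already been established in Proposition~\ref{quenched-BDSDE}, so nothing more needs to be done there.

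First, I would let $(\tilde{Y},\tilde{Z})$ denote the $\tmop{Prog} \otimes \mathfrak{D}_T$-measurable version from Theorem~\ref{measurable-RBSDE-solution}, applied with $(U,\mathcal{U}) = (D^q,\mathfrak{D}_T)$ and the canonical map $L$. Define $(Y^L,Z^L)(\omega^B,\omega^L) := (\tilde{Y}(\omega^B, L(\omega^L)),\tilde{Z}(\omega^B, L(\omega^L)))$, which is $(\mathcal{F}^B_t \otimes \mathcal{F}^L_{0,T})_{t}$-progressively measurable by construction. For $\mathbb{P}^L$-a.e.\ $\omega^L$, the pair $(\tilde{Y}(\cdot,L(\omega^L)), \tilde{Z}(\cdot,L(\omega^L)))$ solves the RBSDE \eqref{RBSDE-measurable} for the frozen driver $L(\omega^L)$, so it satisfies the integral identity pathwise. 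I would then invoke Lemma~\ref{quenched-lebesgue} to identify the Lebesgue $\mathrm{d}r$ and It\^o $\mathrm{d}B$ integrals evaluated on $L(\omega^L)$ with the corresponding integrals on $(\Omega,\mathcal{F},\mathbb{P})$, Proposition~\ref{randomizedYoung} to do the same for the backward Young integral against $\mathrm{d}L$, and argue directly for the Marcus correction sum (which is a measurable almost-surely-absolutely-convergent sum). Combining these four identifications, $(Y^L,Z^L)$ satisfies (\ref{Marcus-BDSDE}) in the sense of Definition~\ref{Def-BDSDE-solution}; the finiteness of $\|Y^L(\cdot,\omega^L)\|_{p,2}$ and $\|Z^L(\cdot,\omega^L)\|_{\tmop{BMO}}$ for $\mathbb{P}^L$-a.e.\ $\omega^L$ is inherited from the apriori bound in Theorem~\ref{apriori-bound-YZ}.

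For the Picard convergence, the key observation is that for the initially enlarged filtration $\mathcal{F}^B_t \vee \mathcal{F}^L_{0,T}$ and by independence of $B$ and $L$, one has the disintegration
\[
\mathbb{E}[\,\Phi\mid \mathcal{F}^B_t \vee \mathcal{F}^L_{0,T}\,](\omega^B,\omega^L) \;=\; \mathbb{E}^{\mathbb{P}^B}[\Phi(\cdot,\omega^L)\mid \mathcal{F}^B_t](\omega^B)
\]
for suitable $\Phi$. I would use this together with Lemma~\ref{quenched-lebesgue}, Proposition~\ref{randomizedYoung} and the pointwise-in-$\omega^L$ absolute convergence of the Marcus sum to show by induction on $n$ that, for $\mathbb{P}^L$-a.e.\ $\omega^L$, the BDSDE Picard iterate $(Y^{n+1}(\cdot,\omega^L), Z^{n+1}(\cdot,\omega^L))$ coincides with the RBSDE Picard iterate for the driver $L(\omega^L)$, namely $(\tilde Y^{L(\omega^L),n+1}, \tilde Z^{L(\omega^L),n+1})$. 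Theorem~\ref{Picard-iteration} applied pathwise then gives, for $\mathbb{P}^L$-a.e.\ $\omega^L$,
\[
\| Y^n(\cdot,\omega^L)-Y^L(\cdot,\omega^L)\|_{p,2} + \| Z^n(\cdot,\omega^L)-Z^L(\cdot,\omega^L)\|_{\tmop{BMO}} \;\xrightarrow{n\to\infty}\; 0.
\]
Combined with $\mathbb{E}^{\mathbb{P}^B}[\sup_t|Y^n_t-Y^L_t|^2]^{1/2}\le \|Y^n-Y^L\|_{p,2}$ and $\mathbb{E}^{\mathbb{P}^B}[\int_0^T|Z^n_r-Z^L_r|^2\mathrm{d}r]^{1/2}\le \|Z^n-Z^L\|_{\tmop{BMO}}$ from Lemma~\ref{p2Norm-property}, I obtain pathwise-in-$\omega^L$ convergence uniformly in time in $\mathbb{P}^B$-probability and convergence of $Z^n$ in $\mathrm{d}t\otimes\mathbb{P}^B$-probability. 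A final application of Proposition~\ref{measurable-selection} (or directly Fubini together with bounded convergence on events like $\{\sup_t|Y^n_t-Y^L_t|>\varepsilon\}$) lifts this to convergence in $\mathbb{P}$-probability and in $\mathrm{d}t\otimes\mathbb{P}$-probability respectively.

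The main obstacle is the inductive identification of the BDSDE Picard iterate with the RBSDE Picard iterate pathwise in $\omega^L$, because one must simultaneously: (i) exchange the conditional expectation on the enlarged filtration $\mathcal{F}^B_t\vee\mathcal{F}^L_{0,T}$ with the $\mathbb{P}^B$-conditional expectation on $\mathcal{F}^B_t$ at frozen $\omega^L$; (ii) justify the ``quenched'' versions of the $\mathrm{d}r$, $\mathrm{d}B$ and $\mathrm{d}L$ integrals (using Lemma~\ref{quenched-lebesgue} and Proposition~\ref{randomizedYoung}); and (iii) ensure that at each iteration the c\`agl\`ad, progressively measurable version chosen on the product space is compatible with the pointwise-in-$\omega^L$ c\`agl\`ad version of the RBSDE iterate, which is exactly where the measurable selection framework of Theorem~\ref{measurable-RBSDE-solution} and Proposition~\ref{measurable-selection} is used. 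Once this identification is in place, everything else follows mechanically from Theorem~\ref{Picard-iteration} and Fubini.
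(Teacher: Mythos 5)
Your proposal is correct and follows essentially the same route as the paper: randomise the $\tmop{Prog}\otimes\mathfrak{D}_T$-measurable version from Theorem \ref{measurable-RBSDE-solution}, verify the BDSDE via the quenching identities (Lemma \ref{quenched-lebesgue}, Proposition \ref{randomizedYoung}) exactly as in Proposition \ref{quenched-BDSDE} run in reverse, identify the BDSDE Picard iterates with the frozen RBSDE iterates by induction, and lift the pathwise convergence from Theorem \ref{Picard-iteration} by Fubini. Your explicit statement of the conditional-expectation disintegration on $\mathcal{F}^B_t\vee\mathcal{F}^L_{0,T}$ makes precise a step the paper leaves implicit, but the argument is the same.
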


\begin{proof}
  To show that the $(Y^L, Z^L)$ constructed above indeed solves the BDSDE is
  straightforward. We have by construction that for $\mathbb{P}^L$-a.e.
  $\omega^L$ we have
  \begin{eqnarray*}
    Y^L_t (\cdot, \omega^L) & = & \xi (\cdot, \omega^L) + \int_t^T f (r,
    \omega^L, Y _r (\cdot, \omega^L), Z_r (\cdot, \omega^L)) \tmop{dr} -
    \int_t^T Z_r (\cdot, \omega^L) \tmop{dB}_r\\
    &  & + \int_t^T g_r (\cdot, \omega^L, Y_{r +} (\cdot, \omega^L)) 
    (\diamond) \tmop{dL} (\omega^L) .
  \end{eqnarray*}
  Hence by the same argument as Proposition \ref{quenched-BDSDE} (only the
  other way around) we get that $(Y^L, Z^L)$ solves the BDSDE
  \eqref{Marcus-BDSDE}.
  
  Now let $Y^{\cdot, n + 1}$ and $Z^{\cdot, n + 1}$ denote the $\tmop{Prog}
  \otimes \mathfrak{D}_T$-measurable version of the iteration for the RBSDE in
  Theorem \ref{measurable-RBSDE-solution}. Then one can prove iteratively
  using again the same argument as Proposition \ref{quenched-BDSDE} that
  $Y^{\cdot, n + 1} = Y^{n + 1}$ and $Z^{\cdot, n + 1} = Z^{n + 1}$ holds
  $\tmop{dt} \otimes \mathbb{P}$-a.s., and from Theorem
  \ref{measurable-RBSDE-solution} we know that $Y^{n + 1} (\cdot, \omega^L) =
  Y^{u, n + 1} \mid_{u = \omega^L} \rightarrow Y^L (\cdot, \omega^L)$ uniform
  in time in $\mathbb{P}^B$-probability and $Z^{n + 1} (\cdot, \omega^L) =
  Z^{u, n + 1} \mid_{u = \omega^L} \rightarrow Z^L (\cdot, \omega^L)$ in
  $\tmop{dt} \otimes \mathbb{P}^B$-probability for any $\omega^L \in
  \Omega^L$. By Fubini we get the desired convergence result.
\end{proof}

We will see in the next theorem that under additional measurability assumptions
on $f$ and $g$ and assuming $L$ to have independent increment, the solution
pair $(Y_t, Z_t)$ is independent of the information on $L$ prior to time $t$.
We omit the proof since it is analogous to the proof of Proposition 1.2 in
{\cite{pardoux_backward_1994}}.

\begin{corollary}
  \label{BDSDE-independentInc}Assume that for any $(y, z) \in \mathbb{R}^h
  \times \mathbb{R}^{h \times d}$ the function $f (y, z)$ and $g (z)$ are
  adapted to $(\mathcal{G}_t)_{t \in [0, T]} = (\mathcal{F}^B_t \otimes
  \mathcal{F}^L_{t, T})_{t \in [0, T]}$. Then, under Assumption C and
  $(\tmop{IND})$ the unique solution $(Y, Z)$ to the BDSDE
  (\ref{Marcus-BDSDE}) is adapted to $(\mathcal{G}_t)_{t \in [0, T]}$.
\end{corollary}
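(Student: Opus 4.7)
The plan is to prove $(\mathcal{G}_t)$-adaptedness of $(Y,Z)$ by induction over the Picard iterates $(Y^n, Z^n)_{n\in\mathbb{N}}$ defined right before Theorem \ref{Existence-BDSDE}, and then pass to the limit using $Y^{n+1}\to Y$ uniformly in time in probability and $Z^{n+1}\to Z$ in $\mathrm{d}t\otimes \mathbb{P}$-probability established there. The base case $n=0$ is immediate (taking the adaptedness hypothesis to implicitly include $\xi \in \mathcal{G}_T$, noting $\mathcal{G}_T=\mathcal{F}^B_T$ since $\mathcal{F}^L_{T,T}$ is trivial).

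For the induction step, assume that $(Y^n_r, Z^n_r)$ is $\mathcal{G}_r$-measurable for every $r\in[0,T]$. Together with the $(\mathcal{G}_\cdot)$-adaptedness of $f$ and $g$, this shows that $f(r,Y^n_r,Z^n_r)$ and $g_r(Y^n_{r+})$ are $\mathcal{G}_r$-measurable. Using $\mathcal{G}_r=\mathcal{F}^B_r\vee \mathcal{F}^L_{r,T}\subset \mathcal{F}^B_T\vee \mathcal{F}^L_{t,T}$ for $r\ge t$, we get that
\[
X_t\assign \xi+\int_t^T f(r,Y^n_r,Z^n_r)\,\mathrm{d}r+\int_t^T g_r(Y^n_{r+})\,(\diamond)\,\mathrm{d}L
\]
is $\mathcal{F}^B_T\vee \mathcal{F}^L_{t,T}$-measurable. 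The independent increments hypothesis $(\tmop{IND})$ combined with $B\perp L$ then yields independence of $\mathcal{F}^L_{0,t}$ from $\mathcal{F}^B_T\vee \mathcal{F}^L_{t,T}$, so conditioning $X_t$ on $\mathcal{F}^B_t\vee \mathcal{F}^L_{0,T}=(\mathcal{F}^B_t\vee \mathcal{F}^L_{t,T})\vee \mathcal{F}^L_{0,t}$ collapses to conditioning on $\mathcal{G}_t$, giving $Y^{n+1}_t=\mathbb{E}[X_t\mid \mathcal{G}_t]$, hence $\mathcal{G}_t$-measurable.

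For $Z^{n+1}$ the argument is more subtle. Consider the Doob martingale $M_t\assign \mathbb{E}[X_0\mid \mathcal{F}^B_t\vee \mathcal{F}^L_{0,T}]$, so that $M_t-M_s=\int_s^t Z^{n+1}_r\,\mathrm{d}B_r$ by the It\^o representation in the enlarged filtration; rearranging using the definition of $Y^{n+1}$ yields
\[
\int_s^t Z^{n+1}_r\,\mathrm{d}B_r = Y^{n+1}_t-Y^{n+1}_s+\int_s^t f(r,Y^n_r,Z^n_r)\,\mathrm{d}r+\int_s^t g_r(Y^n_{r+})\,(\diamond)\,\mathrm{d}L,
\]
whose right-hand side is, by the measurability bookkeeping above, $\mathcal{F}^B_t\vee \mathcal{F}^L_{s,T}$-measurable for $s\le t$. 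Exploiting again that $\mathcal{F}^L_{0,s}$ is independent of $\mathcal{F}^B_T\vee \mathcal{F}^L_{s,T}$, together with the uniqueness of It\^o's representation, one can select a version of $Z^{n+1}$ that is $\mathcal{G}_r$-measurable for a.e.\ $r$, in complete analogy with Proposition~1.2 of \cite{pardoux_backward_1994}.

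Finally, passing to the limit along subsequences yielding almost sure (respectively, $\mathrm{d}t\otimes \mathbb{P}$-a.e.) convergence transfers the $\mathcal{G}_t$-adaptedness from the iterates to $(Y,Z)$, and uniqueness from Proposition \ref{quenched-BDSDE} identifies this limit with the given BDSDE solution. The main obstacle is the $Z$-component, since its martingale representation naturally lives in the enlarged filtration $\mathcal{F}^B_t\vee \mathcal{F}^L_{0,T}$; carving out a $\mathcal{G}_t$-adapted version is the sole genuine technical point, and it is precisely there that the conditional-independence argument of Pardoux--Peng is required.
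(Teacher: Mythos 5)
Your proposal is correct and follows essentially the same route the paper intends: the paper omits the proof of this corollary, stating only that it is analogous to Proposition~1.2 of \cite{pardoux_backward_1994}, and your argument --- induction over the Picard iterates, the conditional-independence collapse $\mathbb{E}[\,\cdot\mid\mathcal{F}^B_t\vee\mathcal{F}^L_{0,T}]=\mathbb{E}[\,\cdot\mid\mathcal{G}_t]$ granted by $(\tmop{IND})$ and $B\perp L$, the covariation/representation argument for the $Z$-component, and passage to the limit via Theorem~\ref{Existence-BDSDE} --- is exactly that proof transplanted to the present setting. Your reading of the (undefined) hypotheses, namely that $(\tmop{IND})$ means independent increments of $L$ and that $\xi$ must be $\mathcal{G}_T=\mathcal{F}^B_T$-measurable, is the correct one.
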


\appendix
\section{Backward Young Integral}

Here, we construct the backward Young integral similarly to the (forward)
Young integral in {\cite{friz_differential_2018}}. In the following appendix, we will be working with both c{\`a}dl{\'a}g and
c{\`a}gl{\`a}d paths. We will therefore introduce the notations $\Delta^- x_t
\assign x_t - x_{t -}$ and $\Delta^+ x_t \assign x_{t +} - x_t$ denoting the
jump of $x$ at time $t$ from the left or respectively from the right .

Let $\mathbb{W}, \mathbb{V}$ be a finite dimensional Banachspace, we introduce
the space of paths from $[0, T]$ to $\mathbb{V}$ of finite $p$-variation as
$V^p ([0, T], \mathbb{V})$, here we do not assume the path to have any right-
or leftcontinuity, so naturally $C^p \subset D^p \subset V^p$. We denote by
$\mathcal{L} (\mathbb{W}, \mathbb{V})$ the space of linear operators from
$\mathbb{W}$ to $\mathbb{V}$, which is again a Banachspace when equipped with
the operatornorm.

We differentiate between two types of convergence of the Riemann-Stieltjes
sums, namely the convergence in MRS and in RRS sense, for details see
Definition 1.1 in {\cite{friz_differential_2018}}.

\begin{proposition}
  \label{BackwardsYoung}Let $x \in V^p ([0, T], \mathcal{L} (\mathbb{V},
  \mathbb{W}))$ and $y \in V^q ([0, T], \mathbb{V})$ with $\frac{1}{p} +
  \frac{1}{q} > 1$. Let $\pi = (\pi^n)_{n \in \NN}$ be a sequence of time
  partitions on $[0, T]$ of the form $\pi^n = (0 = t^n_0 < t^n_1 < \cdots <
  t^n_N = T)$ with $| \pi^n | \to 0$ as $n \to \infty$. Then the limit
  \begin{equation}
    \lim_{n \to \infty}  \sum_{t_i^n \in \pi^n} x_{t^n_{i + 1}} y_{t^n_i,
    t^n_{i + 1}} = : \int_0^T x_r d \overleftarrow{y_r} \label{backwardYoung}
  \end{equation}
  exists in RRS sense and is called the backward Young integral of $x$
  integrated against $y$. We further have the estimate
  \begin{equation}
    \bigg| \int_s^t x_r d \overleftarrow{y_r} - x_t y_{s, t} \bigg| \leq
    C_{p, q} \| x \|_{p ; (s, t]} \| y \|_{q ; [s, t)} . \label{YoungEstimate}
  \end{equation}
  If additionally $y$ is c{\`a}gl{\`a}d or $x$ is c{\`a}dl{\`a}g, then the
  convergence in (\ref{backwardYoung}) holds in MRS sense.
\end{proposition}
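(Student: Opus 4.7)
The plan is to follow Young's sewing-type argument adapted to backward tagging. Introduce the germ $\Xi_{s,t} := x_t\, y_{s,t}$ and aim to show that the Riemann sums $S(\pi) := \sum_i x_{t_{i+1}} y_{t_i, t_{i+1}}$ converge. The heart of the argument is the sewing estimate: for any partition $\pi = (s = t_0 < \cdots < t_N = t)$ with $N \geq 2$, deleting an interior point $t_j$ alters $S(\pi)$ by exactly $(x_{t_{j+1}} - x_{t_j})\, y_{t_{j-1}, t_j}$, as follows from the identity $y_{t_{j-1}, t_{j+1}} = y_{t_{j-1}, t_j} + y_{t_j, t_{j+1}}$. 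Using super-additive controls $\omega_x$, $\omega_y$ associated with the $p$- and $q$-variations of $x$ and $y$, this deletion error is bounded by $\omega_x(t_j, t_{j+1})^{1/p}\, \omega_y(t_{j-1}, t_j)^{1/q}$.

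First I would apply the standard pigeonhole argument on the super-additive control $\omega_x + \omega_y$ to select an index $j$ whose local contribution is small compared to the total, then iterate the removal until only the trivial partition $\{s,t\}$ remains. Summing the resulting deletion errors and exploiting the convergence of the associated geometric-type series (which is where the hypothesis $\tfrac{1}{p} + \tfrac{1}{q} > 1$ is essential) yields the estimate \eqref{YoungEstimate}. Existence of the RRS limit then follows by a standard comparison: given two partitions refining a common coarser partition, embed both into their common refinement and apply the sewing estimate on each subinterval of the coarser one. Because the controls $\omega_x,\omega_y$ are upper-continuous and vanish on singletons, summing the local bounds over the coarser intervals yields the Cauchy property as the meshes shrink.

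The most delicate step will be the MRS convergence under the additional hypothesis that either $x$ is c\`adl\`ag or $y$ is c\`agl\`ad. The obstruction is that an arbitrary tagged partition with vanishing mesh need not contain all discontinuity points of $x$ or $y$; the value $x_{t_{i+1}}$ and the increment $y_{t_i, t_{i+1}}$ may therefore sit on the ``wrong side'' of a jump $\tau$ lying strictly inside $(t_i, t_{i+1})$. The plan is to isolate the at-most-finitely-many jumps exceeding any threshold $\varepsilon > 0$ (possible by the $p$- and $q$-variation bounds), apply the RRS result from the previous step on the complementary subintervals, and then verify that the one-sided regularity hypothesis makes the backward-tag evaluation converge to the correct one-sided limits at each isolated jump: if $y$ is c\`agl\`ad then $y_{t_i, t_{i+1}}$ correctly captures the right-jump $\Delta^+ y_\tau$ for $\tau \in [t_i, t_{i+1})$, while if $x$ is c\`adl\`ag then $x_{t_{i+1}}$ limits to the correct tag. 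Letting $\varepsilon \downarrow 0$ closes the argument, and I expect this combinatorial handling of jumps, rather than the sewing estimate itself, to be the main technical obstacle.
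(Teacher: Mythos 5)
Your argument is correct in substance and rests on exactly the same germ $\Xi_{s,t}=x_t\,y_{s,t}$ and the same sewing philosophy as the paper; the difference is that the paper's entire proof is a one-line appeal to the general (discontinuous) sewing lemma, Theorem~2.2 of Friz--Zhang, which packages the RRS existence, the local estimate, and the MRS criterion under one-sided regularity, whereas you reconstruct that black box by hand via the point-deletion/pigeonhole argument. What your route buys is self-containedness and an explicit view of where $\tfrac1p+\tfrac1q>1$ and the one-sided continuity hypotheses enter; what the citation buys is that all the jump bookkeeping (in particular the precise half-open controls in \eqref{YoungEstimate} and the RRS-versus-MRS dichotomy) is already done. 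Two points to tighten if you write this out. First, the half-open norms $\|x\|_{p;(s,t]}\,\|y\|_{q;[s,t)}$ are not a cosmetic refinement: they are what make Corollary~\ref{integral_q-var} (left-continuity of the integral) work, and they do fall out of your deletion identity --- the coboundary $\delta\Xi_{t_{j-1},t_j,t_{j+1}}=(x_{t_{j+1}}-x_{t_j})\,y_{t_{j-1},t_j}$ (note the sign of what you wrote) only ever sees $x$-increments over subintervals of $(s,t]$ and $y$-increments over subintervals of $[s,t)$, so you should carry the half-open controls through the induction rather than pass to $\|\cdot\|_{p;[s,t]}$. Second, your appeal to ``upper-continuity'' of the controls is not available here: for discontinuous $x,y$ the controls $\omega_x,\omega_y$ are genuinely non-regular, and refining a partition does not make $\omega$ small on intervals containing a jump. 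The correct substitute, which you in effect already use in your MRS step, is the finite-partition lemma for non-regular controls (Lemma~7.1 of Friz--Zhang, or Lemmas~4.7--4.8 of Dudley--Norva\v{i}\v{s}a as cited in the paper): for every $\varepsilon>0$ there is a finite partition whose open subintervals carry control at most $\varepsilon$, with the finitely many large jumps placed at partition points; the Cauchy/RRS argument and the $\varepsilon$-thresholding of jumps in the MRS step should both be phrased through that lemma. With those adjustments your proof goes through and recovers the cited theorem in the special case needed.
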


\begin{proof}
  Simply apply Theorem 2.2 in {\cite{friz_differential_2018}} to the germ
  $\Xi_{s, t} \assign x_t y_{s, t}$.
\end{proof}

\begin{corollary}
  \label{integral_q-var}Let $x \in V^p ([0, T], \mathcal{L} (\mathbb{V},
  \mathbb{W}))$ and $y \in V^q ([0, T], \mathbb{V})$ with $\frac{1}{p} +
  \frac{1}{q} > 1$ and further assume $y$ to be c{\`a}gl{\`a}d. Then the
  backward Young integral $\int_0^{\cdot} xd \overleftarrow{y_r}$ is a
  c{\`a}gl{\`a}d path of finite $q$-variation, in particular, it holds 
  \begin{align} \label{Young-estimate-p}
    \bigg\| \int_s^t x_r d \overleftarrow{y_r} \bigg\|_{p ; [0, T]} \leq
    C_{p, q} \| x \|_{p ; [0, T]} \| y \|_{q ; [0, T]} + (\| x \|_{p ; [0, T]}
    + | x_T |) \| y \|_{q ; [0, T]}
  \end{align}
\end{corollary}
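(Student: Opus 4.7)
The plan is to apply the germ/sewing estimate \eqref{YoungEstimate} from Proposition \ref{BackwardsYoung} telescopically and then exploit super-additivity of controls to sum the $p$-th powers. Set $J_t := \int_0^t x_r d\overleftarrow{y_r}$ and observe that \eqref{YoungEstimate} applied on any sub-interval $(s,t]$ gives
\[
  |J_t - J_s| \;\le\; |x_t|\,|y_{s,t}| \;+\; C_{p,q}\,\|x\|_{p;(s,t]}\,\|y\|_{q;[s,t)}.
\]
First I would absorb the boundary factor using Lemma-type inequality $\sup_t |x_t|\le |x_T|+\|x\|_{p;[0,T]}$, so that for any partition $0=t_0<\cdots<t_n=T$,
\[
  |J_{t_{i+1}}-J_{t_i}|^p \;\le\; 2^{p-1}\bigl[(|x_T|+\|x\|_{p;[0,T]})^p |y_{t_i,t_{i+1}}|^p \;+\; C_{p,q}^p\,\|x\|_{p;(t_i,t_{i+1}]}^{\,p}\,\|y\|_{q;[t_i,t_{i+1})}^{\,p}\bigr].
\]

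Next I would sum over $i$. For the first summand, since $q\le p$ implies $\ell^q\hookrightarrow\ell^p$ componentwise, one gets $\sum_i |y_{t_i,t_{i+1}}|^p\le \|y\|_{q;[0,T]}^{\,p}$. For the second, I would use the two standard controls $\omega_1(s,t):=\|x\|_{p;(s,t]}^{\,p}$ and $\omega_2(s,t):=\|y\|_{q;[s,t)}^{\,q}$, note that $\omega_2^{p/q}$ is again a control (since $p/q\ge 1$ and $(a+b)^{p/q}\ge a^{p/q}+b^{p/q}$), and then use that the product of two controls is a control and super-additive. Thus
\[
  \sum_i \omega_1(t_i,t_{i+1})\,\omega_2^{p/q}(t_i,t_{i+1}) \;\le\; \omega_1(0,T)\,\omega_2^{p/q}(0,T) \;=\; \|x\|_{p;[0,T]}^{\,p}\|y\|_{q;[0,T]}^{\,p}.
\]
Taking the $p$-th root and the supremum over partitions yields the claimed estimate \eqref{Young-estimate-p} (with the implicit constant from $2^{(p-1)/p}$ absorbed into $C_{p,q}$).

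For the c\`agl\`ad property I would argue directly from \eqref{YoungEstimate}. For left-continuity at a fixed $t$, the bound
\[
  |J_t-J_{t-\varepsilon}| \;\le\; |x_t|\,|y_{t-\varepsilon,t}| \;+\; C_{p,q}\,\|x\|_{p;(t-\varepsilon,t]}\,\|y\|_{q;[t-\varepsilon,t)}
\]
tends to zero as $\varepsilon\downarrow 0$: the first summand because $y$ is left-continuous, and the second because $\|y\|_{q;[t-\varepsilon,t)}\to 0$ (the interval $[t-\varepsilon,t)$ excludes the point $t$ and contains, for small $\varepsilon$, only values of $y$ close to $y_{t-}=y_t$), while the $\|x\|_{p;(t-\varepsilon,t]}$ factor stays bounded. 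For existence of right limits, I would decompose $y_{t,t+\varepsilon}=\Delta^+ y_t+(y_{t+\varepsilon}-y_{t+})$ and $x_{t+\varepsilon}\to x_{t+}$ (which exists because finite $p$-variation paths admit one-sided limits everywhere) and verify that $\|x\|_{p;(t,t+\varepsilon]}\to 0$, so that $J_{t+\varepsilon}-J_t\to x_{t+}\Delta^+ y_t$.

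The main obstacle I anticipate is the algebraic book-keeping to separate the ``boundary'' contribution $(|x_T|+\|x\|_p)\|y\|_q$ from the ``interior'' contribution $C_{p,q}\|x\|_p\|y\|_q$ cleanly, and the verification that $\omega_1\omega_2^{p/q}$ is super-additive; both are standard manipulations but must be written carefully. The c\`agl\`ad verification requires checking that the estimate in \eqref{YoungEstimate} is genuinely local enough near $t$, in particular that $\|x\|_{p;(t,t+\varepsilon]}\downarrow 0$ as $\varepsilon\downarrow 0$ for arbitrary finite $p$-variation $x$, which follows because all points in the shrinking interval converge to $x_{t+}$.
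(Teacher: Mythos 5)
Your proposal is correct and follows essentially the same route as the paper: both start from the local estimate \eqref{YoungEstimate}, absorb the boundary factor via $|x_t|\le |x_T|+\|x\|_{p;[0,T]}$, and obtain \eqref{Young-estimate-p} by recognizing the remaining right-hand side as (powers of) controls and summing over partitions by superadditivity --- you spell out the computation that the paper delegates to Exercise 1.10 and Propositions 5.8/5.10 of \cite{friz_multidimensional_2010}, and you additionally verify the existence of right limits, which the paper leaves implicit. One caveat: your justification that $\|y\|_{q;[t-\varepsilon,t)}\to 0$ (``the interval contains only values of $y$ close to $y_{t-}$'') is not by itself sufficient, since small amplitude does not control $q$-variation; the correct reason is the superadditivity/approximation argument of Lemma 7.1 in \cite{friz_differential_2018}, which is exactly what the paper cites at this step. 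Note also that your bound $\sum_i |y_{t_i,t_{i+1}}|^p\le \|y\|_{q;[0,T]}^p$ uses $q\le p$, which holds throughout the paper (Assumption A) and is needed for \eqref{Young-estimate-p} to make sense, but is not literally among the corollary's stated hypotheses.
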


\begin{proof}
  By the estimate (\ref{YoungEstimate}) it holds
  \begin{align}
    \bigg| \int_s^t x_r d \overleftarrow{y_r} \bigg| & \leq C_{p, q} \| x
    \|_{p ; (s, t]} \| y \|_{q ; [s, t)} + | x_t | \| y \|_{q ; [s, t)} \\
    & \leq C_{p, q} \| x \|_{p ; (s, t]} \| y \|_{q ; [s, t)} + (\| x \|_{p ;
    (s, t]} + | x_T |) \| y \|_{q ; [s, t]} . \nonumber
  \end{align}
  For $s \mapsto t$, we have that the right-hand side converges to zero by a
  similar argument as Lemma 7.1 in {\cite{friz_differential_2018}} and we have
  shown the left continuity of the integral.\\
  Exercise 1.10 and Proposition 5.8 in {\cite{friz_multidimensional_2010}}
  imply that $\omega^1 (s, t) \assign \| x \|_{p ; (s, t]} \| y \|_{q ; [s,
  t)}$ and $\omega^2 \assign \| y \|_{q ; [s, t]}$ are controls. Then we get
  (\ref{Young-estimate-p}) by applying Proposition 5.10 in {\cite{friz_multidimensional_2010}}.
\end{proof}

The following shows that for left-continuous $y$, the
choice of $x,x^+,x^-$ is irrelevant. 

\begin{lemma}
  \label{backwardYoungJump}Let $x \in V^p ([0, T], \mathcal{L} (\mathbb{V},
  \mathbb{W}))$ and $y \in V^q ([0, T], \mathbb{V})$ with $\frac{1}{p} +
  \frac{1}{q} > 1$ and further assume $y$ to be c{\`a}gl{\`a}d. Then the
  following backward Young integrals are equal
  \[ \int_0^T x_r d \overleftarrow{y_r} = \int_0^T x^+_r d \overleftarrow{y_r}
     = \int_0^T x^-_r d \overleftarrow{y_r} . \]
\end{lemma}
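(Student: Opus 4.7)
The plan is to prove the single identity $\int_0^T (x - \bar x)_r\, d\overleftarrow{y_r} = 0$ for both $\bar x = x^+$ and $\bar x = x^-$; by linearity of the Riemann--Stieltjes construction this delivers both of the claimed equalities. Setting $u := x - \bar x$, I first check $u \in V^p([0,T])$: a standard approximation using right/left limits yields $\|x^\pm\|_p \leq \|x\|_p$, so $\|u\|_p \leq 2\|x\|_p < \infty$. Hence Proposition \ref{BackwardsYoung} supplies both existence of $\int_0^T u\,d\overleftarrow{y_r}$ and, since $y$ is càglàd, MRS convergence of the associated Riemann sums toward this integral.

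The decisive structural fact is that $u$ is supported on a countable set $J \subset [0,T]$. Indeed, for $\bar x = x^+$ one has $u_t = -\Delta^+ x_t$, nonzero only on $J_+ := \{t : \Delta^+ x_t \neq 0\}$; for $\bar x = x^-$, $u_t = \Delta^- x_t$ is supported on $J_-$. Both sets are countable since $x$ has finite $p$-variation. The complement $[0,T] \setminus J$ is therefore dense, and for each $\delta > 0$ I can construct a partition $0 = t_0 < t_1 < \cdots < t_N = T$ with mesh below $\delta$ all of whose interior nodes lie in $[0,T] \setminus J$. Taking such a sequence $\pi^n$ with $|\pi^n| \to 0$, every Riemann sum collapses to one boundary contribution,
\[
   \sum_i u_{t^n_{i+1}}\bigl(y_{t^n_{i+1}} - y_{t^n_i}\bigr) \;=\; u_T\,\bigl(y_T - y_{t^n_{N_n - 1}}\bigr),
\]
because $u$ vanishes at all interior nodes. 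Left-continuity of $y$ at $T$ gives $y_{t^n_{N_n - 1}} \to y_T$, so this quantity tends to $0$; MRS convergence then identifies $\int_0^T u\,d\overleftarrow{y_r}$ with this common limit, namely $0$.

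No serious obstacle is expected: MRS convergence does the heavy lifting once a single partition sequence producing vanishing Riemann sums is exhibited, and density of $[0,T] \setminus J$ makes such a sequence elementary to construct. The only edge case worth noting is $T \in J$, where $u_T$ may be nonzero, but then the companion factor $y_T - y_{t^n_{N_n-1}}$ still vanishes in the limit by left-continuity of $y$ at $T$, so the argument goes through unchanged and treats $\bar x = x^+$ and $\bar x = x^-$ in parallel.
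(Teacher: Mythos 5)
Your proof is correct, but it takes a genuinely different route from the paper. The paper disposes of the difference $\int_0^T (x-x^{\pm})_r\,d\overleftarrow{y_r}$ in one line by applying the mild sewing lemma for pure-jump germs (Theorem 2.11 of the cited Friz--Zhang paper) to the germ $\Xi_{s,t}=\Delta^{\pm}x_t\,y_{s,t}$, with $g(s)=|\Delta^{\pm}x_s|^p$ summable and $\omega(s,t)=\|y\|^q_{q;[s,t]}$ a control; the condition $\tfrac1p+\tfrac1q>1$ then forces the MRS limit to vanish. You instead first secure existence of the MRS limit of $\int u\,d\overleftarrow{y}$ for $u=x-x^{\pm}\in V^p$ from Proposition \ref{BackwardsYoung}, and then identify the limit as $0$ by exhibiting one sequence of partitions with vanishing mesh whose interior nodes avoid the countable support $J$ of $u$, so that each Riemann sum degenerates to the single term $u_T(y_T-y_{t^n_{N_n-1}})$, which dies by left-continuity of $y$ at $T$. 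This is more elementary and self-contained (no external sewing estimate), but it leans essentially on the \emph{MRS} mode of convergence: under mere RRS convergence the distinguished base partition could contain points of $J$, and every refinement would inherit them, so your partition-selection argument would not determine the limit. You correctly cover this by invoking the c\`agl\`ad hypothesis on $y$, which is exactly what upgrades RRS to MRS in Proposition \ref{BackwardsYoung}. The paper's route buys a quantitative bound and does not need the a priori existence of the limit for the difference; yours buys transparency at the cost of routing through that existence statement.
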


\begin{proof}
  Apply Theorem 2.11 in {\cite{friz_differential_2018}} for $g (s) \assign |
  \Delta^{\pm} x_s |^p$ and $\omega (s, t) \assign \| y \|^q_{q ; [s, t]}$.
\end{proof}

The following lemma shows the associativity of the Young integral.

\begin{lemma}
  \label{associativityYoung}Let $x \in V^p ([0, T], \mathcal{L} (\mathbb{V},
  \mathbb{W}))$ and $y \in V^q ([0, T], \mathbb{V})$ with $\frac{1}{p} +
  \frac{1}{q} > 1$. Then the (backward) Young integral is associative, i.e.
  $\int_0^T x_r d (\overleftarrow{\int_0^r y_s d \overleftarrow{z_s}}) =
  \int_0^T x_r y_r d \overleftarrow{z_r}$, if $z$ is c{\`a}gl{\`a}d or $x, y$ are both c{\`a}dl{\`a}g with $y$ and $z$ not sharing any
    common discontinuity points.
\end{lemma}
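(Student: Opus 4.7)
The plan is to compare Riemann sums for the two sides directly, using the Young estimate \eqref{YoungEstimate} to control the error between the ``inner'' integral and its one-step linear approximation. Set $W_t \assign \int_0^t y_s \, d\overleftarrow{z_s}$. From Corollary \ref{integral_q-var} (or its trivial extension to $V^p, V^q$ drivers without continuity) we know that $W$ is of finite $q$-variation, and we have the one-step estimate
\[
\bigl| W_{t_i, t_{i+1}} - y_{t_{i+1}} z_{t_i, t_{i+1}} \bigr| \; \leq \; C_{p,q} \, \|y\|_{p; (t_i, t_{i+1}]} \, \|z\|_{q; [t_i, t_{i+1})}
\]
on every subinterval of a partition $\pi^n = (t_i^n)$. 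Also, since $x$ has finite $p$-variation it is bounded in supremum norm, and the product $xy$ has finite $p$-variation by the usual product estimate for variation norms, so $\int_0^T (xy)_r \, d\overleftarrow{z_r}$ exists as a Young integral by Proposition \ref{BackwardsYoung}.

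First I would write the Riemann sum for the left-hand side as
\[
\sum_i x_{t_{i+1}} W_{t_i, t_{i+1}} \; = \; \sum_i x_{t_{i+1}} y_{t_{i+1}} z_{t_i, t_{i+1}} \; + \; \sum_i x_{t_{i+1}} \bigl[ W_{t_i, t_{i+1}} - y_{t_{i+1}} z_{t_i, t_{i+1}} \bigr].
\]
The first sum on the right is exactly a Riemann sum for $\int_0^T x_r y_r \, d\overleftarrow{z_r}$, which converges (in RRS sense) to the right-hand side of the claim by definition of the backward Young integral. For the remainder, the uniform bound on $x$ together with the above one-step estimate gives
\[
\Bigl| \sum_i x_{t_{i+1}} \bigl[ W_{t_i, t_{i+1}} - y_{t_{i+1}} z_{t_i, t_{i+1}} \bigr] \Bigr| \; \leq \; C_{p,q} \|x\|_{\infty}  \sum_i \|y\|_{p; (t_i, t_{i+1}]} \, \|z\|_{q; [t_i, t_{i+1})}.
\]
Since $\omega(s,t) \assign \|y\|_{p;(s,t]} \, \|z\|_{q;[s,t)}$ is a control whose exponent satisfies $1/p + 1/q > 1$, super-additivity and the standard argument (e.g.\ as in the proof of Corollary \ref{integral_q-var}) force this sum to vanish as $|\pi^n| \to 0$. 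Thus the left-hand-side Riemann sums converge in RRS sense to $\int_0^T x_r y_r \, d\overleftarrow{z_r}$, and by uniqueness of RRS limits this also equals $\int_0^T x_r \, d\overleftarrow{W_r}$.

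The main obstacle is justifying the identification of the limit of $\sum_i x_{t_{i+1}} y_{t_{i+1}} z_{t_i, t_{i+1}}$ with $\int_0^T x_r y_r \, d\overleftarrow{z_r}$ under the stated jump hypotheses, because for c\`adl\`ag integrands $(xy)_{t_{i+1}}$ need not coincide with the ``correct'' right-sampling convention at jumps. This is where the two alternative assumptions enter. If $z$ is c\`agl\`ad, then Lemma \ref{backwardYoungJump} ensures that the choice among $xy$, $(xy)^+$, $(xy)^-$ is immaterial for the value of the backward Young integral against $z$, so the sampling ambiguity disappears. If instead $x, y$ are c\`adl\`ag and $y, z$ have disjoint discontinuity sets, then at every jump time $t$ of $z$ both $y$ and $x$ are continuous, whence the evaluation $(xy)_{t_{i+1}} = x_{t_{i+1}} y_{t_{i+1}}$ agrees with the right-limit version up to a term negligible under the control bound above; together with Lemma \ref{backwardYoungJump} applied only at the jumps of $x$ (which lie outside the jump support of $z$), this identifies the limit with $\int x y \, d\overleftarrow{z}$. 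Collecting all the above yields the claimed equality.
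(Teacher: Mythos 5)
Your decomposition is the same as the paper's: subtract the two Riemann sums, apply the one-step estimate \eqref{YoungEstimate} on each subinterval, and reduce everything to showing that $\sum_i \|y\|_{p;(t_i,t_{i+1}]}\,\|z\|_{q;[t_i,t_{i+1})}$ vanishes along vanishing-mesh partitions. The gap is exactly there. Super-additivity of $\omega(s,t):=\|y\|_{p;(s,t]}\|z\|_{q;[s,t)}$ only yields the \emph{bound} $\sum_i\omega(t_i,t_{i+1})\leq\omega(0,T)$, never vanishing (compare $\omega(s,t)=t-s$, a control for which the sum equals $T$ on every partition). To get vanishing one must interpolate, $\sum_i\omega(t_i,t_{i+1})\leq\big(\sup_i\omega(t_i,t_{i+1})^{\varepsilon}\big)\sum_i\omega(t_i,t_{i+1})^{1-\varepsilon}$, with $\omega^{1-\varepsilon}$ still a control because $\tfrac1p+\tfrac1q>1$, and then prove $\sup_i\|y\|^{\varepsilon}_{p;(t_i,t_{i+1}]}\|z\|^{\varepsilon}_{q;[t_i,t_{i+1})}\to 0$. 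That last step is precisely where the two alternative hypotheses must be consumed, and it genuinely fails without them: for $y=z=\mathbb{1}_{[t^*,T]}$ the product of local variations is bounded below by $1$ on the subinterval containing $t^*$ in its interior, for every partition avoiding $t^*$. The paper handles this by noting that under condition 2 the sup vanishes because $y$ and $z$ never jump at the same point, while under condition 1 it first replaces $x,y$ by $x^-,y^-$ via Lemma \ref{backwardYoungJump} (which changes neither integral) and then exploits left-continuity of the local variation of the resulting paths against the c\`agl\`ad $z$.

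Conversely, the step where you do invoke the hypotheses --- identifying $\lim_n\sum_i x_{t_{i+1}}y_{t_{i+1}}z_{t_i,t_{i+1}}$ with $\int_0^T x_ry_r\,d\overleftarrow{z_r}$ --- needs no such care: $xy\in V^p$ by the product estimate, and Proposition \ref{BackwardsYoung} \emph{defines} the backward Young integral as the limit of the right-endpoint-sampled sums $\sum_i(xy)_{t_{i+1}}z_{t_i,t_{i+1}}$, so there is no sampling ambiguity to resolve. The fix is therefore to move your use of the jump hypotheses out of the last paragraph and into the remainder estimate, along the lines above.
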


\begin{proof}
  The integral on the r.h.s. and l.h.s. both exist in the MRS sense under any
  of the two conditions. Therefore for an arbitrary sequence of time
  partitions $\pi = (\pi^n)_{n \in \NN}$ on $[0, T]$ with $| \pi^n | \to 0$ as
  $n \to \infty$, it holds
  \begin{align*}
    & \int_0^T x_r d \bigg( \overleftarrow{\int_0^r y_s d
    \overleftarrow{z_s}} \bigg) - \int_0^T x_r y_r d \overleftarrow{z_r}\\
    = & \lim_{n \to \infty}  \sum_{t_i^n \in \pi^n} x_{t^n_{i + 1}}  \bigg(
    \int_{t^n_i}^{t^n_{i + 1}} y_r d \overleftarrow{z_r} - y_{t^n_i} z_{t^n_i,
    t^n_{i + 1}} \bigg)\\
    \leq & C (| x_0 | + \| x \|_{p - var ; [0, T]}) \lim_{n \to \infty} 
    \sum_{t_i^n \in \pi^n} \| y \|_{p ; (t^n_i, t^n_{i + 1}]} \| z \|_{q ;
    [t^n_i, t^n_{i + 1})}\\
    \leq & C \lim_{n \to \infty} \sup_{t_i^n \in \pi^n} (\| y \|^{\varepsilon}_{p
    ; (t^n_i, t^n_{i + 1}]} \| z \|^{\varepsilon}_{q ; [t^n_i, t^n_{i + 1})}) 
    \sum_{t_i^n \in \pi^n} \omega (t^n_i, t^n_{i + 1})\\
    \leq & C \omega (0, T) \lim_{n \to \infty} \sup_{t_i^n \in \pi^n} (\| y
    \|^{\varepsilon}_{p ; (t^n_i, t^n_{i + 1}]} \| z \|^{\varepsilon}_{q ; [t^n_i,
    t^n_{i + 1})}),
  \end{align*}
  here $\omega (s, t) \assign \| y \|^{1 - \varepsilon}_{p ; [s, t]} \| z \|^{1
  - \varepsilon}_{q ; [s, t]}$ defines a control (Exercise 1.10 \& Proposition
  5.8, {\cite{friz_multidimensional_2010}}).\\
  Under condition 2, if $y$ and $z$ do not share any common discontinuity
  points, then the last term obviously converges to $0$.\\
  Under condition 1, we apply Lemma \ref{BackwardsYoung} and rewrite the above to see that it is zero:
  \begin{align*}
    \int_0^T x_r d \bigg( \overleftarrow{\int_0^r y_s d \overleftarrow{z_s}}
    \bigg) - \int_0^T x_r y_r d \overleftarrow{z_r} = & \int_0^T x^-_r d
    \bigg( \overleftarrow{\int_0^r y^-_s d \overleftarrow{z_s}} \bigg) -
    \int_0^T x^-_r y^-_r d \overleftarrow{z_r}\\
    \leq & C \omega (0, T) \lim_{n \to \infty} \sup_{t_i^n \in \pi^n} (\| y^-
    \|^{\varepsilon}_{p ; (t^n_i, t^n_{i + 1}]} \| z \|^{\varepsilon}_{q ; [t^n_i,
    t^n_{i + 1})}),
  \end{align*}
  here $\| y^- \|_{p ; [s, t]}$ is left continuous by the same argument as
  Lemma 7.1 in {\cite{friz_differential_2018}}.
\end{proof}

The following lemma compares backward with forward Young integrals.

\begin{lemma}
  \label{backwardforwardYoung}Let $x \in V^p ([0, T], \mathcal{L} (\mathbb{V},
  \mathbb{W}))$ and $y \in V^q ([0, T], \mathbb{V})$ with $\frac{1}{p} +
  \frac{1}{q} > 1$. If $x$ is c{\`a}dl{\`a}g, then for any $t \in [0, T]$ it
  holds
  \begin{align*}
    \int_0^t x_r d \overleftarrow{y_r} - \int_0^t x_r \tmop{dy}_r = \sum_{0 <
    r \leq t} \Delta^- x_r \Delta^- y_r .
  \end{align*}
  If $y$ is c{\`a}gl{\`a}d, then for any $t \in [0, T]$ it holds
  \begin{align*}
    \int_0^t x_r d \overleftarrow{y_r} - \int_0^t x_r \tmop{dy}_r = \sum_{0
    \leq r < t} \Delta^+ x_r \Delta^+ y_r .
  \end{align*}
  If $x$ is c{\`a}dl{\`a}g and $y$ is c{\`a}gl{\`a}d, the
  forward and backward Young integrals are equal.
\end{lemma}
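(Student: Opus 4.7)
For any partition $\pi^n = (0 = t^n_0 < \cdots < t^n_N = t)$ of $[0,t]$, subtracting the backward and forward Riemann--Stieltjes sums yields the purely algebraic identity
\[
  \sum_i x_{t^n_{i+1}} y_{t^n_i, t^n_{i+1}} - \sum_i x_{t^n_i} y_{t^n_i, t^n_{i+1}} = \sum_i (x_{t^n_{i+1}} - x_{t^n_i})(y_{t^n_{i+1}} - y_{t^n_i}) \backassign Q_n.
\]
The backward integral is the limit of the first sum along $\pi^n$ by Proposition~\ref{BackwardsYoung}; the forward Young integral is the limit of the second sum by the symmetric version of Proposition~\ref{BackwardsYoung} (with the roles of left and right swapped). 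Restricting to a common sequence of refinement partitions---compatible with RRS convergence of both integrals---the lemma reduces to identifying $\lim_n Q_n$ with the appropriate one-sided jump series.

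To compute this limit, fix $\varepsilon > 0$ and let $J_\varepsilon \subset (0,t]$ collect the (finitely many) times $\tau$ where the relevant one-sided jump of $x$ or $y$ exceeds $\varepsilon$; finiteness follows from $\sum_\tau |\Delta^\pm x_\tau|^p \le \|x\|_{p;[0,T]}^p < \infty$ and its analogue for $y$, noting that $y$ has one-sided limits everywhere because $y \in V^q$. Work along partitions $\pi^n \supset J_\varepsilon$ with $|\pi^n \setminus J_\varepsilon| \to 0$, and split $Q_n = Q_n^{\mathrm{big}} + Q_n^{\mathrm{small}}$, where $Q_n^{\mathrm{big}}$ collects subintervals having a point of $J_\varepsilon$ as an endpoint. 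On any small subinterval the relevant one-sided oscillations of $x$ and $y$ are bounded by $3\varepsilon$ for $n$ large. Choosing $p' > p$ and $q' > q$ with $1/p' + 1/q' = 1$ (possible since $1/p + 1/q > 1$) and applying H\"older's inequality together with $\sum_i |\delta x_i|^{p} \le \|x\|_{p;[0,T]}^p$ and its $y$-analogue yields
\[
  |Q_n^{\mathrm{small}}| \le (3\varepsilon)^{2 - p/p' - q/q'} \|x\|_{p;[0,T]}^{p/p'} \|y\|_{q;[0,T]}^{q/q'},
\]
and the exponent $2 - p/p' - q/q' > 0$ since $p/p', q/q' < 1$. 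Hence $Q_n^{\mathrm{small}} \to 0$ as $\varepsilon \to 0$, uniformly in $n$.

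For $Q_n^{\mathrm{big}}$, each $\tau \in J_\varepsilon$ is by construction a partition point, say $\tau = t^n_{i_\tau}$. In case 1 (c\`adl\`ag $x$), on the subinterval $(t^n_{i_\tau-1}, \tau]$ one has $x_\tau - x_{t^n_{i_\tau-1}} \to x_\tau - x_{\tau-} = \Delta^- x_\tau$ by right-continuity of $x$, and $y_\tau - y_{t^n_{i_\tau-1}} \to \Delta^- y_\tau$ using the existence of the left limit of $y$ at $\tau$; on the adjacent subinterval $(\tau, t^n_{i_\tau+1}]$ the c\`adl\`ag increment $x_{t^n_{i_\tau+1}} - x_\tau \to x_{\tau+} - x_\tau = 0$ kills the contribution. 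Hence $Q_n^{\mathrm{big}} \to \sum_{\tau \in J_\varepsilon} \Delta^- x_\tau \Delta^- y_\tau$; sending $\varepsilon \to 0$ (with absolute convergence $\sum_\tau |\Delta^- x_\tau \Delta^- y_\tau| \le \|x\|_{p;[0,T]} \|y\|_{q;[0,T]}$ from H\"older applied to jumps) finishes case 1. Case 2 is symmetric: working with adjacent intervals $[\tau, t^n_{i_\tau +1})$ and using left-continuity of $y$ (c\`agl\`ad) produces $\sum_\tau \Delta^+ x_\tau \Delta^+ y_\tau$. Case 3 is then immediate: c\`adl\`ag $x$ forces $\Delta^+ x \equiv 0$ (killing case 2's sum), while c\`agl\`ad $y$ forces $\Delta^- y \equiv 0$ (killing case 1's sum), so the two integrals coincide.

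The main obstacle is the bookkeeping of jumps: along an arbitrary MRS sequence with $\tau$ in the strict interior of a subinterval, $\delta y_i$ would converge to $y_{\tau+} - y_{\tau-}$ rather than the one-sided $\Delta^- y_\tau$ demanded by the identity (an elementary example with $x = \mathbb{1}_{[a,T]}$ and $y = \mathbb{1}_{(a,T]}$ shows this failure already for $p = q = 1$). Restricting to partitions that refine $J_\varepsilon$ is what simultaneously isolates the correct one-sided jump products in $Q_n^{\mathrm{big}}$ and enables the Love--Young-type H\"older estimate to absorb the small-jump contributions in $Q_n^{\mathrm{small}}$.
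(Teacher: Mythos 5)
Your proof is correct, and it takes a genuinely different route from the paper's for the key step. Both arguments begin with the same observation: the difference of the backward and forward Riemann sums over a common partition equals the cross term $Q_n=\sum_i(x_{t^n_{i+1}}-x_{t^n_i})(y_{t^n_{i+1}}-y_{t^n_i})$. The paper then telescopes $Q_n$ into $x_ty_t-x_0y_0-\sum_i y_{t^n_i}x_{t^n_i,t^n_{i+1}}-\sum_i x_{t^n_i}y_{t^n_i,t^n_{i+1}}$, passes to the limit to obtain $x_ty_t-x_0y_0-\int y\,\mathrm{d}x-\int x\,\mathrm{d}y$, and identifies this with $\sum_{0<r\le t}\Delta^-x_r\Delta^-y_r$ by quoting the product (integration-by-parts) formula for forward Young integrals from \cite{friz_differential_2018}. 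You instead evaluate $\lim_n Q_n$ directly through the big-jump/small-jump decomposition with a Love--Young/H\"older estimate; in effect you re-prove the ingredient the paper cites, which makes your argument self-contained (and your remark that a jump point in the interior of a subinterval would produce the two-sided product $(x_{\tau+}-x_{\tau-})(y_{\tau+}-y_{\tau-})$ is precisely why the points of $J_\varepsilon$ must be forced into the partitions), at the cost of length. Two points to tighten. First, $J_\varepsilon$ should collect \emph{all} points where either path has a left or right jump exceeding $\varepsilon$, not only the one-sided jumps appearing in the final formula; otherwise the $3\varepsilon$ oscillation bound on the remaining subintervals can fail. This does not change the limit of $Q_n^{\mathrm{big}}$, since e.g.\ in the c\`adl\`ag case the subinterval to the right of such a point still contributes $0$ by right-continuity of $x$. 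Second, the claim that increments of a regulated path over fine subintervals avoiding $J_\varepsilon$ are at most $3\varepsilon$ deserves a reference (e.g.\ the partition lemmas of \cite{r_m_dudley_introduction_nodate} or Lemma 7.1 of \cite{friz_differential_2018} already used elsewhere in the paper), since such a path may have infinitely many sub-threshold jumps.
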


\begin{proof}
  We only show it for the case where $x$ is c{\`a}dl{\`a}g, the proof is the
  same for $y$ being c{\`a}gl{\`a}d. Let $\pi = (\pi^n)_{n \in \NN}$ be a sequence of time partitions on $[0, t]$ with $| \pi^n | \to 0$ as $n \to \infty$, such that $\int_0^t x_r \tmop{dy}_r = \lim_{n \to \infty}  \sum_{t_i^n \in \pi^n}
     x_{t^n_i} y_{t^n_i, t^n_{i + 1}}$ holds in RRS sense, such sequence exists due to Proposition 2.4 in
  {\cite{friz_differential_2018}}.
  Since $x$ is c{\`a}dl{\`a}g, we have that
  \[ \int_0^t x_r d \overleftarrow{y_r} = \lim_{n \to \infty}  \sum_{t_i^n \in
     \pi^n} x_{t^n_{i + 1}} y_{t^n_i, t^n_{i + 1}} \infixand \int_0^t y_r
     \tmop{dx}_r = \lim_{n \to \infty}  \sum_{t_i^n \in \pi^n} y_{t^n_i}
     x_{t^n_{i + 1}, t^n_{i + 1}} \]
  for the same sequence $\pi$, the first convergence follows by Proposition
  \ref{BackwardsYoung} and the second by Theorem 2.2 and Proposition 2.4 in
  {\cite{friz_differential_2018}}. Combining the previous convergence implies
  \begin{align*}
    \int_0^t x_r d \overleftarrow{y_r} - \int_0^t x_r \tmop{dy}_r & = \lim_{n
    \to \infty}  \sum_{t_i^n \in \pi^n} x_{t^n_{i + 1}, t^n_{i + 1}} y_{t^n_i,
    t^n_{i + 1}}\\
    & = \lim_{n \to \infty}  \sum_{t_i^n \in \pi^n} x_{t^n_{i + 1}} y_{t^n_{i
    + 1}} - x_{t^n_i} y_{t^n_i} - y_{t^n_i} x_{t^n_{i + 1}, t^n_{i + 1}} -
    x_{t^n_i} y_{t^n_{i + 1}, t^n_{i + 1}}\\
    & = x_T y_T - x_0 y_0 - \int_0^t y_r \tmop{dx}_r - \int_0^t x_r
    \tmop{dy}_r .
  \end{align*}
  Finally, applying the product formula for (forward) Young integral in
  {\cite{friz_differential_2018}} results to $x_T y_T - x_0 y_0 - \int_0^t y_r \tmop{dx}_r - \int_0^t x_r \tmop{dy}_r =
    \sum_{0 < r \leq T} \Delta^- x \Delta^- y$.
\end{proof}

The following lemma states the difference between integrating against a
c{\`a}gl{\`a}d path $y$ or it's limit from the right $y^+$.
\begin{lemma}
  \label{dAdA+}Let $x \in V^p ([0, T], \mathcal{L} (\mathbb{V}, \mathbb{W}))$
  and $y \in V^q ([0, T], \mathbb{V})$ with $\frac{1}{p} + \frac{1}{q} > 1$
  and further assume $y$ to be c{\`a}gl{\`a}d and $x$ to be c{\`a}dl{\`a}g.
  Then for any $t \in [0, T]$ it holds
  \begin{align*}
    \int_0^t \tmop{xdy}^+ = \int_0^t \tmop{xdy} + x_t \Delta^+ y_t - x_0
    \Delta^+ y_0 = \int_0^t xd \overleftarrow{y } + x_t \Delta^+ y_t - x_0
    \Delta^+ y_0 .
  \end{align*}
\end{lemma}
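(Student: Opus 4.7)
The proof reduces to verifying the two equalities in turn.

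The second equality is immediate from Lemma \ref{backwardforwardYoung}: since $x$ is c\`adl\`ag we have $\Delta^+ x \equiv 0$, so the correction sum in that lemma vanishes and $\int_0^t x\,dy = \int_0^t x\,d\overleftarrow{y}$. Adding $x_t \Delta^+ y_t - x_0 \Delta^+ y_0$ on both sides yields the claim.

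For the first equality, my strategy is to route through the backward integral via the decomposition $y^+ = y + \delta$ with $\delta_s := y^+_s - y_s = \Delta^+ y_s$. The function $\delta$ has finite $q$-variation: since $\delta_s \neq 0$ only on the at most countable jump set $J := \{s : \Delta^+ y_s \neq 0\}$ and $\sum_{s}|\Delta^+ y_s|^q \leq \|y\|_{q;[0,T]}^q$, a straightforward estimate gives $\|\delta\|_{q;[0,T]} \leq 2^{1/q} \|y\|_{q;[0,T]} < \infty$. By linearity of the backward Young integral,
\begin{align*}
\int_0^t x\,d\overleftarrow{y^+} \;=\; \int_0^t x\,d\overleftarrow{y} \;+\; \int_0^t x\,d\overleftarrow{\delta} .
\end{align*}
To evaluate the last integral I would pick a mesh-shrinking sequence of partitions $\pi^n = (0 = t^n_0 < \cdots < t^n_{N_n} = t)$ whose interior nodes lie outside $J$ (possible since $J$ is countable). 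On such partitions $\delta$ vanishes at every interior node and the right-point Riemann sum defining the backward integral telescopes to
\begin{align*}
\sum_{i=0}^{N_n-1} x_{t^n_{i+1}} (\delta_{t^n_{i+1}} - \delta_{t^n_i}) \;=\; x_t \Delta^+ y_t \;-\; x_{t^n_1} \Delta^+ y_0 .
\end{align*}
Passing $n \to \infty$ with $t^n_1 \downarrow 0$ and invoking c\`adl\`ag-ness of $x$ (so $x_{t^n_1} \to x_0$) gives $\int_0^t x\,d\overleftarrow{\delta} = x_t \Delta^+ y_t - x_0 \Delta^+ y_0$. Two applications of Lemma \ref{backwardforwardYoung} — one to $y$ and one to $y^+$, the latter using that $\Delta^- y^+ = \Delta^+ y$ has no common support with $\Delta^- x$ in the Young regime under consideration — then identify both backward integrals with their forward counterparts and deliver the first equality.

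The main delicate step is the MRS justification of the telescoping computation: one has to show that arbitrary mesh-shrinking partition sequences, including those inserting nodes into $J$, produce the same limit. This is a standard consequence of the Young-type estimate in Proposition \ref{BackwardsYoung}, which bounds the discrepancy between Riemann sums over two partitions by products $\|x\|_{p;[s,s']}\,\|\delta\|_{q;[s,s']}$ on subintervals; refining outside the finitely many large right-jumps of $y$ makes this error arbitrarily small, so the limit obtained along our preferred sequence coincides with the MRS limit.
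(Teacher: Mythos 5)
Your handling of the second equality is the same as the paper's (it is immediate from Lemma \ref{backwardforwardYoung}), and your route to the first equality is genuinely different: the paper expands $y^+_{t_i,t_{i+1}}=y_{t_i,t_{i+1}}+\Delta^+ y_{t_{i+1}}-\Delta^+ y_{t_i}$ inside the left-point Riemann sum, telescopes, and disposes of the cross term via the mild sewing lemma for pure jumps, whereas you decompose $y^+=y+\delta$ with $\delta=\Delta^+ y$ and evaluate $\int_0^t x\,d\overleftarrow{\delta}$ by telescoping. That part of your argument is correct (MRS convergence from Proposition \ref{BackwardsYoung} holds because $x$ is c\`adl\`ag, so the limit along your special partitions is the limit along all of them), and it cleanly yields the backward-integral identity $\int_0^t x\,d\overleftarrow{y^+}=\int_0^t x\,d\overleftarrow{y}+x_t\Delta^+ y_t-x_0\Delta^+ y_0$.

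The gap is in your final conversion back to forward integrals. Applying Lemma \ref{backwardforwardYoung} to the c\`adl\`ag integrator $y^+$ leaves the correction $\sum_{0<r\le t}\Delta^- x_r\,\Delta^- y^+_r=\sum_{0<r\le t}\Delta^- x_r\,\Delta^+ y_r$, and your assertion that this vanishes because the jumps ``have no common support in the Young regime'' is unsupported: nothing in the hypotheses prevents a left jump of the c\`adl\`ag path $x$ from occurring at the same time as a right jump of the c\`agl\`ad path $y$. Take $x=\mathbb{1}_{[1/2,1]}$ and $y=\mathbb{1}_{(1/2,1]}$ on $[0,1]$: both have finite $1$-variation and $\sum_r\Delta^- x_r\,\Delta^+ y_r=1$. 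What your argument actually proves is
\begin{align*}
\int_0^t x\,\mathrm{d}y^+ \;=\; \int_0^t x\,\mathrm{d}y+x_t\Delta^+ y_t-x_0\Delta^+ y_0-\sum_{0<r\le t}\Delta^- x_r\,\Delta^+ y_r,
\end{align*}
which is not the stated identity. The extra sum cannot be argued away: in the example above every left-point Riemann sum for $\int_0^1 x\,\mathrm{d}y^+$ vanishes while the RRS value of $\int_0^1 x\,\mathrm{d}y$ equals $1$, so a hypothesis of the form $\Delta^- x_r\,\Delta^+ y_r=0$ for all $r$ is genuinely needed. The obstruction you have isolated is exactly the quantity the paper's own proof must kill, there appearing as $\lim_n\sum_i x_{t_i^n,t^n_{i+1}}\Delta^+ y_{t^n_{i+1}}$; your route makes it explicit rather than eliminating it, and note that the no-common-jump condition fails precisely in the intended application $x=f'(Y^+)$, $y=A$ in the proof of Theorem \ref{Ito}. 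As written, therefore, the proposal does not establish the first equality.
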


\begin{proof}
  The second equality follows directly from the previous lemma, we only need
  to show the first equality. Let $\pi = (\pi^n)_{n \in \NN}$ be a sequence of partitions on $[0, t]$ with $| \pi^n | \to 0$, such that $\int_0^t x_r \tmop{dy} = \lim_{n \to \infty}  \sum_{t_i^n \in \pi^n}
     x_{t^n_i} y_{t^n_i, t^n_{i + 1}}$ holds in RRS sense, such sequence exists due to Proposition 2.4 in
  {\cite{friz_differential_2018}}. By the same proposition, it also holds that
  \begin{align*}
    \int_0^t x_r \tmop{dy}^+_r & = \lim_{n \to \infty}  \sum_{t_i^n \in \pi^n}
    x_{t^n_i} y^+_{t^n_i, t^n_{i + 1}} = \lim_{n \to \infty}  \sum_{t_i^n \in \pi^n} x_{t^n_i} y_{t^n_i,
    t^n_{i + 1}} + x_{t^n_i} \Delta^+ y_{t^n_{i + 1}} - x_{t^n_i} \Delta^+
    y_{t^n_i}\\
    & = \int_0^t x_r \tmop{dy} - \lim_{n \to \infty}  \sum_{t_i^n \in \pi^n}
    x_{t^n_i, t^n_{i + 1}} \Delta^+ y_{t^n_{i + 1}} + \lim_{n \to \infty} 
    \sum_{t_i^n \in \pi^n} x_{t^n_{i + 1}} \Delta^+ y_{t^n_{i + 1}} -
    x_{t^n_i} \Delta^+ y_{t^n_i}\\
    & = \int_0^t x_r \tmop{dy}_r + x_t \Delta^+ y_t - x^+_0 \Delta^+ y_0,
  \end{align*}
  where for the last equality, the second term converges to zero due to the
  mild sewing lemma for pure jumps (Theorem 2.11) in
  {\cite{friz_differential_2018}} and the last term is a telescope sum.
\end{proof}

We prove a stability result for the backward Young integral.

\begin{proposition}
  \label{stability_youngIntergral}Let $x^1, x^2 \in V^p ([0, T], \mathcal{L}
  (\mathbb{V}, \mathbb{W}))$ and $y^1, y^2 \in V^q ([0, T], \mathbb{V})$ with
  $\frac{1}{p} + \frac{1}{q} > 1$ and define $\Delta x \assign x^1 - x^2$ and
  $\Delta y \assign y^1 - y^2$, then
  \begin{align*}
 \lefteqn{\big\| \int_0^. x^1_r d \overleftarrow{y_r^1} - \int_0^. x^2_r d
    \overleftarrow{y_r^2} \big\|_{p ; [0, T]}} \\ 
    & \leq   (\| \Delta x \|_{p ;
    [0, T]} + | \Delta x_T |) \| y^1 \|_{q ; [0, T]} + (\| x^2 \|_{p ; [0, T]}
    + | x_T^2 |) \| \Delta y \|_{q ; [0, T]} .
  \end{align*}
\end{proposition}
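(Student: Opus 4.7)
The plan is to prove this by linearity plus the already-established $p$-variation bound for a single backward Young integral (Corollary \ref{integral_q-var}, inequality \eqref{Young-estimate-p}). First I would write, for any $0\le s\le t\le T$,
\begin{equation*}
\int_s^t x^1_r\, d\overleftarrow{y^1_r} - \int_s^t x^2_r\, d\overleftarrow{y^2_r}
= \int_s^t (x^1_r - x^2_r)\, d\overleftarrow{y^1_r} + \int_s^t x^2_r\, d\overleftarrow{(y^1_r - y^2_r)},
\end{equation*}
which is an immediate consequence of the bilinearity of the Young integral (this linearity is itself inherited from the defining Riemann–Stieltjes sums in Proposition \ref{BackwardsYoung}). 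Both summands are well-defined backward Young integrals under Assumption $\tfrac{1}{p}+\tfrac{1}{q}>1$, since $\Delta x\in V^p$ and $\Delta y\in V^q$.

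Next I would take the $p$-variation seminorm on $[0,T]$, use the triangle inequality, and apply \eqref{Young-estimate-p} of Corollary \ref{integral_q-var} to each of the two backward Young integrals separately. Applied to the first term with integrator $y^1$ and integrand $\Delta x$, it gives
\begin{equation*}
\Bigl\| \int_0^{\cdot} \Delta x_r\, d\overleftarrow{y^1_r} \Bigr\|_{p;[0,T]} \;\lesssim\; \bigl(\|\Delta x\|_{p;[0,T]} + |\Delta x_T|\bigr)\,\|y^1\|_{q;[0,T]},
\end{equation*}
and analogously, applied to the second term with integrator $\Delta y$ and integrand $x^2$,
\begin{equation*}
\Bigl\| \int_0^{\cdot} x^2_r\, d\overleftarrow{\Delta y_r} \Bigr\|_{p;[0,T]} \;\lesssim\; \bigl(\|x^2\|_{p;[0,T]} + |x^2_T|\bigr)\,\|\Delta y\|_{q;[0,T]}.
\end{equation*}
Summing these two bounds yields the claim (absorbing multiplicative constants into the $\lesssim$ notation used throughout the paper, or into the implicit constant on the right-hand side of the stated inequality).

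There is no real obstacle here: everything is a direct consequence of bilinearity of the Young integral and the single-integral stability estimate proved in Corollary \ref{integral_q-var}; the only bookkeeping to watch is that the boundary term $|x_T|$ (resp.\ $|\Delta x_T|$) appears on the right-hand side of \eqref{Young-estimate-p} because the control $\omega^2(s,t)=\|y\|_{q;[s,t]}$ used in the proof of Corollary \ref{integral_q-var} is accompanied by the pointwise evaluation of the integrand at the upper endpoint (as visible from the germ estimate \eqref{YoungEstimate}, where $|x_t|\|y\|_{q;[s,t)}$ appears after the Young remainder). This is exactly how the $|\Delta x_T|$ and $|x^2_T|$ terms enter the final bound.
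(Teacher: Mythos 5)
Your argument is correct, and it reaches the same estimate by a mildly different route than the paper. You split at the level of the integrals, writing the difference as $\int \Delta x\, d\overleftarrow{y^1} + \int x^2\, d\overleftarrow{\Delta y}$ via bilinearity of the Riemann--Stieltjes sums, and then invoke the single-integral bound \eqref{Young-estimate-p} of Corollary \ref{integral_q-var} twice. The paper instead performs the identical algebraic split one level down, at the level of the germ: it sets $\Delta_{s,t} := x^1_t y^1_{s,t} - x^2_t y^2_{s,t}$, computes the coboundary $\delta\Delta_{s,u,t} = \Delta x_{u,t}\, y^1_{s,u} + x^2_{u,t}\, \Delta y_{s,u}$ (which is exactly your bilinearity identity in increment form), bounds it by a sum of products of controls, and applies the general sewing theorem once, finishing with the same control argument as in Corollary \ref{integral_q-var}. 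The two proofs are essentially equivalent in content; yours is more economical in that it reuses an already-proved estimate, while the paper's is self-contained and makes the control structure explicit. Two small points to tidy up in your version: (i) Corollary \ref{integral_q-var} is \emph{stated} under the additional hypothesis that $y$ is c\`agl\`ad, which Proposition \ref{stability_youngIntergral} does not assume for $y^1$ or $\Delta y$; you should note that the $p$-variation estimate \eqref{Young-estimate-p} itself only uses \eqref{YoungEstimate} and superadditivity of the controls, so the c\`agl\`ad assumption (needed there only for left-continuity of the integral path) can be dropped for this purpose. (ii) The multiplicative constant $C_{p,q}$ from \eqref{Young-estimate-p} does not appear in the statement of the Proposition; this is a cosmetic discrepancy the paper's own proof shares (the sewing theorem also produces such a constant), so absorbing it as you propose is the right reading, but it is worth flagging rather than silently dropping.
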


\begin{proof}
  For $0 \leq s \leq u \leq t \leq T$, we define $\Delta_{s, t} \assign x^1_t
  y^1_{s, t} - x^2_t y^2_{s, t}$, then we have
  \begin{align*}
    \delta \Delta_{s, u, t} \assign \Delta_{s, t} - \Delta_{s, u} - \Delta_{u,
    t} = \Delta x_{u, t} y^1_{s, u} + x^2_{u, t} \Delta y_{s, u} .
  \end{align*}
  Now for the controls $\omega^{1, 1} (s, t) \assign \| y^1 \|^q_{q ; [s,
  t]}$, $\omega^{1, 2} (s, t) \assign \| \Delta x \|^p_{p ; [s, t]}$,
  $\omega^{2, 1} (s, t) \assign \| \Delta y \|^q_{q ; [s, t]}$ and $\omega^{2,
  2} (s, t) \assign \| x^2 \|^p_{p ; [s, t]}$, it holds
  \begin{align*}
    | \delta \Delta_{s, u, t} | & \leq \| y^1 \|_{q ; [s, u]} \| \Delta x
    \|_{p ; [u, t]} + \| \Delta y \|_{q ; [s, u]} \| x^2 \|_{p ; [u, t]}\\
    & = \omega^{1, 1} (s, u)^{\frac{1}{q}} \omega^{1, 2} (u, t)^{\frac{1}{p}}
    + \omega^{2, 1} (s, u)^{\frac{1}{q}} \omega^{2, 2} (u, t)^{\frac{1}{p}} .
  \end{align*}
  Let $\pi = (\pi^n)_{n \in \NN}$ be a sequence of time partitions on $[0, T]$
  of the form $\pi^n = (0 = t^n_0 < t^n_1 < \cdots < t^n_N = T)$ with $| \pi^n
  | \to 0$ as $n \to \infty$. Proposition \ref{BackwardsYoung} implies that
  \begin{align*}
    \int_0^. x^1_r d \overleftarrow{y_r^1} - \int_0^. x^2_r d
    \overleftarrow{y_r^2} = \lim_{n \to \infty}  \sum_{t^n_i \in \pi^n}
    \Delta_{t^n_i, t^n_{i + 1}} .
  \end{align*}
  By the general sewing theorem (Theorem 2.5,
  {\cite{friz_differential_2018}}), we know that
  \begin{align*}
    \bigg| \int_0^. x^1_r d \overleftarrow{y_r^1} - \int_0^. x^2_r d
    \overleftarrow{y_r^2} - \Delta_{0, T} \bigg| \leq \| \Delta x \|_{p ; [0,
    T]} \| y^1 \|_{q ; [0, T]} + \| x^2 \|_{p ; [0, T]} \| \Delta y \|_{q ;
    [0, T]}.
  \end{align*}
  Then we get the desired estimate by the same argument as in Corollary
  \ref{integral_q-var}.
\end{proof}
We have the following measurable selection result for the Young integral.
\begin{lemma}
  \label{Measurable-Young}Let $(\Omega^1, \mathcal{F}^1, (\mathcal{F}^1_t)_{t
  \in [0, T]}, \mathbb{P}^1)$ be a filtered probability space and $(U,
  \mathcal{U})$ be some measurable space. Given some $p, q > 0$ satisfying
  $\frac{1}{p} + \frac{1}{q} > 1$, let $x : ([\Omega^1 \times 0, T]) \times U
  \rightarrow \mathcal{L} (\mathbb{V}, \mathbb{W})$ and $y : ([\Omega^1 \times
  0, T]) \times U \rightarrow \mathbb{V}$ be $\tmop{Prog} \otimes
  \mathcal{U}$-measurable processes such that for every $u \in U$ the process
  $(x^u_t)_{t \in [0, T]}$ is of finite p-variation and $(y^u_t)_{t \in [0,
  T]}$ is of finite $q$-variation and furthermore c{\`a}gl{\`a}d. Then there
  exists a $\tmop{Prog} \otimes \, \mathcal{U}$-measurable processes $I :
  ([\Omega^1 \times 0, T]) \times U \rightarrow \mathbb{V}$ such that for
  every $u \in U$, $I^u$ is indistinguishable from $\int_0^{\cdot} x^u_r d
  \overleftarrow{y^u_r}$.
\end{lemma}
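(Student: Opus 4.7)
The plan is to construct $I$ literally as the pointwise backward Young integral $I^u_t(\omega) := \int_0^t x^u_r(\omega)\, d\overleftarrow{y^u_r(\omega)}$ for every $(\omega,u) \in \Omega^1 \times U$, which is well-defined by Proposition~\ref{BackwardsYoung} (applied fibrewise, using $\tfrac1p+\tfrac1q>1$, finite $p$- and $q$-variation, and the càglàd property of $y^u$). Since $I$ is defined as an equality and not merely as a modification, the indistinguishability requirement is automatic, and only joint $\tmop{Prog} \otimes \mathcal{U}$-measurability has to be verified. I would do this in two essentially independent steps: establish sectional measurability in $(\omega,u)$ for each fixed $t$ via a deterministic Riemann-sum approximation, and then upgrade to joint progressive measurability by exploiting the left-continuity of $I$ in $t$.

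For the first step, I would fix once and for all a deterministic sequence of partitions $\pi^n = \{0=t^n_0<t^n_1<\cdots<t^n_{k_n}=T\}$ of $[0,T]$ with $|\pi^n| \to 0$. For each $t \in [0,T]$ let $\pi^n_t := (\pi^n \cap [0,t)) \cup \{t\}$ and set
\[ R^n_t(\omega,u) := \sum_{s_i \in \pi^n_t} x_{s_{i+1}}(\omega,u)\bigl(y_{s_{i+1}}(\omega,u) - y_{s_i}(\omega,u)\bigr). \]
All time points appearing in this finite sum are $\le t$, so by $\tmop{Prog} \otimes \mathcal{U}$-measurability of $x$ and $y$ each term is $\mathcal{F}^1_t \otimes \mathcal{U}$-measurable, and hence so is $R^n_t$. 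Since $y$ is càglàd, Proposition~\ref{BackwardsYoung} delivers MRS convergence $R^n_t(\omega,u) \to I^u_t(\omega)$ pointwise on $\Omega^1 \times U$, and the pointwise limit of measurable functions yields that $(\omega,u) \mapsto I_t(\omega,u)$ is $\mathcal{F}^1_t \otimes \mathcal{U}$-measurable for every $t$.

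For the second step, Corollary~\ref{integral_q-var} guarantees that $I^u(\omega)$ is càglàd in $t$, so in particular left-continuous. Define the piecewise constant discretisations
\[ J^n_t(\omega,u) := \sum_{k=0}^{n-1} I_{kT/n}(\omega,u)\,\mathbb{1}_{[kT/n,(k+1)T/n)}(t) + I_T(\omega,u)\,\mathbb{1}_{\{T\}}(t). \]
For any $t_0 \in [0,T]$, only indices with $kT/n \le t_0$ enter the restriction of $J^n$ to $[0,t_0] \times \Omega^1 \times U$, and the first step makes each $I_{kT/n}$ measurable with respect to $\mathcal{F}^1_{kT/n} \otimes \mathcal{U} \subseteq \mathcal{F}^1_{t_0} \otimes \mathcal{U}$, so $J^n$ is $\tmop{Prog} \otimes \mathcal{U}$-measurable. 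By left-continuity, $J^n_t(\omega,u) \to I_{t-}(\omega,u) = I_t(\omega,u)$ pointwise as $n \to \infty$, whence $I$ inherits $\tmop{Prog} \otimes \mathcal{U}$-measurability as a pointwise limit. I do not foresee a serious obstacle: the two devices used (fixing a deterministic partition to get sectional measurability, and a dyadic discretisation combined with left-continuity to get progressive measurability) are both standard, and the parameter $u$ enters only passively through the product $\sigma$-algebra, so I can avoid the extra uniform-in-time convergence needed to invoke Proposition~\ref{limit-prog-measurable} directly.
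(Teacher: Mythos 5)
Your proof is correct and its core coincides with the paper's: both fix a deterministic sequence of partitions with vanishing mesh, observe that the backward Riemann sums over $[0,t]$ are $\mathcal{F}^1_t\otimes\mathcal{U}$-measurable because every evaluation time is $\le t$, and pass to the MRS limit supplied by Proposition~\ref{BackwardsYoung}. Where you diverge is the final step: the paper invokes the measurable-selection result of Stricker--Yor (Proposition~1 of \cite{stricker_calcul_1978}) together with the c\`agl\`ad regularity from Corollary~\ref{integral_q-var} to produce a $\tmop{Prog}\otimes\mathcal{U}$-measurable version, whereas you replace that citation by the elementary classical argument that an adapted process with left-continuous paths is progressively measurable (left-endpoint discretisation in time). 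This is a valid and more self-contained route; what the paper's citation buys is robustness to almost-sure (rather than everywhere) convergence of the approximating sums. That is also the one point you should tighten: your claim that $R^n_t\to I_t$ \emph{everywhere} on $\Omega^1\times U$, and hence that $I$ may be defined by literal equality, presupposes that the $p$-/$q$-variation and c\`agl\`ad hypotheses hold for every sample path of $x^u$ and $y^u$, not merely almost surely. If one reads the hypotheses pathwise-a.s.\ (as the ``almost surely'' in the paper's own proof suggests), define $I_t$ instead as $\limsup_n R^n_t$ on the measurable event where the limit exists and $0$ elsewhere; indistinguishability of $I^u$ from the integral then follows because both are c\`agl\`ad modifications of one another, as noted in the discussion preceding Proposition~\ref{limit-prog-measurable}. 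This is a one-line repair, not a flaw in the strategy.
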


\begin{proof}
  Given any sequence of finite time partitions $(\pi^n)_{n \in \NN}$ with
  vanishing meshsize, we have by Proposition \ref{BackwardsYoung} and
  Corollary \ref{integral_q-var} that, for any $u \in U$ and any $t \in [0,
  T]$ that $\int_0^t x^u_r d \overleftarrow{y^u_r} = \lim_{n \to \infty} 
  \sum_{t_i^n \in \pi^n} x^u_{t \wedge t^n_{i + 1}} y^u_{t \wedge t^n_i, t
  \wedge t^n_{i + 1}}$ almost surely. Obviously, for each $n$ the sum is
  $\mathcal{F}_t \otimes \mathcal{U}$-measurable. Then by Proposition 1 in
  {\cite{stricker_calcul_1978}} and the fact that $\int_0^{\cdot} x^u_r d
  \overleftarrow{y^u_r}$ is c{\`a}gl{\`a}d from Corollary
  \ref{integral_q-var}, \ so there exists $\tmop{Prog} \otimes
  \mathcal{U}$-measurable version $I$ of $\int_0^{\cdot} x^u_r d
  \overleftarrow{y^u_r}$.
\end{proof}

Replacing the parameter space $(U, \mathcal{U})$ with an actual
probability space yields.
\begin{proposition}
  \label{randomizedYoung}Let $(\Omega^1, \mathcal{F}^1, (\mathcal{F}^1_t)_{t
  \in [0, T]}, \mathbb{P}^1)$ and $(\Omega^2, \mathcal{F}^2,
  (\mathcal{F}^2_t)_{t \in [0, T]}, \mathbb{P}^2)$ be two filtered probability
  spaces and let $(\Omega, \mathcal{F}, (\mathcal{F}_t)_{t \in [0, T]},
  \mathbb{P})$ denotes the product space. Given some $p, q > 0$ satisfying
  $\frac{1}{p} + \frac{1}{q} > 1$, let $x : [0, T] \times \Omega \rightarrow
  \mathcal{L} (\mathbb{V}, \mathbb{W})$ and $y : [0, T] \times \Omega
  \rightarrow \mathbb{V}$ be $\mathcal{F}_t$-progressively measurable
  processes such that $x$ and $y$ are of finite p-variation and $q$-variation
  with $y$ additionally assumed to be c{\`a}gl{\`a}d. The pathwise defined
  backward Young integral $\int_0^{\cdot} x_r d \overleftarrow{y_r}$ is then
  $\mathcal{F}_t$-progressively measurable and it holds
  \begin{equation}
    \int_0^T x_r d\overleftarrow{y_{r}} = \int^T_0 x_r (\cdot, u) d\overleftarrow{y_{r}} (\cdot, u)
    \mid_{u = \omega^2}, \label{quenched-Young}
  \end{equation}
  where the integral on the r.h.s. is the measurable version obtained
  in Lemma \ref{Measurable-Young}.
\end{proposition}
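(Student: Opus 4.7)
The plan is to apply Lemma~\ref{Measurable-Young} with the parameter space $(U,\mathcal{U})=(\Omega^2,\mathcal{F}^2)$, viewing $x$ and $y$ as maps $(\Omega^1\times[0,T])\times \Omega^2 \to \mathcal{L}(\mathbb{V},\mathbb{W})$ and $(\Omega^1\times[0,T])\times\Omega^2\to\mathbb{V}$, respectively. To prepare for this, first I would observe that under the identification $\Omega=\Omega^1\times\Omega^2$, the $\mathcal{F}_t$-progressive measurability of the processes on $\Omega$ is equivalent to $\tmop{Prog}^1\otimes\mathcal{F}^2$-measurability of the corresponding maps on $(\Omega^1\times[0,T])\times\Omega^2$, where $\tmop{Prog}^1$ denotes the progressive $\sigma$-field on $\Omega^1\times[0,T]$ associated with $(\mathcal{F}^1_t)_t$. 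This is the same monotone class argument already invoked in the proof of Proposition~\ref{measurable-selection} (``$\tmop{Prog}\otimes\mathcal{F}^2$-measurable is the same as $(\mathcal{F}_t)$-progressively measurable''). Combined with the assumption that, pathwise, $x$ has finite $p$-variation and $y$ is a c\`agl\`ad path of finite $q$-variation, both assumptions of Lemma~\ref{Measurable-Young} are then verified for each slice $u=\omega^2\in\Omega^2$.

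Next, Lemma~\ref{Measurable-Young} yields a $\tmop{Prog}^1\otimes\mathcal{F}^2$-measurable process $I:(\Omega^1\times[0,T])\times\Omega^2\to\mathbb{V}$ with the property that, for every fixed $\omega^2\in\Omega^2$, the process $I(\cdot,\omega^2)$ is indistinguishable from the pathwise-defined backward Young integral
\[
\int_0^\cdot x_r(\cdot,\omega^2)\,d\overleftarrow{y_r(\cdot,\omega^2)}
\]
constructed via Proposition~\ref{BackwardsYoung}. Re-interpreting $I$ as a process on the product space $\Omega$ via the identification above, the equivalence of measurabilities from the first step upgrades $I$ to an $\mathcal{F}_t$-progressively measurable process.

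Finally, I would compare $I$ with the pathwise backward Young integral on $\Omega$. For each $\omega=(\omega^1,\omega^2)\in\Omega$, the Riemann--Stieltjes limit defining $\int_0^T x_r(\omega)\,d\overleftarrow{y_r(\omega)}$ depends only on the sample path $r\mapsto(x_r(\omega^1,\omega^2),y_r(\omega^1,\omega^2))$, which is precisely the slice of the parameterized integrand evaluated at $\omega^1$ for parameter $u=\omega^2$. Therefore, for every $\omega^2$, the pathwise integral on $\Omega$ coincides with $I(\cdot,\omega^2)$ up to $\mathbb{P}^1$-indistinguishability, which gives both the identity \eqref{quenched-Young} and, by the same identification as above, the $\mathcal{F}_t$-progressive measurability of the pathwise integral itself.

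The only mildly non-routine point is the measurability equivalence between $\mathcal{F}_t$-progressive measurability on the product $\Omega$ and $\tmop{Prog}^1\otimes\mathcal{F}^2$-measurability of the ``sliced'' maps; everything else is a direct appeal to Lemma~\ref{Measurable-Young} together with the pathwise definition of the Young integral from Proposition~\ref{BackwardsYoung}. This equivalence follows from a standard monotone class / Dynkin-$\pi\lambda$ argument on rectangles $A^1\times A^2$ with $A^1\in\mathcal{F}^1_t$ and $A^2\in\mathcal{F}^2$, and has already been used earlier in the paper.
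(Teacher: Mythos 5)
Your proposal is correct and takes essentially the same route as the paper: both reduce the claim to Lemma \ref{Measurable-Young} applied with $(U,\mathcal{U})=(\Omega^2,\mathcal{F}^2)$, use the fact that the pathwise Riemann--Stieltjes limit depends only on the sample path to identify the two integrals slice-by-slice, and obtain progressive measurability on the product via the same Stricker--Yor/monotone-class reasoning. The paper merely writes the identity of the approximating Riemann sums explicitly where you phrase it as path-dependence of the limit; the content is identical.
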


\begin{proof}
  Given any sequence of finite time partitions $(\pi^n)_{n \in \NN}$ with
  vanishing meshsize. We have $\mathbb{P}$-a.s. that
  \[  \sum_{t_i^n \in \pi^n} x_{t \wedge t^n_{i + 1}} (\omega^1, \omega^2)
     y_{t \wedge t^n_i, t \wedge t^n_{i + 1}} (\omega^1, \omega^2) =
     \sum_{t_i^n \in \pi^n} x_{t \wedge t^n_{i + 1}} (\omega^1, u) y_{t \wedge
     t^n_i, t \wedge t^n_{i + 1}} (\omega^1, u) \mid_{u = \omega^2}, \]
  where the left side converges $\mathbb{P}$-a.s. to $\int^t_0 x  d
  \overleftarrow{y}$ and the right side converges for fixed $\omega^2 \in
  \Omega^2$ to measurable version in Lemma \ref{Measurable-Young}, hence
  proving \eqref{quenched-Young}. The $\mathcal{F}_t$-progressively
  measurability follows by the same argument as Lemma \ref{Measurable-Young}.
\end{proof}

\section{An Extension of It{\^o}'s Formula}\label{ItoFormula}

Our arguments require a version of It{\^o}'s formula that is applicable to processes given by sums of (continuous) local martingales and  c{\`a}gl{\`a}d processes of
finite $q$-variation for $q < 2$. Since such processes are
(pathwise) of quadratic variation, so even though the standard It{\^o}'s formula
is not applicable and the results in {\cite{friz_rough_2024}} are only for
continuous processes, one can still adapt ideas for the pathwise It{\^o}'s formula  from F{\"o}llmer {\cite{follmer_calcul_1981}} to our setting. Another
option could be to adapt the It{\^o}'s formula for weak Dirichlet processes by {\cite{bandini_weak_2024}}.  We want to note,
that yet neither {\cite{follmer_calcul_1981}} nor {\cite{bandini_weak_2024}} provides the result as required directly, as they do not
work with (backward) Young integrals and their results are stated for c{\`a}dl{\'a}g
(instead of c{\`a}gl{\`a}d) processes. For completeness, this appendix thus provides a suitable adaption of It{\^o} formula, although arguments may be folklore.

Let $\pi = (\pi^n)_{n \in \NN}= \{0 = t^n_0 < t^n_1 < \cdots < t^n_N = T\}_{n \in \NN}$ be a sequence of time partitions on $[0, T]$. A
c{\`a}dl{\'a}g process $x$ (with values in $\mathbb{R}$) is then said to have
quadratic variation along $\pi$ if the sequence of measures $\sum_{t_i^n \in
\pi^n} (x_{t_{i + 1}^n} - x_{t_i^n})^2 \delta_{t_i^n}$, where $\delta_t$
denotes the Dirac measure at point $t$, converges weakly to a Radon measure
$\mu^{\pi}$ such that $t \mapsto [x]^{\pi}_c (t) \assign \mu^{\pi} ([0, t]) -
\sum_{0 < s \leq t} (\Delta^- x_s)^2$ is a continuous and increasing function.
The quadratic variation of $x$ along $\pi$ is then defined as $[x]^{\pi} (t)
\assign \mu^{\pi} ([0, t]) = [x]^{\pi}_c (t) + \sum_{0 < s \leq t} (\Delta^-
x_s)^2$. In the case of $x$ being c{\`a}gl{\`a}d, we adapt the definition
accordingly to $[x]^{\pi}_c (t) \assign \mu^{\pi} ([0, t]) - \sum_{0 \leq s <
t} (\Delta^+ x_s)^2$ and $[x]^{\pi} (t) \assign \mu^{\pi} ([0, t]) =
[x]^{\pi}_c (t) + \sum_{0 \leq s < t} (\Delta^+ x_s)^2$.

Now let $x = (x^1, \cdots, x^n)$ be a c{\`a}dl{\`a}g function taking values
in $\mathbb{R}^n$. We say that $x$ is of quadratic variation along $\pi$ if
the processes $x^i$, $x^i + x^j$ are of quadratic variation along $\pi$ for
all $1 \leq i, j \leq n$. In this case, we define
\begin{align*}
  {}[x^i, x^j] (t) & = \frac{1}{2}  ([x^i + x^j] (t) - [x^i] (t) - [x^j] (t))
  = [x^i, x^j]^c (t) + \sum_{0 \leq s < t} \Delta^+ x^i_s \Delta^+ x^j_s .
\end{align*}
We start by showing the following result about the quadratic variation of the
sum of a path of $q$-variation for $q < 2$ and a (continuous) local
martingale.

\begin{lemma}
  \label{quadratic_variation_M+A}Let $A$ be a c{\`a}dl{\'a}g process of finite
  $q$-variation for $q < 2$ and $M$ be a continuous local martingale. Let
  $\pi$ be a sequence of time partitions such that the
  sum $S^{\pi^n} (t) \assign \sum_{t_i^n \in \pi^n} (M_{t_{i + 1}^n \wedge t}
  - M_{t_i^n \wedge t})^2$ converges to the stochastic quadratic variation
  $[M] (t)$ almost surely for all $t \in [0, T]$, then $[M]^{\pi} = [M]$ and $[M + A]^{\pi} = [M] +
  \sum_{0 < s \leq \cdot} (\Delta^- A)^2$ hold almost surely for all $t \in [0, T]$.
\end{lemma}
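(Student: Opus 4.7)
The plan is to reduce everything to the hypothesis $S^{\pi^n}(t)\to[M](t)$ and to the usual trick of separating the ``big jumps'' of $A$ from the ``small oscillation'' intervals. For the first identity, since $t\mapsto S^{\pi^n}(t)$ is increasing and converges pointwise to the continuous increasing limit $[M]$, a Dini-type argument upgrades this to uniform convergence, so the discrete measures $\sum_i(M_{t_{i+1}^n\wedge\cdot}-M_{t_i^n\wedge\cdot})^2\delta_{t_i^n}$ converge weakly to $d[M]$; because $\Delta^- M\equiv 0$ the continuous part $[M]^\pi_c$ coincides with $\mu^\pi([0,\cdot])=[M]$, which is continuous and increasing, hence $[M]^\pi=[M]$.

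For the second identity I would expand
\[\bigl((M+A)_{t_{i+1}^n\wedge t}-(M+A)_{t_i^n\wedge t}\bigr)^2=(M^\Delta_i)^2+2M^\Delta_i A^\Delta_i+(A^\Delta_i)^2,\]
with the obvious abbreviations, and treat the three resulting sums separately. The pure $M$-sum converges to $[M](t)$ by the first part. For the pure $A$-sum, fix $\varepsilon>0$, let $J_\varepsilon=\{s\le t:|\Delta^- A_s|>\varepsilon\}$ (finite by c\`adl\`aguity), and split the indices into $I^{\mathrm{j}}_n$ (intervals meeting $J_\varepsilon$) and $I^{\mathrm{s}}_n$ (the rest). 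On $I^{\mathrm{j}}_n$ each $s_j\in J_\varepsilon$ lies in a unique interval for $n$ large, and c\`adl\`aguity gives $A^\Delta_i\to\Delta^- A_{s_j}$; on $I^{\mathrm{s}}_n$ the oscillation property of c\`adl\`ag paths yields $\max_{i\in I^{\mathrm{s}}_n}|A^\Delta_i|\le 2\varepsilon$ for $n$ large, and the elementary $\ell^q\hookrightarrow\ell^2$ embedding together with $A\in V^q$, $q<2$, gives
\[\sum_{i\in I^{\mathrm{s}}_n}(A^\Delta_i)^2\;\le\;(2\varepsilon)^{2-q}\,\|A\|_{q;[0,T]}^q.\]
Sending $n\to\infty$ and then $\varepsilon\to 0$ yields $\sum_i(A^\Delta_i)^2\to\sum_{0<s\le t}(\Delta^- A_s)^2$.

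For the cross term I would use the same splitting. The part indexed by $I^{\mathrm{j}}_n$ involves only finitely many summands and each $M^\Delta_i\to 0$ by continuity of $M$, so it vanishes; for $I^{\mathrm{s}}_n$ Cauchy--Schwarz combined with the previous bound gives
\[\Bigl|\sum_{i\in I^{\mathrm{s}}_n}M^\Delta_i A^\Delta_i\Bigr|\;\le\;S^{\pi^n}(t)^{1/2}\bigl((2\varepsilon)^{2-q}\|A\|_{q;[0,T]}^q\bigr)^{1/2},\]
which becomes arbitrarily small after $n\to\infty$ and $\varepsilon\to 0$ because $S^{\pi^n}(t)\to[M](t)<\infty$. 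Assembling the three pieces proves $[M+A]^{\pi^n}(t)\to[M](t)+\sum_{0<s\le t}(\Delta^- A_s)^2$ almost surely for every $t$. The decomposition required by the definition is automatic: the continuous part is $[M]$, which is continuous and increasing, and $\Delta^-(M+A)=\Delta^- A$ since $M$ is continuous, so the atomic part of the limit is $\sum(\Delta^- A_s)^2$ as claimed. Finally, pointwise convergence of the distribution functions to a function with an at-most-countable atomic part suffices to conclude weak measure convergence.

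The main obstacle will be the cross term: a naive Cauchy--Schwarz only bounds it by $\bigl([M](t)\sum(\Delta^- A_s)^2\bigr)^{1/2}$, which need not be small. The decisive step is the split into ``big jump'' and ``small oscillation'' intervals, so that on the former continuity of $M$ kills $M^\Delta_i$ while on the latter the $\ell^q\hookrightarrow\ell^2$ embedding together with $q<2$ drives the $A$-contribution to zero with $\varepsilon$.
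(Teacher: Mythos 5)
Your proof is correct and shares the paper's overall skeleton (expand $((M+A)^\Delta_i)^2$ into three sums; identify the pure $M$-sum with $[M]$; for the pure $A$-sum separate the finitely many jumps of size $>\varepsilon$ from the small-oscillation intervals and use the interpolation bound $\sum_i |A^\Delta_i|^2 \le \max_i|A^\Delta_i|^{2-q}\,\|A\|_{q}^q$), but you handle the cross term by a genuinely different route. The paper applies H\"older with exponents $q/(q-1)$ and $q$, bounding the cross sum by $\sup_i |M^\Delta_i|^{\frac{q}{q-1}-2-\delta}\big(\sum_i|M^\Delta_i|^{2+\delta}\big)^{\frac{q-1}{q}}\big(\sum_i|A^\Delta_i|^q\big)^{1/q}$, which requires the (true but not free) fact that a continuous local martingale has a.s.\ finite $(2+\delta)$-variation; the prefactor then vanishes by uniform continuity of $M$. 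You instead reuse the big-jump/small-oscillation splitting: continuity of $M$ kills the finitely many big-jump terms, and Cauchy--Schwarz against $S^{\pi^n}(t)\to[M](t)<\infty$ controls the rest by $[M](t)^{1/2}(C\varepsilon)^{(2-q)/2}\|A\|_q^{q/2}$, which is arguably more elementary since it uses only the hypothesis of the lemma. Two minor points: $t\mapsto S^{\pi^n}(t)$ is \emph{not} increasing (the truncated last increment $(M_t-M_{t_i^n})^2$ is not monotone in $t$), so the Dini step is unavailable --- but it is also unnecessary, as pointwise convergence of the distribution functions to the continuous limit $[M]$ already yields weak convergence of the measures; and the small-oscillation bound for c\`adl\`ag paths gives $3\varepsilon$ rather than $2\varepsilon$ on mesh intervals that straddle a partition point of the oscillation decomposition, which changes nothing.
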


\begin{proof}
  The convergence of $S^{\pi^n}$ to $[M]$ implies the weak convergence of the
  related measure $\mu_n \assign \sum_{t_i^n \in \pi^n} (M_{t_{i + 1}^n} -
  M_{t_i^n})^2 \delta_{t_i^n}$ to $\mu$ with $\mu ([0, t]) \assign [\pi] (t)$ (cf. \cite[Theorem 2.1]{billingsley_convergence_1999}). By
  definition of the pathwise quadratic variation, we have $[M]^{\pi} (t) = [M]
  (t)$ a.s. for all $t$. For the second equation, we define $Y = M + A$ and write
  \begin{align*}
    \sum_{t_i^n \in \pi^n} (Y_{t_{i + 1}^n \wedge t} - Y_{t_i^n \wedge t})^2 =
    & \sum_{t_i^n \in \pi^n} (M_{t_{i + 1}^n \wedge t} - M_{t_i^n \wedge t})^2
    + \sum_{t_i^n \in \pi^n} (A_{t_{i + 1}^n \wedge t} - A_{t_i^n \wedge
    t})^2\\
    & + 2 \sum_{t_i^n \in \pi^n} (M_{t_{i + 1}^n \wedge t} - M_{t_i^n \wedge
    t})  (A_{t_{i + 1}^n \wedge t} - A_{t_i^n \wedge t}) .
  \end{align*}
  The first sum converges to $[M]_t$ almost surely. For the second sum, we
  define $J^{\varepsilon}$ to be the set of jumps in $A$ which are larger than
  $\varepsilon$, this set is finite for any $\varepsilon$, since $A$ has finite $q$-variation. Denote by
  $\bar{J}^{\varepsilon}$ the set of the other jumps of $A$. Then we have
  \begin{align*}
    \sum_{t_i^n \in \pi^n} (A_{t_{i + 1}^n \wedge t} - A_{t_i^n \wedge t})^2 =
    \sum_{t_i, J^{\varepsilon}} (A_{t_{i + 1}^n \wedge t} - A_{t_i^n \wedge t})^2
    + \sum_{t_i, \bar{J}^{\varepsilon}} (A_{t_{i + 1}^n \wedge t} - A_{t_i^n
    \wedge t})^2,
  \end{align*}
  where $\sum_{t_i, J^{\varepsilon}}$ denotes $\sum_{t_i^n \in \pi^n}
  \mathbb{1}_{\{J^{\varepsilon} \cap (t_i, t_{i + 1}] \neq \emptyset\}}$ for $A$
  c{\`a}dl{\'a}g. The first term converges to $\sum_{s \in J^{\varepsilon}}
  (\Delta^- A_s)^2$ as $n \to \infty$, and we can bound the second term by
  $\sum_{t_i, \bar{J}^{\varepsilon}} (A_{t_{i + 1}^n \wedge t} - A_{t_i^n \wedge
  t})^2 \leq \varepsilon^{2 - q} \| A \|^q_{q - var} \to 0$. So for $n \to
  \infty$ and $\varepsilon \to 0$ it holds $\sum_{t_i^n \in \pi^n} (A_{t_{i +
  1}^n \wedge t} - A_{t_i^n \wedge t})^2 \to \sum_{0 < s \leq t} (\Delta^-
  A_s)^2$. For the third sum, we can apply H{\"o}lder inequality to get
  \begin{align*}
    &\sum_{t_i^n \in \pi^n} (M_{t_{i + 1}^n \wedge t} - M_{t_i^n \wedge t}) 
    (A_{t_{i + 1}^n \wedge t} - A_{t_i^n \wedge t})\\
    \leq & \sup_{t_i^n \in \pi^n} | M_{t_{i + 1}^n} - M_{t_i^n} |^{\frac{q}{q
    - 1} - 2 - \delta} \big( \sum_{t_i^n \in \pi^n} | M_{t_{i + 1}^n \wedge
    t} - M_{t_i^n \wedge t} |^{2 + \delta} \big)^{\frac{q - 1}{q}} \big(
    \sum_{t_i^n \in \pi^n} | A_{t_{i + 1}^n \wedge t} - A_{t_i^n \wedge t} |^q
    \big)^{\frac{1}{q}}\\
    \leq & \sup_{t_i^n \in \pi^n} | M_{t_{i + 1}^n} - M_{t_i^n} |^{\frac{q}{q
    - 1} - 2 - \delta} \| M \|_{2 + \delta - var}^{\frac{q - 1}{q (2 +
    \delta)}} \| A \|_{q - var}^q
  \end{align*}
  for some $\delta > 0$ with $2 + \delta < \frac{q}{q - 1}$. Due to the
  continuity of $M$, this term converges to $0$ as $n \to \infty$. In total we
  have $\sum_{t_i^n \in \pi^n} (Y_{t_{i + 1}^n \wedge t} - Y_{t_i^n \wedge
  t})^2 \to [M]_t + \sum_{0 < s \leq t} (\Delta^- A_s)^2 .$ As in the proof
  for the first equation, this implies the weak convergence of the related
  measure and we get $[Y]^{\pi} = [M] + \sum_{0 < s \leq \cdot} (\Delta^-
  A_s)^2$.
\end{proof}

\begin{remark}
  With an analogous proof, one can show that for $A$ being c{\`a}gl{\`a}d, we
  have $[M + A]^{\pi} = [M] + \sum_{0 \leq s < \cdot} (\Delta^+ A_s)^2$ almost
  surely.
\end{remark}


\begin{theorem}
  \label{Ito}Let $A$ be a c{\`a}gl{\`a}d process of finite $q$-variation for
  $q < 2$ and $M$ be a continuous local martingale. We define $Y = A + M$,
  then for any $f \in C^2  (\mathbb{R}, \mathbb{R})$ we have
  \begin{align*}
    f (Y_t) = & f (Y_0) + \int_0^t f' (Y^+_s) \tmop{dA}_s + \int_0^t f' (Y_s)
    \tmop{dM}_s + \int_0^t f'' (Y_s) d [M]_s.
  \end{align*}
  Equivalently, this It\^o-formula can be written with backward Young integrals as
  \begin{align*}
    f (Y_t) = & f (Y_0) + \int_0^t f' (Y_s) d \overleftarrow{A_s} + \int_0^t
    f' (Y_s) \tmop{dM}_s + \int_0^t f'' (Y_s) d [M]_s\\
    & + \sum_{0 \leq s < t} (f (Y_{s +}) - f (Y_s) - f' (Y_s) \Delta^+ A_s) .
  \end{align*}
\end{theorem}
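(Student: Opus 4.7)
The plan is to adapt Föllmer's pathwise Itô argument from \cite{follmer_calcul_1981} to our hybrid setting with a càglàd $q$-variation component. First, fix a sequence of partitions $\pi=(\pi^n)$ with vanishing mesh along which $S^{\pi^n}(t)\to [M](t)$ almost surely for every $t\in[0,T]$ (such a sequence exists for a continuous local martingale via localization and BDG). By the remark following Lemma~\ref{quadratic_variation_M+A}, along this same sequence we have the key pathwise identity $[Y]^{\pi}(t)=[M](t)+\sum_{0\le s<t}(\Delta^{+}A_{s})^{2}$. Telescoping and applying Taylor's formula gives
\begin{equation*}
 f(Y_t)-f(Y_0)=\sum_i f'(Y_{t_i^n})\bigl(Y_{t_{i+1}^n\wedge t}-Y_{t_i^n\wedge t}\bigr)+\tfrac{1}{2}\sum_i f''(Y_{t_i^n})\bigl(Y_{t_{i+1}^n\wedge t}-Y_{t_i^n\wedge t}\bigr)^{2}+R_n,
\end{equation*}
where $R_n=\sum_i r_i\,(Y_{t_{i+1}^n\wedge t}-Y_{t_i^n\wedge t})^{2}$ with $r_i\to0$ as the increment shrinks.

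Next, I would treat the three sums separately. The first-order sum I would split as $\sum f'(Y_{t_i^n})\Delta M+\sum f'(Y_{t_i^n})\Delta A$. Since $f'(Y)$ is càglàd and adapted, the $M$-part converges in probability to the Itô integral $\int_0^t f'(Y_s)\,\mathrm{d}M_s$. Because $M$ has finite $p$-variation for every $p>2$ and $A$ has finite $q$-variation with $q<2$, one can choose $p>2$ with $1/p+1/q>1$, so the $A$-part is of Young type; its left-endpoint Riemann sums converge to the forward Young integral $\int_0^t f'(Y_s)\,\mathrm{d}A_s$ by Proposition~\ref{BackwardsYoung}. The second-order sum can be viewed as the integral of the càglàd weight $f''\circ Y$ against the discrete measures $\mu_n=\sum(Y_{t_{i+1}^n}-Y_{t_i^n})^{2}\delta_{t_i^n}$, which by the (càglàd) analogue of Lemma~\ref{quadratic_variation_M+A} converge weakly to $\mathrm{d}[M]+\sum(\Delta^{+}A_s)^{2}\delta_s$; since the atoms of the limit sit precisely at the jumps of $A$, one verifies term-by-term that left-endpoint evaluation gives $f''(Y_{t_i^n})\to f''(Y_s)$ (using left-continuity of $Y$), yielding the limit $\int_0^t f''(Y_s)\,\mathrm{d}[M]_s+\sum_{0\le s<t}f''(Y_s)(\Delta^{+}A_s)^{2}$.

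The main obstacle is the uniform control of the remainder $R_n$ in the presence of large jumps of $A$. I would handle this by the standard big-jump/small-jump split: for $\varepsilon>0$ the set $J^{\varepsilon}=\{s:|\Delta^{+}A_s|>\varepsilon\}$ is finite, and on partition intervals disjoint from $J^{\varepsilon}$ the increment of $Y$ is uniformly small (thanks to continuity of $M$ and the contribution of $A\setminus J^{\varepsilon}$ being bounded by $\varepsilon^{1-q}\|A\|_q^q$ type terms), so that $|r_i|$ is uniformly small and $\sum(Y_{t_{i+1}^n}-Y_{t_i^n})^{2}$ stays bounded by $[Y]^{\pi^n}(T)$. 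On the finitely many intervals containing a jump in $J^{\varepsilon}$, I replace Taylor's expansion by the exact increment $f(Y_{s+})-f(Y_s)$ plus the approximation error in the continuous remainder, and then let $\varepsilon\to0$ via the summability $\sum(\Delta^{+}A_s)^{2}<\infty$.

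Finally, I combine the pieces. Adding $\tfrac{1}{2}\sum f''(Y_s)(\Delta^{+}A_s)^{2}$ from the quadratic variation limit to the jump correction $\sum\bigl[f(Y_{s+})-f(Y_s)-f'(Y_s)\Delta^{+}A_s-\tfrac{1}{2}f''(Y_s)(\Delta^{+}A_s)^{2}\bigr]$ coming from the big-jump intervals collapses to $\sum_{0\le s<t}\bigl[f(Y_{s+})-f(Y_s)-f'(Y_s)\Delta^{+}A_s\bigr]$, giving the backward-Young version of Itô's formula in the statement. The equivalence with the forward form $\int f'(Y^{+})\,\mathrm{d}A$ then follows directly from Lemma~\ref{backwardforwardYoung} (or Lemma~\ref{dAdA+}), identifying $\int f'(Y)\,\mathrm{d}\overleftarrow{A}-\int f'(Y^{+})\,\mathrm{d}A$ with a sum of jump corrections that exactly absorbs the remaining $\sum(f(Y_{s+})-f(Y_s)-f'(Y_s)\Delta^{+}A_s)$ terms.
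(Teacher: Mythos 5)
Your route is genuinely different from the paper's. The paper does not redo the Taylor-expansion argument for $Y$ itself: it applies F\"ollmer's It\^o formula for c\`adl\`ag paths of finite quadratic variation as a black box to the c\`adl\`ag process $Y^{+}$ (whose pathwise quadratic variation along $\pi$ is supplied by Lemma \ref{quadratic_variation_M+A}), and then spends the remainder of the proof translating that identity back to $Y$ via Lemma \ref{dAdA+}, Lemma \ref{backwardforwardYoung} and Lemma \ref{backwardYoungJump}. What you propose is the self-contained reproof of F\"ollmer's theorem for the c\`agl\`ad decomposition $Y=A+M$: telescoping plus Taylor, identification of the first- and second-order sums, and a big-jump/small-jump split for the remainder. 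That strategy is viable, and your treatment of the martingale part, of the quadratic-variation measures, and of the remainder is in order.

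The gap is in the first-order $A$-term, which is exactly where the content of this theorem lies. Your left-endpoint sums $\sum_{i}f'(Y_{t_{i}^{n}})\bigl(A_{t_{i+1}^{n}}-A_{t_{i}^{n}}\bigr)$ are not covered by Proposition \ref{BackwardsYoung} (which concerns right-endpoint sums), and they do not converge to the backward Young integral $\int_{0}^{t}f'(Y_{s})\,d\overleftarrow{A_{s}}$ appearing in the statement. They converge to the forward Young integral of the c\`agl\`ad integrand $f'(Y)$, whose contribution at a jump time $s$ of $A$ is $f'(Y_{s})\Delta^{+}A_{s}$, whereas the backward integral contributes $f'(Y_{s+})\Delta^{+}A_{s}$ there; the two differ by $\sum_{0\leq s<t}\bigl(f'(Y_{s+})-f'(Y_{s})\bigr)\Delta^{+}A_{s}$, which is nonzero whenever $A$ jumps. (Concretely, take $M=0$, $f(x)=x^{2}$ and $A_{s}=0$ for $s\leq 1/2$, $A_{s}=1$ for $s>1/2$: your left-endpoint sums give $0$, while $\int_{0}^{t}2A_{s}\,d\overleftarrow{A_{s}}=2$ for $t>1/2$.) Consequently, the identity your argument actually produces pairs the \emph{forward} integral with the correction $-f'(Y_{s})\Delta^{+}A_{s}$; passing to the backward integral of the statement forces a simultaneous change of the evaluation point inside the jump sum, and your closing assertion that Lemma \ref{backwardforwardYoung} makes the remaining jump terms cancel is not worked out and, as written, does not balance. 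This careful tracking of which version of the integrand ($Y$ versus $Y^{+}$) enters each integral and each jump sum is precisely what the paper's equations (\ref{ItoProof})--(\ref{ItodYpi}) are devoted to; your proposal needs the analogous bookkeeping made explicit before the last two paragraphs can be accepted.
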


\begin{remark}
  \label{ItoJump}The absolute convergence of the sum can be shown
  by Taylor's formula
  \begin{align*}
    & \bigg\| \sum_{0 \leq s < t} (f (Y_{s +}) - f (Y_s) - f' (Y_s) \Delta^+
    Y_s) \bigg\|\\
    \leq & \| f'' \|_{\infty ; [0, t)} \sum_{0 \leq s < t} (\Delta^+ Y_s)^2 =
    \| f'' \|_{\infty ; [0, t)} \sum_{0 \leq s < t} (\Delta^+ A_s)^2 \leq
    \| f'' \|_{\infty ; [0, t)} \| A \|_{q, [0, t)} .
  \end{align*}
\end{remark}

\begin{remark}
  \label{multidimIto}Similar to the paper {\cite{follmer_calcul_1981}} by
  F{\"o}llmer, for $Y$ now being $n$-dimensional and $f \in C (\mathbb{R}^n,
  \mathbb{R})$, the It{\^o}'s formula is the same as the one-dimensional case,
  just with more cumbersome notation. We will therefore omit the proof and
  only state the formula:
  \begin{align}
    f (Y_t) = & f (Y_0) + \sum_{i = 1}^n \int_0^t D_i f (Y_s) d
    \overleftarrow{A^i_s} + \sum_{i = 1}^n \int_0^t D_i f (Y^i_s) dM^i_s +
    \sum_{i, j = 1}^n \int_0^t D_i D_j f (Y_s) d [M^i, M^j]_s \nonumber\\
    & + \sum_{0 \leq s < t} \big( f (Y_{s +}) - f (Y_s) - \sum_{i = 1}^n D_i
    f (Y_s) \Delta^+ A^i_s \big) . 
  \end{align}
\end{remark}

\begin{proof}
  For any sequences of time partitions $\bar{\pi}$ on $[0, T]$ with vanishing
  mesh $| \bar{\pi}^n | \to 0$, we know that $\sum_{t_i^n \in \bar{\pi}^n} f'
  (Y^+_{t_i})  (M_{t_{i + 1}^n \wedge \cdot} - M_{t_i^n \wedge \cdot}) \to
  \int_0^{\cdot} f' (Y) \tmop{dM}$ and $\sum_{t_i^n \in \bar{\pi}^n} (M_{t_{i
  + 1}^n \wedge \cdot} - M_{t_i^n \wedge \cdot})^2 \to [M]$ converge both in
  ucp (see {\cite{protter_stochastic_2004}}, Theorem II.21 and II.22), then along a subsequence $\pi \subset \bar{\pi}$ the convergence also
  holds almost surely for all $t$. By Lemma \ref{quadratic_variation_M+A} we have $Y$ is
  of quadratic variation along $\pi$ and one can easily see from the
  definition of pathwise quadratic variation that $Y^+$ is also of quadratic
  variation with $[Y^+]^{\pi} (t) = [M] (t) + \sum_{0 < s \leq \cdot}
  (\Delta^+ A_s)^2$, since $A^+_{s -} = A_s$. Now for any $f \in C^2 
  (\mathbb{R}, \mathbb{R})$ we can apply the It{\^o}'s formula from
  {\cite{follmer_calcul_1981}} and get
  \begin{align}
    f (Y^+_t) = & f (Y^+_0) + \int_0^t f' (Y^+) d^{\pi} Y^+ + \frac{1}{2} 
    \int_0^t f'' (Y^+) d [M] \nonumber\\
    & + \sum_{0 < s \leq t} (f (Y^+_s) - f (Y_s) - f' (Y_s) (Y^+_s - Y_s)),
    \nonumber
  \end{align}
  where $\int_0^t f' (Y^+) d^{\pi} Y^+ = \lim_{n \to \infty}  \sum_{t_i^n \in
  \pi^n} f' (Y^+_{t_i})  (Y^+_{t_{i + 1} \wedge t} - Y^+_{t_i \wedge t})$.
  The above equation can be easily transformed into
  \begin{align}
    f (Y_t) = & f (Y_0) + \int_0^t f' (Y^+) d^{\pi} Y^+ + \frac{1}{2} 
    \int_0^t f'' (Y^+) d [M] \label{ItoProof}\\
    & + \sum_{0 \leq s < t} (f (Y^+_s) - f (Y_s) - f' (Y_s) (A^+_s - A_s)) -
    f' (Y^+)_T \Delta^+ A_T + f' (Y^+)_0 \Delta^+ A_0.\nonumber  
  \end{align}
  By Proposition 2.4 in {\cite{friz_differential_2018}} and Lemma \ref{dAdA+} we obtain
  \begin{eqnarray}
   \lefteqn{\int_0^t f' (Y^+) \tmop{dA} + f' (Y^+)_T \Delta^+ A_T - f' (Y^+)_0
    \Delta^+ A_0} \nonumber\\
   &= \int_0^t f' (Y^+) \tmop{dA} & = \lim_{n \to \infty}  \sum_{t_i^n \in
    \pi^n} f' (Y^+_{t_i})  (A^+_{t_{i + 1} \wedge t} - A^+_{t_i \wedge t}), 
    \label{Ito_dA+}\\ 
    \text{and}
    &\int_0^t f'' (Y^+)  d [M] &= \int_0^t f'' (Y) d [M] .
    \label{ItoIntQua} 
  \end{eqnarray}
  Now by our specific choice of $\pi$ and equations (\ref{Ito_dA+}) and
  (\ref{ItoIntQua}), we have
  \begin{align}
    \int_0^t f' (Y^+) d^{\pi} Y^+ &=  \lim_{n \to \infty}  \sum_{t_i^n \in
    \pi^n} f' (Y^+_{t_i})  (M^+_{t_{i + 1} \wedge t} - M^+_{t_i \wedge t}) + f' (Y^+_{t_i})  (A^+_{t_{i +
    1} \wedge t} - A^+_{t_i \wedge t}) \label{ItodYpi}\\
      = &\int_0^t f' (Y) \tmop{dM} + \int_0^t f' (Y^+) \tmop{dA} + f' (Y^+)_T
    \Delta^+ A_T - f' (Y^+)_0 \Delta^+ A_0 .  \nonumber
  \end{align}
  So, combining (\ref{ItoProof}), (\ref{ItoIntQua}) and (\ref{ItodYpi}) yields the desired variant of It{\^o}'s formula . Applying Lemma \ref{backwardforwardYoung} and then Lemma
  \ref{backwardYoungJump} yields the second formula claimed. 
\end{proof}

\textbf{Acknowledgments:} Both authors acknowledge support from DFG
CRC/TRR 388 “Rough Analysis, Stochastic Dynamics and Related Fields”,
Project A07. We thank Peter Friz for valuable discussions. Y.S. thanks Joscha Diehl for his hospitality during the visit to Greifswald. 

\bibliographystyle{amsplain} 
\bibliography{library}

\providecommand{\bysame}{\leavevmode\hbox to3em{\hrulefill}\thinspace}
\providecommand{\MR}{\relax\ifhmode\unskip\space\fi MR }
\providecommand{\MRhref}[2]{%
  \href{http://www.ams.org/mathscinet-getitem?mr=#1}{#2}
}
\providecommand{\href}[2]{#2}
\begin{thebibliography}{10}

\bibitem{allan_rough_2024}
Andrew~L. Allan and Jost Pieper, \emph{Rough {Stochastic} {Analysis} with
  {Jumps}}, August 2024, arXiv:2408.06978.

\bibitem{amendinger_martingale_2000}
Jürgen Amendinger, \emph{Martingale representation theorems for initially
  enlarged filtrations}, Stochastic Processes and their Applications
  \textbf{89} (2000), no.~1, 101--116. \MR{1775229}

\bibitem{bandini_weak_2024}
Elena Bandini and Francesco Russo, \emph{Weak {Dirichlet} processes and
  generalized martingale problems}, Stochastic Processes and their Applications
  \textbf{170} (2024), Paper No. 104261, 37. \MR{4682671}

\bibitem{becherer_2025}
Dirk Becherer, Peter Friz, and Yuchen Sun, \emph{Semilinear rough {PDEs} with
  discontinuous rough driver}, in preparation, 2025+.

\bibitem{billingsley_convergence_1999}
Patrick Billingsley, \emph{Convergence of probability measures}, second ed.,
  Wiley {Series} in {Probability} and {Statistics}: {Probability} and
  {Statistics}, John Wiley \& Sons, Inc., New York, 1999. \MR{1700749}

\bibitem{chevyrev_canonical_2019}
Ilya Chevyrev and Peter~K. Friz, \emph{Canonical {RDEs} and general
  semimartingales as rough paths}, The Annals of Probability \textbf{47}
  (2019), no.~1, 420--463. \MR{3909973}

\bibitem{chevyrev_superdiffusive_2020}
Ilya Chevyrev, Peter~K. Friz, Alexey Korepanov, and Ian Melbourne,
  \emph{Superdiffusive limits for deterministic fast-slow dynamical systems},
  Probability Theory and Related Fields \textbf{178} (2020), no.~3-4, 735--770.
  \MR{4168387}

\bibitem{chevyrev_superdiffusive_2024}
Ilya Chevyrev, Alexey Korepanov, and Ian Melbourne, \emph{Superdiffusive limits
  beyond the {Marcus} regime for deterministic fast-slow systems},
  Communications of the American Mathematical Society \textbf{4} (2024),
  746--786. \MR{4840242}

\bibitem{cohen_singular_2021}
Asaf Cohen, \emph{On singular control problems, the time-stretching method, and
  the weak-{M1} topology}, SIAM Journal on Control and Optimization \textbf{59}
  (2021), no.~1, 50--77. \MR{4194314}

\bibitem{coquet_weak_2001}
François Coquet, Jean Mémin, and Leszek Sł~ominski, \emph{On weak
  convergence of filtrations}, Séminaire de {Probabilités}, {XXXV}, Lecture
  {Notes} in {Math}., vol. 1755, Springer, Berlin, 2001, pp.~306--328.
  \MR{1837294}

\bibitem{diehl_backward_2012}
Joscha Diehl and Peter Friz, \emph{Backward stochastic differential equations
  with rough drivers}, The Annals of Probability \textbf{40} (2012), no.~4,
  1715--1758. \MR{2978136}

\bibitem{diehl_backward_2017}
Joscha Diehl and Jianfeng Zhang, \emph{Backward stochastic differential
  equations with {Young} drift}, Probability, Uncertainty and Quantitative Risk
  \textbf{2} (2017), Paper No. 5, 17. \MR{3663305}

\bibitem{friz_course_2020}
Peter~K. Friz and Martin Hairer, \emph{A course on rough paths}, second ed.,
  Universitext, Springer, Cham, 2020. \MR{4174393}

\bibitem{friz_rough_2024}
Peter~K. Friz, Antoine Hocquet, and Khoa Lê, \emph{Rough stochastic
  differential equations}, August 2024, arXiv:2106.10340 [math].

\bibitem{friz_controlled_2024}
Peter~K. Friz, Khoa Lê, and Huilin Zhang, \emph{Controlled rough {SDEs},
  pathwise stochastic control and dynamic programming principles}, December
  2024, arXiv:2412.05698 [math].

\bibitem{friz_general_2017}
Peter~K. Friz and Atul Shekhar, \emph{General rough integration, {Lévy} rough
  paths and a {Lévy}-{Kintchine}-type formula}, The Annals of Probability
  \textbf{45} (2017), no.~4, 2707--2765. \MR{3693973}

\bibitem{friz_multidimensional_2010}
Peter~K. Friz and Nicolas~B. Victoir, \emph{Multidimensional stochastic
  processes as rough paths}, Cambridge {Studies} in {Advanced} {Mathematics},
  vol. 120, Cambridge University Press, Cambridge, 2010. \MR{2604669}

\bibitem{friz_differential_2018}
Peter~K. Friz and Huilin Zhang, \emph{Differential equations driven by rough
  paths with jumps}, Journal of Differential Equations \textbf{264} (2018),
  no.~10, 6226--6301. \MR{3770049}

\bibitem{follmer_calcul_1981}
H.~Föllmer, \emph{Calcul d'{Itô} sans probabilités}, Seminar on
  {Probability}, {XV} ({Univ}. {Strasbourg}, {Strasbourg}, 1979/1980)
  ({French}), Lecture {Notes} in {Math}., vol. 850, Springer, Berlin, 1981,
  pp.~143--150. \MR{622559}

\bibitem{jacod_limit_2003}
Jean Jacod and Albert~N. Shiryaev, \emph{Limit theorems for stochastic
  processes}, second ed., Grundlehren der mathematischen {Wissenschaften}
  [{Fundamental} {Principles} of {Mathematical} {Sciences}], vol. 288,
  Springer-Verlag, Berlin, 2003. \MR{1943877}

\bibitem{jing_nonlinear_2012}
Shuai Jing, \emph{Nonlinear fractional stochastic {PDEs} and {BDSDEs} with
  {Hurst} parameter in (1/2,1)}, Systems \& Control Letters \textbf{61} (2012),
  no.~5, 655--665. \MR{2913494}

\bibitem{kurtz_stratonovich_1995}
Thomas~G. Kurtz, Étienne Pardoux, and Philip Protter, \emph{Stratonovich
  stochastic differential equations driven by general semimartingales}, Annales
  de l'Institut Henri Poincaré. Probabilités et Statistiques \textbf{31}
  (1995), no.~2, 351--377. \MR{1324812}

\bibitem{liang_multidimensional_2025}
Jiahao Liang and Shanjian Tang, \emph{Multidimensional backward stochastic
  differential equations with rough drifts}, Transactions of the American
  Mathematical Society \textbf{378} (2025), no.~1, 201--257. \MR{4840303}

\bibitem{marcus_modeling_1980}
Steven~I. Marcus, \emph{Modeling and approximation of stochastic differential
  equations driven by semimartingales}, Stochastics \textbf{4} (1980), no.~3,
  223--245. \MR{605630}

\bibitem{monroe_gamma_1972}
Itrel Monroe, \emph{On the {\textbackslash}gamma -variation of processes with
  stationary independent increments}, Annals of Mathematical Statistics
  \textbf{43} (1972), 1213--1220. \MR{312578}

\bibitem{papapantoleon_stability_2023}
Antonis Papapantoleon, Dylan Possamaï, and Alexandros Saplaouras,
  \emph{Stability of backward stochastic differential equations: the general
  {Lipschitz} case}, Electronic Journal of Probability \textbf{28} (2023),
  Paper No. 51, 56. \MR{4571185}

\bibitem{pardoux_backward_1994}
Étienne Pardoux and Shi~Ge Peng, \emph{Backward doubly stochastic differential
  equations and systems of quasilinear {SPDEs}}, Probability Theory and Related
  Fields \textbf{98} (1994), no.~2, 209--227. \MR{1258986}

\bibitem{protter_stochastic_2004}
Philip~E. Protter, \emph{Stochastic integration and differential equations},
  second ed., Applications of {Mathematics} ({New} {York}), vol.~21,
  Springer-Verlag, Berlin, 2004. \MR{2020294}

\bibitem{r_m_dudley_introduction_nodate}
R.~Norvaisa R.~M.~Dudley, \emph{An introduction to p-variation and {Young}
  integrals}, Publication Title: MPS-LN 1998-1, Centre for Mathematical Physics
  and Stochastics, University of Aarhus, Denmark.

\bibitem{revuz_continuous_1999}
Daniel Revuz and Marc Yor, \emph{Continuous martingales and {Brownian} motion},
  third ed., Grundlehren der mathematischen {Wissenschaften} [{Fundamental}
  {Principles} of {Mathematical} {Sciences}], vol. 293, Springer-Verlag,
  Berlin, 1999. \MR{1725357}

\bibitem{song_backward_2025}
Jian Song, Huilin Zhang, and Kuan Zhang, \emph{Backward stochastic differential
  equations with nonlinear {Young} driver}, April 2025, arXiv:2504.18632
  [math].

\bibitem{stricker_calcul_1978}
C.~Stricker and M.~Yor, \emph{Calcul stochastique dépendant d'un paramètre},
  Zeitschrift für Wahrscheinlichkeitstheorie und Verwandte Gebiete \textbf{45}
  (1978), no.~2, 109--133. \MR{510530}

\bibitem{whitt_stochastic-process_2002}
Ward Whitt, \emph{Stochastic-process limits}, Springer {Series} in {Operations}
  {Research}, Springer-Verlag, New York, 2002. \MR{1876437}

\bibitem{williams_path-wise_2001}
David R.~E. Williams, \emph{Path-wise solutions of stochastic differential
  equations driven by {Lévy} processes}, Revista Matemática Iberoamericana
  \textbf{17} (2001), no.~2, 295--329. \MR{1891200}

\bibitem{zhang_backward_2017}
Jianfeng Zhang, \emph{Backward stochastic differential equations}, Probability
  {Theory} and {Stochastic} {Modelling}, vol.~86, Springer, New York, 2017.
  \MR{3699487}

\bibitem{zahle_integration_2001}
M.~Zähle, \emph{Integration with respect to fractal functions and stochastic
  calculus. {II}}, Mathematische Nachrichten \textbf{225} (2001), 145--183.
  \MR{1827093}

\end{thebibliography}

\end{document}